\title[Gaussian Multiplicative Chaos and Beta Ensembles match exactly]
      { On the circle, Gaussian Multiplicative Chaos and Beta Ensembles match exactly}
\author{Reda \textsc{Chhaibi}}
\address{R. C. : Universit\'e C\^ote d'Azur, CNRS, LJAD\\
Parc Valrose\\
06108 Nice Cedex 02, France}
\email{{\tt reda.chhaibi@univ-cotedazur.fr}}
\author{Joseph \textsc{Najnudel}} 
\address{J. N. :Bristol University -- University Walk, Clifton, Bristol, United Kingdom}
\email{{\tt joseph.najnudel@bristol.ac.uk}}
\date{}
\DeclareRobustCommand{\SkipTocEntry}[5]{}
\DeclareMathOperator{\tr}{tr}
\DeclareMathOperator{\Hc}{ \textrm{ch} }
\DeclareMathOperator{\eqlaw}{\stackrel{\Lc}{=}}
\DeclareMathOperator{\Var}{Var}
\DeclareMathOperator{\id}{id}
\def\half{\frac{1}{2}}
\def\1{{\mathbf 1}}
\def\e{\textrm{\bf e}}
\def\N{{\mathbb N}}
\def\Z{{\mathbb Z}}
\def\Q{{\mathbb Q}}
\def\R{{\mathbb R}}
\def\C{{\mathbb C}}
\def\H{{\mathbb H}}
\def\D{\mathbb{D}}
\def\V{\mathbb{V}}
\def\P{{\mathbb P}}
\def\E{{\mathbb E}}
\def\F{{\mathbb F}}
\def\Cc{{\mathcal C}}
\def\Ec{{\mathcal E}}
\def\Fc{{\mathcal F}}
\def\Gc{{\mathcal G}}
\def\Hc{{\mathcal H}}
\def\Lc{{\mathcal L}}
\def\Mc{{\mathcal M}}
\def\Nc{{\mathcal N}}
\def\Oc{{\mathcal O}}
\newtheorem{thm}{Theorem}[section]
\newtheorem{proposition}[thm]{Proposition}
\newtheorem{corollary}[thm]{Corollary}
\newtheorem{question}[thm]{Question}
\newtheorem{lemma}[thm]{Lemma}
\newtheorem{rmk}[thm]{Remark}
\numberwithin{equation}{thm}
\begin{document}

\begin{abstract}
We identify an {\it equality} between two objects arising from different contexts of mathematical physics: Kahane's Gaussian Multiplicative Chaos ($GMC^\gamma$) on the circle, and the Circular Beta Ensemble $(C\beta E)$ from Random Matrix Theory. This is obtained via an analysis of related random orthogonal polynomials, making the approach spectral in nature. In order for the equality to hold, the simple relationship between coupling constants is $\gamma = \sqrt{\frac{2}{\beta}}$, which we establish when $\gamma \leq 1$ or equivalently $\beta \geq 2$. This corresponds to the sub-critical and critical phases for $GMC^{\gamma}$.

As a side product, we answer positively a question raised by Vir\'ag, on the fractal spectrum of a random measure constructed from the $C \beta E$.  We also give an alternative proof of the Fyodorov-Bouchaud formula concerning the total mass of $GMC^\gamma$ on the circle. This conjecture was recently settled by Remy using Liouville conformal field theory. We can go even further and give an explicit description of the Fourier coefficients of the random measure $GMC^\gamma$ in terms of independent Beta random variables. 
Furthermore, we notice that the ``spectral construction'' has a few advantages. For example, the Hausdorff dimension of the support is efficiently described for all $\beta>0$, thanks to existing spectral theory. Remarkably, the critical parameter for $GMC^\gamma$ corresponds to $\beta=2$, where the geometry and representation theory of unitary groups lie.
\end{abstract}
\keywords{Orthogonal polynomials on the unit circle, Kahane's Gaussian Multiple Chaos, Random Matrix Theory}

\maketitle

\hrule
\tableofcontents
\hrule
\newpage

\addtocontents{toc}{\SkipTocEntry}
\section*{Notation}
\begin{itemize}
 \item $\D := \left\{ z \in \C \ | \ |z| < 1 \right\}$ is the open unit disc, $\partial \D$ denotes its boundary, which is the unit circle. 
 \item A measure $\nu$ on $\partial \D$ induces a linear form on the space of continuous functions on the circle. Thus its evaluation against $f$ will be denoted $\nu(f)$.
 \item Equality in law between the random variables $X$ and $Y$ is denoted by $X \stackrel{\Lc}{=} Y$.
 \item The Vinogradov symbol $\ll$ is equivalent to the $\Oc$ notation: $f \ll g \Leftrightarrow f = \Oc(g)$. Moreover, 
 in all the computations of the article, we allow the implicit constant to depend on the parameter $\beta$.  If the implicit constant depends on other quantities, they will be 
indicated by subscripts: $f \ll_{x} g$ means that there exists $C$ depending only on $x$ and $\beta$ such that $|f| \leq C g$. 
\item All the random objects we consider in the paper are defined on a measurable space $(\Omega, \mathcal{B})$. When changes of probability measures are not involved, the underlying probability measure is  $\P$, and the symbol $\E$ denotes the expectation under $\P$.
\end{itemize}

\section{Introduction}
The relationship we are pointing out between Gaussian Multiplicative Chaos and Random Matrices Theory is best expressed in terms of the classical theory of orthogonal polynomials on the unit circle. As such, we start by recalling a few facts on the topic.

\subsection{Orthogonal Polynomials on the Unit Circle (OPUC)} Consider a probability measure $\mu$ on the unit circle $\partial \D$, $\D$ being the unit disc. By applying the Gram-Schmidt orthogonalization procedure to monomials $\{ 1, z, z^2, \dots \}$, one obtains a sequence $\left( \Phi_n \right)_{n \geq 0}$ of OPUC which satisfies the Szeg\"o recurrence:
\begin{align}
\label{def:matrix_szego}
\begin{pmatrix}
\Phi_{k+1}  (z) \\
\Phi_{k+1}^*(z)
\end{pmatrix}
= & 
\begin{pmatrix}
z            & - \overline{\alpha}_k \\
- \alpha_k z & 1
\end{pmatrix}
\begin{pmatrix}
\Phi_{k}  (z) \\
\Phi_{k}^*(z)
\end{pmatrix} \ ,
\end{align}
where 
$$\Phi^*_{n} (z) := z^n \overline{\Phi_n(1/\bar{z})}.$$
The Szegö recurrence is the analogue of the three term recurrence for orthogonal polynomials on the line $\R$. The coefficients $\alpha_j$ belong to the closed disc, $\overline{\D}$, and are called Verblunsky coefficients. If a measure $\mu$ determines the Verblunsky coefficients, the converse is also true (see \cite[Theorem 1.7.11 p.97]{Sim05-1}):

\begin{thm}[Verblunsky's theorem]
\label{thm:verblunsky}
Let $\Mc_1(\partial \D)$ be the simplex of probability measures on the circle, endowed with the weak topology, and let 
$$\mathcal{D} :=  \D^{\N} \sqcup \left( \sqcup_{n\in \Z_+} \D^{n} \times \partial \D \right)$$
 be endowed with the topology related to the following notion of convergence: a sequence $(A_p)_{p \geq 1}$ in $\mathcal{D}$ converges to an element $A_{\infty} = (\alpha_j)_{0 \leq j < K}$ with finitely many or infinitely many components ($K$ finite or infinite) if and only if for all $j < K$, 
the coefficient of order $j$ of $A_p$ is well-defined for $p$ large enough and converges to $\alpha_j$.
Then, the  map
$$
\begin{array}{cccc}
 \V: & \Mc_1(\partial \D) & \rightarrow & \mathcal{D} \\
\end{array}
$$
given by the sequence of Verblunsky coefficients 
is a homeomorphism. Atomic measures with $n$ atoms have $n$ Verblunsky coefficients, the last one being of modulus one, other measures have infinitely many Verblunsky coefficients. 
\end{thm}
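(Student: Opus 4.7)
The plan is to verify separately that $\V$ is well-defined, bijective, and a homeomorphism, with the Cara\-th\'eodory function playing the role of an intermediate coordinate system. First, to define $\V(\mu)$, one applies Gram--Schmidt to $\{1,z,z^2,\dots\}$ in $L^2(\partial\D,\mu)$. If $\mu$ has exactly $n$ atoms, then $L^2(\partial\D,\mu)$ is $n$-dimensional, so the procedure terminates after producing $\Phi_0,\dots,\Phi_{n-1}$, and one checks directly from (\ref{def:matrix_szego}) that $|\alpha_{n-1}|=1$ while the previous $\alpha_j$ lie in $\D$. Otherwise, the procedure produces infinitely many $\Phi_n$ and the Szeg\"o recurrence (\ref{def:matrix_szego}) forces all $\alpha_j\in\D$, since $\|\Phi_{k+1}\|^2=(1-|\alpha_k|^2)\|\Phi_k\|^2$ must remain strictly positive. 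This places $\V(\mu)$ in the correct stratum of $\Dc$.

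For bijectivity, the main tool is the Cara\-th\'eodory function
\[
F_\mu(z) \;:=\; \int_{\partial\D}\frac{w+z}{w-z}\,d\mu(w),
\]
which is holomorphic on $\D$, has positive real part and satisfies $F_\mu(0)=1$. By the Herglotz theorem, $\mu\mapsto F_\mu$ is a bijection between $\Mc_1(\partial\D)$ and this class of holomorphic functions. Schur's algorithm then produces, from $F_\mu$, a sequence of Schur iterates whose values at $0$ are precisely the Verblunsky coefficients; the algorithm terminates with a finite unimodular value exactly when $F_\mu$ is a finite Blaschke product, i.e. when $\mu$ is finitely atomic. This gives a bijection between $\Dc$ and Cara\-th\'eodory functions, and composing with Herglotz yields bijectivity of $\V$. (Equivalently, one can invoke the CMV matrix built from $(\alpha_j)$ and take the spectral measure of the cyclic vector $e_1$.)

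For continuity of $\V$, weak convergence $\mu_p \to \mu$ entails convergence of all trigonometric moments, hence of the Gram matrices, and Gram--Schmidt is a rational function of these entries so long as we are strictly inside $\D$ for each relevant $\alpha_j$; the topology on $\Dc$ described in the statement is designed precisely to make this local description work coordinate by coordinate. Conversely, if $A_p \to A_\infty$ in $\Dc$, the Schur iterates converge on compacts of $\D$, so $F_{\mu_p}$ converges locally uniformly to $F_\mu$, which by Herglotz is equivalent to weak convergence $\mu_p \to \mu$.

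The main obstacle is handling the interface between the finite-atom stratum $\sqcup_n \D^n\times\partial\D$ and the infinite stratum $\D^\N$. At such transition points, some $\|\Phi_k\|^2$ degenerates to zero and Gram--Schmidt becomes singular, so the direct rational formulas for $\alpha_j$ break down. Routing everything through $F_\mu$ bypasses this difficulty, since the Cara\-th\'eodory/Schur picture is uniformly continuous across strata: a sequence of finite Blaschke products of growing degree or an infinite Schur function truncating to a finite Blaschke product are both captured by locally uniform convergence on $\D$, matching exactly the coordinate-wise convergence built into the topology of $\Dc$.
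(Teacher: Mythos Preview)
The paper does not prove Verblunsky's theorem: it is quoted as a classical result with a reference to \cite[Theorem 1.7.11, p.~97]{Sim05-1}, so there is no in-paper proof to compare against.

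Your sketch is essentially the standard argument (and close to Simon's): well-definedness via Gram--Schmidt together with the norm identity $\|\Phi_{k+1}\|^2=(1-|\alpha_k|^2)\|\Phi_k\|^2$, bijectivity via the Herglotz representation and the Schur algorithm (the identification of Schur parameters with Verblunsky coefficients is Geronimus' theorem), and bicontinuity via convergence of moments in one direction and locally uniform convergence of Carath\'eodory functions in the other. Your handling of the stratum interface by routing through the Carath\'eodory/Schur picture rather than the possibly-degenerate Gram--Schmidt formulas is exactly the right move.

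One inaccuracy worth correcting: it is not $F_\mu$ that becomes a finite Blaschke product in the finitely-atomic case, but the associated Schur function $f_\mu=(F_\mu-1)/(F_\mu+1)$; the Carath\'eodory function $F_\mu$ has positive real part and is never a Blaschke product. With that fix, and with the understanding that the Schur parameters of $f_\mu$ agree with the Verblunsky coefficients up to a conjugation convention, your outline is sound.
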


If $Leb$ is the Lebesgue measure on $\partial \D$, then $\V(Leb) = \left( 0, 0, \dots \right)$. In fact, the tangent map of the Verblunsky map, at the point $Leb$, gives exactly the Fourier coefficients of the perturbation. Hence the Verblunsky map is inherently spectral in nature and Verblunsky coefficients can be seen as non-linear Fourier coefficients.

Now let us introduce the random matrix model of interest.

\subsection{The Circular Beta Ensemble (\texorpdfstring{$C \beta E$}{CBE})} For this paragraph, $\beta>0$ plays the role of a coupling constant. Consider $n$ points on the unit circle whose probability distribution is:

\begin{align}
\label{eq:def_CBE}
(C\beta E_n)
\quad &
\frac{1}{Z_{n, \beta}} \prod_{1 \leq k < l \leq n}\left| e^{i \theta_k} - e^{i \theta_l} \right|^\beta d\theta.
\end{align}

For $\beta = 2$, we recognize Weyl's integration formula for central functions on the unitary group $U(n)$. In this case the $C\beta E_n$ reduces to an ensemble known as the Circular Unitary Ensemble ($C U E_n$). It is nothing but the distribution of the eigenvalues of a Haar distributed random matrix. Naturally, the study of this case is very rich in the representation theory of unitary groups (see for example Diaconis-Shahshahani \cite{DS94} and Bump-Gamburd \cite{BG06}).

For general $\beta>0$, the representation-theoretic picture is more complicated: $C \beta E_n$ is the orthogonality measure for Jack polynomials in $n$ variables (see Appendix \ref{section:appendix_CBE} and the references therein). In turn, Jack polynomials are also intimately related to representation theory via rational Cherednik algebras \cite{DG10}. Our point of view will be more direct. From the work of Killip and Nenciu \cite{KN04}, the characteristic polynomial
of $C \beta E_n$ 
can be realized as the last term of the Szeg\"o recurrence, whose distribution of the  Verblunsky coefficients is explicitly given.
More precisely, let $(\alpha_j)_{0 \leq j \leq n-2}$ and $\eta$ be independent complex random variables, 
such that  $\eta$ is uniform on the unit circle, and for 
$0 \leq j \leq n-2$, 
  $\alpha_j$ is rotationally invariant and $|\alpha_j|^2$ is a Beta random variable with parameters $1$ and $\beta_j := \frac{\beta (j+1)}{2}$:
\begin{align}
\label{eq:beta_density} 
\P\left( |\alpha_j|^2 \in dx \right) = \beta_j \left( 1-x \right)^{ \beta_j - 1} \mathds{1}_{\left\{ 0 < x < 1 \right\} } dx \ .
\end{align}

In passing, let us record the following equalities: 

\begin{align}
\label{eq:beta_variance} 
\E\left( |\alpha_j|^2 \right) = \frac{1}{1+\beta_j} \ 
\end{align}
and 
\begin{align}
\label{eq:log_beta} 
\E\left[ -\log\left( 1 - |\alpha_j|^2 \right) \right] = \frac{1}{\beta_j}. \ 
\end{align}
 Killip and Nenciu prove in \cite[Theorem 1]{KN04} and \cite[Proposition 4.2]{KN04} that 
$$
  \V^{-1}\left( \alpha_{n-2}, \dots, \alpha_{1}, \alpha_0, \eta \right)
= \sum_{j=1}^n \widetilde{\pi}_j \delta_{\tilde{\theta}_j^{(n)}}(d\theta) \ ,
$$
where the weights $\left( \widetilde{\pi}_j \right)_{1 \leq j \leq n}$ have a $\beta$-Dirichlet distribution and the support $(\tilde{\theta}_j^{(n)})_{1 \leq j \leq n}$ is independently distributed according to the $C \beta E_n$ given in \eqref{eq:def_CBE}. 

Moreover, thanks to \cite[Proposition B.2]{KN04}, reversing the order of Verblunsky coefficients, except the last one $\eta$, changes the weights but preserves the distribution of the support.

From this property, a fruitful idea consists in using the reversed order of Verblunksy coefficients and incorporating the weights in the definition of the $C \beta E_n$. Therefore, we {\it redefine} the Circular $\beta$ Ensemble with $n$ points as the random probability measure:
\begin{align}
\label{eq:def_CBE2}
C\beta E_n := &
  \V^{-1}\left( \alpha_0, \alpha_1, \dots, \alpha_{n-2}, \eta \right)
= \sum_{j=1}^n \pi_j \delta_{\theta_j^{(n)}}(d\theta) \ .
\end{align}
The support points $\left( \theta_j^{(n)} \right)_{1 \leq j \leq n}$ are the arguments of the zeroes of the last orthogonal polynomial 
$X_n := \Phi^*_n (\alpha_0, \alpha_1, \dots, \alpha_{n-2}, \eta)$ associated to the finite sequence 
of Verblunsky coefficients $\left( \alpha_0, \alpha_1, \dots, \alpha_{n-2}, \eta \right)$:  we have 
$$X_n (z) = \prod_{j=1}^n ( 1 - z e^{i  \theta_j^{(n)}})$$
and $(e^{i  \theta_j^{(n)}})_{1 \leq j \leq n}$  are distributed as in \eqref{eq:def_CBE}. Nevertheless, the distribution of the weights $\left( \pi_j \right)_{1 \leq j \leq n}$ is not known explicitly, with a tractable form.

With the definition \eqref{eq:def_CBE2}, a remarkable fact is that the sequence of Verblunsky coefficients is \textit{consistent}. 
Indeed, even if $C\beta E_n$ and $C\beta E_{n+1}$ have a priori no reason for living on the same probability space, it is possible to couple them in such a way that the $n-1$ first Verblunsky coefficients are exactly the same. This provides a way to couple the characteristic polynomial of  $C \beta E_n$ for all values of $n \geq 1$: if $(\alpha_j)_{j \geq 0}$ is an {\it infinite} sequence of independent variables whose distribution is given as above, and if $\eta$ is an independent variable, uniform on the unit circle, then the last orthogonal polynomial given by the sequence of Verblunsky coefficients $(\alpha_0, \dots, \alpha_{n-2}, \eta)$ has the same law as the $C \beta E_n$ for all $n \geq 1$. With this particular coupling, the Verblunsky coefficients provide a sequence of random measures indexed by $n$, supported by the points of the $C \beta E_n$, and tending to a limiting random measure $\mu^{ \beta}$, whose Verblunsky coefficients are $(\alpha_j)_{j \geq 0}$. 

In light of Verblunsky's Theorem \ref{thm:verblunsky}, this remark begs the question:

\begin{question}
\label{question:whatIsMu}
Is there anything remarkable or canonical about the projective limit
 $$ \varprojlim C\beta E_n
 := \V^{-1}\left( \alpha_0, \alpha_1, \alpha_2, \dots \right)
  = \mu^\beta \ ,$$
 obtained from using all Verblunsky coefficients? Does this measure arise in other circumstances?
\end{question}

 Before discussing this question, it is worth explaining why the points $C \beta E_n$ can be seen as quadrature points of the infinite random measure $\varprojlim C \beta E_n = \mu^\beta$. Any sequence of measures, indexed by $n$, whose $n-1$ first Verblunsky coefficients match the $n-1$ first elements of the sequence $\left( \alpha_0, \alpha_1, \dots \right)$ will converge to $\mu^\beta$, in the topology of weak convergence. Moreover, if we assume that the Verblunsky coefficients are $(\alpha_0, \dots, \alpha_{n-2}, \eta)$  with $|\eta|=1$, then the approximating measure is atomic, supported by $n$ points.  The general theory of orthogonal polynomials dictates that for all polynomials $P$ of degree $\deg P \leq n-1$:
$$ \int_{\partial \D} P \ \mu^\beta = \sum_{j=1}^n \pi_j P( e^{i \theta_j^{(n)}} ) \ .$$
In the language of approximation theory, that is exactly to say that $\left( \pi_j \right)_{1 \leq j \leq n}$ are (random) quadrature weights and that the $n$ of points $C \beta E_n$ can be seen as the $n$ (random) quadrature points for the (random) measure $\mu^\beta = \varprojlim C \beta E_n$.

\medskip

\subsection{The Gaussian Multiplicative Chaos (\texorpdfstring{$GMC^\gamma$}{GMC})}
\label{subsection:GMC}
In this paragraph, $\gamma>0$ plays the role of coupling constant in an a priori different context. Define the Gaussian field on the unit disc:
\begin{align}
\label{eq:def_G}
G(z) := & \ 2 \Re \sum_{k=1}^\infty \frac{z^k}{\sqrt{k}} \Nc_k^\C  
\end{align}
where $(\Nc_k^\C)_{k \geq 0}$ denote i.i.d. complex Gaussian variables, such that 
$$\E \left[ (\Nc_k^\C)^2 \right] = \E \left[ \Nc_k^\C \right] = 0, \;
  \E \left[ |\Nc_k^{\C}|^2 \right] = 1 \ .$$
One can establish that:
\begin{itemize}
 \item $\textrm{Cov}(G(w), G(z) ) = -2 \log \left| 1 - w \bar{z} \right| \ .$ 
 \item The field can be extended to the closed unit disc $\overline{\D}$ but its restriction to the circle is not a function. In fact, $G_{|\partial \D}$ is almost surely a random Schwartz distribution in $\cap_{\varepsilon > 0} H^{-\varepsilon}(\partial \D)$ where the Sobolev spaces are given for all $s \in \R$ by:
 $$ H^{s}(\partial \D) := \left\{ f \ \Big | \ \sum_{n\in \Z} (1+ |n|)^s |\widehat{f}(n)|^2 < \infty \right\} \ .$$
 \item  Because $G$ is harmonic, $G(re^{i\theta}) = (G_{|\partial \D} * P_r) \left( e^{i \theta} \right)$ where $*$ denotes convolution and $P_r$ is the Poisson kernel. 
\end{itemize}

We can define the measure 
\begin{align}
\label{eq:gmc_r_def}
GMC_r^\gamma(f) := & 
   \int_{\partial \D}
   \frac{d\theta}{2\pi}
   f( e^{i\theta} )
   \exp\left( \gamma G(r e^{i \theta}) - \half \gamma^2 \Var(G(r e^{i \theta})) \right) \\
= &
   \nonumber
   \int_{\partial \D}
   \frac{d\theta}{2\pi}
   f( e^{i\theta} )
   e^{ \gamma G(r e^{i \theta}) } \left( 1-r^2 \right)^{\gamma^2}
   \ . 
\end{align}

The Gaussian Multiplicative Chaos with coupling constant $\gamma>0$ is the weak limit:
\begin{align}
\label{eq:gmc_limit}
GMC^\gamma := & \lim_{r \rightarrow 1} GMC_r^\gamma \ .
\end{align}
To be exact, the above limit holds in probability, upon integrating against continuous functions. The existence of such a limit for all $\gamma>0$ is well-established via standard regularization techniques such as convolution or Karhunen-Lo\`eve expansions of Gaussian processes \cite{RV13, B17}. The literature treats higher dimensions and different geometries as well. Of course, this includes our particular case of convolution by the Poisson kernel. However, there are different regimes regarding the limit \eqref{eq:gmc_limit}:
\begin{itemize}
 \item $\gamma<1$, Sub-critical phase. $GMC^\gamma$ is a non-degenerate random measure, which can be seen from the following $L^1$ convergence.

 \begin{thm}[Theorem 1.2 in \cite{B17}]
\label{thm:GMC_CV}
For all nonnegative, smooth functions $f$, and for $\gamma<1$, i.e. in the sub-critical regime:
$$ GMC^\gamma_r(f) \stackrel{r \rightarrow 1}{\longrightarrow} GMC^\gamma(f) \ , $$
the convergence being in probability and in $L^{1}\left( \Omega, \mathcal{B}, \P \right)$. 
\end{thm}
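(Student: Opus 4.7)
The strategy is to upgrade convergence in probability to $L^1$ convergence by establishing uniform integrability via an $L^q$-bound for some $q > 1$; the hypothesis $\gamma < 1$ is tailored exactly to the range where such a bound exists. By Fubini and the identity $\E[e^{\gamma X - \gamma^2 \Var(X)/2}] = 1$ for any centered Gaussian $X$, the normalization is immediate:
\begin{equation*}
\E\bigl[GMC_r^\gamma(f)\bigr] = \int_{\partial \D} f(e^{i\theta})\,\frac{d\theta}{2\pi}, \qquad r < 1.
\end{equation*}
For integer $q \geq 2$, Fubini plus the Gaussian moment generating function reduce $\E[GMC_r^\gamma(f)^q]$ to the Selberg-type integral
\begin{equation*}
\int \prod_{j=1}^q f(e^{i\theta_j}) \prod_{1 \leq j<k \leq q} \bigl| 1 - r^2 e^{i(\theta_j - \theta_k)}\bigr|^{-2\gamma^2} \prod_{j=1}^q \frac{d\theta_j}{2\pi},
\end{equation*}
which is bounded uniformly in $r$ provided $\gamma^2(q-1) < 1$. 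This covers $q = 2$ whenever $\gamma < 1/\sqrt{2}$. For the rest of the range $\gamma \in [1/\sqrt{2}, 1)$, a non-integer $q \in (1, 1/\gamma^2)$ is required; the standard tool is Kahane's convexity inequality, used to dominate $G$ by an exactly solvable reference log-correlated field (a star-scale-invariant kernel or a log-normal cascade on a dyadic tree) whose $q$-th moment is explicitly computable. This delivers $\sup_{r < 1} \E[GMC_r^\gamma(f)^q] < \infty$ throughout the subcritical range.

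For the convergence itself, I exploit the Fourier expansion of $G$ as a natural martingale. Setting $G^{(N)}(z) := 2\Re \sum_{k=1}^N (z^k/\sqrt{k}) \Nc_k^\C$ and $\Fc_N := \sigma(\Nc_k^\C,\, k \leq N)$, the truncated chaos
\begin{equation*}
M^{(N)}(f) := \int_{\partial \D} f(e^{i\theta}) \exp\!\Bigl(\gamma G^{(N)}(e^{i\theta}) - \tfrac{\gamma^2}{2} \Var G^{(N)}(e^{i\theta})\Bigr)\,\frac{d\theta}{2\pi}
\end{equation*}
is a nonnegative $(\Fc_N)$-martingale (by independence of the modes $\Nc_k^\C$), hence converges a.s.\ to some $M^{(\infty)}(f)$. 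Since $G - G^{(N)}$ is independent of $\Fc_N$, the conditional expectation is
\begin{equation*}
\E\bigl[GMC_r^\gamma(f) \,\big|\, \Fc_N\bigr] = \int_{\partial \D} f(e^{i\theta}) \exp\!\Bigl(\gamma G^{(N)}(re^{i\theta}) - \tfrac{\gamma^2}{2} \Var G^{(N)}(re^{i\theta})\Bigr)\,\frac{d\theta}{2\pi},
\end{equation*}
which converges to $M^{(N)}(f)$ as $r \to 1$ because $G^{(N)}$ is continuous on $\overline{\D}$. The $L^q$-bound above transfers to $\{M^{(N)}\}$ by Kahane (since $G^{(N)}$ has smaller covariance than $G$), so $M^{(N)} \to M^{(\infty)}$ in $L^1$ as well.

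Combining the two ingredients in a diagonal argument yields the claim: for any $\varepsilon > 0$, first pick $N$ so that $\|M^{(N)}(f) - M^{(\infty)}(f)\|_{L^1} < \varepsilon$, then $r$ close to $1$ so that $\|\E[GMC_r^\gamma(f) \mid \Fc_N] - M^{(N)}(f)\|_{L^1} < \varepsilon$, while the $L^q$-bound controls $\|GMC_r^\gamma(f) - \E[GMC_r^\gamma(f) \mid \Fc_N]\|_{L^1}$ uniformly in $r$ for $N$ large (via the orthogonal chaos decomposition, whose variance is driven to zero by increasing $N$).

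The principal obstacle is the $L^q$-bound for non-integer $q > 1$ in the regime $\gamma \in [1/\sqrt{2}, 1)$: direct Gaussian integration handles only integer moments, and closing the gap genuinely requires the Kahane comparison with a reference field whose $q$-th moments are analytically tractable. Everything else---the martingale convergence, the conditional expectation identity, and the final diagonal limit exchange---is routine once this single moment estimate is in place.
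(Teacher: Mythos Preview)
The paper does not prove this statement: it is quoted verbatim as Theorem~1.2 of Berestycki~\cite{B17} and used as a black box in Subsection~\ref{subsection:GMC}. There is therefore no ``paper's own proof'' to compare against; the authors simply invoke the existing literature on the construction of the subcritical GMC.

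That said, your sketch has a genuine gap in the diagonal step. You claim that
\[
\bigl\| GMC_r^\gamma(f) - \E[GMC_r^\gamma(f)\mid\Fc_N]\bigr\|_{L^1}
\]
is driven to zero uniformly in $r$ as $N\to\infty$ ``via the orthogonal chaos decomposition, whose variance is driven to zero''. This is an $L^2$ argument, and $GMC_r^\gamma(f)$ is bounded in $L^2$ only when $\gamma<1/\sqrt{2}$. For $\gamma\in[1/\sqrt{2},1)$ the second moment diverges as $r\to1$, so there is no uniform orthogonal decomposition to appeal to, and an $L^q$ bound with $q<2$ does not by itself control the conditional residual. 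Berestycki's proof in \cite{B17} circumvents exactly this issue: rather than moment bounds plus a diagonal limit, he works with the rooted (size-biased) measure and shows directly that the mass carried by points that are not $\gamma$-thick is negligible in $L^1$, which yields uniform integrability and identifies the limit in one stroke. Your outline is closer to the older Kahane/Robert--Vargas route, but to make it work beyond the $L^2$ phase you would need to replace the variance argument by something that survives with only $q$-th moments for $q\in(1,1/\gamma^2)$---for instance a truncation on the field level combined with the $L^q$ bound, not a chaos expansion.
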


 \item $\gamma=1$, Critical phase. The limit in \eqref{eq:gmc_limit} is the trivial zero measure, however one can perform different normalizations in order to obtain the so-called critical GMC. A random renormalization via the so-called derivative martingale has been implemented in \cite{DRSV14}, while the Seneta-Heyde renormalization has been implemented in \cite{JS17}. Both constructions agree \cite{P18}. Moreover,  Aru, Powell and Sepulveda \cite[Section 4.1]{APS18} have proven that the critical GMC can be written as the limit
of the subcritical GMC when the parameter tends to $1$ from below. This allows us to bootstrap the construction of the sub-critical GMC and obtain the critical GMC  via the limit in probability: 
\begin{align}
\label{eq:RenormalizeGMC}
GMC^{\gamma=1} = \lim_{\gamma \rightarrow 1^-} \frac{GMC^\gamma}{1-\gamma} \ ,
\end{align}
when the random measures $ GMC^{\gamma}$ are constructed from the same field $G$ for all values of $\gamma \in (0,1)$. The critical GMC is known to be non-atomic and it has been conjectured to assign full measure to a random set of Hausdorff dimension zero (see the overview section of \cite{DRSV14}). 
 
 As a corollary of our main result (Corollary \ref{corollaryxxx}) we shall see that this latter conjecture holds, in the context of the circle.
 
 \item $\gamma>1$, Supercritical phase. In this case, there are two constructions resulting in different measures.
 
 A first point of view consists in noticing that the renormalization of Eq. \eqref{eq:gmc_r_def} by a factor $(1-r^2)^{\gamma^2}$ is too strong, and the limit \eqref{eq:gmc_limit} is the zero measure. One needs a different renormalization procedure so that a non-trivial limit holds. The correct normalization at the exponential scale is given by the precise asymptotic behavior of the maximum $\max_{\theta \in \R} G(r e^{i \theta})$ as $r \rightarrow 1^-$. As such, one naturally expects the limit to be atomic, giving mass to the Gaussian field's maxima.
 This was done in \cite{MRV16}. With such a construction, the $\gamma>1$ regime is called the glassy phase and the transition is referred to as a freezing transition. The term "freezing" comes from the fact that the logarithm of the total mass of the measure behaves linearly in $\gamma$ because of the new renormalization. All in all, the result is that the limiting measure can be described as follows: one starts with the critical GMC, and conditionally on the corresponding random measure $GMC^{\gamma=1}$, one takes a strictly positive stable noise of scaling exponent $\frac{1}{\gamma}$ and intensity $GMC^{\gamma=1}$. In loose terms, in the supercritical regime, one only sees Dirac masses corresponding to the extrema of the underlying Gaussian field, and which are ``sprinkled'' on the circle with an intensity depending on the critical measure. 
 
Another version of the supercritical Gaussian multiplicative chaos has been previously constructed in \cite{BJRV13} by taking a {\it subcritical} GMC with coupling constant $\gamma' = \frac{1}{\gamma}$, as the intensity of a stable noise of scaling exponent $\frac{1}{\gamma^2}$.  We use a different normalization, hence extra factors $2$ in \cite{BJRV13}. The constructed measure is named the KPZ dual measure. As explained in that paper, the name stems from the relationship to the KPZ formula and its symmetry with respect to the transform $\gamma \mapsto \frac{1}{\gamma}$. This last construction cannot be naturally recovered from a logarithmically correlated Gaussian field on the circle without adding some extra randomness, contrarily to the construction of \cite{MRV16} with a freezing transition. Nevertheless, the KPZ dual measure seems to have better analyticity properties than the construction with a freezing transition. We will make further remarks on the topic at the end of the next section.
  
%
\end{itemize}
 
\section{Main result and consequences}

The Main Theorem of the present article provides a direct link between the a priori unrelated objects introduced in the previous section: namely, it shows that up to a suitable normalization, the random measure  $\varprojlim C\beta E_n$ and the Gaussian multiplicative chaos of parameter $\gamma := \sqrt{ \frac{2}{\beta} }$ have the same distribution in the sub-critical and the critical cases, i.e. for $\beta \geq 2$.

Notice that the construction of $\varprojlim C\beta E_n$ bypasses the phase transition involved in the definition of the GMC, since the description in terms of Verblunsky coefficients is uniform for all values of $\beta > 0$. However, we do not exactly know how the two random measures $C \beta E_{\infty}$ and $GMC^{\gamma}$ are related in the supercritical case. 

The precise statement of the main result of the article is the following: 

\begin{thm}[Main Theorem - $GMC^\gamma = \varprojlim C\beta E_n$]
\label{thm:main}
For $\beta \geq 2$, let $(\alpha_j)_{j \geq 0}$ be a sequence of independent, rotationally invariant complex-valued random variables, 
such that $|\alpha_j|^2$ is Beta-distributed with parameters $1$ and $\beta_j = \frac{\beta}{2}(j+1)$. Let $\mu^{\beta}$ be the random probability measure whose Verblunsky coefficients are given by the sequence $(\alpha_j)_{j \geq 0}$, and 
let
$$ C_0 
 := \left\{\begin{array}{cc}
          \prod_{j=0}^{\infty} \left( 1 - \left|\alpha_j\right|^2 \right)^{-1} \left( 1 - \frac{2}{\beta(j+1)} \right) & \textrm{if $\beta>2$}\\
        2  \left( 1 - \left|\alpha_0\right|^2 \right)^{-1}
         \prod_{j=1}^{\infty} \left( 1 - \left|\alpha_j\right|^2 \right)^{-1} \left( 1 - \frac{2}{\beta(j+1)} \right) & \textrm{if $\beta=2$} \ .
          \end{array}
   \right.$$
Then, the product of $C_0$ by the measure $\mu^{\beta}$ has the same law as the measure corresponding to the Gaussian multiplicative 
chaos $GMC^\gamma$, with parameter $\gamma = \sqrt{\frac{2}{\beta}} \leq 1$.
In particular, $\mu^\beta$ has the same law as $GMC^\gamma$, renormalized into a probability measure, and the total mass of 
$GMC^\gamma$ has the same law as $C_0$.
\end{thm}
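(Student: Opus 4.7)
The plan is to realize $C_0\mu^\beta$ as the almost-sure limit of a measure-valued martingale built from truncated Verblunsky coefficients, and then to identify this limit in law with $GMC^\gamma$ via a Gaussian comparison at the level of log-densities.

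First I would introduce the Bernstein--Szeg\"o approximant $\tilde\mu^\beta_n$ with Verblunsky sequence $(\alpha_0,\dots,\alpha_{n-1},0,0,\dots)$; classical OPUC theory yields that $\tilde\mu^\beta_n$ is absolutely continuous with density $\tilde w_n(\theta)=\prod_{j=0}^{n-1}(1-|\alpha_j|^2)/|\Phi^*_n(e^{i\theta})|^2$ against $d\theta/(2\pi)$. Let $\Fc_n:=\sigma(\alpha_0,\dots,\alpha_{n-1})$, set $C_0^{(n)}:=\prod_{j=0}^{n-1}(1-|\alpha_j|^2)^{-1}(1-\tfrac{2}{\beta(j+1)})$ (with the $j=0$ modification at $\beta=2$ prescribed in the statement), and define $M_n:=C_0^{(n)}\tilde\mu^\beta_n$; after cancellation, $M_n$ has density $\prod_{j=0}^{n-1}(1-\tfrac{2}{\beta(j+1)})\cdot|\Phi^*_n(e^{i\theta})|^{-2}$. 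The Szeg\"o recursion \eqref{def:matrix_szego} gives $|\Phi^*_{n+1}/\Phi^*_n|^2=|1-\bar\alpha_n h_n(\theta)|^2$ for some $\Fc_n$-measurable unit phase $h_n(\theta)$; by rotational invariance of $\alpha_n$ together with the classical Poisson-kernel identity $\int|1-re^{i\phi}|^{-2}\,d\phi/(2\pi)=(1-r^2)^{-1}$, one obtains $\E[|1-\bar\alpha_n h_n(\theta)|^{-2}\mid\Fc_n]=\E[(1-|\alpha_n|^2)^{-1}]=(1-\tfrac{2}{\beta(n+1)})^{-1}$, so $(M_n(f))_{n\geq 0}$ is a non-negative $\Fc_n$-martingale of unit mean for every continuous $f\geq 0$. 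Theorem \ref{thm:verblunsky} then gives $\tilde\mu^\beta_n\to\mu^\beta$ weakly almost surely, while independence of the factors in $C_0^{(n)}$ forces $C_0^{(n)}\to C_0$ almost surely; hence $M_n(f)\to C_0\mu^\beta(f)$ almost surely for every $f\in C(\partial\D)$. On the GMC side, Theorem \ref{thm:GMC_CV} presents $GMC^\gamma(f)$ as the $L^1$-limit of the positive radial martingale $GMC^\gamma_r(f)$; identifying the laws of the two limits is what remains.

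The core step, and the main obstacle, is this Gaussian identification. Linearizing the Szeg\"o recursion one obtains $\log\Phi^*_n(z)=-\sum_{j=0}^{n-1}\bar\alpha_j z^{j+1}+\text{nonlinear corrections}$; since $\E|\alpha_j|^2\sim 2/(\beta(j+1))$, the leading linear term has exactly the covariance structure of $\gamma G$ on $|z|<1$, because $2\gamma^2\sum_k\Re[(w\bar z)^k]/k=\gamma^2(-2\log|1-w\bar z|)$. The deterministic prefactor $\prod(1-\tfrac{2}{\beta(j+1)})$ together with the hidden $\prod(1-|\alpha_j|^2)^{-1}$ enact precisely the Wick renormalization $\exp(-\tfrac{\gamma^2}{2}\Var\,G_{r_n})$ that turns $|\Phi^*_n|^{-2}$ into a bona fide GMC density. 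Making this rigorous is the technical crux, since the $\alpha_j$ are Beta- rather than Gaussian-distributed, the Szeg\"o recursion is non-linear, and errors must be controlled uniformly on the circle; I would proceed by (i) a Lindeberg-type swap of the $\alpha_j$'s for centred complex Gaussian proxies of the same variance, (ii) iterating exponential tail bounds on the nonlinear Szeg\"o remainder inside the disc, and (iii) choosing a radial scale $r_n\nearrow 1$ matching the Verblunsky depth so that $M_n(f)\eqlaw GMC^\gamma_{r_n}(f)+o(1)$. Uniqueness of the sub-critical GMC limit (Theorem \ref{thm:GMC_CV}) then yields $C_0\mu^\beta\eqlaw GMC^\gamma$.

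The critical case $\beta=2$ is recovered by continuity as $\beta\downarrow 2$: the factor $1-\tfrac{2}{\beta}$ at $j=0$ in the $\beta>2$ definition of $C_0$ vanishes linearly in $\beta-2$, matching the renormalization in \eqref{eq:RenormalizeGMC} by $1-\gamma=1-\sqrt{2/\beta}\sim(\beta-2)/4$, so the ratio $C_0/(1-\gamma)$ admits a well-defined limit as $\beta\downarrow 2$ that accounts exactly for the prefactor $2$ and the isolation of the $j=0$ contribution in the modified $\beta=2$ definition.
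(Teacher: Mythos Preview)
Your setup coincides with the paper's: Bernstein--Szeg\"o approximants $\tilde\mu^\beta_n$, the martingale structure of $M_n(f)$, the reduction of $\beta=2$ to $\beta>2$ via \eqref{eq:RenormalizeGMC}. But the heart of the argument---your step (iii)---is where the proposal and the paper diverge, and where your outline has a genuine gap.

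You propose a Lindeberg swap of the $\alpha_j$'s for Gaussian proxies together with tail bounds on the nonlinear Szeg\"o remainder \emph{inside the disc}, and then ``choosing a radial scale $r_n\nearrow 1$ matching the Verblunsky depth so that $M_n(f)\eqlaw GMC^\gamma_{r_n}(f)+o(1)$.'' The difficulty is that $M_n$ lives on the circle, where $|\Phi^*_n(e^{i\theta})|^{-2}$ is not uniformly controlled as $n\to\infty$, while your Lindeberg/tail estimates are confined to compact subsets of $\D$. Bridging the two means comparing $|\Phi^*_n(e^{i\theta})|^{-2}$ with $|\Phi^*_\infty(r_n e^{i\theta})|^{-2}$ uniformly in $\theta$ as both $n\to\infty$ and $r_n\to 1$; neither a Lindeberg replacement (which sees only finite moments of individual $\alpha_j$'s and does not respect the nonlinear OPUC map) nor interior tail bounds say anything about this. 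The paper does \emph{not} attempt such a direct matching. Instead it first proves that $\log\Phi^*_\infty$ is \emph{exactly} the Gaussian field $\gamma G^\C$ on $\D$ (via the known Gaussianity of $C\beta E$ traces, no Lindeberg needed), so that $GMC^\gamma_r(d\theta)=(1-r^2)^{2/\beta}|\Phi^*_\infty(re^{i\theta})|^{-2}\,d\theta/2\pi$ almost surely. Then, rather than coupling $r$ to $n$, it conditions $M_\infty^{-1}GMC^\gamma_r(f)$ on $\Fc_n$ and shows that, as $r\to 1^-$ \emph{with $n$ fixed}, this conditional expectation converges to $K_\beta\,\mu^\beta_n(f)$. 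The technical engine is a change of measure $\Q_r$ (with density $\prod_j(1-|\alpha_jQ_j(r)|^2)/|1-\alpha_jQ_j(r)|^2$) under which the tail product becomes tractable, followed by a diffusive limit for $|Q_j(r)|^2$ satisfying an explicit SDE whose entrance law is computed via Dufresne's identity. None of this apparatus is visible in, or replaceable by, a Lindeberg scheme.

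In short: your martingale framework is correct and matches the paper, but the Gaussian identification on the boundary requires an interchange-of-limits argument ($n\to\infty$ versus $r\to 1^-$) that your proposal does not supply. The paper's route through $\Q_r$ and the SDE for $|Q_j(r)|^2$ is precisely what fills this gap, and it is not clear a Lindeberg substitution can.
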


\begin{proof}[General structure of proof]
First, the result can be bootstrapped quite easily from the sub-critical phase to the critical phase by using  \eqref{eq:RenormalizeGMC}. Therefore, it is enough to deal only with the sub-critical phase $\beta >2$ ($\gamma < 1$). Let us sketch the general ideas of the proof. A finer description of the structure of the paper is given at the end of the section.

In the sub-critical phase, the main idea of the proof is the following. We consider the sequence of orthogonal polynomials $(\Phi^*_n)_{n \geq 0}$ associated to the Verblunsky coefficients $(\alpha_j)_{j \geq 0}$. A general theorem in OPUC theory, due to Bernstein and Szeg\"o, implies that $\mu^{\beta}$ is the limit, when $n$ goes to infinity, of the probability measure $\mu_n^\beta$ whose density at $e^{i \theta}$ is proportional to $|\Phi^*_n(e^{i \theta})|^{-2}$. On the other hand, one can show that $(\Phi^*_n)_{n \geq 0}$ almost surely converges, uniformly on compact sets of the unit disc $\mathbb{D}$, to a limiting random holomorphic function $\Phi^*_{\infty}$, which is the exponential of a logarithmically correlated Gaussian field: see Proposition \ref{proposition31}. From the precise form of the correlation of this field, we deduce that the regularization of the Gaussian multiplicative chaos can be written as:
$$ GMC_r^{\gamma=\sqrt{\frac{2}{\beta}}}(d\theta)
   = (1-r^2)^{\frac{2}{\beta}} |\Phi^*_{\infty}(re^{ i \theta})|^{-2} d\theta \ .$$
To prove the Main Theorem, it is then enough to show that up to a delicate issue of renormalization, the limit of the measure $|\Phi^*_n(e^{i \theta})|^{-2} d \theta$ when $n$ goes to infinity (the measure $\mu^{\beta}$) is the same as the limit of the measure $|\Phi^*_{\infty} (r e^{i \theta})|^{-2} d \theta$ when $r$ goes to $1$ from below (the Gaussian multiplicative chaos). In other words, up to a suitable normalization, the two limits $n \rightarrow \infty$ and $r \rightarrow 1^-$ commute when we  start with the measure  $|\Phi^*_n(r e^{i \theta})|^{-2} d \theta$. One can sketch the following diagram:

$$
\begin{tikzcd}
\mu_{n,r}^{\beta}(d\theta) \propto \frac{1}{|\Phi^*_n(r e^{i \theta})|^{2}} d\theta \arrow[d, "r \rightarrow 1"] \arrow[r, "n \rightarrow \infty"] &  \mu_{r}^{\beta}(d\theta) \propto GMC_r^{\gamma=\sqrt{\frac{2}{\beta}}}(d\theta) \arrow[d, "r \rightarrow 1"] \\
\mu^\beta_n(d\theta) \propto \frac{1}{|\Phi^*_n(e^{i \theta})|^{2}} d\theta \arrow[r, "n \rightarrow \infty"]
& \mu^\beta \Big\backslash \frac{1}{C_0} GMC^{\gamma=\sqrt{\frac{2}{\beta}}}(d\theta) 
\end{tikzcd} \ ,
$$

where the symbol $\propto$ stands for "proportional to", the multiplicative factor being a random variable. In the end, the proof boils down to tracking the exact behavior of these factors.
\end{proof}

The diagram above shows in particular that the subcritical GMC is the limit of a suitable normalization of the measure $|\Phi^*_n(e^{i \theta})|^{-2} d \theta$ when $n$ goes to infinity. 
It is reasonable to expect a similar convergence for powers of $|\Phi^*_n(e^{i \theta})|$ with more general exponents. However, the techniques of the present paper do not directly apply to this case since
the result by Bernstein and Szeg\"o crucially depends on the fact that we consider a power of exponent $-2$. 
It is also natural to conjecture a convergence to the GMC when $\Phi^*_n(e^{i \theta})$ is replaced by the characteristic polynomial $X_n$ of the $C \beta E_n$, since these 
two polynomials are very strongly related: the only difference between the two settings is that the squared modulus of the  last Verblunsky coefficient $\alpha_{n-1}$ associated
to $\Phi^*_n$ is  distributed as a 
Beta random variable with parameters $1$ and $\beta n/2$, whereas the last Verblunsky coefficient used in the construction of $X_n$ is uniform on the unit circle. 
This change of the last coefficient has non-trivial consequences, in particular for the behavior of the polynomial on the unit circle: for example, 
$X_n$ has zeros on the unit circle, whereas all zeros of 
$\Phi^*_n$ are outside the unit disc. Since $X_n$ has zeros on the unit circle, the measure with density $|X_n|^{\alpha}$ with respect to the Lebesgue measure
is infinite as soon as $\alpha \leq -1$, and in particular for the exponent  $\alpha = -2$ considered in the present article. 

In \cite{W15}, Webb proves that if $X_n$ is the characteristic polynomial of the CUE (i.e. $\beta = 2$), and if $\alpha \in (-1/2, \sqrt{2})$, then the random measure on the unit circle with density  $|X_n(e^{i \theta})|^{\alpha}/ \mathbb{E} [ |X_n(e^{i \theta})|^{\alpha}]$ converges in law to $GMC^{|\alpha|/2}$ 
when $n \rightarrow \infty$. This result has been extended to $\alpha \in [-1/2, 2)$ by 
 Nikula, Saksman and Webb in \cite{NSW18}. For the $C \beta E$ with general $\beta > 0$, convergence to the GMC has been proven 
in the subcritical phase by Lambert in  \cite{L19}, in the case where we take the polynomial inside the unit disc, at a small mesoscopic distance ($n^{-1} (\log n)^6$) from the unit circle.
In these results, the convergence which occurs is a convergence in law: the matrix ensembles for different dimensions $n$ are a priori not defined on the same probability space, contrarily to the setting of the present article, where the polynomials $(\Phi^*_n)_{n \geq 0}$ are constructed from a single infinite sequence of Verblunsky coefficients. 

So far, we believe that the equality between $GMC^{\gamma}$ and $C \beta E_{\infty}$ can be extended to the supercritical regime, possibly after suitable adjustements. We 
expect that the measure $\mu_{\beta}$ for $\beta < 2$ is closely related to the KPZ dual measure constructed in \cite{BJRV13} for $\gamma = \sqrt{2/ \beta}$, which
seems to have good properties of analyticity with respect to the parameter $\gamma$. 
However, proving this link does not seem to be straightforward and is beyond the scope of this paper. 

\begin{rmk}[The splitting phenomenon]
Just like the characteristic polynomial from $C \beta E_n$ evaluated at one point is a product of independent random variables (see \cite{BHNY}) so is the total mass $C_0$. 

As explained in \cite{BHNY}, the splitting phenomenon for the characteristic polynomial is the probabilistic manifestation of the product formula for the (circular) Selberg integrals. It will be apparent in the proof of the Fyodorov-Bouchaud formula (Corollary \ref{corollary:fyodorovBouchaud}), that the splitting of the total mass is the probabilistic manifestation of another product formula,   related to the $\Gamma$ function.
\end{rmk}
Before diving into technical considerations, let us provide a few corollaries.

\addtocontents{toc}{\SkipTocEntry}

\subsection{The law of the Verblunsky coefficients of the GMC}
The Main Theorem gives a way to construct the Gaussian multiplicative chaos from a sequence of Verblunsky coefficients. We can think of it in the reverse way: 
\begin{corollary}
For $\gamma \leq 1$, let $(\alpha_j)_{j \geq 0}$ be the Verblunsky coefficients associated to the random probability measure 
obtained by dividing $GMC^{\gamma}$ by its total mass. Then, the random variables $(\alpha_j)_{j \geq 0}$ are independent, rotationally invariant in distribution, and $|\alpha_j|^2$ is distributed like a Beta variable of parameters $1$ and $\frac{\beta (j+1)}{2}$, for $\beta = \frac{2}{\gamma^2}$. 
Moreover, the total mass of $GMC^{\gamma}$ is given by the formula defining $C_0$ in the Main Theorem. 
\end{corollary}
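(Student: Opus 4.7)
The plan is to derive this corollary directly from the Main Theorem (Theorem~\ref{thm:main}) combined with Verblunsky's Theorem~\ref{thm:verblunsky}. By the Main Theorem, one has the equality in law of random measures
$$ GMC^\gamma \eqlaw C_0 \cdot \mu^\beta, $$
where $(\alpha_j)_{j \geq 0}$ are independent rotationally invariant with $|\alpha_j|^2$ of Beta$(1, \beta(j+1)/2)$ law, $\mu^\beta$ is the probability measure whose Verblunsky sequence is $(\alpha_j)_{j \geq 0}$, and $C_0$ is the explicit functional of $(\alpha_j)$ described in the theorem.

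First, I would observe that since $\mu^\beta$ is a probability measure by construction, the total mass of the right-hand side is exactly $C_0$, while the normalization of the right-hand side by its total mass is $\mu^\beta$ itself. Applying the Borel map ``(finite positive measure) $\mapsto$ (total mass, associated probability measure)'' to both sides of the distributional equality above, the pair $\bigl( \text{total mass of } GMC^\gamma,\ GMC^\gamma / \text{total mass} \bigr)$ has the same joint law as $(C_0, \mu^\beta)$.

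Next, I would invoke Verblunsky's theorem, which provides a homeomorphism, and in particular a Borel isomorphism, between $\Mc_1(\partial \D)$ and the sequence space $\mathcal{D}$. By the very definition of $\mu^\beta$, its image under this map is $(\alpha_j)_{j \geq 0}$. Applying the Verblunsky map to the second coordinate of the joint equality above preserves the joint distribution, whence the Verblunsky coefficients of the normalized $GMC^\gamma$ are jointly distributed as $(\alpha_j)_{j \geq 0}$, and the total mass of $GMC^\gamma$ is the same measurable functional of these coefficients as $C_0$ is of $(\alpha_j)$. This simultaneously yields the independence, the rotational invariance, the Beta law of the moduli squared, and the formula for the total mass.

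I do not expect any substantial obstacle, since the corollary is essentially a reformulation of the Main Theorem through the Verblunsky bijection. The only mild point to verify is that $GMC^\gamma$ is almost surely a non-degenerate, non-atomic finite measure, so that the normalization step is well defined and the image under the Verblunsky map lies in the infinite-sequence stratum $\D^{\N}$ of $\mathcal{D}$ (so that genuinely infinitely many coefficients $\alpha_j$ are produced). For $\gamma < 1$ this follows from Theorem~\ref{thm:GMC_CV} and the standard subcritical theory recalled in Section~\ref{subsection:GMC}; for $\gamma = 1$ it follows from the bootstrap construction \eqref{eq:RenormalizeGMC} together with the non-atomicity of the critical $GMC$ also recorded there.
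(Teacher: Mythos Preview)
Your proof is correct and follows essentially the same approach as the paper's: both argue that the map sending a random measure to its (total mass, Verblunsky coefficients) pair is determined by the law of the measure, and then read off the conclusion from the Main Theorem applied to $C_0\mu^\beta$. Your version is slightly more explicit about the measurability (via the Verblunsky homeomorphism) and about the non-atomicity of $GMC^\gamma$ ensuring the image lies in $\D^{\N}$, but the argument is the same.
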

\begin{proof}
The joint law of total mass and the Verblunsky coefficients associated to $GMC^{\gamma}$ is uniquely determined by the law of this measure, and then it is the same for any other random measure with the same distribution.

In particular, it is the same for the measure $C_0 \mu^{\beta}$ considered in the Main Theorem. Now, by construction, $C_0 \mu^{\beta}$ has Verblunsky coefficients with the desired distribution and its total mass is $C_0$.
\end{proof}

\addtocontents{toc}{\SkipTocEntry}
\subsection{Coupling the \texorpdfstring{$C \beta E$}{CBE} for different \texorpdfstring{$\beta$}{B}}

The different measures $GMC^\gamma$ for different $\gamma>0$ naturally live {\it on the same probability space}, as limits of measures built from the Gaussian field $G$.
From the previous corollary, dividing these measures by their total mass gives a coupling of the measures $C \beta E_{\infty}$ for $\beta \geq 2$ - and therefore a coupling of their Verblunsky coefficients. 
Introducing an independent uniform variable $\eta$  on the unit circle gives a way to deduce a coupling of $C \beta E_n$ for all $n \geq 1$ and all $\beta \geq 2$.

\addtocontents{toc}{\SkipTocEntry}
\subsection{Hausdorff dimension of the support}

At first, the description of Hausdorff dimension of spectral measures for random Schr\"odinger operators was investigated by Kiselev, Last and Simon in \cite{KLS98}. The adaptation to OPUC was made in the book by Simon \cite[Chapter 12]{Sim05-2}.

\begin{corollary} \label{corollaryxxx} 
The Hausdorff dimension of the support of $\mu^\beta = C \beta E_{\infty} $ is given as 
follows: 
\begin{itemize}
 \item If $\beta>2$ (sub-critical), then $\dim_\H supp(\mu^\beta) = 1 - \frac{2}{\beta} $.
 \item If $\beta=2$ (critical), then $\dim_\H supp(\mu^\beta) = 0$ and $\mu^\beta$ is  non-atomic.
 \item If $\beta<2$ (super-critical), then $\mu^\beta$ is atomic. 
\end{itemize}
Since for $\beta \geq 2$,  $\gamma = \sqrt{2/ \beta}$, $\mu^{\beta}$ has the same law as $GMC^{\gamma}$  up to normalization, 
one deduces: 
\begin{itemize}
 \item If $\gamma \in (0,1) $ (sub-critical), then $\dim_\H supp(GMC^{\gamma}) = 1 - \gamma^2$.
 \item If $\gamma = 1$ (critical), then $\dim_\H supp(GMC^{\gamma}) = 0$ and $GMC^{\gamma}$ is non-atomic. 
\end{itemize} 
\end{corollary}
\begin{proof}
Apply \cite[Theorem 12.7.7]{Sim05-2}. Notice that because our Verblunsky coefficients $\left( \alpha_j(\mu^\beta); j \in \N \right)$ are rotation invariant, the Alexandrov measures $\left( \mu^\beta_\lambda; \lambda \in \partial \D \right)$ are in fact all the same in law. Therefore the conclusion of that theorem, holding for almost every $\lambda$, is true for the measure $\varprojlim C \beta E_n$.	

Let us mention that \cite[Theorem 12.7.7]{Sim05-2} depends on the previous \cite[Theorem 12.7.2]{Sim05-2}, whose hypotheses do not exactly match ours. Nevertheless the proofs carry verbatim.
\end{proof}

In Corollary  \ref{corollaryxxx}, the statement on the subcritical GMC has already been proven by different methods (see \cite{RV13}), and the
statement on the critical GMC has been conjectured in \cite{DRSV14}. In the supercritical phase $\gamma > 1$, the "freezing" construction of $GMC^{\gamma}$ given in \cite{MRV16}, and the 
"KPZ dual" construction given in \cite{BJRV13} both provide atomic measures, in agreement with the behavior of $\mu^{\beta}$ for $\gamma = \sqrt{2/\beta}$. 
 It is also worth mentioning that a similar analysis for Gaussian ensembles has been tried in \cite{BFS07}.

\addtocontents{toc}{\SkipTocEntry}
\subsection{The Fyodorov-Bouchaud formula and beyond}
The Main Theorem allows us to easily deduce the distribution of the total mass of the chaos, which has been conjectured by Fyodorov and Bouchaud \cite{FB08}, and 
 recently proven by Remy \cite{R17}, using the partial differential equations satisfied by correlation functions in Liouville conformal field theory: the conformal field theory on the hyperbolic disc uses the GMC as an ingredient.
 
 Moreover, our approach allows to explicitly write the Fourier coefficients of the random  measure $GMC^{\gamma}$, in terms of the Verblunsky coefficients $(\alpha_j)_{j \geq 0}$, 
 whose joint distribution is explicitly known. The following result holds:
 
\begin{corollary}
\label{corollary:fyodorovBouchaud}
In the sub-critical phase and critical phases ($\gamma \leq 1$):
$$ GMC^\gamma\left( \partial \D \right) \ \eqlaw \ K_\gamma \e^{-\gamma^2} \ ,$$
where $GMC^\gamma\left( \partial \D \right)$ denotes the total mass of the $GMC^\gamma$, $\e$ is a standard exponential random variable, and $K_\gamma$ is an explicit constant:
$$ K_\gamma
:= \left\{ \begin{array}{cc}
           \Gamma(1-\gamma^2)^{-1} & \textrm{if $\gamma<1$,}\\
           2                       & \textrm{if $\gamma=1$} \ .
           \end{array} \right.
$$
Moreover, if for $n \geq 1$,  
$$C_n := \int_0^{2\pi} GMC^{\gamma} (e^{i \theta}) e^{-n i \theta} \frac{d \theta}{2 \pi} 
$$
and 
$$c_n := \frac{C_n} {GMC^{\gamma} (\partial \mathbb{D}) },$$
then 
$$c_{n} = \alpha_{n-1} \prod_{j=0}^{n-2} (1 - |\alpha_j|^2) + V^{(n-1)}(\alpha_0, \dots, \alpha_{n-2}, \overline{\alpha_0}, \dots, \overline{\alpha_{n-2}})$$
where $(\alpha_j)_{j \geq 0}$ are the Verblunsky coefficients associated to the measure $GMC^{\gamma}$ and  $V^{(n-1)}$ is an explicitly computable polynomial with integer coefficients. In particular: 
$$ \left\{
   \begin{array}{ccc}
   c_1 & = & \alpha_0,\\
   c_2 & = & \alpha_0^2 + \alpha_1 (1 - |\alpha_0|^2),\\
   c_3 & = & (\alpha_0 - \alpha_1 \overline{\alpha_0}) [ \alpha_0^2 + \alpha_1 (1 - |\alpha_0|^2)] \\
       &   & \quad \quad + \alpha_1 \alpha_0 + \alpha_2 (1 - |\alpha_0|^2) (1 - |\alpha_1|^2).
   \end{array}
   \right.
$$
Since the joint law of $(\alpha_j)_{j \geq 0}$ is known,  these formulas, together with  the formula of Theorem \ref{thm:main} giving the total mass of $GMC^{\gamma}$ in function of $(\alpha_j)_{j \geq 0}$, uniquely determine the joint law of the Fourier coefficients $(c_n)_{n \geq 1}$ and $(C_n)_{n \geq 1}$.
\end{corollary}
\begin{proof}
In the case $\gamma<1$, pick a $z \in \C$, with $\Re(z) \leq 0$. From Theorem \ref{thm:main}:
\begin{align*}
    \E\left( GMC^\gamma\left( \partial \D \right)^{z} \right)
= & \prod_{j=0}^\infty \left( 1 - \frac{2}{\beta(j+1)} \right)^{z} \E\left( \left( 1 - |\alpha_j|^2 \right)^{-z} \right)\\
= & \prod_{j=0}^\infty \left( 1 - \frac{2}{\beta(j+1)} \right)^{z}
                       \frac{\beta_j}
                            {\beta_j - z}\\
= & \prod_{j=0}^\infty \frac{ \left( 1 + \frac{1}{(j+1)} \right)^{ -\frac{2z}{\beta} }
                              \left( 1 - \frac{2z}{\beta(j+1)} \right)^{-1} }
                            { \left[
                              \left( 1 + \frac{1}{(j+1)} \right)^{ -\frac{2}{\beta} }
                              \left( 1 - \frac{2}{\beta(j+1)} \right)^{-1}
                              \right]^z
                            }
                       \\
= & \frac{ \Gamma\left( 1 - \frac{2 z}{\beta} \right)   }
         { \Gamma\left( 1 - \frac{2}{\beta}   \right)^{z} }\ ,
\end{align*}
where on the last line, we used the Weierstrass product formula for the Gamma function. One recognizes that the $\Gamma$ function is the Mellin transform of an exponential, which gives the desired result for $\gamma < 1$. 

The case $\gamma=1$ is handled by taking the limit $\gamma \rightarrow 1^-$ as in \eqref{eq:RenormalizeGMC}:
$$ GMC^{\gamma=1}\left( \partial \D \right) \eqlaw \lim_{\gamma \rightarrow 1^-} \frac{K_\gamma}{1-\gamma} \e^{-\gamma^2} \ .$$
The constant $K_\gamma$ vanishes at $\gamma=1$ and absorbs the renormalization:
\begin{align*}
    \frac{K_\gamma}{1-\gamma}
= & \frac{1}{(1-\gamma) \Gamma(1-\gamma^2)}
=   \frac{1+\gamma}{\Gamma(2-\gamma^2)}
\stackrel{\gamma \rightarrow 1^-}{\longrightarrow} 2 \ .
\end{align*}
This provides the distribution of $GMC^\gamma\left( \partial \D \right)$. 
The expression giving $c_n$ is a consequence of general theory of OPUC: see  \cite[Theorem 1.5.5 p.60]{Sim05-1}), and  \cite{Sim05-1}, formulas (1.3.51), (1.3.52), (1.3.53). 
\end{proof}

 It is possible to compute, when they exist, moments of the Fourier coefficients, i.e. the expectation of products of some powers of the $c_n$'s, their conjugates, and a power of the total mass of the chaos. For example, it is not difficult (but not obvious) to deduce Conjecture 1 of \cite{R17} from our Main Theorem. Similarly, we get immediately
  $$\mathbb{E}[ |c_1|^2] = \frac{1}{1 + \half \beta} = \frac{\gamma^2}{1 + \gamma^2}$$
  where $\gamma = \sqrt{\frac{2}{\beta}}$, which is consistent to the computation of the Edwards-Anderson's order parameter in the circular model of the $\frac{1}{f}$ noise: see formula (7) of \cite{CLD}. One can also recover formula  (42) of \cite{CLD} when $\gamma \leq 1$.

\addtocontents{toc}{\SkipTocEntry}
\subsection{Further remarks}

\medskip

\paragraph{\it On the supercritical phase:} For all $\beta > 0$, with the notation of the Main Theorem, one can define the random measure $C'_0 \mu^{\beta}$, where 
   $$C'_0 = \prod_{j=0}^{\infty} (1- |\alpha_j|^2)^{-1} e^{-\frac{2}{\beta(j+1)}}.$$
  The paper shows that in the subcritical and the critical cases ($\beta \geq 2$), this measure has the same law as the Gaussian multiplicative chaos, times a deterministic constant depending only on $\beta$.  Nevertheless, in the supercritical phase ($\beta <2$), the measure $C'_0 \mu^{\beta}$ is still well-defined, and gives a new way to construct a supercritical Gaussian multiplicative chaos. 
  
  It is natural to ask how this construction can be compared to the very different constructions given in \cite{BJRV13} and \cite{MRV16}, which we described in the introduction (Subsection \ref{subsection:GMC}). Since the quantities involved in the definition of $C'_0 \mu^{\beta}$ are analytic in $\beta$, it is very unlikely that our construction gives the freezing transition appearing in \cite{MRV16}. Therefore, we conjecture that $C'_0 \mu^{\beta}$ is strongly related to the KPZ dual measure of \cite{BJRV13}: the two random measures may have the same law, up to a multiplicative constant. As a corroborating evidence is the fact that the laws of the total masses agree. This is done as follows.

  By Proposition 6 of \cite{BJRV13}, we have, for $\gamma >1$ and $0 \leq \rho < 1/\gamma^2$, with obvious notation: 
 $$ \E\left[  GMC^{\gamma, BJRV} (\partial \mathbb{D})^{\rho} \right]
  = \frac{\Gamma(1- \rho \gamma^2) \Gamma(1- \gamma^{-2})^{\rho \gamma^2}}
         {\Gamma(1-\rho) \gamma^{-2 \rho \gamma^2} }
    \E\left[  GMC^{1/\gamma} (\partial \mathbb{D})^{\rho \gamma^2} \right] \ ,$$
  and, using the Fyodorov-Bouchaud conjecture proved by Remy in \cite{R17} and in a different way in the present article, we obtain:
 $$ \E\left[  GMC^{\gamma, BJRV} (\partial \mathbb{D})^{\rho} \right]
  = \kappa_{\gamma}^{\rho} \Gamma(1- \rho \gamma^2)$$
  for some $\kappa_{\gamma} > 0$ depending only on $\gamma$. 
  Hence, the total mass $GMC^{\gamma, BJRV} (\partial \mathbb{D})$ is the power $-\gamma^2$ of an exponential variable, as the total mass $C'_0$ of the measure 
  $C'_0 \mu^{\beta}$. 
  
  If one considers the construction of \cite{MRV16}, it is seen from the freezing transition that the total mass of the supercritical chaos is not anymore the power $-\gamma^2$ of an exponential random variable.

\medskip

\paragraph{\it On relating the GMC and Random Matrix Theory:}  To the best of the authors' knowledge, the first hints that there should be a relationship between Gaussian multiplicative cascades and Random Matrix Theory have first appeared in the paper \cite{BFS07} and then Vir\'ag's ICM Proceeding of Seoul 2014 \cite{V14}. In both of these references, the focus is on multiplicative cascades on the real line and on tridiagonal models for the $G \beta E$ (Gaussian $\beta$ Ensembles). The point of view is very similar to ours since the relationship is probed through orthogonal polynomials, and of course, we expect similar results to hold in the context of the real line. A key ingredient would be the Bernstein-Szeg\"o type measure.

Nevertheless, the Main Theorem \ref{thm:main} is remarkable, as it says that in the case of the circle, the relationship is much stronger than initially expected. For example, the questions 1 and 2 raised by Vir\'ag \cite{V14}, in our context, would ask whether $GMC^\gamma$ and $\varprojlim C \beta E_n$ are similar at the level of fractal spectra. The fractal spectrum of a random measure on the circle is determined by the behavior of the number of arcs of the form $e^{2 i \pi [k, k+1]/m}$, $k \in \{0,1,\dots, m-1\}$, whose measure is larger than $m^{-\alpha}$ for a given $\alpha \in \mathbb{R}$, and by the  behavior of the sum of 
the $q$-th power of the measure of these arcs, for a given $q \in \mathbb{R}$, when $m \rightarrow \infty$: see \cite[Chapter 17]{F04} for a more detailed 
discussion. The answer to Vir\'ag's question, and more generally, to any question asking if  $GMC^\gamma$ and $\varprojlim C \beta E_n$ enjoy similar properties, 
is positive, since we prove that  $GMC^\gamma$ and $\varprojlim C \beta E_n$ are in fact the same object.

In fact, one could wonder in which sense the $C \beta E_n$ is a regularization of $GMC^\gamma$. As explained in Appendix \ref{section:appendix_CBE}, from the works of Macdonald, Random Matrix Theory is a very peculiar regularization of a Gaussian space {\it at the level of symmetric functions}. This regularization is of course very different from convolution, which is the standard process in order to construct the GMC (Theorem \ref{thm:GMC_CV}), hence the difficulty of proving $\varprojlim C \beta E_n = GMC^{\gamma}$. This difficulty is further exemplified by the following. Relating an approximation via finitely many Verblunsky coefficients and an approximation via convolution is present in the literature in the form of Golinskii-Ibragimov (GI) measures (See \cite[Section 6.1]{Sim05-1}), however one cannot apply any of the general approximation theorems that are available. Most of the results in \cite{Sim05-1} treat only regular measures by assuming the existence of densities or via the Szegö condition, which is a finite entropy condition for the Lebesgue measure relative to the measure of interest. And the GMC is very far from that.

Finally, in the same way that the GMC plays an important role in understanding the extrema of log-correlated fields, it is certainly desirable to relate the current paper to our previous work \cite{CMN18} investigating the extrema of the characteristic polynomial of the $C \beta E$ field.

\medskip

{\it Relation to conformal field theories:} The GMC plays a key role in the recent mathematical progress \cite{kupiainen2020integrability,
guillarmou2020conformal, guillarmou2021segal} for understanding Liouville Conformal Field Theory (CFT). In particular, one could ponder about the relationship of our Theorem \ref{thm:main} to these aforementioned results. On the one hand, since we establish an exact relationship between GMC and Beta ensembles, we argue this is an instance of the integrability of Liouville CFT, in the same fashion that the celebrated DOZZ formula \cite{kupiainen2020integrability} is exact. More interestingly, the GMC on the circle is the central object for the conformal bootstrap in \cite{guillarmou2020conformal, guillarmou2021segal}.

Notice that our log-correlated field $G$ in Eq. \eqref{eq:def_G} appears in \cite[Eq. (3.1-3.4)]{guillarmou2020conformal}. And $GMC^\gamma$ is the steady-state of the dynamic on annuli which allows to glue Liouville amplitudes according to Segal's axioms \cite[Section 6]{guillarmou2021segal}.

\addtocontents{toc}{\SkipTocEntry}
\subsection{Structure of the paper}

In Section \ref{section:GMC_in_D}, we show the convergence of $\Phi_n^*$ towards the exponential of a logarithmically correlated field inside the unit disc, and we provide a bound on the moments of $|\Phi_n^*(z)|$ when $z \in \D$. This is a consequence of a general result on OPUC, which can be of interest beyond our study of $C \beta E$. The setting is that of rotationally invariant Verblunsky coefficients with mild decay.
 
In Section \ref{section:proofmain}, we begin the proof of the Main Theorem \ref{thm:main}. In fact, we made the choice of factoring the proof of Theorem \ref{thm:main}, so that a first part can be presented as quickly as possible. This section gives all the required arguments for a complete proof modulo two Lemmas whose proofs are postponed for later. These are Lemma \ref{lemma:RenormalizedGMC_L1Limit} and Lemma \ref{lemma:OmegaRhoControl}, which motivate the next two sections.

In Section \ref{section:approximations}, we define and estimate some quantities, and consider a new probability distribution, in order to study the behavior of the polynomials $\Phi^*_n$ near the unit circle. 

In Section \ref{section:convergenceSDE}, we prove the convergence of some discrete stochastic process towards the solution of a suitable stochastic differential equation. This inhomogeneous SDE is rather ill-behaved and its analysis is a key ingredient in proving Lemma \ref{lemma:OmegaRhoControl}.

Finally, Section \ref{section:proofmain2} gives the missing proofs of Lemmas \ref{lemma:RenormalizedGMC_L1Limit} and \ref{lemma:OmegaRhoControl} and thus concludes the proof of the Main Theorem \ref{thm:main}.

\section{Orthogonal polynomials and Gaussian field inside the disc}
\label{section:GMC_in_D}
We consider the following Gaussian random holomorphic function $G^\C$, defined on the unit disc by 
$$G^\C(z) :=  \sum_{k=1}^\infty \frac{z^k}{\sqrt{k}} \Nc_k^\C$$
where $(\Nc_k^\C)_{k \geq 1}$ are i.i.d. complex Gaussian variables, such that 
$$\E [ \Nc_k^\C] = \E [ (\Nc_k^\C)^2] = 0, \; \E [ |\Nc_k^\C|^2]  = 1.$$
The function $G^\C$ itself is complex Gaussian and centered, the covariance structure being given by 
$$\E [ G^\C(w) G^\C(z) ] = 0, \; \E [  G^\C(w) \overline{G^\C(z)}]= - \log (1 - w \bar{z}).$$
From this covariance structure, we deduce that the field 
$$G(z) := 2 \Re (G^\C(z)) = G^\C(z) + \overline{G^\C(z)}$$
is real-valued, centered and Gaussian, with covariance 
$$\E [ G(w) G(z) ] = - \log ( 1 -  w \bar{z}) - \log ( 1 - \bar{w} z)  = - 2 \log | 1 - w \bar{z}|.$$
Recall that the Gaussian multiplicative chaos of parameter $\gamma < 1$ can then be constructed by considering the measure defined in \eqref{eq:gmc_r_def} and letting $r \rightarrow 1^-$. We have the following result: 

\begin{proposition}
\label{proposition31}
For $\beta >2$, let  $(\alpha_j)_{j \geq 0}$ be distributed as in Theorem \ref{thm:main}, and let $(\Phi_n^*)_{n \geq 0}$ be the corresponding sequence of OPUC. By general theory (see  \cite[Theorem 1.7.1 p.90]{Sim05-1}), these polynomials are equal to $1$ at $0$ and do not vanish on the unit disc. 
Hence, for any $n \geq 0$, there exists a unique continuous function from the unit disc to the complex plane, vanishing at zero, and 
 whose exponential is equal to $\Phi^*_n$: let us denote this function by  $\log \Phi_n^*$. Then, almost surely,  $\left( \log \Phi_{n}^* \right)_{n \geq 1}$ converges to a limit $\log \Phi_{\infty}^*$, uniformly on compact sets of the unit disc. Moreover, this limit has the same distribution as the Gaussian field $\gamma G^\C$ for $\gamma = \sqrt{\frac{2}{\beta}}$. Consequently, the random measure 
$$\lim_{r \rightarrow 1-} (1- r^2)^{ \frac{2}{\beta}} | \Phi_{\infty}^*(re^{i \theta})|^{-2}  \frac{d \theta}{2 \pi}$$
exists and has the same distribution as $GMC^{\gamma}$, and then Theorem \ref{thm:main} is proven if we show that $C_0 \mu^{\beta}$ coincides with this random measure. 

Moreover, we have the following bound on the moments of  $\Phi_{n}^*$: 
$$ 
\forall z \in \D, \forall p \in \R, \forall n \in \Z_+, \ 
\E\left(
\left| \Phi_n^*(z) \right|^{p}
\right)
\leq \left( 1 - |z|^2 \right)^{-\frac{p^2}{2 \beta}}.
$$
\end{proposition}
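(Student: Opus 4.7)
My plan is to prove the proposition in three stages, corresponding to its three claims: a.s.\ uniform convergence of $\log \Phi_n^*$ on compacta of $\mathbb{D}$, identification of the limit with $\gamma G^{\mathbb{C}}$, and the moment bound. The starting point is the factored Szeg\"o recurrence $\Phi_{k+1}^*(z) = \Phi_k^*(z)\bigl(1-\alpha_k\,z\,b_k(z)\bigr)$ where $b_k(z) := \Phi_k(z)/\Phi_k^*(z)$ is a finite Blaschke product of degree $k$ with $|b_k|\le 1$ on $\overline{\mathbb{D}}$ a.s. (its zeros, those of $\Phi_k$, lie inside $\mathbb{D}$ by standard OPUC theory). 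Summing logs gives $\log\Phi_n^*(z) = \sum_{k<n}\log(1-\alpha_k\,z\,b_k(z))$, and by rotational invariance of $\alpha_k$ given $\mathcal{F}_{k-1} := \sigma(\alpha_0,\dots,\alpha_{k-1})$ each term has conditional mean zero, making the sum a $\mathbb{C}$-valued martingale in $n$ for every $z\in\mathbb{D}$. Expanding the logarithm and using the exact Beta moments $\mathbb{E}|\alpha_k|^{2m} = m!/(\beta_k+1)_m$, the conditional $L^2$-norm of the $k$-th increment is controlled at leading order by $|zb_k(z)|^2/(\beta_k+1)$. Combined with the moment bound proved below, this gives a.s.\ pointwise convergence; the upgrade to uniform convergence on a compact $K \subset \mathbb{D}$ is then via a standard argument using the moment bound to produce a uniform $L^p$-bound on $|\Phi_n^*|$ on a slightly larger disc, hence by Cauchy's integral formula equicontinuous bounds on derivatives, with Montel's theorem concluding.

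For the identification with $\gamma G^{\mathbb{C}}$, I would compute the covariance of $\log\Phi_\infty^*$: rotational invariance of each $\alpha_k$ immediately gives $\mathbb{E}[\log\Phi_\infty^*(w)\log\Phi_\infty^*(z)]\equiv 0$, while the Hermitian two-point function is obtained by expanding each $\log(1-\alpha_k\,z\,b_k)$ in a power series and summing against the exact Beta moments; via partial-fraction telescoping the result is $-\gamma^2\log(1-w\bar z)$, matching the covariance of $\gamma G^{\mathbb{C}}$. The distributional equality is then extended from covariance matching using the martingale CLT applied to the tail sums $\sum_{k\ge N}$, which become asymptotically Gaussian as $N\to\infty$ because each $|\alpha_k|\to 0$ a.s. and the Lindeberg condition is verified by the Beta moments. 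The consequent equality of the random measure $(1-r^2)^{2/\beta}|\Phi_\infty^*(re^{i\theta})|^{-2}\tfrac{d\theta}{2\pi}$ with $GMC^\gamma$ follows because this is exactly the regularization of the exponential of the log-correlated field $\log|\Phi_\infty^*|^{-2} \stackrel{\mathcal{L}}{=} \gamma G$ with the correct Wick renormalization; $L^1$-convergence as $r\to 1^-$ is then provided by Theorem \ref{thm:GMC_CV}.

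For the moment bound, I would iterate the Szeg\"o recurrence. Conditioning on $\mathcal{F}_{k-1}$ and using rotational invariance yields the exact formula
\[
\mathbb{E}\bigl[|1-\alpha_k\,c|^p \mid \mathcal{F}_{k-1}\bigr] \;=\; {}_2F_1\!\bigl(-\tfrac{p}{2},\,-\tfrac{p}{2};\,\beta_k+1;\,|c|^2\bigr), \qquad c = z\,b_k(z),
\]
obtained by Taylor-expanding $|1-c\alpha_k|^p$ in its two holomorphic factors and integrating term by term against the Beta law of $|\alpha_k|^2$. A crude product bound using only $|b_k|\le 1$ diverges since $\sum_k 1/\beta_k=\infty$; one must exploit the joint evolution of $(\Phi_k^*, b_k)$ through the identity
\[
|\Phi_{k+1}^*(z)|^2 - |\Phi_{k+1}(z)|^2 \;=\; (1-|\alpha_k|^2)\bigl(|\Phi_k^*(z)|^2 - |z|^2|\Phi_k(z)|^2\bigr),
\]
which is a direct consequence of the Szeg\"o recurrence. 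This suggests constructing a supermartingale $M_k := |\Phi_k^*(z)|^p \cdot G_k(|b_k(z)|^2)$ with $G_k$ tuned so that the Beta-moment corrections from the ${}_2F_1$ absorb into a telescoping product involving $(1-|\alpha_k|^2)^{-p^2/(2\beta_k)}$-type factors, yielding the target $(1-|z|^2)^{-p^2/(2\beta)}$. The main obstacle is precisely this construction of $G_k$: one must balance the divergent naive contribution against the compensating factors in the identity above, and finding the explicit $G_k$ that ensures the supermartingale property and the correct telescoping requires exploiting the specific hypergeometric identities (notably Euler's transformation) together with $|b_k|\le 1$ to close the induction in one stroke.
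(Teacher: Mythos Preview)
Your proposal has a genuine gap in the moment bound, and this gap propagates because you make the convergence argument depend on it.

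For the moment bound, you set out to build a supermartingale $M_k = |\Phi_k^*(z)|^p\,G_k(|b_k(z)|^2)$ and close an induction via hypergeometric identities, but you do not construct $G_k$ and concede this is ``the main obstacle.'' The paper bypasses this entirely with a one-line trick you have not seen: once you know (i) $(\log\Phi_n^*(z))_{n\ge 0}$ is a martingale, so $(|\Phi_n^*(z)|^p)_{n\ge 0}$ is a submartingale with increasing expectations, and (ii) the limit $\log\Phi_\infty^*$ has the law of $\gamma G^{\mathbb C}$, you get immediately
\[
\mathbb E\bigl[|\Phi_n^*(z)|^p\bigr] \;\le\; \mathbb E\bigl[|\Phi_\infty^*(z)|^p\bigr]
\;=\; \mathbb E\bigl[e^{\frac{p\gamma}{2}G(z)}\bigr]
\;=\; (1-|z|^2)^{-p^2/(2\beta)}.
\]
No inductive $G_k$, no ${}_2F_1$ balancing; the sharp constant drops out of the Gaussian moment. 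The only technical point is justifying $\mathbb E[|\Phi_n^*(z)|^p]\to\mathbb E[|\Phi_\infty^*(z)|^p]$, which follows from a uniform-in-$n$ bound on \emph{some} exponential moment (dominated convergence), not the sharp one.

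This forces a different logical order than yours: first prove convergence and \emph{qualitative} finiteness of exponential moments of $\Re\log\Phi_n^*(z)$ by an argument that does not assume the sharp bound; then identify the limit as Gaussian; only then read off the sharp moment bound. Your convergence step invokes ``the moment bound proved below,'' which is circular. The paper breaks the circle by a CMV-matrix trace estimate (a commutator argument giving $\operatorname{tr}(\mathcal C_{[n]}^k) = -k\sum_j \overline{\alpha_j}\rho_{j+1}^2\cdots\alpha_{j+k} + O(k^3\sum_j|\alpha_j|^3)$), then McDiarmid's inequality on the phases of the $\alpha_j$'s to get subgaussian tails, yielding uniform exponential moments directly. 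Your $L^2$-increment-plus-Montel route could perhaps be made to work without the sharp bound, but as written it leans on something you have not established.

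On identification, your covariance-plus-martingale-CLT route is plausible in outline but differs from the paper, which instead couples $\log\Phi_n^*$ to the $C\beta E_n$ characteristic polynomial $\log X_n$ (they differ by a term $\log(1 - z\eta\,\Phi_{n-1}/\Phi_{n-1}^*)\to 0$ on compacta) and then imports the known convergence of $C\beta E_n$ power-sum traces to independent Gaussians. Your direct CLT would need a careful Lindeberg check for the tail sums with the random $b_k$'s, which you only sketch; the paper's detour through $C\beta E$ traces avoids this.
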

 
 Let us introduce the filtration: 
\begin{align}
\label{def:filtration}
\F := & \left( \Fc_n := \sigma\left( \alpha_0, \alpha_1, \dots, \alpha_{n-1} \right) ; n \in \Z_+ \right) \ .
\end{align}
Throughout the paper, the following martingale structure is crucial. We have, by the Szeg\"o recursion: 
$$\Phi_{n+1}^*(z) = \Phi_n^*(z) \left( 1 - \alpha_n Q_n(z) \right)$$
where 
$$Q_n(z) := z \frac{\Phi_n(z)}{\Phi_n^*(z)}.$$
From \cite[Corollary 1.7.2 p.90]{Sim05-1}, $Q_n <1$ inside the unit disc, and then we can write on $\mathbb{D}$:
$$\log \Phi_{n+1}^* (z) = \log \Phi_{n}^* (z)  + \log ( 1 - \alpha_n Q_n(z) ),$$
where we take the principal branch of the logarithm in the last term of the equality: we then have, for $n \geq 0$, 
$$ \log \Phi_{n}^* (z)  = \sum_{j=0}^{n-1} \log (1 - \alpha_j Q_j(n)).$$
Notice that this formula could have been used as a definition of $\log \Phi^*_n$.

From the fact that $Q_n$ is $\Fc_n$-measurable, $|\log ( 1- |\alpha_n|)|$ is integrable, $\alpha_n$ is rotationally invariant
and independent of $\Fc_n$, we deduce that $(\log \Phi_{n}^*(z))_{n \geq 0}$ is a $\F$-martingale, for all $z \in \mathbb{D}$. 

\begin{proof}[Proof of Proposition \ref{proposition31}]
We claim that, almost surely, $\left( \log \Phi_n^* \right)_{n \in \N}$ converges uniformly on compact sets of $\D$. Moreover, $\Re \log \Phi_{n}^*(z)$ and $\Im \log \Phi_{n}^*(z)$ have exponential moments of all orders (positive and negative), uniformly bounded in $n$ when the order and $z \in \D$ are fixed. A fortiori we have also uniform bounds for usual moments ($|x|^p \leq p!(e^x + e^{-x})$ for all $x \in \mathbb{R}$).
This is true by virtue of a general convergence result, Proposition \ref{proposition:cv_opuc}, which we shall prove in the next subsections. The only required hypothesis is \eqref{eq:Sigma2_hypothesis} and it is implied by:

\begin{lemma}
 For all $k>2$ and $\sigma \geq 0$:
 $$ \E\left( e^{ \sigma \sum_{j \geq 0} |\alpha_j|^k } \right) < \infty \ .$$
\end{lemma}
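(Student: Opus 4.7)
The plan is to use independence of the Verblunsky coefficients to factorize the exponential moment as an infinite product, and then show the product is convergent by exploiting the fast growth of $\beta_j = \beta(j+1)/2$.

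First, since the $(\alpha_j)_{j \geq 0}$ are independent, Tonelli gives
$$\mathbb{E}\left(e^{\sigma \sum_{j \geq 0}|\alpha_j|^k}\right) = \prod_{j=0}^{\infty} \mathbb{E}\left(e^{\sigma |\alpha_j|^k}\right),$$
and each individual factor is bounded above by $e^\sigma$ since $|\alpha_j| \leq 1$. Convergence of the product is therefore equivalent to summability of $\mathbb{E}(e^{\sigma|\alpha_j|^k}) - 1$. Because $|\alpha_j|^k \in [0,1]$, an elementary integration $\int_0^{\sigma x} e^t dt \leq \sigma x\, e^\sigma$ for $x \in [0,1]$ yields
$$\mathbb{E}\left(e^{\sigma|\alpha_j|^k}\right) - 1 \leq \sigma e^\sigma\, \mathbb{E}\left(|\alpha_j|^k\right).$$
So it suffices to prove that $\sum_{j} \mathbb{E}(|\alpha_j|^k)$ is finite.

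Next I would compute $\mathbb{E}(|\alpha_j|^k)$ directly from the fact that $|\alpha_j|^2$ is $\mathrm{Beta}(1,\beta_j)$-distributed. Writing $s = k/2$, the standard Beta moment formula gives
$$\mathbb{E}\left(|\alpha_j|^k\right) = \mathbb{E}\left((|\alpha_j|^2)^{s}\right) = \frac{\beta_j\, \Gamma(1+s)\,\Gamma(\beta_j)}{\Gamma(1+s+\beta_j)} = \frac{\Gamma(1+s)\,\Gamma(1+\beta_j)}{\Gamma(1+s+\beta_j)}.$$
Stirling's asymptotics (or the ratio property $\Gamma(x+s)/\Gamma(x) \sim x^s$) then yield
$$\mathbb{E}\left(|\alpha_j|^k\right) \sim \Gamma(1+s)\, \beta_j^{-s} = \Gamma(1+k/2)\left(\tfrac{\beta(j+1)}{2}\right)^{-k/2}$$
as $j \to \infty$. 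Since $k > 2$, the exponent $k/2 > 1$ and hence $\sum_j \mathbb{E}(|\alpha_j|^k) < \infty$.

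Combining both steps, $\sum_j (\mathbb{E}(e^{\sigma|\alpha_j|^k})-1) < \infty$, so the infinite product converges (using $\log(1+u) \leq u$), which gives the desired finiteness of the exponential moment. No step here is delicate: the only thing to be mindful of is that $k > 2$ is sharp, since $k = 2$ would produce a harmonic-type divergence. The crux is simply that $|\alpha_j|^2$ concentrates at scale $1/\beta_j$, and $k > 2$ is exactly the threshold at which $\beta_j^{-k/2}$ becomes summable.
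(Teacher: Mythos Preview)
Your proof is correct and follows essentially the same approach as the paper: factorize by independence, reduce to summability of $\mathbb{E}(e^{\sigma|\alpha_j|^k})-1$, and control this by a Gamma ratio that decays like $\beta_j^{-k/2}$. The only cosmetic difference is that the paper reaches the Gamma ratio via an integration by parts on the explicit Beta density, whereas you use the cleaner inequality $e^{\sigma x}-1\leq \sigma e^{\sigma}x$ and then the standard Beta moment formula; both routes land on the same asymptotic and the same summability criterion $k/2>1$.
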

\begin{proof}
Using the independence of the $\alpha_j$'s, their explicit density and then an integration by parts:
\begin{align*}
    \E\left( e^{ \sigma \sum_{j \geq 0} |\alpha_j|^k } \right)
= & \prod_{j=0}^\infty \E\left( e^{ \sigma |\alpha_j|^k } \right) \\
= & \prod_{j=0}^\infty \left( \beta_j \int_0^1 dx \ e^{\sigma x^{k/2}} (1-x)^{\beta_j-1} \right) \\
= & \prod_{j=0}^\infty \left( 1 + \half k \sigma \int_0^1 dx \ e^{\sigma x^{k/2}} x^{k/2-1} (1-x)^{\beta_j} \right) \\
\leq & \prod_{j=0}^\infty \left( 1 + \half k \sigma e^\sigma \int_0^1 dx \ x^{k/2-1} (1-x)^{\beta_j} \right) \\
=    & \prod_{j=0}^\infty
       \left( 1 + \half k \sigma e^\sigma
       \frac{ \Gamma( \half k ) \Gamma(\beta_j + 1) }{\Gamma(\beta_j + \half k + 1)}
       \right) \ .
\end{align*}
This product is finite because of the asymptotics:
$$  \frac{\Gamma(\beta_j + 1) }{\Gamma(\beta_j + \half k + 1)}
\ll \beta_j^{-\half k} \ll j^{-\half k} \ .
$$
\end{proof}

Let us now identify the law of the limit of  $\log \Phi_n^*$. By the ratio asymptotics \cite[Theorem 1.7.4 p.91]{Sim05-1}, as $\alpha_n \stackrel{n \rightarrow \infty}{\longrightarrow} 0$, we have a.s.
$$ \lim_{n \rightarrow \infty} \frac{\Phi_{n-1}(z)}{\Phi_{n-1}^*(z)} = 0,$$
uniformly in the interior of the unit disc $\D$. On the other hand, from results by Killip and Nenciu \cite{KN04}, we deduce that if
$\log X_n$ is the logarithm of the  characteristic polynomial corresponding to the $C \beta E$, defined as follows: 
$$\log X_n (z) := \sum_{\lambda \in C \beta E_n}  \log (1 - \lambda z),$$
then we have the equality in law: 
$$ \left( \log X_n(z) \ ; \ z \in \D \right)
 = \left( \log \Phi_{n-1}^*(z)  + \log \left( 1 - z \eta \frac{\Phi_{n-1}(z)}{\Phi_{n-1}^*(z)} \right) \ ; \ z \in \D \right)$$
where $\eta$ is an independent uniform random variable on the unit circle.
We deduce that $\log X_n$ converges in law to $ \log \Phi^*_{\infty}$ when $n$ goes to infinity, for the topology of uniform convergence 
on compact sets in $\D$. In particular, since the Taylor coefficients at zero of $\log X_n$ can be written as contour integrals involving $\log X_n$ on a 
small circle centered at $0$, their finite dimensional joint distributions tend to the corresponding distributions for $\log \Phi^*_{\infty}$. 

Now, thanks to the result of Jiang and Matsumoto \cite{JM15}, the joint distribution, for finitely many given values of $k \geq 1$,  of the sum of the $k$-th powers of the zeros of $X_n$, tends to the joint distribution of the corresponding independent complex Gaussian variables $\sqrt{\frac{2k}{\beta}} \Nc_k^\C$.  We provide an independent proof of this fact in Appendix \ref{section:appendix_CBE}. Although not quantitative, this independent proof has the advantage of showing why $C \beta E$ is inherently a regularization of a Gaussian space
 at the level of symmetric functions \footnote{This proof was in fact known to some specialists, like Philippe Biane}.
Using the standard expansion of the logarithm, the Taylor coefficients of $\log X_n$ tend, in the sense of finite dimensional marginals, to independent Gaussians $\sqrt{\frac{2}{\beta k}}\Nc_k^\C$. Hence, the Taylor coefficients of $\log \Phi^*_{\infty}$ have the same joint
distribution as these variables, which shows that $\log \Phi^*_{\infty}$ has the same law as $\gamma G^\C$. 

It remains to check the bounds on moments. For that, we observe that for $z \in \D$, $p \in \mathbb{R}$, $(|\Phi^*_n(z)|^p)_{n \geq 0}$ is a submartingale, since it is the image of the martingale $(\Re \log \Phi^*_n(z))_{n \geq 1}$ by the convex function $x \mapsto e^{px}$. From the bound on the exponential moments of $\Re \log  \Phi^*_n(z)$, we deduce that $(|\Phi^*_n(z)|^p)_{n \geq 0}$ is bounded in $L^2$, and
then, by Doob's submartingale inequality, $\sup_{n \geq 0} |\Phi^*_n(z)|^p$ is in $L^2$, and a fortiori in $L^1$.  By dominated convergence, we deduce, since $\Phi^*_n (z)$ converges a.s. to $\Phi^*_{\infty}(z)$, that
$$\E [ |\Phi^*_{\infty}(z)|^p] =\lim_{n \rightarrow \infty} \E [ |\Phi^*_{n}(z)|^p].$$

Since $(|\Phi^*_n(z)|^p)_{n \geq 0}$  is a submartingale, the last limit is increasing, and then for any $n$, 
$$\E [ |\Phi^*_{n}(z)|^p] \leq \E [ |\Phi^*_{\infty}(z)|^p] = \E [ e^{ p \gamma \Re(G^\C(z))}] = \E [ e^{\half p \gamma G(z)} ] = (1-|z|^2)^{- \frac{p^2 \gamma^2}{4}} = (1-|z|^2)^{- \frac{p^2}{2 \beta} }.$$

The proof of the proposition is complete, modulo the convergence result stated in Proposition \ref{proposition:cv_opuc}. We chose to treat it separately because the technology developed in the next two subsections actually holds beyond the $C \beta E$, for large classes of orthogonal polynomials.
\end{proof}

The identity in law between $\log \Phi^*_{\infty}$ and $\gamma G^\C$, and then between $\Phi^*_{\infty}$ and $e^{\gamma G^\C}$, implies identities in law between functions of Gaussian variables and functions of the Verblunsky coefficients. Indeed, using Szeg\"o recursion (written in a matricial way), we can write the coefficients of the polynomial $\Phi^*_n$ as polynomials in the Verblunsky coefficients $\alpha_0, \dots, \alpha_{n-1}$ and their conjugates. Passing to the limit, we deduce an expression of the Taylor coefficients of $\Phi^*_{\infty}$ as limits of infinite series involving all the Verblunsky coefficients and their conjugates. On the other hand, the Taylor coefficients of $e^{\gamma G^\C}$ can be written as polynomials in the Gaussian variables $\Nc_k^\C$. Identifying the two expressions gives the following result ($SC_n$ representing the coefficient in $z^n$): 

\begin{corollary}
For $N, n \geq 0$, let $\Pi_{n,N}$ be the set of sequences $(\pi_j)_{j \geq 0}$ such that $\pi_j = 1$ for $n$ values of $j$, all smaller than or equal to $N$, 
and $\pi_j = 2$ for all the other values of $j$, and let 
$$ SC_{n,N} := \sum_{\pi \in \Pi_{n,N} } \prod_{j,  \pi_{j} \pi_{j+1} = 12} \left[ -\alpha_j \right]
 \prod_{j, \pi_{j} \pi_{j+1} = 21}    \left[ -\overline{\alpha_j} \right]$$
      Then, $SC_{n,N}$ a.s. tends to a limit $SC_n$ when $N$ goes to infinity, 
      and this limit can be expressed in terms of i.i.d Gaussians $\Nc^\C_k$ as:
$$ SC_n = \sum_{(m_k)_{k \geq 1}, \sum_{k \geq 1} k m_k = n} \prod_{k \geq 1} \frac{\left( \Nc_k^\C \right)^{m_k} }{m_k!}
                                          \left( \frac{2}{\beta k} \right)^{\half m_k} \ .
$$
\end{corollary}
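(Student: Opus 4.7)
The plan is to identify $SC_{n,N}$ as the $n$-th Taylor coefficient at $0$ of $\Phi_N^*$, and then to pass to the limit $N \to \infty$ via Proposition \ref{proposition31}, matching the result against the Taylor expansion of $\exp(\gamma G^\C)$.

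First, I would iterate the matrix form \eqref{def:matrix_szego} of the Szeg\"o recurrence, writing
$$\begin{pmatrix} \Phi_N(z) \\ \Phi_N^*(z) \end{pmatrix} = M_{N-1}(z) \cdots M_0(z) \begin{pmatrix} 1 \\ 1 \end{pmatrix}, \qquad M_k(z) = \begin{pmatrix} z & -\overline{\alpha_k} \\ -\alpha_k z & 1 \end{pmatrix},$$
and expand the product as a sum over sequences $(\pi_0, \ldots, \pi_N) \in \{1,2\}^{N+1}$ with $\pi_N = 2$: each such path contributes $\prod_{k=0}^{N-1}(M_k)_{\pi_{k+1},\pi_k}$, which equals $z$ when $\pi_k \pi_{k+1} = 11$, $-\alpha_k z$ when $12$, $-\overline{\alpha_k}$ when $21$, and $1$ when $22$. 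Consequently the exponent of $z$ carried by a path is exactly the number of indices $k < N$ with $\pi_k = 1$, while the remaining factor is the product over $12$ and $21$ transitions appearing in the definition of $SC_{n,N}$. Padding a path with $\pi_j = 2$ for all $j > N$ adds only inert $22$ transitions, which is precisely the convention used in defining $\Pi_{n,N}$. Extracting the coefficient of $z^n$ therefore gives $[z^n]\,\Phi_N^*(z) = SC_{n,N}$. Proposition \ref{proposition31} then yields the almost sure uniform convergence of $\Phi_N^*$ to $\Phi_\infty^*$ on compact subsets of $\D$, which transfers via Cauchy's formula to the almost sure convergence of each Taylor coefficient, so that $SC_{n,N}$ tends to $SC_n := [z^n]\,\Phi_\infty^*(z)$.

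The same proposition asserts that $\log \Phi_\infty^* \eqlaw \gamma G^\C$ as random holomorphic functions on $\D$; taking exponentials, the sequence $(SC_n)_{n \geq 1}$ has the same joint distribution as the sequence of Taylor coefficients at $0$ of $\exp(\gamma G^\C)$. Expanding
$$\exp\bigl(\gamma G^\C(z)\bigr) = \prod_{k=1}^\infty \sum_{m_k \geq 0} \frac{1}{m_k!}\left(\frac{\gamma}{\sqrt{k}}\right)^{m_k} (\Nc_k^\C)^{m_k}\, z^{k m_k},$$
and collecting the coefficient of $z^n$ produces the announced formula after substituting $\gamma = \sqrt{2/\beta}$, so that $(\gamma/\sqrt{k})^{m_k} = (2/(\beta k))^{m_k/2}$. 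The proof has no serious obstacle; the only point requiring a little care is the combinatorial bookkeeping in the matrix-product expansion, specifically the observation that every factor of $z$ in the Szeg\"o iteration corresponds to a state $\pi_k = 1$, and that the implicit constraint $\pi_N = 2$ together with $\pi_j = 2$ for $j > N$ exactly matches the definition of $\Pi_{n,N}$.
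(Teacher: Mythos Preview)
Your approach is essentially the same as the paper's: expand $\Phi_N^*$ via the matrix form of the Szeg\"o recursion to obtain its Taylor coefficients as sums over paths, pass to the limit $N\to\infty$ using Proposition~\ref{proposition31}, and identify with the Taylor expansion of $e^{\gamma G^\C}$. The paper states this more tersely in the paragraph preceding the corollary, and you have filled in the combinatorics explicitly.

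One small correction: your bookkeeping has an off-by-one slip. In your expansion of $\Phi_N^*$ the terminal state is $\pi_N=2$, so the 1's occur only among $\pi_0,\dots,\pi_{N-1}$; but the definition of $\Pi_{n,N}$ allows the 1's at any position $\le N$, including $\pi_N=1$ (which then produces the transition $\pi_N\pi_{N+1}=12$ contributing $-\alpha_N$). The correct identification is $SC_{n,N}=[z^n]\,\Phi_{N+1}^*(z)$, as one checks on the case $n=1$, $N=0$ where $SC_{1,0}=-\alpha_0=[z^1]\Phi_1^*$. This does not affect the convergence argument in the slightest.
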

In particular, for $n = 1$, we deduce the following non-trivial identity in law
\begin{align}
\label{eq:firstGaussian}
\sqrt{ \frac{2}{ \beta}} \Nc_1^\C = \sum_{j=-1}^\infty \overline{ \alpha_j } \alpha_{j+1} \ ,
\end{align}
the series being a.s. (not absolutely) convergent. In the above sum, by convention $\alpha_{-1}=-1$. This convention is consistent with the book \cite{Sim05-1} and shall be used throughout the paper.

From the corollary, we can deduce an expression of the Gaussian variables $\Nc_k^\C$ themselves in terms of the $\alpha_j$. A more direct way to get this expression is to use the CMV matrices (from Cantero, Moral, and Velazquez \cite{CMV03}, see also \cite[Section 4.2.]{Sim05-1}). One can show  that the characteristic polynomial corresponding to the  $C \beta E$ of order $n$ has the same distribution as the characteristic polynomial of the matrix 
$\mathcal{C}_n := \mathcal{L}_n \mathcal{M}_n$, 
$\mathcal{L}_n$ and $\mathcal{M}_n$ being the $n \times n$ top-left minors of $\mathcal{L}_{n,0}$ and $\mathcal{M}_{n,0}$, with 
$$ \mathcal{L}_{n,0}= \operatorname{Diag} (\Theta_{n,0}, \Theta_{n,2}, \dots),
\; \mathcal{M}_{n,0} = \operatorname{Diag} (1, \Theta_{n,1}, \Theta_{n,3}, \dots),$$
$$\Theta_{n,j} = \left( \begin{matrix} 
\alpha_{n,j} & \rho_{n,j} \\ \rho_{n,j} & -\overline{\alpha_{n,j}}
\end{matrix} \right), \; \; \rho_{n,j} = (1 - |\alpha_{n,j}|^2)^{1/2}.
$$
Here, $\alpha_{n,j} = \alpha_j$ for $j \leq n-2$, $\alpha_{n,n-1}$ is independent of the $\alpha_j$'s and uniform on the unit circle,
$\alpha_{n,j} = 0$ for $j \geq n$. Notice that since the matrices $\Theta_{n,j}$ have size $2 \times 2$ and 
the coefficient $1$ is a single entry, we have for $n$ even, 
$$ \mathcal{L}_{n}= \operatorname{Diag} (\Theta_{n,0}, \Theta_{n,2}, \dots, \Theta_{n, n-2}),
\; \mathcal{M}_{n} = \operatorname{Diag} (1, \Theta_{n,1}, \Theta_{n,3}, \dots, \Theta_{n,n-3 } , \alpha_{n, n-1} ),$$
and for $n$ odd, 
$$ \mathcal{L}_{n}= \operatorname{Diag} (\Theta_{n,0}, \Theta_{n,2}, \dots, \Theta_{n, n-3}, \alpha_{n, n-1} ),
\; \mathcal{M}_{n} = \operatorname{Diag} (1, \Theta_{n,1}, \Theta_{n,3}, \dots, \Theta_{n,n-2 }),$$

The coefficient in $z^k$ of $- \log X_n(z)$ is given by $\frac{1}{k}$ times the sum of the $k$-th power of the points in the $C \beta E$. 
Hence, the joint law of the coefficients of $- \log X_n(z)$ is the same as the joint law of $(\operatorname{tr} (\mathcal{C}_n^k)/k)_{k \geq 1}$, 
and we deduce, from the convergence in law of $\log X_n$ towards $\log \Phi_{\infty}^*$, 
that the finite-dimensional marginals of $(- \sqrt{ \frac{\beta}{2k} } \operatorname{tr} (\mathcal{C}_n^k))_{k \geq 1}$ tend in law to i.i.d. complex Gaussian variables. 

In the expansion of $\operatorname{tr}( \mathcal{C}_n^k)$, each term involves a product of $\Oc(k)$ factors equal to  $\alpha_{n,j}$, $\overline{\alpha_{n,j}}$ or
$\rho_{n,j}$, and such that all the indices $j$ involved are in an interval of length $\Oc(k)$. 
For fixed $k$, we then have a bounded number of terms involving $\alpha_{n,n-1}$ or its conjugate, and all these terms tend to zero in probability 
when $n \rightarrow \infty$, since a careful look of the matrix product shows  that they necessarily also involve a factor $\alpha_{n,j}$ for $j = n + \Oc(k)$, $j \neq n-1$. 
Hence, we can replace  $\alpha_{n,n-1}$ by $0$ in the matrices  $\mathcal{L}_n$ and $\mathcal{M}_n$ without changing the limiting distribution of $\operatorname{tr} \mathcal{C}_n^k$. It is not difficult to deduce the following result: 
\begin{corollary}
\label{corollary:Verblunsky2Gaussians}
Let $\mathcal{C} = \mathcal{L} \mathcal{M}$, for $$ \mathcal{L}= \operatorname{Diag} (\Theta_{0}, \Theta_{2}, \dots),
\; \mathcal{M} = \operatorname{Diag} (1, \Theta_{1}, \Theta_{3}, \dots),$$
where
$$\Theta_{j} = \left( \begin{matrix} 
\alpha_{j} & \rho_{j} \\ \rho_{j} & -\overline{\alpha_{j}}
\end{matrix} \right), \; \; \rho_{j} = (1 - |\alpha_{j}|^2)^{1/2},
$$
$(\alpha_j)_{j \geq 0}$ being distributed as in Theorem \ref{thm:main}. 
Then, $(- \sqrt{ \frac{\beta}{2k} } \operatorname{tr} (\mathcal{C}^k))_{k \geq 1}$ has the same distribution as 
$(\mathcal{N}^{\C}_k)_{k \geq 1}$, where $\operatorname{tr} (\mathcal{C}^k)$ is obtained by taking the formal expansion of the 
trace, removing all the terms involving indices larger than $n$  and letting $n \rightarrow \infty$. 
\end{corollary}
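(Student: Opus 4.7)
The plan is to build directly on the observations that precede the statement. Because $\operatorname{tr}(\mathcal{C}_n^k)/k$ equals the negative of the coefficient of $z^k$ in $\log X_n(z)$, and $\log X_n$ converges in law to $\log \Phi^*_{\infty} = \gamma G^\C$ uniformly on compact subsets of $\D$, Cauchy's integral formula applied to a small circle around $0$ yields
\[ \bigl(-\sqrt{\tfrac{\beta}{2k}}\, \operatorname{tr}(\mathcal{C}_n^k)\bigr)_{k \geq 1} \xrightarrow[n \to \infty]{\text{f.d.d.}} (\Nc_k^\C)_{k \geq 1}. \]
The corollary then reduces to showing that in this finite-dimensional limit, $\operatorname{tr}(\mathcal{C}_n^k)$ may be replaced by the truncation $T_n(k)$ of the formal expansion of $\operatorname{tr}(\mathcal{C}^k)$ defined in the statement.

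The key step is the splitting $\operatorname{tr}(\mathcal{C}_n^k) = T_n^{\mathrm{bulk}}(k) + T_n^{\mathrm{bdry}}(k)$, where $T_n^{\mathrm{bdry}}(k)$ collects the monomials in the formal trace that contain the boundary entry $\alpha_{n,n-1}$ or its conjugate. Since $\mathcal{L}_n \mathcal{M}_n$ is pentadiagonal, closed walks of length $k$ remain within an index window of width $\Oc(k)$, so $T_n^{\mathrm{bdry}}(k)$ is a sum of $\Oc(k)$ monomials of degree $\Oc(k)$. A local inspection of the CMV pattern near the last block $\Theta_{n,n-1}$ should show that any closed walk using $\alpha_{n,n-1}$ must leave and return to row $n-1$ through a neighbouring $\Theta_{n,j}$, and therefore carries an additional factor $\alpha_{n,j}$ with $j \neq n-1$, $j = n + \Oc(k)$. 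For $j \geq n$ this factor vanishes by convention, and for $j < n-1$ it equals the genuine Verblunsky coefficient $\alpha_j$, with $\E |\alpha_j|^2 = (1 + \beta(j+1)/2)^{-1} = \Oc(1/n)$ by \eqref{eq:beta_variance}. Combined with $|\alpha_{n,n-1}| = 1$ and $\rho_{n,j} \leq 1$, this gives $T_n^{\mathrm{bdry}}(k) \to 0$ in probability.

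By construction, $T_n^{\mathrm{bulk}}(k)$ depends only on $(\alpha_j)_{j \leq n-2}$ and the $\rho_j$'s; the same window estimate then shows that for $n$ larger than a $k$-dependent threshold it coincides with $T_n(k)$. Combining this with Slutsky's lemma and the first display yields the joint convergence of $(-\sqrt{\beta/(2k)}\, T_n(k))_{k \geq 1}$ to $(\Nc_k^\C)_{k \geq 1}$, which is the claimed distributional identity and simultaneously fixes the meaning of the limit $\operatorname{tr}(\mathcal{C}^k) := \lim_n T_n(k)$. The main obstacle is the combinatorial assertion that every boundary walk must pick up a second factor of index $n + \Oc(k)$; once verified by inspecting the two-row structure of $\mathcal{L}_n \mathcal{M}_n$ around $\Theta_{n,n-1}$, the Beta concentration turns the vanishing estimate into routine bookkeeping.
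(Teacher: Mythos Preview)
Your proposal is correct and follows essentially the same approach as the paper: the discussion preceding the corollary is the paper's proof, and it proceeds exactly by (i) using the convergence of $\log X_n$ to $\gamma G^\C$ to get the limiting law of $(\operatorname{tr}(\mathcal{C}_n^k))_k$, (ii) observing that the boundary terms involving $\alpha_{n,n-1}$ are $\Oc(k)$ in number and each carries an extra factor $\alpha_{n,j}$ with $j = n + \Oc(k)$, $j \neq n-1$, hence vanish in probability, and (iii) noting that after replacing $\alpha_{n,n-1}$ by $0$ one recovers the truncation of $\operatorname{tr}(\mathcal{C}^k)$. The combinatorial claim you flag as the main obstacle is precisely what the paper dispatches with the phrase ``a careful look of the matrix product shows'', at the same level of detail.
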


Such formulas have been investigated before, for example in \cite{GZ07}.

\subsection{A universal bound on traces}
Notice that $(\log \Phi_n^*(z))_{n \geq 0}$ is a complex martingale. As such, thanks to Jensen's inequality, for all $\sigma \in \C$, 
$$ (e^{\Re \sigma \log\Phi_n^*(z)})_{n \geq 0}$$ is a real submartingale. We shall now prove that it is uniformly bounded in $L^1(\Omega, \mathcal{B}, \P)$, using the description thanks to CMV matrices. Using, in  \cite{Sim05-1}, the equation following (4.2.48), p. 271,  we get, after taking care of the conventions of conjugation of the $\alpha_j$'s (which are not the same here and in   \cite{Sim05-1}), 
$$
 \log  \Phi^*_n(z) = -  \sum_{k=1}^{\infty} \frac{z^k}{k} \operatorname{tr}( \Cc_{[n]}^k ),
$$
 where $ \Cc_{[n]}$ is the $n \times n$ top-left block of the infinite matrix $\Cc$ introduced in Corollary \ref{corollary:Verblunsky2Gaussians}.
A careful look at the matrix product shows that the traces of powers of $\Cc_{[n]}$ are equal to the traces of powers of $\Cc$ after replacing all Verblunsky coefficients of index larger than or equal to $n$ by zero.

In the following computations, we then omit the subscript $n$ and we  always implicitly assume that $\alpha_j$ has been replaced by zero for $j \geq n$. We have the following universal bound on traces, which is clearly of independent interest, and which holds deterministically: 
\begin{lemma}
\label{lemma:cmv_trace}
For all $k \geq 2$:
\begin{align*}
   \tr \left( \Cc^k \right)
= -k \left( \sum_{j \geq -1} \overline{\alpha_j} \rho_{j+1}^2 \rho_{j+2}^2 \dots \alpha_{j+k} \right)
  + \Oc \left( k^3 \sum_{j \geq -1} |\alpha_j|^3  \right) \ ,
\end{align*}
the implicit constant in $\Oc$ being absolute.
\end{lemma}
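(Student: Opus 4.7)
The plan is to expand $\tr(\Cc^k)=\tr((\Lc\Mc)^k)$ as a sum over closed walks of length $2k$ that alternate between entries of $\Lc$ and $\Mc$. Since $\Lc$ and $\Mc$ are block-diagonal with $2\times 2$ blocks $\Theta_j$, each nonzero entry is either a diagonal entry of some block (a ``self-loop'' at the current position, contributing a factor $\pm\alpha_j$ or $\pm\overline{\alpha_j}$) or an off-diagonal entry (a ``move'' to the partner index of the block, contributing $\rho_j$). When a move is used, its direction is completely forced by the parity of the current position and by which operator is applied; consequently, a closed walk is specified entirely by its starting position $p_0$ together with the subset $S\subset\{1,\dots,2k\}$ of sub-steps at which a self-loop (rather than a move) is chosen.

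I organize the walks by $m:=|S|$. A short displacement/parity argument shows that $m$ must be even; in particular, $m=0$ walks cannot close since pure $\Cc$-steps translate even positions by $+2$ and odd positions by $-2$. The main contribution comes from $m=2$: with only two self-loops, the walk must be tent-shaped, traveling in one direction via pure moves from one self-loop position to the other and then retracing its steps. A short case analysis on the parity of $j$ and on whether each self-loop is an $\Lc$- or $\Mc$-sub-step shows that every such cyclic closed walk contributes precisely $-\overline{\alpha_j}\,\rho_{j+1}^2\rho_{j+2}^2\cdots\rho_{j+k-1}^2\,\alpha_{j+k}$ for a unique $j\geq -1$ (the convention $\alpha_{-1}=-1$ cleanly absorbs the boundary entry $\Mc_{0,0}=1$). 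Since $\tr$ sums over all $k$ cyclic starting positions of such a walk, the total leading contribution becomes $-k\sum_{j\geq -1}\overline{\alpha_j}\,\rho_{j+1}^2\cdots\rho_{j+k-1}^2\,\alpha_{j+k}$.

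For the error I bound the sum of absolute weights of closed walks with $m\geq 4$. The modulus of the weight of any such walk is at most $\prod_{s}|\alpha_{j_s}|$, and since $|\alpha_{j_s}|\leq 1$ I may retain only three of the factors and apply AM--GM to get the pointwise bound $\tfrac{1}{3}(|\alpha_{j_{s_1}}|^3+|\alpha_{j_{s_2}}|^3+|\alpha_{j_{s_3}}|^3)$, choosing the three to be the $\alpha$-indices at the first three self-loop sub-steps $s_1<s_2<s_3$. Because the walk is rigidly determined between consecutive self-loops and is localized to a window of length $O(k)$ around each of these indices, summing over the $\binom{2k}{3}=O(k^3)$ choices of the anchor sub-steps and absorbing the $|\alpha|$-factors from the remaining self-loops through the trivial bound $|\alpha|\leq 1$ produces the bound $\Oc\left(k^3\sum_{j\geq -1}|\alpha_j|^3\right)$. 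The hardest part of this step is making this combinatorial count rigorous without losing more than a factor $k^3$: one must carefully show that once the three anchor sub-steps and the starting position are fixed, the remaining tail of the walk contributes only a uniformly bounded multiplicative count, which ultimately rests on the rigidity of the move-phases between self-loops.
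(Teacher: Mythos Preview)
Your combinatorial expansion via closed walks is a natural and essentially correct route to the leading term: the parity argument forcing $m=|S|$ even, the impossibility of $m=0$, and the identification of the $m=2$ tent-shaped walks with the product $-\overline{\alpha_j}\rho_{j+1}^2\cdots\rho_{j+k-1}^2\alpha_{j+k}$ (absorbing the boundary via $\alpha_{-1}=-1$) all check out and match the paper's leading term.

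The gap is in the error bound. Your claim that ``once the three anchor sub-steps and the starting position are fixed, the remaining tail of the walk contributes only a uniformly bounded multiplicative count'' is false. Concretely, take $s_1=1$, $s_2=2$, $s_3=3$; after three self-loops you are back at $p_0$ with the direction flipped, and you must close in the remaining $L=2k-3$ sub-steps using an odd number $m'\geq 1$ of further self-loops. Writing the gap lengths between successive self-loops as $a_0,\dots,a_{m'}$, the closure condition becomes a single linear constraint on their alternating sum, and a short computation with Vandermonde gives
\[
\sum_{m'\ \text{odd}}\#\{\text{completions}\}=\binom{L-1}{(L-1)/2}\sim \frac{2^{\,2k-4}}{\sqrt{k}},
\]
which is exponential in $k$, not $O(1)$. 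Since you have already replaced every remaining $|\alpha|$-factor by $1$, there is nothing left to damp this count, and your bound blows up to $2^{O(k)}\sum_j|\alpha_j|^3$ rather than $k^3\sum_j|\alpha_j|^3$.

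The paper avoids this by an operator-theoretic device rather than a walk count. It telescopes $\Cc^k-\Cc_\rho^k$ three times, producing terms of the form
\[
\tr\bigl(A^{(1,j_1)}\,B^{(j_1+1)}\,A_\rho^{(j_1+2,j_2)}\,B^{(j_2+1)}\,A_\rho^{(j_2+2,j_3)}\,B^{(j_3+1)}\,A_\rho^{(j_3+2,2k)}\bigr),
\]
where the $B$'s are diagonal with entries $\pm\alpha,\pm\overline\alpha$ and the $A,A_\rho$ factors are products of the unitary blocks $\Lc,\Mc,\Lc_\rho,\Mc_\rho$. All the ``extra'' self-loops beyond the three anchors are packed into the single factor $A^{(1,j_1)}$, and the estimate $|[A]_{i,j}|\leq |A|_{op}\leq 1$ (which uses that each $\Theta_j$ is unitary, hence relies on sign cancellations you discarded when passing to absolute values) replaces your unbounded completion count by a single factor $1$. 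The $O(k^3)$ then comes solely from the choice of $(j_1,j_2,j_3)$. If you want to salvage the walk picture, the fix is precisely this: do not bound the remaining $|\alpha|$-factors by $1$ term-by-term, but instead recognise the weighted sum over completions as a matrix entry of a product of contractions and bound that entry by the operator norm.
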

\begin{proof}
We prove the Lemma by a 3-step commutator argument. We notice that in the present setting, finitely many Verblunsky coefficients are non-zero and then there is no issue of convergence for the series which are involved. 

Introduce the matrix $\Cc_\rho = \Lc_\rho \Mc_\rho$ where all the $\alpha$ terms have been replaced by zero, i.e. 
$$ \Lc_\rho = \operatorname{Diag} (\Theta_{0}^\rho, \Theta_{2}^\rho, \dots),
\; \Mc_\rho = \operatorname{Diag} (0, \Theta_{1}^\rho, \Theta_{3}^\rho, \dots),$$
where
$$\Theta_{j}^\rho = 
  \left( \begin{matrix} 
         0 & \rho_{j} \\ \rho_{j} & 0
  \end{matrix} \right) \ .
$$
For an infinite matrix $\mathcal{P}$ whose rows have finitely many non-zero entries, 
we define its operator norm as
$$|\mathcal{P}|_{op} := \sup_{(x_1, x_2, \dots) \in \mathbb{C}^{\mathbb{N}}, \; 
\sum_{j=1}^{\infty} |x_j|^2 = 1}  \left(  \sum_{j=1}^{\infty}  \left| \sum_{ k=1}^{\infty} \mathcal{P}_{j,k} x_k \right|^2  \right)^{1/2},$$
and its trace as
$$\tr (\mathcal{P}) = \sum_{j=1}^{\infty} \mathcal{P}_{j,j}$$
when the sum is absolutely convergent. 

We have $\left| \Lc_\rho \right|_{op}, \left| \Mc_\rho \right|_{op} \leq 1$ and the same goes for $\Lc$ and $\Mc$. Also $\Cc_\rho$ is a shift operator in the sense that:
$$ \Cc_\rho  e_k = u_k e_{k \pm 2} \ ,$$
depending whether $k$ is odd or even, for some $u_k \in [0,1]$.

Before presenting the 3-step commutator argument mentioned above, let us present 1-step and 2-step arguments which provide similar, but simpler and weaker estimates of $\operatorname{tr} \left( \Cc^k \right) $.

{\bf 1-step commutator:} Because $\Cc_\rho$ is the matrix of a weighted shifting operator, $\tr \Cc_\rho^k = 0$. Write
$$ \Cc^k        = \Lc \Mc \dots \Lc \Mc                     \textrm{ ($k$ times) } \ ,$$
$$ \Cc_{\rho}^k = \Lc_\rho \Mc_\rho \dots \Lc_\rho \Mc_\rho \textrm{ ($k$ times) } \ ,$$
and use the commutator:
\begin{align*}
	\tr \left( \Cc^k \right)
= & \tr \left( \Cc^k - \Cc_\rho^k \right)\\
= & \sum_{i=0}^{k-1} \tr \left( \Cc^i (\Cc - \Cc_\rho) \Cc_\rho^{k-i-1} \right) \\
= & \sum_{i=0}^{k-1} \tr \left( \Cc^i (\Lc - \Lc_\rho) \Mc_\rho \Cc_\rho^{k-i-1} \right)
                   + \tr \left( \Cc^i \Lc (\Mc - \Mc_\rho) \Cc_\rho^{k-i-1} \right)\\
= & \sum_{i_1 + i_2 = k-1}
        \tr \left( \Cc^{i_1} (\Lc - \Lc_\rho) \Mc_\rho \Cc_\rho^{i_2} \right)
      + \tr \left( \Cc^{i_1} \Lc (\Mc - \Mc_\rho) \Cc_\rho^{i_2} \right)
\end{align*}
The nice fact about this operation is that it forces out the appearance of diagonal matrices $\Lc-\Lc_\rho$ and $\Mc-\Mc_\rho$. We will freely invoke (see \cite[Chapter 6]{RS80}) the circular property of the trace $\tr(AB) = \tr(BA)$ when $A$ is trace-class and $B$ bounded, and the inequality $| \tr(AB) | \leq |AB|_1 \leq |A|_1 |B|_{op}$, where $|A|_1 = \tr \left[ (A^* A)^\half \right]$ is the trace-class norm  and $|B|_{op}$ is the operator norm (\cite[Chapter 6, Exercise 28]{RS80}). For example, those facts applied to the previous computation give a bound on the traces:
$$   \left| \tr \left( \Cc^k \right) \right|
\leq 2k  \sum_{j \geq -1} |\alpha_j|.
$$
Before pursuing, let us adopt a convention that will ease notations. Write
$$ 
   A^{(i)} = \left\{ 
             \begin{array}{cc}
             \Lc & \textrm{if $i$ is odd}\\
             \Mc & \textrm{if $i$ is even}
             \end{array} \right. ;
  \quad
   A_\rho^{(i)} = \left\{ 
             \begin{array}{cc}
             \Lc_\rho & \textrm{if $i$ is odd}\\
             \Mc_\rho & \textrm{if $i$ is even}
             \end{array} \right. \ .  
$$ 
In the same fashion:
$$
   B^{(i)} = \left\{ 
             \begin{array}{cc}
             \Lc - \Lc_\rho & \textrm{if $i$ is odd}\\
             \Mc - \Mc_\rho & \textrm{if $i$ is even}
             \end{array} \right. \ .  
$$
Then define:
$$ A^{(i_1, i_2)} := A^{(i_1)} \dots A^{(i_2)} \ ,  A_{\rho}^{(i_1, i_2)} := A_{\rho}^{(i_1)} \dots A_{\rho}^{(i_2)} \ ,$$
with the convention that $A^{(i_1, i_2)} = A_{\rho}^{(i_1, i_2)} =  \id$ if $i_1>i_2$. As such, the above commutator argument becomes:
\begin{align*}
	\tr \left( \Cc^k \right)
= & \tr \left( \Cc^k - \Cc_\rho^k \right)\\
= & \tr \left( A^{(1, 2k)} - A_\rho^{(1, 2k)} \right)\\
= & \sum_{j=0}^{2k-1}
      \tr \left( A^{(1, j)} A^{(j+1)} A_\rho^{(j+2, 2k)}
               - A^{(1, j)} A_\rho^{(j+1)} A_\rho^{(j+2, 2k)} \right)\\
= & \sum_{j=0}^{2k-1}
        \tr \left( A^{(1, j)} B^{(j+1)} A_\rho^{(j+2, 2k)} \right) \ .
\end{align*}

{\bf 2-step commutator:} We start by the same argument as before, that is to say that $A_\rho^{(j+2, 2k)} A_\rho^{(1, j)} = \dots \Lc_\rho \Mc_\rho \Lc_\rho \Mc_\rho \dots$ is a weighted shifting operator. Because $B^{(j+1)}$ is diagonal, $B^{(j+1)} A_\rho^{(j+2, 2k)} A_\rho^{(1, j)}$ is a shifting operator as well. As a consequence, by the circular property of the trace:
$$ \tr \left( A_\rho^{(1, j)} B^{(j+1)} A_\rho^{(j+2, 2k)} \right) = 0 \ .$$
Therefore, we can repeat the operation and obtain a two step commutator:
\begin{align*}
	\tr \left( \Cc^k \right)
= & \tr \left( \Cc^k - \Cc_\rho^k \right)\\
= & \sum_{j=0}^{2k-1}
       \left[  \tr \left( A^{(1, j)} B^{(j+1)} A_\rho^{(j+2, 2k)} \right)
      - \tr \left( A_\rho^{(1, j)} B^{(j+1)} A_\rho^{(j+2, 2k)} \right)  \right]\\
= & \sum_{0 \leq j_1 < j_2 \leq 2k-1}
       \left[  \tr \left( A^{(1, j_1)} A^{(j_1+1)} A_\rho^{(j_1+2, j_2)} B^{(j_2+1)} A_\rho^{(j_2+2, 2k)} \right) \right.\\
  & \quad \quad 
     \left. - \tr \left( A^{(1, j_1)} A_\rho^{(j_1+1)} A_\rho^{(j_1+2, j_2)} B^{(j_2+1)} A_\rho^{(j_2+2, 2k)} \right) \right]\\
= & \sum_{0 \leq j_1 < j_2 \leq 2k-1}
        \tr \left( A^{(1, j_1)} B^{(j_1+1)} A_\rho^{(j_1+2, j_2)} B^{(j_2+1)} A_\rho^{(j_2+2, 2k)} \right) \ .
\end{align*}
This gives another bound on the traces, as follows. For each of the indices in the sum $0 \leq j_1 < j_2 \leq 2k-1$, consider the trace which is more conveniently written as a sum over $\Z$:
\begin{align*}
  & \tr \left( A^{(1, j_1)} B^{(j_1+1)} A_\rho^{(j_1+2, j_2)} B^{(j_2+1)} A_\rho^{(j_2+2, 2k)} \right) \\
= & \sum_{i \in \Z} \left[ A^{(1, j_1)} B^{(j_1+1)} A_\rho^{(j_1+2, j_2)} B^{(j_2+1)} A_\rho^{(j_2+2, 2k)} \right]_{i,i} \\
= & \sum_{i \in \Z} \left[ B^{(j_1+1)} \right]_{i,i}
                    \left[ A_\rho^{(j_1+2, j_2)} \right]_{i,i+*}
                    \left[ B^{(j_2+1)} \right]_{i+*,i+*}
                    \left[ A_\rho^{(j_2+2, 2k)} A^{(1, j_1)}  \right]_{i+*,i} \ ,
\end{align*}
where $*$ is a shift, whose value is of no importance, and where terms are considered to be equal to zero if they involve nonpositive indices.  Taking absolute values and using the fact that for any operator $A$, $|[A]_{i,j}| \leq |A|_{op}$, we have:
\begin{align*}
     & \left|
    \tr \left( A^{(1, j_1)} B^{(j_1+1)} A_\rho^{(j_1+2, j_2)} B^{(j_2+1)} A_\rho^{(j_2+2, 2k)} \right)
    \right|    
    \\
\leq & \sum_{i \in \Z} |\left[ B^{(j_1+1)} \right]_{i,i}|
                       |\left[ B^{(j_2+1)} \right]_{i+*,i+*}|
                       \left| A_\rho^{(j_1+2, j_2)} \right|_{op}
                       \left| A_\rho^{(j_2+2, 2k)} A^{(1, j_1)} \right|_{op} \\
\leq & \sum_{i \in \Z} |\left[ B^{(j_1+1)} \right]_{i,i}|
                       |\left[ B^{(j_2+1)} \right]_{i+*,i+*}| \\
\leq & \half \sum_{i \in \Z}  \left( |\left[ B^{(j_1+1)} \right]_{i,i}|^2
                          +  |\left[ B^{(j_2+1)} \right]_{i+*,i+*}|^2 \right) \\
\ll  & \,  \sum_{i \geq -1} |\alpha_i|^2 \ .
\end{align*}

Notice that we did not even use trace-class inequalities, only that $B$ is diagonal and $A, A_\rho$ are bounded. In the end:
$$   \left| \tr \left( \Cc^k \right) \right|
\ll  k^2    \sum_{j \geq -1}  |\alpha_j|^2, 
$$
the implicit constant being absolute.

{\bf 3-step commutator:} At this level, the computation is different. The crucial point is that, for most indices $j_1, j_2$:
$$ \tr \left( A_\rho^{(1, j_1)} B^{(j_1+1)} A_\rho^{(j_1+2, j_2)} B^{(j_2+1)} A_\rho^{(j_2+2, 2k)} \right) = 0 \ ,$$
but not all. We need a closer inspection. 

If $\ell$ is even, $\Mc_\rho e_\ell \in \R e_{\ell+1}$; while if $\ell$ is odd, then $\Lc_\rho e_\ell \in \R e_{\ell+1}$. Reversing parity yields $e_{\ell-1}$ instead of $e_{\ell+1}$. As such, if $\ell$ is even:
\begin{align*}
          & A_\rho^{(1, j_1)} B^{(j_1+1)} A_\rho^{(j_1+2, j_2)} B^{(j_2+1)} A_\rho^{(j_2+2, 2k)}( \R e_\ell )\\
\subset \ & A_\rho^{(1, j_1)} B^{(j_1+1)} A_\rho^{(j_1+2, j_2)} B^{(j_2+1)} ( \R e_{\ell+2k-j_2-1} )\\
\subset \ & A_\rho^{(1, j_1)} B^{(j_1+1)} A_\rho^{(j_1+2, j_2)} ( \R e_{\ell+2k-j_2-1} )\\
\subset \ & A_\rho^{(1, j_1)} ( \R e_{\ell+2k-j_2-1-(j_2-j_1-1)} )\\
\subset \ & \R e_{\ell+2k-j_2-1-(j_2-j_1-1)+j_1}
          = \R e_{\ell+2k-2j_2+2j_1}
\end{align*}
Upon considering $\ell$ odd as well, one has for all $\ell$:
$$
A_\rho^{(1, j_1)} B^{(j_1+1)} A_\rho^{(j_1+2, j_2)} B^{(j_2+1)} A_\rho^{(j_2+2, 2k)}( \R e_\ell )
\subset \ \R e_{\ell \pm 2 (k-(j_2-j_1))} \ .
$$
Therefore, in the combinatorial computation of the trace, one has a ``closed loop'' only for the indices such that $j_2-j_1=k$. Consequently, 
\begin{align*}
  & \tr \left( \Cc^k - \Cc_\rho^k \right)\\
= & \sum_{0 \leq j_1 < j_2 \leq 2k-1}
       \left[ \tr \left( A^{(1, j_1)} B^{(j_1+1)} A_\rho^{(j_1+2, j_2)} B^{(j_2+1)} A_\rho^{(j_2+2, 2k)} \right) \right. \\ 
        & 
    \left.   - \tr \left( A_\rho^{(1, j_1)} B^{(j_1+1)} A_\rho^{(j_1+2, j_2)} B^{(j_2+1)} A_\rho^{(j_2+2, 2k)} \right) \right] \\
  & + \quad  \sum_{0 \leq j_1 \leq k-1}
        \tr \left( A_\rho^{(1, j_1)} B^{(j_1+1)} A_\rho^{(j_1+2, j_1+k)} B^{(j_1+k+1)} A_\rho^{(j_1+k+2, 2k)} \right) \\ .
= & \sum_{0 \leq j_1 < j_2 < j_3 \leq 2k-1}
        \tr \left( A^{(1, j_1)} B^{(j_1+1)}
                   A_\rho^{(j_1+2, j_2)} B^{(j_2+1)} 
                   A_\rho^{(j_2+2, j_3)} B^{(j_3+1)} 
                   A_\rho^{(j_3+2, 2k)}
            \right) \\
  & + \quad  \sum_{0 \leq j_1 \leq k-1}
        \tr \left( A_\rho^{(1, j_1)} B^{(j_1+1)} A_\rho^{(j_1+2, j_1+k)} B^{(j_1+k+1)} A_\rho^{(j_1+k+2, 2k)} \right) .
\end{align*}
where in the last line, we used the usual commutator trick to rearrange the first sum. This first sum can be controlled using the fact that the $A$'s are subunitary and the $B$'s are diagonal, like in step 2 (note that the symbols $*$ denote shifts which may not be the same for different factors of the same term):
\begin{align*}
  & \left|
    \sum_{0 \leq j_1 < j_2 < j_3 \leq 2k-1}
        \tr \left( A^{(1, j_1)} B^{(j_1+1)}
                   A_\rho^{(j_1+2, j_2)} B^{(j_2+1)} 
                   A_\rho^{(j_2+2, j_3)} B^{(j_3+1)} 
                   A_\rho^{(j_3+2, 2k)}
            \right)
   \right| \\
\leq & (2k)^3 \max_{0 \leq j_1, j_2, j_3 \leq 2k-1} 
              \left|
              \sum_{i \in \Z}
              \left[ A^{(1, j_1)} B^{(j_1+1)}
                   A_\rho^{(j_1+2, j_2)} B^{(j_2+1)} 
                   A_\rho^{(j_2+2, j_3)} B^{(j_3+1)} 
                   A_\rho^{(j_3+2, 2k)} \right]_{i,i}
              \right|
              \\
\leq & (2k)^3 \max_{0 \leq j_1, j_2, j_3 \leq 2k-1} 
              \Big |
              \sum_{i \in \Z}
              \left[ B^{(j_1+1)} \right]_{i,i}
              \left[ A_\rho^{(j_1+2, j_2)} \right]_{i,i+*}
              \left[ B^{(j_2+1)} \right]_{i+*,i+*}
              \left[ A_\rho^{(j_2+2, j_3)} \right]_{i+*,i+*}            
              \\
     &  \quad \quad
              \left[ B^{(j_3+1)} \right]_{i+*,i+*}
              \left[ A_\rho^{(j_3+2, 2k)} A^{(1, j_1)} \right]_{i+*,i}
              \Big |
              \\
\leq & (2k)^3 \max_{0 \leq j_1, j_2, j_3 \leq 2k-1} 
              \sum_{i \in \Z}
              \left|
              \left[ B^{(j_1+1)} \right]_{i,i}
              \left[ B^{(j_2+1)} \right]_{i+*,i+*}
              \left[ B^{(j_3+1)} \right]_{i+*,i+*}
              \right| \\
\ll  & (2k)^3   \sum_{j \geq -1} |\alpha_j|^3  \ .
\end{align*}
The second sum is explicitly obtained by following the ``loops'' in the combinatorial computation of the trace, without forgetting the entry $B^{(2)}_{1,1}$. We get 
$$ - k \sum_{j \geq -1} \overline{\alpha_j}\rho_{j+1}^2 \rho_{j+2}^2 \dots  \rho_{j+k-1}^2 \alpha_{j+k} \ ,$$
which gives the bound we require:
$$   \tr \left( \Cc^k \right)
   = - k \left( \sum_{j \geq -1} \overline{\alpha_j} \rho_{j+1}^2 \rho_{j+2}^2 \dots \alpha_{j+k} \right)
     + \Oc \left( k^3  \sum_{j \geq -1} |\alpha_j|^3  \right).
$$
\end{proof}

\subsection{A convergence result for OPUC with rotationally invariant \texorpdfstring{$(\alpha_j)_{j \geq 0}$}{alphaj}}
\label{subsection:exp_moments}

We can now prove the following general convergence result for $\log \Phi_\infty^*$ inside compact sets of $\D$. The setting is that of independent and  rotationally invariant Verblunsky coefficients, so that $(\log \Phi_n^*)_{n \geq 0}$ is an $\F$-martingale. Also, the condition on the decay of modulii $|\alpha_j|$ easily includes the square-root decay in Random Matrix Theory.

\begin{proposition}
\label{proposition:cv_opuc}
Assume that Verblunsky coefficients are independent and rotationally invariant and that
\begin{align}
\label{eq:Sigma2_hypothesis}
\forall \sigma>0, \ \forall k \geq 3, \ &
\E\left[ \exp\left( \sigma \sum_{j \geq -1} |\alpha_j|^k  \right) \right] < \infty \ .
\end{align}
As a consequence, for any compact set $K \subset \D$, $\left( \log \Phi_n^* \right)_{n \in \N}$ almost surely converges uniformly on $K \subset \D$ and 
\begin{align}
\label{eq:finiteExpMoments}
\forall \sigma \in \C, \ \sup_{z \in K} \sup_{n \in \N}
\E\left[ e^{ \Re \sigma \log \Phi_n^*(z) } \right] & < \infty.
\end{align}
\end{proposition}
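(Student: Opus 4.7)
The plan is to combine the CMV trace representation
\[
\log \Phi_n^*(z) = -\sum_{k\ge 1}\frac{z^k}{k}\,\tr(\Cc_{[n]}^k),
\]
derived in the preceding paragraphs, with the universal bound of Lemma \ref{lemma:cmv_trace}, so as to control $\log \Phi_n^*(z)$ uniformly in $n$. Substituting the bound yields the decomposition
\[
\log \Phi_n^*(z) = M_n(z) + E_n(z), \qquad M_n(z):=\sum_{k\ge 1} z^k\,T_k^{(n)},
\]
where $T_k^{(n)} = \sum_{j\ge -1}\overline{\alpha_j}\,\rho_{j+1}^2\cdots\rho_{j+k-1}^2\,\alpha_{j+k}$ (with $\alpha_j=0$ for $j\ge n$), and the remainder satisfies $|E_n(z)|\le C\, S_3 \sum_{k\ge 1}k^2|z|^k \le C_K S_3$ uniformly for $z$ in a compact $K\subset\D$, with $S_3:=\sum_{j\ge -1}|\alpha_j|^3$. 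The hypothesis \eqref{eq:Sigma2_hypothesis} with $k=3$ immediately gives all exponential moments of $\sup_{z\in K}|E_n(z)|$, uniformly in $n$.

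Controlling $M_n(z)$ is the heart of the argument. I would compute $L^{2p}$-moments by expanding and exploiting the rotational invariance and independence of the $\alpha_j$: in $\E|M_n(z)|^{2p}$, the expansion of the product produces multilinear monomials in the $\alpha_j$ and $\overline{\alpha_j}$, and only monomials in which each $\alpha_j$ appears with the same number of unconjugated and conjugated factors have a nonzero expectation. This structural cancellation reduces the sum to a \emph{positive} combinatorial sum of products of moments $\E|\alpha_j|^{2c_j}$ times $\rho$-factors bounded by $1$. For $p=1$ only the diagonal contraction $(k=l,\,j=i)$ survives and yields
\[
\E|M_n(z)|^2 \;\le\; \sum_{k\ge 1} |z|^{2k}\sum_{j\ge -1}\E|\alpha_j|^2\,\E|\alpha_{j+k}|^2,
\]
which, in the $C\beta E$-type regime $\E|\alpha_j|^2\asymp j^{-1}$, behaves like $\sum_k|z|^{2k}\log(k)/k < \infty$ uniformly on compacts of $\D$. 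An analogous but heavier combinatorial bookkeeping handles $p\ge 2$: groups of multiplicity $c_j\ge 2$ are controlled by moments $\E|\alpha_j|^{2c_j}$, uniformly bounded via \eqref{eq:Sigma2_hypothesis}. Together with the bounded-increment bound $|\log(1-\alpha_n Q_n(z))|\le |z|/(1-\sup_{z\in K}|z|)$ (using $|Q_n(z)|\le |z|$ by Schwarz's lemma applied to the $\D\to\D$ map $Q_n$), these $L^{2p}$ bounds for all $p$ upgrade to the exponential moment bound~\eqref{eq:finiteExpMoments}.

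The a.s.\ uniform convergence on compact subsets of $\D$ is then deduced as follows. For each fixed $z\in\D$, $(\log\Phi_n^*(z))_n$ is a complex $\F$-martingale: indeed, $\alpha_n$ is rotationally invariant and independent of $\Fc_n$, hence $\E[\log(1-\alpha_n Q_n(z))\mid\Fc_n]=0$. Uniform $L^p$ boundedness yields a.s.\ and $L^p$ convergence pointwise. Since $\log\Phi_n^*$ is holomorphic on $\D$ (as $\Phi_n^*$ is non-vanishing there) and the family is uniformly bounded in $L^1$ on compacts, Vitali's convergence theorem promotes pointwise a.s.\ convergence on a countable dense subset to a.s.\ uniform convergence on compact subsets of $\D$.

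The main obstacle is the $L^{2p}$-moment estimate for $M_n(z)$. A naive absolute bound gives $|T_k^{(n)}|\le \sum_j|\alpha_j||\alpha_{j+k}|\le S_2:=\sum_j|\alpha_j|^2$, but \eqref{eq:Sigma2_hypothesis} \emph{does not} include $k=2$, and in fact $S_2=\infty$ a.s.\ in the $C\beta E$ case. One must therefore really use the cancellations dictated by rotational invariance, via the diagonal pairings in moment computations, and organize the combinatorics so that only $\E|\alpha_j|^{2c}$ with $c\ge 2$ (plus pointwise-finite $\E|\alpha_j|^2$) enter the final bound.
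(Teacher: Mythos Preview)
Your decomposition $\log\Phi_n^*(z)=M_n(z)+E_n(z)$ via Lemma~\ref{lemma:cmv_trace} is exactly the paper's starting point, and the treatment of $E_n$ is correct. The genuine gap is in your control of $M_n(z)=\sum_k z^k T_k^{(n)}$. The $p=1$ computation is fine, but ``heavier combinatorial bookkeeping'' for $p\ge 2$ is not an argument: the pairings at order $2p$ generate a rapidly growing number of nonzero contraction patterns, and you never extract the \emph{growth rate in $p$} of $\E|M_n(z)|^{2p}$. Finite $L^{2p}$ moments for each $p$ do not by themselves yield \eqref{eq:finiteExpMoments}; you need something like $\E|M_n(z)|^{2p}\le C_K^p\,p!$, and nothing in your outline forces that. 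The bounded-increment observation $|\log(1-\alpha_n Q_n(z))|\le r/(1-r)$ on $K$ does not help here either: Azuma--Hoeffding with $n$ bounded increments gives bounds degenerating in $n$, and the bracket $\sum_j \E[|\log(1-\alpha_j Q_j(z))|^2\mid\Fc_j]$ is not deterministically bounded.

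The paper bypasses the moment combinatorics entirely. Conditioning on the $\sigma$-algebra $\Mc$ generated by the moduli $(|\alpha_j|)_j$, each $F_{n,k}=T_k^{(n)}$ becomes a function of the bounded phases with coordinate-wise Lipschitz constants $\Sigma_i\ll |\alpha_i|(|\alpha_{i+k}|+|\alpha_{i-k}|)$; McDiarmid's inequality then gives subgaussian tails $\Q(|F_{n,k}|\ge \Sigma x\mid\Mc)\le Ce^{-cx^2}$ with variance proxy $\Sigma^2\ll\sum_j|\alpha_j|^4$. This yields $\E[e^{\sigma|F_{n,k}|}\mid\Mc]\ll e^{c\sigma^2\Sigma^2}$ directly, and a H\"older combination over $k$ (with exponents $p_k$ summing to $1$) reduces the exponential moment of $\sum_k|z|^k|F_{n,k}|$ to an exponential moment of $\Sigma^2$, finite by hypothesis \eqref{eq:Sigma2_hypothesis} with $k=4$. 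This is the missing idea in your argument.

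There is also a smaller gap in the convergence step: Vitali's theorem needs the family $(\log\Phi_n^*)_n$ to be a.s.\ locally uniformly bounded (a normal family), not merely ``uniformly bounded in $L^1$ on compacts''. The paper closes this by viewing $(\log\Phi_n^*(\rho\,\cdot))_n$ as an $L^2(\rho\partial\D)$-valued martingale, showing the squared norm is an $L^2$-bounded submartingale, passing to the $C^1$ norm on smaller discs via Cauchy's formula, and then invoking Ascoli--Arzel\`a together with a.s.\ convergence of each Fourier coefficient. Your Vitali route can be repaired along similar lines, but it needs that extra step.
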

\begin{proof}
We start by proving the finiteness of exponential moments given in \eqref{eq:finiteExpMoments}. Write: 
$$ F_{n,k} := \sum_{-1 \leq j \leq n-k-1} \overline{\alpha_j} \rho_{j+1}^2 \dots \rho_{j+k-1}^2 \alpha_{j+k} \,$$
where we recall that  $\alpha_{-1} := -1$ by convention. 
Thanks to Lemma \ref{lemma:cmv_trace}:
$$ \log \Phi_n^*(z) = \sum_{k=1}^\infty z^k F_{n,k} + \Oc\left(  1 + \sum_{j \geq 0} |\alpha_j|^3 \right) \ ,$$
the $\Oc$ being uniform on compact sets. Because of the hypothesis \eqref{eq:Sigma2_hypothesis} and the Cauchy-Schwarz inequality, it is sufficient to show that:
$$ \forall \sigma>0, \sup_{z \in K} \sup_{n \in \N} \E\left( e^{\sigma \sum_{k=1}^\infty |z|^k |F_{n,k}| } \right) < \infty \ ,$$
for \eqref{eq:finiteExpMoments} to be true.

 First, we work conditionnally on the $\sigma$-algebra $\Mc$ generated by the modulii $|\alpha_j|$. By seeing the random variable $F_{n,k}$ as a function of the bounded phases $\Theta_j$, it is easy to check that:
 \begin{align*}
  &
  \left| F_{n,k}(\dots, \Theta_i , \dots)
       - F_{n,k}(\dots, \Theta_i', \dots)
  \right|\\
\leq & | \Theta_i - \Theta_i'| \, |\alpha_i| \rho_{i+1}^2 \dots \rho_{i+k-1}^2 |\alpha_{i+k}|
     +| \alpha_{i-k}| \,  \rho_{i-k+1}^2 \dots \rho_{i-1}^2 |\alpha_{i}| \, |\Theta_i - \Theta_i'| \\
\leq & 2 |\alpha_i| \left( |\alpha_{i+k}| + |\alpha_{i-k}| \right) =: \Sigma_i,
 \end{align*}
 with the convention $\alpha_{-1} = -1$ and $\alpha_{k} = 0$ for $k \leq -2$. 
As such:
$$ \sum_i \Sigma_i^2 \ll \sum_{j \geq -1} |\alpha_j|^4 =: \Sigma^2\ .$$
Invoking McDiarmid's inequality, there are constants $C, c>0$ such that:
$$ \forall x>0, \ \P\left( |F_{n,k}| \geq \Sigma x \ | \ \Mc \right) \leq C e^{-c x^2} \ .$$
Classically, these subgaussian tails translate to bounds on moments:
$$    \E\left( e^{\sigma |F_{n,k}|} \ | \ \Mc \right)
 \leq 1 + \int_0^\infty dx \ e^{x} \P\left( \sigma |F_{n,k}| \geq x| \Mc \right)
 \ll e^{c \sigma^2 \Sigma^2} \ 
$$
with a possibly different constant $c$.
We finish using the H\"older's inequality with $ \sum_k \frac{1}{p_k} = 1$:
$$   \E\left( e^{\sigma \sum_{k=1}^\infty |z|^k |F_{n,k}| } \right)
\leq \E\left( \prod_{k} \E\left( e^{\sigma |z|^k |F_{n,k}| p_k} | \Mc \right)^{\frac{1}{p_k}} \right)
\leq \E\left( e^{c \sigma^2 \Sigma^2 \sum_k |z|^{2 k} p_k} \right)
< \infty$$
the finiteness coming from the assumption \eqref{eq:Sigma2_hypothesis}. All constants  involved here are uniform in $z$ on any compact subset of $\D$. 

Now, in order to prove uniform convergence of $\log \Phi_n^*$, consider the Hilbert space $B = L^2( \rho \partial \D , d\theta)$ of square-integrable functions on the circle of radius $\rho <1$. It is easy to see that $(\log \Phi_n^*(\rho \ \cdot))_{n \geq 0}$ is a $B$-valued martingale. We shall study its convergence in $B$. One could invoke the general theory of martingales in Banach spaces (for e.g \cite{P16}), but for the reader's convenience, let us explain why a Hilbert space such as $B = L^2( \rho \partial \D , d\theta)$ does not require such a machinery.

Thanks to bounds on moments, we have
$$ \sup_{n \geq 0} \E\left( \int_{\rho \partial \D} |\log \Phi_n^*|^2 \right) < \infty \ ,$$
hence the square of the $L^2( \rho \partial \D, d\theta)$ norm of $\log \Phi_n^*(\cdot)$ is a scalar submartingale (Remark \cite[1.12]{P16}), which is also $L^2(\Omega, \mathcal{B}, \P)$-bounded, hence convergent. Because we are dealing with holomorphic functions, the Banach norm in $B$ dominates the $C^0$-norm (and $C^1$, $C^2$, $\dots$) in smaller discs (because of Cauchy's formula), and for all $\rho < 1$, the square of the $C^1$-norm of $\log \Phi_n^*(\cdot)$ in the disc $\rho \D$  is also a convergent submartingale (a.s and in $L^2(\Omega, \mathcal{B}, \P)$), because the supremum of submartingales is a submartingale. By Ascoli-Arzela theorem, $\left( \log \Phi_n^* \right)_{n \in \N}$ is relatively compact (a.s.) as a family of continuous function on $\rho \D$.  To prove the a.s. uniform convergence of this sequence on $\rho \D$, it is then enough to check that the limit of any subsequence is uniquely determined.  This last statement is due to the a.s. convergence of each Fourier coefficient of  $z \mapsto \log \Phi_n^*(\rho z) $, which is a martingale,  bounded in $L^2(\Omega, \mathcal{B}, \P)$. 
We deduce that $\left( \log \Phi_n^* \right)_{n \in \N}$ a.s. converges uniformly on compact sets of $\D$. 
\end{proof}

\section{Beginning of the proof of the Main Theorem \ref{thm:main}}
\label{section:proofmain}
As explained while announcing the structure of the paper, we made the choice of giving a full proof of the Main Theorem \ref{thm:main}, at the cost of admitting some intermediate results. The missing ingredients are condensed in Lemma \ref{lemma:RenormalizedGMC_L1Limit} and Lemma \ref{lemma:OmegaRhoControl}. And both lemmas are formulated in this section, when needed. 

We first introduce the following quantity: 
\begin{align}
\label{def:M_infty}
M_\infty := & \prod_{j=0}^{\infty} \left( 1 - \left|\alpha_j\right|^2 \right)^{-1} e^{ - \frac{2}{\beta (j+1)} } \ .
\end{align}
This product is a.s. convergent. Indeed, let us consider the logarithm and truncate the series at rank $n$:
\begin{equation}
\log M_n := \sum_{j=0}^{n} \left(-\log\left( 1 - \left|\alpha_j\right|^2 \right) - \frac{2}{\beta (j+1)} \right)\ . \label{martingaleN}
\end{equation}
Because of \eqref{eq:log_beta}, $(\log M_n)_{n \geq 0}$ is an $\F$-martingale. Moreover, this martingale is bounded in $L^2$.
Indeed for $x \in [0,1)$, we have, after considering separately the cases $x \in [0,1/2]$ and $x \in [1/2,1)$,  
$$\log^2(1-x) =\mathcal{O}( x^2 (1-x)^{-\beta/10} ),$$
 and then 
\begin{align}\mathbb{E} [ \log^2 \left( 1 - \left|\alpha_j\right|^2 \right) ] 
& =  \beta_j \int_0^1  (1-x)^{\beta_j- 1} \log^2 (1-x) dx
\nonumber \\ & \ll   \beta_j  \int_0^{1} (1-x)^{\beta_j - 1 - (\beta/10)} x^2 dx
\nonumber \\ & =  \beta_j \, \frac{ \Gamma(\beta_j - (\beta/10)) \Gamma(3)}{\Gamma (\beta_j - (\beta/10) + 3)} 
 \nonumber \\& \leq \frac{2  \beta_j} {(\beta_j - (\beta/10))^3} \ll \frac{1}{j^2}.  \label{boundlogcarre}
\end{align} 
Hence, the $\F$-martingale $(\log M_n)_{n \geq 0}$ is a.s. convergent. 

Now, we fix a nonnegative smooth function $f: \partial \D \rightarrow \R_+$ on the unit circle. We will be interested in the integral of $f$ with respect to several random measures on the unit circle, and its conditional expectation given $\Fc_n$ for  $n \geq 0$. 

\medskip

As formulated in the beginning of the paper (see Question \ref{question:whatIsMu}), recall that the object of interest is $\mu^\beta$. A natural approximation is the Bernstein-Szeg\"o approximation of $\mu^\beta$ (see \cite[Theorem 1.7.8 p.95]{Sim05-1}) which we denote by $\mu^\beta_{n}$. By definition:
\begin{align}
\label{def:mu_BZ}
	\mu^\beta_{n}(d\theta) = & 
	\frac{d\theta}{2\pi} 
    \frac{ \prod_{j=0}^{n-1}\left( 1 - |\alpha_j|^2 \right) }
         { \left| \Phi_n^*( e^{i \theta} ) \right|^2  } \ .	  
\end{align}
The limit $\mu_n^\beta(f) \stackrel{n \rightarrow \infty}{\longrightarrow} \mu^\beta(f) $ holds {\it surely} by virtue of the previously referenced \cite[Theorem 1.7.8 p. 95]{Sim05-1}: this is a deterministic statement regarding the Bernstein-Szeg\"o approximation of a probability measure on the circle. Let us also prove that we have convergence in all $L^p(\Omega, \mathcal{B}, \P)$. We 
notice that for all smooth $f$, $(\mu^\beta_{n}(f))_{n \geq 0}$ is an $(\F, \P)$-martingale by integrating against $f$ the point-wise martingale:
$$ \E\left( \frac{\prod_{j=0}^{n}(1-|\alpha_j|^2)}{|\Phi_{n+1}^*(e^{i \theta})|^2} | \Fc_n \right)
 = \frac{\prod_{j=0}^{n-1}(1-|\alpha_j|^2)}{|\Phi_n^*(e^{i \theta})|^2} \ .
$$
The above computation uses crucially that $|Q_j(e^{i \theta})| = 1$, where we recall that 
$$Q_j(z) = z \, \frac{\Phi_n(z)}{\Phi^*_n(z)}. $$
Now, since $\mu^\beta_n$ is constructed to be a probability measure:
$$ \mu^\beta_{n}(f) \leq |f|_\infty \ ,$$
and $(\mu^\beta_{n}(f))_{n \geq 0}$ ends up being a bounded martingale! Then, the convergence (almost sure and in all $L^p(\Omega, \mathcal{B}, \P)$) holds by Doob's martingale convergence theorem. In any case:
$$ \E\left( \mu^\beta(f) \ | \ \Fc_n \right) = \mu_n^\beta(f) \ .$$

\medskip

The second quantity of interest is related to the integral of $f$ with respect to the Gaussian multiplicative chaos constructed from the Gaussian 
field $\log \Phi^*_{\infty}$. We set:
\begin{align}
\label{def:X}
       X_{r,n}(f)
:= & \ \E \left[ \frac{1}{M_{\infty}} GMC_r^\gamma(f) \ \Big| \ \Fc_n \right]\\
 = & \ \int \frac{d\theta}{2\pi}
            f(e^{i\theta})
        \ 
        \E \left[ \frac{1}{M_{\infty}}
                  \frac{(1-r^2)^{\frac{2}{\beta}}}{|\Phi_\infty^*(r e^{i\theta})|^2}
           \ \Big| \ \Fc_n \right] 
\nonumber
\end{align}
where 
$$GMC_r^{\gamma} = (1-r^2)^{2/\beta} |\Phi^*_{\infty} (r e^{i \theta}) |^{-2} d \theta.$$
Now all the positive moments of $M_{\infty}^{-1}$ are finite and the Gaussian multiplicative chaos can be  obtained as the limit 
$$GMC^{\gamma}(f) = \lim_{r \rightarrow 1-} GMC_{r}^{\gamma}(f) \ ,$$
in $L^1(\Omega, \mathcal{B}, \P)$. In fact, we have:
\begin{lemma}
\label{lemma:RenormalizedGMC_L1Limit}
$$
  \E \left[ \frac{1}{M_{\infty}} |GMC_r^{\gamma} (f) - GMC^{\gamma}(f) | \right]
  \underset{r \rightarrow 1^-}{\longrightarrow} 0 \ .
$$
\end{lemma}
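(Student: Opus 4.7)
The plan is to apply Hölder's inequality to decouple $M_\infty^{-1}$ from the GMC difference, reducing the problem to known moment estimates. For conjugate exponents $p, q > 1$ with $1/p + 1/q = 1$,
\begin{equation*}
\E\bigl[M_\infty^{-1}|GMC_r^{\gamma}(f) - GMC^{\gamma}(f)|\bigr] \leq \bigl\|M_\infty^{-1}\bigr\|_{L^q(\P)} \cdot \bigl\|GMC_r^{\gamma}(f) - GMC^{\gamma}(f)\bigr\|_{L^p(\P)},
\end{equation*}
so it is enough to exhibit one $p > 1$ for which the first factor is finite and the second tends to zero as $r\to 1^-$.

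The first factor is controlled by a direct calculation. By independence of the Verblunsky coefficients and the formula $\E[(1-|\alpha_j|^2)^q] = (1+q/\beta_j)^{-1}$, which follows from $|\alpha_j|^2 \sim \mathrm{Beta}(1,\beta_j)$,
\begin{equation*}
\E\bigl[M_\infty^{-q}\bigr] = \prod_{j=0}^\infty \E\bigl[(1-|\alpha_j|^2)^q\bigr]\, e^{q/\beta_j} = \prod_{j=0}^\infty \frac{e^{q/\beta_j}}{1+q/\beta_j}.
\end{equation*}
Each factor equals $1+O(q^2/\beta_j^2)=1+O(1/j^2)$, so the infinite product converges and $M_\infty^{-1}\in L^q(\P)$ for every $q>0$.

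For the second factor, I would combine the $L^1$ (hence in-probability) convergence of Theorem~\ref{thm:GMC_CV} with a uniform $L^{p'}$ bound and Vitali's convergence theorem. Since $\beta > 2$, one can fix $1 < p < p' < \beta/2 = 1/\gamma^2$. For integer $p'$, Wick's formula for Gaussian exponentials gives
\begin{equation*}
\E\bigl[GMC_r^\gamma(f)^{p'}\bigr] = \int_{(\partial\D)^{p'}} \prod_{1 \leq i<j \leq p'} \bigl|1-r^2 e^{i(\theta_j-\theta_i)}\bigr|^{-2\gamma^2}\,\prod_{k=1}^{p'} f(e^{i\theta_k})\,\frac{d\theta_k}{2\pi},
\end{equation*}
and the elementary bound $|1-r^2 e^{i\phi}|\geq 2r|\sin(\phi/2)|$ reduces the uniform control in $r$ to the Selberg-type integral $\int\prod_{i<j}|\sin((\theta_j-\theta_i)/2)|^{-2\gamma^2}$, which is finite precisely when $p'\gamma^2<1$. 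Non-integer values of $p'$ are accessed through Kahane's convexity inequality against a comparison cascade, a standard tool in the subcritical theory. Vitali's theorem then delivers the required convergence in $L^p(\P)$.

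The principal technical point is the uniform moment bound for $GMC_r^\gamma(f)$, which rests on the analysis of the Selberg-type integral near its diagonal singularities as $r\to 1^-$. With this in hand, the Hölder decoupling together with the moment estimate on $M_\infty^{-1}$ finishes the proof at once.
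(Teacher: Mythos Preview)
Your approach is correct and genuinely different from the paper's. You decouple $M_\infty^{-1}$ from the GMC term by H\"older, control $M_\infty^{-1}$ in $L^q$ by the product computation, and upgrade the $L^1$ convergence of Theorem~\ref{thm:GMC_CV} to $L^p$ for some $p>1$ via a uniform $L^{p'}$ bound (Selberg-type integral for integer $p'$, Kahane convexity otherwise) and Vitali. This is clean and relies only on standard subcritical GMC moment theory; the one caveat is that for $2<\beta\le 4$ there is no integer $p'\in(1,\beta/2)$, so the Kahane step is genuinely needed there and not just cosmetic.

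The paper takes a different route: it truncates on $\{M_\infty^{-1}\le A\}$, uses only the $L^1$ convergence of $GMC_r^\gamma(f)$ on that event, and controls the tail by showing that $\E\bigl[M_\infty^{-2}\,GMC_r^\gamma(\partial\D)\bigr]$ is bounded uniformly in $r$. That last bound is obtained by passing to the tilted measure $\Q_r$ and invoking the exponential moment estimates on $\rho_{r,0}$ and $\omega_{r,0}$ from Proposition~\ref{proposition:boundOmegaRho}. In other words, the paper's proof reuses the heavy Section~\ref{section:approximations} machinery that is in any case required for Lemma~\ref{lemma:OmegaRhoControl}, so it costs nothing extra within the paper's architecture; your argument is more portable but imports the $L^{p'}$ moment bounds for the regularised GMC from outside.
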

This lemma is the first ingredient which is admitted for now, and proven in Section \ref{section:proofmain2}. Therefore, using the fact that the  conditional expectation is a contraction on $L^1(\Omega, \mathcal{B}, \P)$,   we get the $L^1(\Omega, \mathcal{B}, \P)$ convergence:
\begin{align}
\label{eq:X_CV}
     \lim_{r \rightarrow 1^-} X_{r, n}(f)
 = & \E \left[ \frac{1}{M_{\infty}} GMC^\gamma(f) \ \Big| \ \Fc_n \right] \ .
\end{align}

The entire point of the proof consists in  relating the two measures defined by  \eqref{def:mu_BZ} and \eqref{def:X}. 

From the product \eqref{def:M_infty} and the fact that 
$$ \left( 1 - r^2 \right)^{\frac{2}{\beta}} = e^{-\frac{2}{\beta} \sum_{j=0}^\infty \frac{r^{2j+2}}{j+1} } \ ,$$
$$ \Phi^*_{\infty}(z) = \prod_{j=0}^{\infty} (1 - \alpha_j Q_j(z)) \ ,$$
for all $z \in \D$, we obtain:
\begin{align*}
    X_{r,n}(f)
= & \int_0^{2 \pi} \frac{d \theta}{2 \pi} f(e^{i\theta})
				   \E\left[ 
                   \prod_{j=0}^{\infty} \frac{1 - |\alpha_j|^2}
                                          {|1 - \alpha_j Q_j(re^{i \theta})|^{2}}
                                          e^{ \frac{2}{\beta(j+1)}(1-r^{2j+2}) }
                   \ | \ \Fc_n \right] \\
= & \int_0^{2 \pi} \frac{d \theta}{2 \pi} f(e^{i\theta})
                   \frac{\prod_{j=0}^{n-1} (1 - |\alpha_j|^2) e^{ \frac{2}{\beta(j+1)}(1-r^{2j+2}) }}
                        {|\Phi_n^*(r e^{i\theta})|^2}\\
  & \quad \quad \times \E\left[ 
                   \prod_{j=n}^{\infty} \frac{1 - |\alpha_j|^2}
                                          {|1 - \alpha_j Q_j(re^{i \theta})|^{2}}
                                          e^{ \frac{2}{\beta(j+1)}(1-r^{2j+2}) }
                   \ | \ \Fc_n \right] \\
= & \int_0^{2 \pi} \frac{d \theta}{2 \pi} f(e^{i\theta}) \ 
                   R^{(0, n-1)}(\theta) \ 
				   \E\left[ R^{(n, \infty)}(\theta) | \Fc_n \right] \ ,
\end{align*}
where $R^{(0, n-1)}(\theta)$ and $R^{(n, \infty)}(\theta)$ represent respectively the products from $0$ to $n-1$ and from $n$ to $\infty$.

Let $L$ be a compact set of $\D^n$ and $\mathcal{A}_{L}$ the $\Fc_n$-measurable event corresponding to the fact that $(\alpha_0, \dots, \alpha_{n-1}) \in L$. 
Under the event $\mathcal{A}_L$, because the Verblunsky coefficients are away from the unit circle, 
$$
 \sup_{\theta} \left| 
   R^{(0,n-1)}(\theta)
 - \frac{d \mu_n^\beta}{d\theta}(\theta)
 \right|
 \leq 
 \sup_{\theta} \left| 
   |\Phi_n^*(r e^{i\theta})|^{-2}
   \prod_{j=0}^{n-1} e^{ \frac{2}{\beta(j+1)}(1-r^{2j+2}) }
 - |\Phi_n^*(e^{i\theta})|^{-2}
 \right|  
 $$
is bounded by a quantity $c_r$ depending only on $\beta, r$ and $L$, and tending to zero when $r$ goes to $1$ for $\beta, L$ fixed. Indeed 
$$\prod_{j=0}^{n-1} (1 - \alpha_j Q_j(z)) = \Phi^*_n(z)$$
is (by the Szeg\"o recursion) a polynomial in $z$, the $\alpha_j$'s and their conjugate, and then it is uniformly Lipschitz in $z \in \overline{\D}$ for $n$ fixed.  On the event $\mathcal{A}_L$, 
$|\Phi^*_n(z)|^{-2}$ is also uniformly Lipschitz in $z \in \overline{\D}$  for $n$ and $L$ fixed since  $\Phi^*_n(z)$ does not vanish, and then is uniformly away from zero by compactness of the sets  $L$ and $\overline{\D}$.

As such, for any constant $K_{\beta}$ - to be determined later:
\begin{align*}
     & \E \left[ \mathds{1}_{\mathcal{A}_L} \left|X_{r,n}(f) - K_{\beta} \mu_n^\beta(f) \right| \right]\\
\leq & \,  c_r \ K_{\beta} |f|_\infty 
       + \E \left[ \mathds{1}_{\mathcal{A}_L} \left|
          X_{r,n}(f)
		- K_{\beta} \int_0^{2 \pi} \frac{d \theta}{2 \pi} f(e^{i\theta}) R^{(0, n-1)} (\theta)
		  \right| \right]\\
=    &  \,  c_r  \ K_{\beta} |f|_\infty 
       + |f|_\infty 
         \E \left[ \mathds{1}_{\mathcal{A}_L} 
          \left|
          \int_0^{2 \pi} \frac{d \theta}{2 \pi} R^{(0, n-1)} (\theta) \ 
		  \left( \E\left[ R^{(n, \infty)}(\theta) | \ \Fc_n \right] - K_{\beta} \right)
		  \right|
		  \right] \\
\leq & \,  c_r  \ K_{\beta} |f|_\infty 
       + |f|_\infty 
         \int_0^{2 \pi} \frac{d \theta}{2 \pi}
           \E \left[
           R^{(0, n-1)} (\theta) \ 
		  \left| \E\left[ R^{(n, \infty)}(\theta) | \ \Fc_n \right] - K_{\beta} \right|
		  \right] \ ,
\end{align*}

From now on, we shall use, for $r \in (0,1)$ and $\theta \in \R$, a new probability measure $  \Q_{r, \theta}$, equivalent to $\P$ and defined by: 
\begin{align}
\label{def:Qr}
     \frac{d\Q_{r,\theta}}{d\P} = & \frac{ \prod_{j=0}^\infty \left( 1 - |\alpha_j Q_j(r e^{i\theta})|^2 \right) }
                              { \left| \Phi_\infty^*(r e^{i \theta}) \right|^2  } \ .
\end{align}
 The intuition behind introducing the measure $\Q_{r,\theta}$ is that its density is a multiplicative martingale, which is the analogue of the exponential martingale in the context of branching random walks or when constructing GMC from an approximation of the log-correlated field by independent increments. Furthermore, it is the natural (multiplicative) compensation of $\left| \Phi_\infty^*(r e^{i \theta}) \right|^{-2}$ when considering the filtration generated by Verblunsky coefficients.

 If the reference angle $\theta$ is not indicated, it means that we consider $\theta=0$ (note that all angles play symmetric roles by rotational invariance) and we denote the measure by $\Q_r$. Moreover, $\Q_{r}$ will be simply denoted by $\Q$ if there is no possible ambiguity. 
 In order to check that the  probability measure $\Q_{r, \theta}$ is well-defined, we first check that  for all $n \geq 0$, because of the Szeg\"o recursion: 
$$\frac{ \prod_{j=0}^{n-1}  \left( 1 - |\alpha_j Q_j(r)|^2 \right) }
                              { \left| \Phi_n^*(r ) \right|^2  } 
                              = \prod_{j=0}^{n-1} \frac{1 - |\alpha_j Q_j(r)|^2}{|1 - \alpha_j Q_j(r)|^2}$$

From the rotational invariance of $\alpha_j$ and its independence with $\Fc_j$, we have 
$$\E \left[ \frac{1 - |\alpha_j Q_j(r)|^2}{|1 - \alpha_j Q_j(r)|^2} \, \big| |\alpha_j|, \Fc_j \right]
= \frac{1 - u^2}{2 \pi} \int_0^{2 \pi} \frac{d \theta}{ |1 - u e^{i \theta}|^2}$$
where $u = |\alpha_j Q_j(r)| < 1$. Computing the integral (see Lemma \ref{lemma:circleMoment} below for more detail) gives that the last conditional expectation is equal to $1$, and then 
$$\left(\frac{ \prod_{j=0}^{n-1}  \left( 1 - |\alpha_j Q_j(r)|^2 \right) }{ \left| \Phi_n^*(r ) \right|^2  }  \right)_{n \geq 0}$$
is a $(\F,\P)$-martingale. Moreover, this martingale is bounded in all $L^p( \Omega, \mathcal{B}, \P)$ spaces, because it is dominated by $ (\left| \Phi_n^*(r ) \right|^{-2})_{n \geq 0}$, which has been proven to be bounded in $L^p( \Omega,  \mathcal{B}, \P)$. Hence, the martingale converges a.s. and in all $L^p( \Omega,  \mathcal{B},\P)$, and its limit has expectation $1$. It is then the density of a probability measure with respect to $\P$. 

Note that we have, by the martingale property, 
$$\frac{d\Q_r}{d\P} \,_{\big|\Fc_n} = \frac{ \prod_{j=0}^{n-1} \left( 1 - |\alpha_j Q_j(r)|^2 \right) }{ \left| \Phi_n^*(r ) \right|^2} $$
and then, for a $\Fc_{n+1}$-measurable quantity $X$, and a $\Fc_n$-measurable  quantity $Y$, both nonnegative, 
\begin{align*}
\E^{\Q_r} [X Y] & = \E^{\P} \left[ XY \frac{\prod_{j=0}^{n}  \left( 1 - |\alpha_j Q_j(r)|^2 \right) }{ \left| \Phi_{n+1}^*(r ) \right|^2  } \right]
\\ & =  \E^{\P} \left[ Y\frac{ \prod_{j=0}^{n-1}  \left( 1 - |\alpha_j Q_j(r)|^2 \right) }{ \left| \Phi_{n}^*(r ) \right|^2  } 
\E^{\P} \left[ X \frac{1 - |\alpha_n Q_n(r)|^2}{|1 - \alpha_n Q_n(r)|^2} | \Fc_n \right] \right]
\\ & = \E^{\Q_r}  \left[ Y
\E^{\P} \left[ X \frac{1 - |\alpha_n Q_n(r)|^2}{|1 - \alpha_n Q_n(r)|^2} | \Fc_n \right] \right],
\end{align*}
which gives 
\begin{equation}
\E^{\Q_r} [ X | \Fc_n] 
= \E^{\P}  \left[ X \frac{1 - |\alpha_n Q_n(r)|^2}{|1 - \alpha_n Q_n(r)|^2} | \Fc_n \right].
\label{conditionalexpectation}
\end{equation} 
 Similarly, if $X$ is a nonnegative $\mathcal{B}$-measurable variable, 
\begin{equation}
\E^{\Q_r} [ X | \Fc_n] 
= \E^{\P}  \left[ X \prod_{j = n}^{\infty} \frac{1 - |\alpha_j Q_j(r)|^2}{|1 - \alpha_j Q_j(r)|^2} | \Fc_n \right].
\label{conditionalexpectation2}
\end{equation}

Using the problem's rotational invariance, and \eqref{conditionalexpectation2}, we deduce that: 
\begin{align*}
  & \int_0^{2 \pi} \frac{d \theta}{2 \pi}
	\E \left[
	R^{(0, n-1)} (\theta) \ 
	\left| \E\left[ R^{(n, \infty)}(\theta) | \ \Fc_n \right] - K_{\beta} \right|
	\right]\\
= & \E \left[
	R^{(0, n-1)} (\theta=0) \ 
	\left| \E\left[ R^{(n, \infty)}(\theta=0) | \ \Fc_n \right] - K_{\beta} \right|
	\right]\\
= & \E \left[
		\prod_{j=0}^{n-1} \frac{1 - |\alpha_j|^2}
                          {|1 - \alpha_j Q_j(r)|^{2}}
                          e^{ \frac{2}{\beta(j+1)}(1-r^{2j+2}) }
	\left| \E\left[ \prod_{j=n}^{\infty}
	 \frac{1 - |\alpha_j|^2}
     	  {|1 - \alpha_j Q_j(r)|^{2}}
	 e^{ \frac{2}{\beta(j+1)}(1-r^{2j+2}) }
      | \ \Fc_n \right] - K_{\beta} \right|
	\right] \\
= & \E^\Q \left[
		\prod_{j=0}^{n-1} \frac{1 - |\alpha_j|^2}
                          {1 - |\alpha_j Q_j(r)|^{2}}
                          e^{ \frac{2}{\beta(j+1)}(1-r^{2j+2}) }
	\left| \E^\Q\left[ \prod_{j=n}^{\infty}
	 \frac{1 - |\alpha_j|^2}
     	  {1 - |\alpha_j Q_j(r)|^{2}}
	 e^{ \frac{2}{\beta(j+1)}(1-r^{2j+2}) }
      | \ \Fc_n \right] - K_{\beta} \right|
	\right] \\
\leq &  e^{ \sum_{j=0}^{n-1} \frac{2}{\beta(j+1)}(1-r^{2j+2}) }
        \E^\Q \left[
	\left| \E^\Q\left[ \prod_{j=n}^{\infty}
	 \frac{1 - |\alpha_j|^2}
     	  {1 - |\alpha_j Q_j(r)|^{2}}
	 e^{ \frac{2}{\beta(j+1)}(1-r^{2j+2}) }
      | \ \Fc_n \right] - K_{\beta} \right|
	\right] \ .
\end{align*}
The last inequality comes from the fact that $1 - |\alpha_j|^2 \leq 1 - |\alpha_j Q_j(r)|^2$ for $0 \leq j \leq n-1$, because $|Q_j(r)| \leq 1$. 

We now introduce the following quantities, for $N \geq n$: 
$$ \omega_{r,n,N} :=  \frac{2}{\beta}\sum_{k=n}^{N-1} \frac{|Q_k(r)|^2 - r^{2k+2}}{k+1} \ ,$$
and 
$$ \rho_{r,n,N} := \sum_{k=n}^{N-1} \left( - \log \left( \frac{1 - |\alpha_k Q_k(r)|^2}{1 - |\alpha_k|^2} \right)
+ \frac{2}{\beta} \frac{1 - |Q_k(r)|^2}{k+1} \right) \ ,$$
and their respective upper limits  $\omega_{r,n}$ and $\rho_{r,n}$ when $N$ goes to infinity. 
Note that in Proposition \ref{proposition:boundOmegaRho}, we  will show  that these upper limits are in fact limits, i.e. 
$$\omega_{r,n} :=  \frac{2}{\beta}\sum_{k=n}^{\infty} \frac{|Q_k(r)|^2 - r^{2k+2}}{k+1} \ ,$$
$$\rho_{r,n} := \sum_{k=n}^{\infty} \left( - \log \left( \frac{1 - |\alpha_k Q_k(r)|^2}{1 - |\alpha_k|^2} \right)
+ \frac{2}{\beta} \frac{1 - |Q_k(r)|^2}{k+1} \right).$$
In any case, we deduce, from the computation above: 
\begin{align*}
  & \int_0^{2 \pi} \frac{d \theta}{2 \pi}
	\E \left[
	R^{(0, n-1)} (\theta) \ 
	\left| \E\left[ R^{(n, \infty)}(\theta) | \ \Fc_n \right] - K_{\beta} \right|
	\right]\\
\leq &  e^{ \sum_{j=0}^{n-1} \frac{2}{\beta(j+1)}(1-r^{2j+2}) }
        \E^\Q \left|
              \E^\Q\left[ e^{\rho_{r,n}+\omega_{r,n}} | \Fc_n \right] - K_{\beta}
              \right| \ .
\end{align*}
In the end, since $c_r$ goes to zero as $r \rightarrow 1^-$:
$$ \limsup_{r \rightarrow 1^-}
   \E \left[ \mathds{1}_{\mathcal{A}_L} \left|X_{r,n}(f) - K_{\beta} \mu_n^\beta(f) \right| \right]
   \leq |f|_\infty 
        \limsup_{r \rightarrow 1^-}
        \E^\Q \left|
              \E^\Q\left[ e^{\rho_{r,n}+\omega_{r,n}} | \Fc_n \right] - K_{\beta}
              \right| \ .$$

This is where we invoke:
\begin{lemma}
\label{lemma:OmegaRhoControl}
There exists a constant $K_\beta$ such that:
$$ \E^\Q \left|
         \E^\Q\left[ e^{\rho_{r,n}+\omega_{r,n}} | \Fc_n \right] - K_\beta
         \right|
   \stackrel{r \rightarrow 1^-}{\longrightarrow} 0 \ .$$
\end{lemma}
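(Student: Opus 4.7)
The plan is to recognize $(|Q_k(r)|^2)_{k \geq n}$ as a Markov chain under $\Q$ with state space $[0,1]$, identify its scaling limit as $r \to 1^-$ as a time-inhomogeneous SDE entering from the degenerate boundary $X = 1$, and exploit the fact that the initial condition $|Q_n(r)|^2$ converges to $1$ so that the limiting law no longer sees $\Fc_n$. This should collapse $\E^\Q[e^{\rho_{r,n}+\omega_{r,n}}|\Fc_n]$ to a deterministic constant $K_\beta$.

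The first step is a Markov-chain reduction. From \eqref{conditionalexpectation}, under $\Q$ the conditional density of $\alpha_k$ given $\Fc_k$ is $\frac{1 - |\alpha_k Q_k(r)|^2}{|1 - \alpha_k Q_k(r)|^2}$ times its $\P$-density; in particular $|\alpha_k|^2$ retains its Beta law and only the phase of $\alpha_k$ is biased, via a Poisson kernel of modulus $|\alpha_k||Q_k(r)|$. The Szegő recursion gives $Q_{k+1}(r) = r\,\frac{Q_k(r) - \overline{\alpha_k}}{1 - \alpha_k Q_k(r)}$, and combined with the rotational structure of this biased phase distribution one checks that $(|Q_k(r)|^2)_{k\geq n}$ is a Markov chain under $\Q$ whose law depends on $\Fc_n$ only through $|Q_n(r)|^2$; each summand of $\omega_{r,n}+\rho_{r,n}$ is likewise a functional of this chain. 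Hence
\begin{equation*}
\E^\Q\!\left[e^{\rho_{r,n}+\omega_{r,n}}\,\big|\,\Fc_n\right] = \psi_r\!\bigl(|Q_n(r)|^2\bigr)
\end{equation*}
for some deterministic function $\psi_r : [0,1) \to \R_+$, and it suffices to show $\psi_r(|Q_n(r)|^2) \to K_\beta$ in $L^1(\Q)$.

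The second step is the scaling limit announced in Section \ref{section:convergenceSDE}. Under the time change $t = (k+1)(1-r^2)$, the chain $k \mapsto |Q_k(r)|^2$ should converge as $r \to 1^-$ to the solution $(X_t)_{t \geq 0}$ of an inhomogeneous SDE on $[0,1]$ degenerating at $X = 1$. The summands in $\omega_{r,n}+\rho_{r,n}$ become Riemann-sum approximations of an integral functional of $X_t$ plus a martingale with computable compensator, so that $\rho_{r,n}+\omega_{r,n}$ converges in law under $\Q$ to an explicit functional of this SDE. The crucial point is that, for fixed $n$, one has $|Q_n(r)|^2 \to |Q_n(1)|^2 = 1$ as $r \to 1^-$, since $|\Phi_n(e^{i\theta})| = |\Phi_n^*(e^{i\theta})|$ on the unit circle; thus the initial condition of the rescaled chain degenerates to the boundary $X=1$, which the limiting SDE enters deterministically. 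This forces the limiting distribution of $\rho_{r,n}+\omega_{r,n}$ to be independent of $\Fc_n$, so $\psi_r(|Q_n(r)|^2) \to K_\beta$ in $\Q$-probability, with $K_\beta$ equal to the expectation of the limiting exponential functional.

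The hard part will be the SDE convergence itself: the limit is both time-inhomogeneous and degenerate at the boundary from which it starts, so standard diffusion-approximation machinery is not directly applicable. One needs tailored a priori bounds on $1 - |Q_k(r)|^2$ — a lower estimate to keep the chain from escaping to $0$ too fast, an upper estimate to control the sojourn near $1$ — together with a martingale analysis of $\log(1 - |Q_k(r)|^2)$ to prove tightness and uniqueness of the limit. A secondary but essential difficulty is the uniform-integrability upgrade from convergence in probability to $L^1(\Q)$ convergence: under $\Q$ the phase bias on $\alpha_k$ precisely concentrates mass where $-\log(1-|\alpha_k Q_k(r)|^2)$ is large, so a bound in $r$ on the exponential moments of $\rho_{r,n}+\omega_{r,n}$ must be extracted by combining the Beta tail of $|\alpha_k|^2$ with the sharp control of $|Q_k(r)|$ furnished by Proposition \ref{proposition:boundOmegaRho}.
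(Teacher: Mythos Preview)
Your plan is correct and matches the paper's approach: the core ingredients are exactly the SDE scaling limit of $(|Q_k(r)|^2)_{k\geq n}$ under $\Q$ (Proposition~\ref{prop:convergencetosde}), the degenerate entrance at $X=1$ forcing the limiting law to forget $\Fc_n$, and the uniform exponential-moment bounds of Proposition~\ref{proposition:boundOmegaRho} for the $L^1$ upgrade. The only organizational difference is that the paper first shows $\rho_{r,n}\to 0$ in conditional probability (Proposition~\ref{proposition:boundOmegaRho}) and then analyzes $\omega_{r,n}$ alone via the SDE, whereas you propose to treat $\rho_{r,n}+\omega_{r,n}$ jointly; the paper's separation is slightly cleaner since $\rho_{r,n}$ involves $|\alpha_k|$ and not only $|Q_k|$, but your Markov reduction to a function $\psi_r(|Q_n(r)|^2)$ remains valid because $|\alpha_k|$ keeps its Beta law under $\Q$ independently of $\Fc_k$.
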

This lemma is the second ingredient which is admitted for now, and proved in Section \ref{section:proofmain2}. We deduce:
 $$\E [ \mathds{1}_{\mathcal{A}_L} |X_{r,n}(f) - K_{\beta} \mu_n^\beta(f) | ]  \underset{r \rightarrow 1}{\longrightarrow} 0.$$
 From this limit, combined with \eqref{eq:X_CV}, the triangle inequality and the fact that the new quantities involved do not depend on $r$ anymore, we have
$$ \E \left[  \mathds{1}_{\mathcal{A}_L} \left|\E \left[ \frac{1}{M_\infty} GMC^{\gamma}(f) | \Fc_n \right] - K_{\beta} \mu_n^\beta(f) \right| \right] = 0 \ .$$
This is equivalent to
$$ K_{\beta} \mu_n^\beta(f) =  \E \left[ \frac{GMC^{\gamma}(f)}{M_\infty} | \Fc_n \right]$$
almost surely on $\mathcal{A}_L$ and then almost surely without extra restriction as the sample space $\Omega$ can be written as a countable union of events of the form $\mathcal{A}_L$. 
We have seen, just after equation \eqref{def:mu_BZ}, that the left-hand side of the equality is a bounded martingale, a fortiori uniformly integrable. cp
Taking the limit when $n$ goes to infinity, we get 
$$K_{\beta} \mu^\beta(f) =  \E \left[ \frac{GMC^{\gamma}(f)}{M_\infty} | \Fc_{\infty} \right]$$
almost surely, for  $\Fc_{\infty}$ equal to the $\sigma$-algebra generated by all the Verblunsky coefficients. Now, by construction, 
$ GMC^{\gamma}(f)/M_\infty$ is  $\Fc_{\infty}$-measurable, and we easily deduce that the measures   $GMC^{\gamma}$ and  $M_{\infty}K_{\beta} \mu^{\beta}$ 
almost surely coincide. Since the expectation of the total mass of $GMC^{\gamma}$ is $1$, 
we have 
$$GMC^{\gamma} =\frac{ M_{\infty}}{ \E [ M_{\infty}] }\mu^{\beta}.$$

Now, recalling the expression 
$$ M_n :=  \prod_{j=0}^{n-1} \left( 1 - \left|\alpha_j\right|^2 \right)^{-1} e^{ - \frac{2}{\beta (j+1)} } \ ,$$
we have that
$$\frac{M_n}{\E[M_n]} = \prod_{j=0}^{n-1} \frac{\left( 1 - \left|\alpha_j\right|^2 \right)^{-1} }{\E \left[ \left( 1 - \left|\alpha_j\right|^2 \right)^{-1} \right]}
= \prod_{j=0}^{n-1} ( 1 - \left|\alpha_j\right|^2 )^{-1} \left(1 - \frac{2}{\beta(j+1)} \right)$$
is a martingale in $n$. From straightforward computations on the Beta distribution, it is also bounded in $L^p$ for some $p > 1$, and a fortiori uniformly integrable. Therefore, we have that 
$$\E[M_n] = \prod_{j=0}^{n-1}  \left(1 - \frac{2}{\beta(j+1)} \right)^{-1} e^{ - \frac{2}{\beta (j+1)} }$$
converges to a limit $\mathcal{M}$ when $n$ goes to infinity, and then the almost sure and $L^1$ limit  of the martingale is $M_{\infty}/\mathcal{M}$, and necessarily $\mathcal{M} =  \E [ M_{\infty}]$ since the expectations of the martingale and its $L^1$ limit are equal to $1$. Hence,
$$\frac{ M_{\infty}}{ \E [ M_{\infty}] } = \lim_{n \rightarrow \infty} \frac{M_n}{\E[M_n]} 
= \lim_{n \rightarrow \infty}\prod_{j=0}^{n-1} ( 1 - \left|\alpha_j\right|^2 )^{-1} \left(1 - \frac{2}{\beta(j+1)} \right) \ ,
$$
which is the quantity $C_0$ introduced in the Theorem \ref{thm:main}. We deduce that 
$$ GMC^{\gamma} = C_0 \mu^{\beta} $$
almost surely, and a fortiori in distribution, which proves Theorem \ref{thm:main}.

\section{Some useful estimates}
\label{section:approximations}

We first recall that 
\begin{align}
\label{def:Q_j}
Q_j(z) = & \frac{z \Phi_j(z)}{ \Phi_j^*(z) } \ . 
\end{align}
Here are a few properties we will require later: 
\begin{proposition}
\label{ppty:Qj}
The following holds, for all $j \geq 0$:
\begin{itemize}
 \item $Q_j(z)$ is a Blaschke product, with modulus one on $\partial \D$.
 \item $|Q_j(z)| \leq 1$ for all $z \in \D$.
 \item If $|z|=r$, the following recurrence holds:
       \begin{align}
       \label{eq:recurrenceQj}
       1-|Q_{j+1}(z)|^2 & = (1-r^2) + r^2 \frac{(1-|\alpha_j|^2)(1-|Q_{j}(z)|^2)}{|1-\alpha_j Q_j(z)|^2} \ .
       \end{align}
 \item We have the conditional expectation bound, for all $n \geq 0$: 
       $$ r^{2j} |Q_n(z)|^2 \leq \E [|Q_{n+j}(z)|^2 | \Fc_n] \leq 1 \ .$$
\end{itemize}
\end{proposition}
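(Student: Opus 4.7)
The plan is to treat the four items in order, exploiting at every step the Szeg\H{o} recurrence and the rotational invariance of the Verblunsky coefficients.

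For the first two items, I would start by recalling that $\Phi_j^*$ does not vanish on $\overline{\D}$ (this is the statement from Simon's book quoted earlier), so $Q_j$ is holomorphic there. The reverse polynomial identity $\Phi_j^*(z) = z^j \overline{\Phi_j(1/\bar z)}$ immediately gives $|\Phi_j^*(e^{i\theta})| = |\Phi_j(e^{i\theta})|$, hence $|Q_j|\equiv 1$ on $\partial\D$. Since $Q_j$ is a rational function holomorphic on $\overline\D$ with unit modulus on the circle, it is a finite Blaschke product, and the maximum modulus principle then gives $|Q_j(z)|\leq 1$ on $\D$.

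For item (3), the Szeg\H{o} recurrence \eqref{def:matrix_szego} yields
\begin{equation*}
Q_{j+1}(z)\;=\;z\,\frac{Q_j(z)-\overline{\alpha_j}}{1-\alpha_j Q_j(z)},
\end{equation*}
after dividing numerator and denominator by $\Phi_j^*(z)$. Setting $w=Q_j(z)$, $\alpha=\alpha_j$, $r=|z|$, it then suffices to invoke the classical disc automorphism identity
\begin{equation*}
|1-\alpha w|^2-|w-\overline{\alpha}|^2\;=\;(1-|\alpha|^2)(1-|w|^2),
\end{equation*}
which, plugged into $1-|Q_{j+1}|^2=1-r^2|w-\overline\alpha|^2/|1-\alpha w|^2$, gives the announced formula.

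The upper bound in (4) is item (2). For the lower bound, I would first establish the one-step estimate $\E[|Q_{n+1}(z)|^2\mid \Fc_n]\geq r^{2}|Q_n(z)|^2$ and then iterate, using the tower property and the fact that $r^{2}|Q_n(z)|^2$ is $\Fc_n$-measurable. The one-step estimate follows from the explicit formula $Q_{n+1}=z\,T_{\alpha_n}(Q_n)$ with $T_\alpha(w)=(w-\overline\alpha)/(1-\alpha w)$: conditionally on $\Fc_n$ and on $|\alpha_n|=s$, the rotational invariance of $\alpha_n$ plus the Poisson-kernel computation $\frac{1}{2\pi}\int_0^{2\pi}\frac{d\phi}{|1-Rse^{i\phi}|^2}=\frac{1}{1-R^2s^2}$ gives, with $R=|Q_n(z)|$,
\begin{equation*}
\E\bigl[|T_{\alpha_n}(Q_n(z))|^2\,\big|\,\Fc_n,|\alpha_n|=s\bigr]\;=\;\frac{R^2+s^2-2R^2s^2}{1-R^2s^2}\;=\;R^2+\frac{s^2(1-R^2)^2}{1-R^2s^2}\;\geq\;R^2.
\end{equation*}
Multiplying by $r^2$ and taking expectation in $s$ gives the one-step bound; iterating, $\E[|Q_{n+j}|^2\mid \Fc_n]\geq r^2\E[|Q_{n+j-1}|^2\mid\Fc_n]\geq\dots\geq r^{2j}|Q_n|^2$. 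The main (minor) computational nuisance will be the averaging step, but it reduces to a textbook Fourier/Poisson identity, so no serious obstacle is expected.
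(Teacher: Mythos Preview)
Your proof is correct and follows essentially the same route as the paper. The only cosmetic difference is in item (4): the paper takes conditional expectation directly in the recurrence \eqref{eq:recurrenceQj} and, after phase-averaging, reduces to the one-line inequality $(1-|\alpha_j|^2)/(1-|\alpha_j Q_j|^2)\leq 1$, whereas you compute the exact value of $\E[|T_{\alpha_n}(Q_n)|^2\mid\Fc_n,|\alpha_n|]$---both arguments hinge on the same Poisson-kernel identity.
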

\begin{proof}
The first point is due to the fact that
$$ Q_j(z) = z \prod_{\omega, \ \Phi_j(\omega) = 0} \frac{ z - \omega}{1 - \overline{\omega} z} \ ,$$
as a consequence of \eqref{def:Q_j}, and to the fact that all the roots of $\Phi_j$ have modulus strictly smaller than $1$.

The second point is due to the fact that $z \mapsto |Q_j(z)|$ is subharmonic, because $Q_j$ is holomorphic and the absolute value is convex.

For the third point, we  start by the recurrence relation \eqref{def:matrix_szego} and we write:
\begin{align*}
     1 - |Q_{j+1}(z)|^2 
 = & 1 - r^2 \left| \frac{\Phi_{j+1}(z)}{\Phi_{j+1}^*(z)}\right|^2\\
 = & 1 - r^2 \left| \frac{z\Phi_{j}(z) - \overline{\alpha_j} \Phi_{j}^*(z)}
                         { \Phi_{j}^*(z) - \alpha_j z \Phi_{j}(z)} \right|^2\\
 = & 1 - r^2 \left| \frac{ Q_j(z) - \overline{\alpha_j} }
                         { 1 - \alpha_j Q_j (z)         } \right|^2\\
 = & (1-r^2) + r^2 \frac{(1-|\alpha_j|^2)(1-|Q_{j}(z)|^2)}{|1-\alpha_j Q_j(z)|^2} \ .
\end{align*}
For the last point, taking conditional expectation in the recurrence yields:
\begin{align*}
       \E\left( 1-|Q_{j+1}(z)|^2 \ | \Fc_j \right)
 =   & (1-r^2) + r^2 \E\left( \frac{1-|\alpha_j|^2}{1-|\alpha_j Q_j(z)|^2} | \Fc_j \right) (1-|Q_{j}(z)|^2) \\
\leq & (1-r^2) + r^2 (1-|Q_{j}(z)|^2) \\
=    & 1 - r^2 |Q_{j}(z)|^2 \ . 
\end{align*}
The result follows from the previous inequality by induction.
\end{proof}

\subsection{Moment estimates}

\medskip

The following lemma will be useful:
\begin{lemma}
\label{lemma:circleMoment}
For $\Theta$ uniform random variable on $[0, 2\pi]$ and $u \in \D$, we have, for all $\lambda \in \R$,
\begin{align}
\label{eq:circleMoment}
\ \E\left( \left| 1 - e^{i \Theta} u\right|^{-2 \lambda} \right) & =
\sum_{k=0}^\infty  \binom{\lambda+k-1}{k}^2 |u|^{2k} \, 
\end{align}
\begin{align}
\label{eq:circleMomentBound}
    \E\left( \left| 1 - e^{i \Theta} u\right|^{-2 \lambda} \right)
= & 1 + \lambda^2 |u|^{2} + \Oc_\lambda\left( \frac{|u|^{4}}{\left( 1-|u|^2 \right)^{2|\lambda|}}\right)
\end{align}
and  in the case where $|\lambda| \leq 1$,
\begin{align}
\label{eq:circleMomentBound2}
    \E\left( \left| 1 - e^{i \Theta} u\right|^{-2 \lambda} \right)
= & 1 + \lambda^2 |u|^{2} + \Oc \left( \frac{|u|^{4}}{1-|u|^2}\right)
\end{align}

\end{lemma}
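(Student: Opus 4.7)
All three claims will come from a single Taylor expansion, so the plan is to first establish (\ref{eq:circleMoment}) and then extract the two quantitative bounds from it.

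First I would write $|1-e^{i\Theta}u|^{-2\lambda}=(1-e^{i\Theta}u)^{-\lambda}(1-e^{-i\Theta}\bar u)^{-\lambda}$ using the principal branch (valid since for $|u|<1$ both factors have strictly positive real part), and expand each factor through the generalized binomial series
$$(1-z)^{-\lambda}=\sum_{k\geq 0}\binom{\lambda+k-1}{k}z^k,\quad |z|<1.$$
The classical asymptotic $\binom{\lambda+k-1}{k}\sim k^{\lambda-1}/\Gamma(\lambda)$ yields $|\binom{\lambda+k-1}{k}|\ll_\lambda k^{\lambda-1}$, so the double series
$$\sum_{j,k\geq 0}\binom{\lambda+j-1}{j}\binom{\lambda+k-1}{k}\,u^j\bar u^k\, e^{i(j-k)\Theta}$$
is absolutely summable in $(j,k)$, uniformly in $\Theta$. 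Fubini then permits termwise integration against the uniform law on $[0,2\pi]$, and only the diagonal $j=k$ survives, giving (\ref{eq:circleMoment}).

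To obtain (\ref{eq:circleMomentBound}) I would peel off the $k=0,1$ terms (which contribute $1$ and $\lambda^2|u|^2$) and control the tail $T_\lambda(u):=\sum_{k\geq 2}\binom{\lambda+k-1}{k}^2|u|^{2k}$ by comparing its coefficients with those of $(1-|u|^2)^{-2\lambda}=\sum_m\binom{2\lambda+m-1}{m}|u|^{2m}$. When $\lambda>0$, after the index shift $k=m+2$, the asymptotic above gives $\binom{\lambda+m+1}{m+2}^2/\binom{2\lambda+m-1}{m}=\Oc_\lambda(m^{-1})$, hence a uniform-in-$m$ bound by a constant $C_\lambda$; summing then yields $T_\lambda(u)\leq C_\lambda\,|u|^4(1-|u|^2)^{-2\lambda}$. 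When $\lambda\leq 0$, the coefficients are either eventually zero (non-positive integer $\lambda$) or decay like $k^{2\lambda-2}$, so $T_\lambda(u)\ll_\lambda|u|^4$, which is enough because $(1-|u|^2)^{-2|\lambda|}\geq 1$.

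For the sharper estimate (\ref{eq:circleMomentBound2}) when $|\lambda|\leq 1$ I would avoid asymptotics altogether and use the elementary bound
$$\left|\binom{\lambda+k-1}{k}\right|=\frac{|\lambda|}{k!}\prod_{j=1}^{k-1}|\lambda+j|\leq\frac{|\lambda|}{k!}\prod_{j=1}^{k-1}(j+1)=|\lambda|\leq 1,$$
since $|\lambda+j|\leq|\lambda|+j\leq j+1$ for $j\geq 1$. Thus $T_\lambda(u)\leq\lambda^2\sum_{k\geq 2}|u|^{2k}=\lambda^2\,|u|^4/(1-|u|^2)$, which is (\ref{eq:circleMomentBound2}) with an absolute (and in fact explicit) constant.

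There is no serious obstacle; the argument is essentially one Taylor expansion followed by coefficient comparison. The only minor point requiring care is the sign bookkeeping in the Gamma asymptotic when $\lambda$ is a non-positive integer, but in that regime $\binom{\lambda+k-1}{k}$ vanishes for $k$ large and the tail reduces to a polynomial in $|u|^2$, so the estimate is immediate.
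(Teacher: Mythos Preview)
Your proposal is correct and follows essentially the same route as the paper: expand both factors via the generalized binomial series, integrate out the angle to keep only the diagonal, then control the tail $\sum_{k\geq 2}\binom{\lambda+k-1}{k}^2|u|^{2k}$ by coefficient comparison with the expansion of $|u|^4(1-|u|^2)^{-2|\lambda|}$ using the Gamma asymptotic, and in the $|\lambda|\leq 1$ case bound each factor $|{-\lambda}-j|/(1+j)\leq 1$ to get $\binom{\lambda+k-1}{k}^2\leq 1$. The only cosmetic difference is that you split the second estimate explicitly into $\lambda>0$ and $\lambda\leq 0$ (observing that for $\lambda\leq 0$ the tail coefficients are summable outright), whereas the paper treats both signs at once via $|\lambda|$; neither choice affects the argument.
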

\begin{proof}
For \eqref{eq:circleMoment}, we have by series expansion:
\begin{align*}
    \E\left( \left| 1 - e^{i \Theta} u\right|^{-2 \lambda} \right)
= & \E\left( \sum_{k,\ell=0}^\infty \binom{-\lambda}{k} e^{i k \Theta} u^k
								 \overline{ \binom{-\lambda}{\ell} e^{i \ell \Theta} u^{\ell}}
      \right)\\
= & \sum_{k=0}^\infty \binom{-\lambda}{k}^2 |u|^{2k} \\
= & \sum_{k=0}^\infty  \binom{\lambda+k-1}{k}^2 |u|^{2k}.
\end{align*}
The two first terms of the sum are $1$ and $\lambda^2 |u|^2$. 
If $\lambda$ is a nonpositive integer, only finitely many  terms are non-zero. Otherwise, for $k \geq 2$, the coefficient of $|u|^{2k}$ is 
$$\left(\frac{ \Gamma (k + \lambda)}{ \Gamma (\lambda) \Gamma (k+1)} \right)^2
= \Oc_{\lambda} ( k^{2(\lambda - 1)} ),$$
whereas the coefficient of $|u|^{2k}$ in the expansion of $|u|^4 (1 - |u|^2)^{-2 |\lambda|}$ is 
$$(-1)^{k-2} \binom{-2|\lambda|}{k-2} = \binom{ 2 |\lambda| + k-3}{k-2} = \frac{ \Gamma(2 |\lambda| + k-2)}{ \Gamma(2 | \lambda|) \Gamma(k-1)}
\gg_{\lambda} k^{2 |\lambda| -1} \gg_{\lambda} k^{2(\lambda - 1)} $$
for $\lambda \neq 0$. 
This gives bound \eqref{eq:circleMomentBound} for $\lambda \neq 0$, and this bound is obvious for $\lambda = 0$. 

If $|\lambda| \leq 1$, we have 
$$ \binom{-\lambda}{k}^2
= \prod_{j=0}^{k-1} \left( \frac{ |-\lambda - j|}{1+j} \right)^2 \leq 1$$
which gives \eqref{eq:circleMomentBound2}.

\end{proof}

 The following estimates will be useful: 
\begin{proposition}
\label{properties:underQ}
For $\beta> 0$, $j$ large enough depending on $\beta$, and $Q_j = Q_j(r)$, we have  almost surely, for $\Q = \Q_r$: 
\begin{align}
  \label{eq:conditionalQj}
  & \E^\Q\left( 1 - |Q_{j+1}|^2 \Big | \Fc_j \right) \\
\nonumber
= & 1-r^2 + r^2 \left( 1 - |Q_{j}|^2 \right)
           \left( 1 - \frac{2}{\beta(j+1)}\left( 1 - |Q_{j}|^2 \right)
                    + \frac{4}{\beta(j+1)} |Q_{j}|^2 + \Oc \left( \frac{1}{(j+1)^2} \right) \right) \ .
\end{align}
\begin{align}
  \label{eq:conditionalVarQj}
    \Var^\Q\left( 1 - |Q_{j+1}|^2 \Big | \Fc_j \right) 
= & r^4 \left( 1 - |Q_{j}|^2 \right)^2
        \left( \frac{4 |Q_j|^2}{\beta(j+1)}
               + \Oc\left( \frac{1}{(j+1)^2}\right)
        \right) \ .
\end{align}
\begin{align}
  \label{eq:boundmomentorder4}
   \E^\Q\left(( |Q_j|^2 - |Q_{j+1}|^2 )^4\Big | \Fc_j \right) 
= &  \Oc \left( (1-r^2)^4 + \frac{1}{(j+1)^2} \right). 
\end{align}
We also have 
$$ \E^{\Q}\left( - 2 \Re  \log (1 - \alpha_j Q_j )\ | \Fc_{j} \right)
 = \frac{4}{\beta (j+1)} |Q_j|^2 + \Oc \left(\frac{1}{(j+1)^2} \right) \ .$$ 
$$ \Var^{\Q}\left( - 2 \Re \log (1 - \alpha_j Q_j )\ | \Fc_{j} \right)
 = \frac{4}{\beta (j+1)} |Q_j|^2 + \Oc \left(\frac{1}{(j+1)^2} \right) \ .$$ 
 We recall that the implicit constant in $\Oc$ depends only on $\beta$. 
\end{proposition}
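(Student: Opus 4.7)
The five estimates share a common computational strategy. Given any $\Fc_{j+1}$-measurable quantity $X$ that is a function of $\alpha_j$ alone (with the rest of the randomness frozen in $\Fc_j$), identity \eqref{conditionalexpectation} gives
\begin{align*}
\E^\Q[X \mid \Fc_j] = \E^\P\left[X \cdot \frac{1-|\alpha_j Q_j|^2}{|1-\alpha_j Q_j|^2} \,\Big|\, \Fc_j\right].
\end{align*}
Under $\P$, conditionally on $\Fc_j$ and $|\alpha_j|$, the variable $\alpha_j Q_j$ has the law of $|\alpha_j||Q_j| e^{i\Theta}$ with $\Theta$ uniform, so the right-hand side reduces to a phase integral evaluable in closed form via Lemma \ref{lemma:circleMoment}. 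The resulting expression is then expanded in powers of $|\alpha_j|^2$ and integrated against the $\mathrm{Beta}(1,\beta_j)$ density, using $\E^\P[|\alpha_j|^{2k}] = k!/[(\beta_j+1)\cdots(\beta_j+k)] = \Oc(j^{-k})$. Throughout, the bound $|Q_j(r)|^2 \leq r^2 < 1$, which follows inductively from the recurrence \eqref{eq:recurrenceQj} in Proposition \ref{ppty:Qj}, keeps integrands sufficiently away from their singularities.

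For the first identity, the recurrence \eqref{eq:recurrenceQj} expresses $1-|Q_{j+1}|^2$ as $(1-r^2) + r^2(1-|Q_j|^2)(1-|\alpha_j|^2)/|1-\alpha_j Q_j|^2$. The $(1-r^2)$ term is preserved because the Poisson kernel gives $\E^\P[(1-|\alpha_j Q_j|^2)/|1-\alpha_j Q_j|^2 \mid |\alpha_j|, \Fc_j] = 1$. The remaining term becomes $r^2(1-|Q_j|^2) \cdot \E^\P[(1-|\alpha_j|^2)(1+|\alpha_j|^2|Q_j|^2)/(1-|\alpha_j|^2|Q_j|^2)^2 \mid \Fc_j]$ after substituting the closed form $\E_\Theta[|1-we^{i\Theta}|^{-4}] = (1+|w|^2)/(1-|w|^2)^3$ from Lemma \ref{lemma:circleMoment} at $\lambda = 2$. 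Expanding the integrand as $1 + (3|Q_j|^2 - 1)|\alpha_j|^2 + \Oc(|\alpha_j|^4)$ and taking expectation against the Beta density produces $1 - \frac{2}{\beta(j+1)} + \frac{6|Q_j|^2}{\beta(j+1)} + \Oc(1/(j+1)^2)$, which after the elementary rearrangement $-\frac{2}{\beta(j+1)} + \frac{6|Q_j|^2}{\beta(j+1)} = -\frac{2(1-|Q_j|^2)}{\beta(j+1)} + \frac{4|Q_j|^2}{\beta(j+1)}$ matches the claim. The variance \eqref{eq:conditionalVarQj} and the fourth moment \eqref{eq:boundmomentorder4} are obtained by squaring (resp.\ raising to the fourth power) the recurrence and applying the same phase-integral and Beta-expansion machinery; the leading deterministic terms cancel against the square (resp.\ fourth power) of the mean, leaving the variance of $|\alpha_j|^2$ plus the deterministic $(1-r^2)$ fluctuation, which yield the claimed orders.

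For the logarithmic quantities, the key simplification is the closed-form phase integral
\begin{align*}
\int_0^{2\pi} \frac{-\log|1-we^{i\theta}|^2}{|1-we^{i\theta}|^2}\, \frac{d\theta}{2\pi} = \frac{-2\log(1-|w|^2)}{1-|w|^2},
\end{align*}
obtained by expanding $-\log(1-we^{i\theta}) = \sum_{m\geq 1}(we^{i\theta})^m/m$ and $1/|1-we^{i\theta}|^2 = \sum_{k,\ell\geq 0} w^k \bar{w}^\ell e^{i(k-\ell)\theta}$, multiplying, and retaining only the phase-neutral terms. Combined with the density factor $1-|\alpha_j Q_j|^2$ from the change of measure, this identifies $\E^\Q[-2\Re\log(1-\alpha_j Q_j) \mid |\alpha_j|, \Fc_j]$ with the explicit quantity $-2\log(1-|\alpha_j|^2|Q_j|^2)$. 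Expanding $-2\log(1-x) = 2x + \Oc(x^2)$ with $x = |\alpha_j|^2|Q_j|^2$ and integrating against the Beta density produces the leading order $4|Q_j|^2/(\beta(j+1))$ plus $\Oc(1/(j+1)^2)$. The variance is handled in the same way after computing $\E^\Q[(\log|1-\alpha_j Q_j|^2)^2 \mid \Fc_j]$ via a similar series expansion, whose leading order is also $4|Q_j|^2/(\beta(j+1))$; since the square of the mean is $\Oc(1/(j+1)^2)$, it contributes only to the error.

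The principal technical point is ensuring that the $\Oc((j+1)^{-2})$ remainders remain uniform in $r \in (0,1)$ despite the singularities of the rational functions at $|\alpha_j||Q_j|=1$. The safeguard is that $|Q_j(r)|^2 \leq r^2$, so any factor $(1-|\alpha_j|^2|Q_j|^2)^{-m}$ is bounded by $(1-|\alpha_j|^2)^{-m}$; the explicit Beta density then satisfies $\E^\P[|\alpha_j|^{2k}(1-|\alpha_j|^2)^{-m}] = \Oc(j^{-k})$ as soon as $\beta_j > m+1$, which holds for $j$ large enough depending on $\beta$ and absorbs the singularity cleanly.
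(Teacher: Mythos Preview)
Your approach is essentially the paper's: apply the change-of-measure identity \eqref{conditionalexpectation}, integrate out the phase via Lemma \ref{lemma:circleMoment}, and then average the remaining function of $|\alpha_j|^2$ against the Beta law. The only cosmetic difference is that you invoke the exact closed form of the phase integral (e.g.\ $\E_\Theta|1-ue^{i\Theta}|^{-4}=(1+u^2)/(1-u^2)^3$) where the paper uses the two-term expansion \eqref{eq:circleMomentBound}; both routes produce the same leading coefficients. Your uniformity argument via $(1-|\alpha_j Q_j|^2)^{-m}\le(1-|\alpha_j|^2)^{-m}$ is exactly the mechanism the paper relies on.

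One caveat: your treatment of \eqref{eq:conditionalVarQj}, \eqref{eq:boundmomentorder4}, and the logarithmic variance is only a sketch. For the variance you need the second-order term in the expansion of $\E^\P[(1-|\alpha_j|^2)^2(1-|\alpha_j Q_j|^2)/|1-\alpha_j Q_j|^6\mid\Fc_j]$ to recover the precise coefficient $4|Q_j|^2/(\beta(j+1))$, and for the fourth moment the cancellation down to $\Oc((j+1)^{-2})$ is not automatic --- the paper obtains it by computing $\E^\Q[(1-|\alpha_j|^2)^p|1-\alpha_j Q_j|^{-2p}\mid\Fc_j]$ for $p=0,\dots,4$ and forming the binomial alternating sum. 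Your phrase ``the leading deterministic terms cancel against the fourth power of the mean'' understates this; the cancellation occurs among the $\Oc(1/(j+1))$ contributions from all five terms simultaneously. None of this is a conceptual gap, but the bookkeeping should be carried out rather than asserted.
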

\begin{proof}
For the first equation, taking the conditional expectation $\E^\Q( \ \cdot \ | \Fc_j )$ in \eqref{eq:recurrenceQj}, we only have to prove:
$$ \E^\Q \left( \frac{1-|\alpha_j|^2}{|1-\alpha_j Q_j|^2} | \Fc_j \right)
 = 1 - \frac{2}{\beta(j+1)}\left( 1 - |Q_{j}|^2 \right)
     + \frac{4}{\beta(j+1)} |Q_{j}|^2 + \Oc \left( \frac{1}{(j+1)^2} \right) \ .
$$
On the other hand, by using the  rotational invariance of $\alpha_j$ and its independence with 
$\Fc_j$, we get,  for all $\lambda \in \mathbb{R}$,
$$ \E [ |1 - \alpha_j Q_j|^{-2\lambda} \,  | \Fc_j, |\alpha_j|  ]
=  \E \left( |1 - e^{i \Theta} u|^{-2\lambda} \right) $$
with $u = | \alpha_j Q_j|$, and $\Theta$ a uniform random variable as in Lemma \ref{lemma:circleMoment}.

Now, using the change of measure \eqref{conditionalexpectation} and then \eqref{eq:circleMomentBound} for $\lambda=2$ and $u=|\alpha_j Q_j|$, we deduce 
\begin{align*}
   & \E^\Q \left( \frac{1-|\alpha_j|^2}{|1-\alpha_j Q_j|^2} | \Fc_j \right)\\
 = & \E \left( \frac{(1-|\alpha_j|^2)(1-|\alpha_j Q_{j}|^2)}{|1-\alpha_j Q_j|^4} | \Fc_j \right)\\
 = & \E \left( (1-|\alpha_j|^2)(1-|\alpha_j Q_{j}|^2)
               \left( 1 + 4 |\alpha_j Q_j|^2 + \Oc\left( \frac{|\alpha_j Q_j|^4}{(1-|\alpha_j Q_j|^2)^4} \right) 
               \right)  | \Fc_j \right)\\
 = & \E \left( 1-|\alpha_j|^2 -|\alpha_j Q_{j}|^2 + 4 |\alpha_j Q_j|^2  | \Fc_j \right)
     + \Oc\left( \E \frac{|\alpha_j|^4}{(1-|\alpha_j|^2)^4} \right)\\
 = & 1 - \frac{2}{\beta(j+1)}\left( 1 - |Q_{j}|^2 \right)
     + \frac{4}{\beta(j+1)} |Q_{j}|^2
     + \Oc\left( \E \frac{|\alpha_j|^4}{(1-|\alpha_j|^2)^4} \right) \ .
\end{align*}
The first estimate holds as $\E \frac{|\alpha_j|^4}{(1-|\alpha_j|^2)^4} = \Oc( \frac{1}{(j+1)^2})$ for $j$ large enough depending on $\beta$. 

For the second equation, the proof is similar, by taking the variance under $\Q$ conditionally to $\Fc_j$ in  \eqref{eq:recurrenceQj}. 
We only have to prove:
$$ \Var^\Q \left( \frac{1-|\alpha_j|^2}{|1-\alpha_j Q_j|^2} | \Fc_j \right)
 = \frac{4 |Q_j|^2}{\beta(j+1)}
   + \Oc\left( \frac{1}{(j+1)^2}\right)  \ .
$$
The required expansion uses \eqref{eq:circleMomentBound} for $\lambda=3$:
\begin{align*}
   & \Var^{\Q} \left( \frac{1-|\alpha_j|^2}{|1-\alpha_j Q_j|^2} | \Fc_j \right)\\
 = & \E \left( \frac{(1-|\alpha_j|^2)^2(1-|\alpha_j Q_{j}|^2)}{|1-\alpha_j Q_j|^6} | \Fc_j \right)
   - \E^\Q \left( \frac{1-|\alpha_j|^2}{|1-\alpha_j Q_j|^2} | \Fc_j \right)^2\\
 = & \E \left( (1-|\alpha_j|^2)^2 (1-|\alpha_j Q_{j}|^2)
               \left( 1 + 9 |\alpha_j Q_j|^2 + \Oc\left( \frac{|\alpha_j Q_j|^4}{(1-|\alpha_j|^2)^6} \right) 
               \right)  | \Fc_j \right)\\
   & \quad - \E^\Q \left( \frac{1-|\alpha_j|^2}{|1-\alpha_j Q_j|^2} | \Fc_j \right)^2\\   
 = & \E \left( 1-2|\alpha_j|^2 -|\alpha_j Q_{j}|^2 + 9|\alpha_j Q_j|^2  | \Fc_j \right) \\
   & \quad 
     + \Oc\left( \E \frac{|\alpha_j|^4}{(1-|\alpha_j|^2)^6} \right)
     - \E^\Q \left( \frac{1-|\alpha_j|^2}{|1-\alpha_j Q_j|^2} | \Fc_j \right)^2\\
 = & 1 - \frac{4}{\beta(j+1)}\left( 1 - |Q_{j}|^2 \right)
     + \frac{12}{\beta(j+1)} |Q_{j}|^2\\
   & \quad
     + \Oc\left( \E \frac{|\alpha_j|^4}{(1-|\alpha_j|^2)^6} \right) 
     - \E^\Q \left( \frac{1-|\alpha_j|^2}{|1-\alpha_j Q_j|^2} | \Fc_j \right)^2\\
 = & \frac{4}{\beta(j+1)} |Q_{j}|^2 + \Oc \left( \frac{1}{(j+1)^2} \right) \ , 
\end{align*}
which gives the desired estimate for $j$ large enough depending on $\beta$. 

For the fourth moment, we first deduce from  \eqref{eq:recurrenceQj}: 
\begin{align*}
    |Q_j|^2 - |Q_{j+1}|^2 
= & (1-r^2) + (1 - |Q_j|^2) \left(  \frac{r^2 (1 - |\alpha_j|^2)}{ |1 - \alpha_j Q_j|^2} - 1 \right)\\
= & |Q_j|^2 (1-r^2)
  + r^2 (1 - |Q_j|^2) \left(  \frac{(1 - |\alpha_j|^2)}{ |1 - \alpha_j Q_j|^2} - 1 \right) \ .
\end{align*}
Since $|Q_j|^2 \in [0,1]$, the fourth moment of the first term is smaller than $(1-r^2)^4$ and for the second term, it is enough to get the estimate
\begin{equation}
\E^{\Q} \left[ \left(\frac{ 1 - |\alpha_j|^2}{|1 - \alpha_j Q_j|^2}  - 1 \right)^4 \, \big| \Fc_j \right] 
= \Oc \left( \frac{1}{(j+1)^2} \right). \label{4L}
\end{equation}

We have, for $p \in \{0,1,2,3,4\}$, thanks to \eqref{eq:circleMomentBound},
\begin{align*}
   & \E^\Q \left( \frac{(1-|\alpha_j|^2)^p}{|1-\alpha_j Q_j|^{2p}} | \Fc_j \right)\\
 = & \E \left( \frac{(1-|\alpha_j|^2)^p(1-|\alpha_j Q_{j}|^2)}{|1-\alpha_j Q_j|^{2p+2}} | \Fc_j \right)\\
 = & \E \left( (1-|\alpha_j|^2)^p (1-|\alpha_j Q_{j}|^2)
               \left( 1 + (p+1)^2 |\alpha_j Q_j|^2 + \Oc\left( \frac{|\alpha_j Q_j|^4}{(1-|\alpha_j|^2)^{2p+2}} \right) 
               \right)  | \Fc_j \right)\\
 = & \E \left( 1- p |\alpha_j|^2 -|\alpha_j Q_{j}|^2 + (p+1)^2 |\alpha_j Q_j|^2  | \Fc_j \right)
     + \Oc\left( \E \frac{|\alpha_j|^4}{(1-|\alpha_j|^2)^{2p+2}} \right)\\
 = & 1 - \frac{2p}{\beta(j+1)}
     + \frac{2((p+1)^2 - 1) }{\beta(j+1)} |Q_{j}|^2
     + \Oc\left( \frac{1}{(j+1)^2} \right) \ .
\end{align*}
Multiplying this estimate respectively by $1, -4, 6, -4, 1$ for $p$ equal to $0,1,2,3,4$, and adding the terms, we get the desired bound for $j$ large enough depending on $\beta$. 

For the last equations that concern the logarithm, we get 
\begin{align*}
    \E^{\Q}\left( -\log (1 - \alpha_j Q_j) \ | \Fc_{j} \right)
= & -\E\left( \frac{1-|\alpha_j Q_j |^2}{|1-\alpha_j Q_j|^2} 
                    \log (1 - \alpha_j Q_j ) | \Fc_{j} \right)
\end{align*}
If we condition on $\Fc_j$ and $|\alpha_j|$, the expectation is, for $u = |\alpha_j Q_j|$, 
\begin{align*}
& \frac{1-u^2}{2 \pi} \int_0^{ 2 \pi} (1 - u e^{i \theta})^{-1} (1 - u e^{- i \theta})^{-1} \log ( 1- u e^{i \theta}) d \theta
\\ & = - \frac{1-u^2}{2 \pi}   \int_0^{ 2 \pi}  \sum_{k, \ell  \geq 0, \, m \geq 1} \frac{u^{k+ \ell + m}}{m}  e^{ i \theta( k - \ell + m)} d \theta
= - (1-u^2) \sum_{k \geq 0, \, m \geq 1} \frac{u^{2(k+m)}}{m}
\\ & = - (1 - u^2) \sum_{k \geq 0}  u^{2k} \sum_{m \geq 1} \frac{u^{2m}}{m} = \log (1 - u^2).
\end{align*}
We deduce 
\begin{align*}
    \E^{\Q}\left( -\log (1 - \alpha_j Q_j) \ | \Fc_{j} \right) 
= & \E\left(   - \log (1 - |\alpha_j Q_j |^2) | \Fc_{j} \right)\\
= & \E\left(  |\alpha_j Q_j |^2 | \Fc_{j} \right)
    + \Oc \left( \E \frac{ |\alpha_j|^4}{1- |\alpha_j|^2} \right) \\
= & \frac{2}{\beta(j+1)} |Q_j|^2 + \Oc \left( \frac{1}{(j+1)^2} \right)
\end{align*}
for $j$ large enough depending on $\beta$. 
 
 Let us now estimate the variance. We get 
 $$E^{\Q}\left( (- 2 \Re \log (1 - \alpha_j Q_j) )^2 \ | \Fc_{j} \right) 
 = \E\left( \frac{1-|\alpha_j Q_j |^2}{|1-\alpha_j Q_j|^2} 
                    (- 2 \Re \log (1 - \alpha_j Q_j) )^2| \Fc_{j} \right).$$
                   If we condition on $|\alpha_j|$, we obtain 
  \begin{align*}
& \frac{1-u^2}{2 \pi} \int_0^{ 2 \pi} (1 - u e^{i \theta})^{-1} (1 - u e^{- i \theta})^{-1}  [-\log ( 1- u e^{i \theta}) - \log (1 - u e^{-i \theta})]^2 d \theta
\\ & =  \frac{1-u^2}{2 \pi}   \int_0^{ 2 \pi}  \sum_{k \geq 0} u^k e^{ i \theta k }  \sum_{\ell \geq 0} u^{\ell} e^{- i \theta \ell } 
\left(\sum_{m \in \mathbb{Z} \backslash \{0\}} \frac{u^{|m|}}{|m|} e^{i \theta m} \right)^2
d \theta
\\ & = (1-u^2) \sum_{k, \ell \geq 0, \, m, p  \in \mathbb{Z} \backslash \{0\}}
\frac{u^{k + \ell + |m| + |p|}}{|mp|} \mathds{1}_{k - \ell + m + p = 0}
\end{align*}
 If we fix $m$ and $p$, we prescribe the difference $k - \ell = -m -p$. 
 The possible values for $k+ \ell$ are then $|m+p| + 2r$ for $r  = \min(k, \ell) \geq 0$. 
 Hence, we get 
$$
  (1-u^2) \sum_{r \geq 0} u^{2r} \sum_{m, p  \in \mathbb{Z} \backslash \{0\}}\frac{u^{|m+p|+ |m| + |p|}}{|mp|}
  = \sum_{m, p  \in \mathbb{Z} \backslash \{0\}}\frac{u^{|m+p|+ |m| + |p|}}{|mp|}
  = 2 \sum_{ \substack{m \geq 1 \\ p  \in \Z \backslash \{0\}}} \frac{u^{|m+p|+ m + |p|}}{m|p|} \ .
$$
 If we split the sum in function of the sign of $p$ and then isolate the terms for $(m,p) = (1,1), (1,2), (2,1)$ in the second sum, we get after 
 minoring $2\max(m,p)$ by $m + p$: 
 $$ 2\sum_{m, p \geq 1} \frac{u^{2(m+p)}}{ mp} 
  + 2 \sum_{m, p \geq 1}  \frac{u^{2 \max(m,p)}}{ mp}
  =  \Oc \left(\sum_{m, p \geq 1} u^{2m} u^{2p} 
\right) + 2 u^2 + 2 u^4 + \Oc \left( \sum_{m,p \geq 2} u^{m} u^p \right),
   $$
   which gives 
   $$2 u^2 + \Oc \left( \frac{u^4}{(1-u)^2} \right) = 2 u^2 + \Oc \left( \frac{u^4}{(1-u^2)^2} \right).$$
 We deduce 
 $$E^{\Q}\left( (- 2 \Re \log (1 - \alpha_j Q_j) )^2 \ | \Fc_{j} \right) 
 = 2 \E\left(  |\alpha_j Q_j |^2) | \Fc_{j} \right)
+ \Oc \left( \E \frac{ |\alpha_j|^4}{(1- |\alpha_j|^2)^2} \right)
$$ $$= \frac{4}{\beta(j+1)} |Q_j|^2 + \Oc \left( \frac{1}{(j+1)^2} \right)$$
for $j$ large enough depending on $\beta$.
Subtracting the square of previous estimate of  the expectation gives the desired bound for  the variance. 

\end{proof}

\subsection{Bounds of useful quantities related to \texorpdfstring{$(Q_k)_{k \geq 0}$}{Qk}}

We recall the expressions, available for $N \geq n$: 
$$\omega_{r,n,N} :=  \frac{2}{\beta}\sum_{k=n}^{N-1} \frac{|Q_k(r)|^2 - r^{2k+2}}{k+1}$$
and 
$$\rho_{r,n,N} := \sum_{k=n}^{N-1} \left( - \log \left( \frac{1 - |\alpha_k Q_k(r)|^2}{1 - |\alpha_k|^2} \right)
+ \frac{2}{\beta} \frac{1 - |Q_k(r)|^2}{k+1} \right).$$

The analysis of these random variables is intimately related to the following result: 
\begin{proposition}
\label{proposition:boundBox}
For $r \in (0,1)$, the series:
$$ \sum_{k \geq 0} \frac{|Q_{k}(r)|^2}{k+1} $$
converges $\Q$-almost surely. 

Moreover, if for every $A>0$, we define the index $A_{r,n}  =  \max(n, \lfloor A(1-r^2)^{-1} \rfloor)$ and:
$$ \square_{r,n}^{(A, \infty)} := \sum_{k \geq A_{r,n}} \frac{|Q_{k+1}(r)|^2}{k+2},$$
then we have, for all $p \in \R$:
$$
     \E^\Q\left( e^{p \square_{r,n}^{(A, \infty)}} | \Fc_n \right)
\leq \exp\left( \Oc_p ( A^{-1} )\right) 
$$
almost surely.
\end{proposition}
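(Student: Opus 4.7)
\medskip

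\noindent\textbf{Plan of proof.} Both assertions hinge on the single recurrence from Proposition~\ref{properties:underQ}, which may be recast in the convenient form
\[
\E^\Q(|Q_{k+1}(r)|^2 \,|\, \Fc_k) \;=\; r^2\,|Q_k(r)|^2 + \Oc\!\left(\tfrac{1}{k+1}\right)
\]
valid for $k$ large enough (with an implicit constant depending only on $\beta$), obtained by subtracting \eqref{eq:conditionalQj} from $1$. For the first assertion, I would iterate the inequality to get $\E^\Q(|Q_k(r)|^2) \lesssim r^{2k} + (1-r^2)^{-1}(k+1)^{-1}$ (splitting the convolution sum $\sum_{j<k} r^{2(k-1-j)}/(j+1)$ at $j=k/2$, say), and then sum $\E^\Q(|Q_k|^2)/(k+1)$ against $k$, which is finite for any fixed $r<1$. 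By Tonelli, $\sum_k |Q_k(r)|^2/(k+1)$ is $\Q$-integrable, hence $\Q$-a.s.\ finite.

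\medskip

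\noindent For the second assertion, the case $p\leq 0$ is trivial since $\square_{r,n}^{(A,\infty)} \geq 0$, so $\E^\Q(e^{p\square}\,|\,\Fc_n) \leq 1$. For $p>0$, I would build a multiplicative supermartingale under $\Q$. Introduce the deterministic weights
\[
q_k \;:=\; pr^2 \sum_{j\geq k} \frac{r^{2(j-k)}}{j+2}, \qquad \lambda_k := \frac{p}{k+2}+q_{k+1},
\]
so that the telescoping identity $q_k - r^2 q_{k+1} = pr^2/(k+2)$ yields the exact algebraic cancellation $r^2\lambda_k = q_k$. The geometric summation gives $q_k \leq pr^2\big((k+2)(1-r^2)\big)^{-1}$; for $k\geq A_{r,n}\geq A(1-r^2)^{-1}$, this implies $q_k,\lambda_k = \Oc_p(1/A)$ uniformly. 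Consider
\[
U_N \;:=\; \exp\!\Big( p \sum_{k=A_{r,n}}^{N-1} \frac{|Q_{k+1}(r)|^2}{k+2} \;+\; q_N\,|Q_N(r)|^2 \Big).
\]
Applying the convexity bound $e^{\lambda v}\leq 1+(e^\lambda-1)v$ on $[0,1]$ to $v_{N+1}:=|Q_{N+1}(r)|^2$ with $\lambda = \lambda_N$, together with the recurrence above and the identity $r^2\lambda_N=q_N$, the dominant $q_N v_N$ term cancels and one obtains
\[
\E^\Q(U_{N+1}\,|\,\Fc_N) \;\leq\; U_N\,\Big(1 + \Oc_p\!\big(\tfrac{\lambda_N}{N+1}\big) + \Oc_p\!\big(\lambda_N^2\,|Q_N|^2\big)\Big).
\]
Taking logs and summing: $\sum_{N\geq A_{r,n}} \lambda_N/(N+1) \leq C/(1-r^2)\cdot (1-r^2)/A \leq C/A$, and similarly the second piece is absorbed using $\lambda_N^2 \leq \lambda_N\cdot Cp/A$ and the a.s.\ bound on $\sum v_N/(N+1)$ from Part~1. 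After verifying uniform integrability of $U_N$ as $N\to\infty$ (which follows from the exponential moment bounds on $\log\Phi_n^*$ already established in Proposition~\ref{proposition31}), the supermartingale property gives $\E^\Q(Y_\infty\,|\,\Fc_{A_{r,n}}) \leq \exp(q_{A_{r,n}}|Q_{A_{r,n}}|^2)\cdot\exp(\Oc_p(1/A)) \leq \exp(\Oc_p(1/A))$, whence the conclusion by conditioning on $\Fc_n$.

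\medskip

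\noindent\textbf{Main obstacle.} The tight spot is the second-order remainder $\lambda_N^2 v_N$ from the Taylor expansion. Crude estimates using $v_N\leq 1$ and $\lambda_N \leq Cp/(N+2)(1-r^2)$ would sum to $\Oc(1/((1-r^2)A))$, which diverges as $r\to 1^-$. The remedy is to exploit simultaneously (i) the uniform bound $\lambda_N = \Oc_p(1/A)$ valid because we only sum from $A_{r,n}$ onwards, and (ii) the fact that the factor $v_N$ itself is controlled on average by the series $\sum v_N/(N+1)$, whose expectation is $\Oc(1/A)$ in the relevant range (effectively Part~1 applied to the tail). Making these two facts interact cleanly — so that the random factor $v_N$ is accounted for without inflating the final bound by $(1-r^2)^{-1}$ — is the genuine technical content of the argument.
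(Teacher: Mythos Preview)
Your treatment of the first assertion is correct and in fact gives a slightly stronger conclusion ($L^1(\Q)$-integrability of the series) than the paper, which instead identifies $\sum_j |Q_j|^2/(j+1)$ with the bracket of the martingale $(\log\Phi_k^*(r))_{k\geq 0}$ and uses its $L^2(\P)$-boundedness together with the equivalence $\P\sim\Q$.

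For the second assertion, your supermartingale scheme with the weights $q_k$ is a reasonable alternative to the paper's method, and the algebraic identity $r^2\lambda_k=q_k$ does kill the first-order drift exactly. The gap is precisely where you locate it, but your proposed remedy does not close it. Bounding $\lambda_N^2 v_N\leq (Cp/A)\lambda_N v_N$ and then $\lambda_N\leq Cp/((N+2)(1-r^2))$ gives
\[
\sum_{N\geq A_{r,n}}\lambda_N^2 v_N \;\leq\; \frac{(Cp)^2}{A(1-r^2)}\sum_{N\geq A_{r,n}}\frac{v_N}{N+2}
\;=\;\frac{(Cp)^2}{A(1-r^2)}\,\square_{r,n}^{(A,\infty)},
\]
which is the very quantity you are trying to bound, multiplied by a factor that blows up as $r\to 1^-$. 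Invoking the first-moment bound $\E^\Q[\square|\Fc_n]=\Oc(1/A)$ does not help, because the correction factor sits \emph{inside} the same expectation as $U_N$ and cannot be decoupled by a simple H\"older argument without reintroducing the $(1-r^2)^{-1}$ loss.

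The missing ingredient is the conditional \emph{variance} estimate \eqref{eq:conditionalVarQj}, which you never use. Writing $v_{N+1}=\E^\Q(v_{N+1}|\Fc_N)+\xi_{N+1}$ with $\E^\Q(\xi_{N+1}^2|\Fc_N)\ll 1/(N+1)$ and $|\xi_{N+1}|\leq 1$, the drift part is absorbed exactly by your identity $r^2\lambda_N=q_N$, and the fluctuation contributes $\exp\big(\Oc(\lambda_N^2/(N+1))\big)$ by a standard Chernoff bound. Now $\sum_{N\geq A_{r,n}}\lambda_N^2/(N+1)\ll (1-r^2)^{-2}\sum_{N\geq A_{r,n}}(N+1)^{-3}\ll A^{-2}$ uniformly in $r$, and the argument closes. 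This is essentially what the paper does, but packaged differently: it first writes the self-consistent identity
\[
\square_{r,n}^{(A,\infty)}=\Oc(1/A)-(1-r^2)^{-1}\sum_{k\geq A_{r,n}}\frac{(1-|Q_{k+1}|^2)-\E^\Q(1-|Q_{k+1}|^2|\Fc_k)}{k+2},
\]
so that $\square$ is a deterministic $\Oc(1/A)$ plus a martingale with increments bounded by $1$ and bracket $\ll(1-r^2)^{-2}(k+1)^{-3}$; the exponential moment then follows from a one-line Chernoff estimate. Both routes ultimately rest on the same variance bound---your convexity inequality $e^{\lambda v}\leq 1+(e^\lambda-1)v$ is too crude because it throws away the smallness of the conditional variance.
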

\begin{proof}
In this proof, we again write $Q_k$ for $Q_k(r)$ in order to simplify the notation. 
We know that $(\log \Phi_k^*(r))_{k \geq 0}$ is a $(\F,\P)$-martingale, bounded in $L^2(\Omega, \mathcal{B}, \P)$ since it has bounded exponential moments, and then the expectation of its bracket is bounded. In particular, the bracket is $\P$-almost surely finite, and then $\Q$-almost surely finite since $\P$ and $\Q$ are equivalent measures. Now, since
$$ \log \Phi_k^*(r)
 = \sum_{j=0}^{k-1} \log\left( 1 - \alpha_j Q_j \right), $$
the bracket is 
$$ \langle \log \Phi_\cdot^*(r) \rangle_k
 = \sum_{j=0}^{k-1} \E\left( \left|\log\left( 1 - \alpha_j Q_j \right) \right|^2 \ | \ \Fc_j \right)
 = \sum_{j=0}^{k-1} \E \left( \sum_{m = 1}^{\infty} \frac{ |\alpha_j Q_j|^{2m}}{m^2} \ | \ \Fc_j \right)$$
 by rotational invariance of $\alpha_j$ and independence of $\alpha_j$ and $\Fc_j$, and then 
 $$\langle \log \Phi_\cdot^*(r) \rangle_k
 = \frac{2}{\beta} \sum_{j=0}^{k-1}   \left(\frac{ |Q_j|^2}{j+1} + \Oc (\E [ |\alpha_j|^4]) \right)
  =\Oc(1) +   \frac{2}{\beta} \sum_{j=0}^{k-1} \frac{ |Q_j |^2}{j+1} \ .$$
  Hence, the last sum is almost surely bounded when $k$ varies, for fixed $r<1$. 

\medskip

Now, let us bound $\square_{r,n}^{(A, \infty)}$. We start by proving the following equation: 
\begin{align}
\label{eq:selfConsistent_result}
  & (1-r^2)^{-1} \square_{r,n}^{(A, \infty)} \\
= & (1-r^2)^{-1} \sum_{k=A_{r,n}}^\infty \frac{|Q_{k+1}|^2 - \E^\Q\left( |Q_{k+1}|^2 | \Fc_k \right)}{k+2}
    + r^2 (1-r^2)^{-1} \square_{r,n}^{(A, \infty)} + \Oc \left(\frac{1}{A} \right).
\nonumber
\end{align}
In order to see that, we write:
\begin{align}
\label{equationsquare}
    (1-r^2)^{-1} \square_{r,n}^{(A, \infty)}
= & (1-r^2)^{-1} \sum_{k=A_{r,n}}^\infty \frac{|Q_{k+1}|^2 - \E^\Q\left( |Q_{k+1}|^2 | \Fc_k \right)}{k+2}\\
  &  + (1-r^2)^{-1} \sum_{k=A_{r,n}}^\infty \frac{\E^\Q\left( |Q_{k+1}|^2 | \Fc_k \right)}{k+2} \ ,
\nonumber
\end{align}
and from \eqref{eq:conditionalQj}, 
$$
  \frac{\E^\Q\left( |Q_{k+1}|^2 | \Fc_k \right)}{k+2}
= \frac{r^2 |Q_{k}|^2 }{k+1} + \Oc\left( \frac{1}{(k+1)^2} \right) \ .
$$
Then the combination of the two previous equations yields:
\begin{align*}
  & (1-r^2)^{-1} \square_{r,n}^{(A, \infty)} \\
= & (1-r^2)^{-1} \sum_{k=A_{r,n}}^\infty \frac{|Q_{k+1}|^2 - \E^\Q\left( |Q_{k+1}|^2 | \Fc_k \right)}{k+2}
    + r^2 (1-r^2)^{-1} \square_{r,n}^{(A, \infty)}
    \\  & + \Oc\left( \frac{r^2(1-r^2)^{-1}}{A_{r,n} + 1} + (1-r^2)^{-1} \sum_{k=A_{r,n}}^\infty \frac{1}{(k+1)^2} \right)\\
= & (1-r^2)^{-1} \sum_{k=A_{r,n}}^\infty \frac{|Q_{k+1}|^2 - \E^\Q\left( |Q_{k+1}|^2 | \Fc_k \right)}{k+2}
    + r^2 (1-r^2)^{-1} \square_{r,n}^{(A, \infty)}
    + \Oc\left( \frac{(1-r^2)^{-1} }{A_{r,n}+1} \right) \\
= & (1-r^2)^{-1} \sum_{k=A_{r,n}}^\infty \frac{|Q_{k+1}|^2 - \E^\Q\left( |Q_{k+1}|^2 | \Fc_k \right)}{k+2}
    + r^2 (1-r^2)^{-1} \square_{r,n}^{(A, \infty)}
    + \Oc\left( \frac{1}{A} \right) \ ,
\end{align*}
which is \eqref{eq:selfConsistent_result}.

\medskip

Rearranging this equation, we get 
\begin{align*}
    \square_{r,n}^{(A, \infty)}
= & \Oc\left( \frac1A \right) - (1-r^2)^{-1} \sum_{k=A_{r,n}}^\infty \frac{1-|Q_{k+1}|^2 - \E^\Q\left( 1-|Q_{k+1}|^2 | \Fc_k \right)}{k+2}\\
= & \Oc\left( \frac1A \right) - \sum_{k=A_{r,n}}^\infty \Delta M_k \ ,
\end{align*}
where $\Delta M_k$ are martingale differences, bounded by $1$. Estimating the increments of the bracket from \eqref{eq:conditionalVarQj} yields:
$$ \E^\Q\left( (\Delta M_k)^2 | \Fc_k \right) \ll \frac{(1-r^2)^{-2}}{(k+1)^3} \ .$$
Hence, the bracket of the corresponding martingale is bounded by 
$$ \langle M \rangle_\infty \ll \sum_{k \geq A_{r,n}} \frac{(1-r^2)^{-2}}{(k+1)^3} \ll_A 1 $$
and the martingale converges in $L^2(\Omega, \mathcal{B},  \Q)$. In order to bound exponential moments, we use the following variant of conditional Chernoff bounds. For all $p \in \R$:
\begin{align*}
         \E^\Q\left( e^{p \Delta M_{k}} | \Fc_k \right)
\leq \ & \E^\Q\left( 1 + p \Delta M_{k} + \half \left( p \Delta M_{k} \right)^2 e^{|p \Delta M_{k}|} | \Fc_k \right)\\
\leq \ & \E^\Q\left( 1 + \half \left( p \Delta M_{k} \right)^2 e^{|p|} | \Fc_k \right)\\
\leq \ & 1 + \Oc \left( \frac{p^2 (1-r^2)^{-2} e^{|p|}}{2(k+1)^3}  \right)\\
\leq \ & \exp\left( \Oc \left( \frac{p^2 (1-r^2)^{-2} e^{|p|}}{2(k+1)^3} \right) \right)
\end{align*}
In the end, by the tower property of conditional expectation, 
and the fact that $A_{r,n} \geq n$, we get 
$$
     \E^\Q\left( e^{p \sum_{k=A_{r,n}}^\infty \Delta M_k} | \Fc_n\right)
\leq \exp\left( \Oc \left( \half p^2 e^{|p|} \sum_{k \geq A_{r,n}} \frac{(1-r^2)^{-2} }{(k+1)^3} \right) \right)
\leq \exp\left( \Oc (p^2 e^{|p|} A^{-2} )\right). 
$$
Therefore,
$$
     \E^\Q\left( e^{p \square_{r,n}^{(A, \infty)}} | \Fc_n \right)
\leq \exp\left( \Oc ( p A^{-1} + p^2 e^{|p|} A^{-2} )\right)
   = \exp\left( \Oc_p ( A^{-1} )\right).
$$
\end{proof}

The main result of this subsection is the following: 
\begin{proposition}
\label{proposition:boundOmegaRho}
Almost surely, under $\Q$, $\omega_{r,n,N}$ and $\rho_{r,n,N}$ converge when $N \rightarrow \infty$, respectively to 
$$\omega_{r,n} =  \frac{2}{\beta}\sum_{k=n}^{\infty} \frac{|Q_k(r)|^2 - r^{2k+2}}{k+1}$$
 and 
 $$\rho_{r,n} := \sum_{k=n}^{\infty} \left( - \log \left( \frac{1 - |\alpha_k Q_k(r)|^2}{1 - |\alpha_k|^2} \right)
+ \frac{2}{\beta} \frac{1 - |Q_k(r)|^2}{k+1} \right).$$
Moreover, 
for all $p \geq 0$,  there exists $K > 0$, depending only on $p$ and $\beta$, such that for all $r \in (0,1)$, 
$$\E^{\Q} [ e^{p \, \omega_{r,n}} | \Fc_n ] \leq K, \; 
\E^{\Q} [ e^{p \rho_{r,n}} | \Fc_n ] \leq K$$
a.s., and if $\varepsilon > 0$, and $L$ is a compact subset of the $n$-th power of the open unit disc, then there exists $\eta$, depending on $ \beta, \varepsilon, n, L$ and $r$, 
and tending to zero when $r \rightarrow 1-$ and the other parameters are fixed, such that 
$$\Q [ |\rho_{r,n}| > \varepsilon | \Fc_n]  \leq \eta$$
a.s. on the event where $(\alpha_0, \dots, \alpha_{n-1}) \in L$. 
In particular, for $n = 0$, we get that $\omega_{r,0}$ and $\rho_{r,0}$ have bounded positive exponential moments
and $\rho_{r,0}$ converges to $0$ in probability when $r$ tends to $1$ from below. 
\end{proposition}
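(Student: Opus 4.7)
The plan is to establish each of the four assertions in turn---$\Q$-almost sure convergence of $\omega_{r,n,N}$ and $\rho_{r,n,N}$, the two uniform positive exponential moment bounds, and the convergence in probability of $\rho_{r,n}$ on $\mathcal{A}_L$---by combining Proposition \ref{properties:underQ} for the one-step conditional estimates with Proposition \ref{proposition:boundBox} for the tail control of the $Q_k$'s.

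For convergence, $\omega_{r,n,N}$ splits as $\frac{2}{\beta}\sum_{k=n}^{N-1}|Q_k(r)|^2/(k+1)$ minus $\frac{2}{\beta}\sum_{k=n}^{N-1}r^{2k+2}/(k+1)$, where the first series converges $\Q$-a.s.\ by Proposition \ref{proposition:boundBox} and the second converges deterministically. For $\rho_{r,n,N}$, I would perform a Doob decomposition under $\Q$: writing $\rho_{r,n,N} = M_{r,n,N} + A_{r,n,N}$ with $A$ the compensator of the increments $\xi_k$, the crucial observation is that $\xi_k$ depends only on $|\alpha_k|$, whose conditional law given $\Fc_k$ is the same under $\P$ and $\Q$ (the Radon--Nikodym factor integrates to $1$ over the angular variable, cf.\ the derivation of \eqref{conditionalexpectation}). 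Combining this with the fact that $-\log(1-|\alpha_k|^2)$ is exponentially distributed with rate $\beta_k$ and a Taylor expansion of $\E^\P[-\log(1-|\alpha_k|^2|Q_k|^2)|\Fc_k]$ to order $1/\beta_k^2$ gives the cancellation $\E^\Q[\xi_k|\Fc_k] = O(1/k^2)$ uniformly in $|Q_k|^2 \in [0,1]$, and similarly $\Var^\Q[\xi_k|\Fc_k] = O(1/k^2)$; the compensator and the bracket are therefore summable, so $M$ converges in $L^2(\Q)$ and $\rho_{r,n,N}$ converges $\Q$-a.s.

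For the positive exponential moment of $\omega$, I would use the elementary bound $|Q_k(r)|^2 - r^{2k+2} \leq r^2(1-r^{2k}) \leq 2(k+1)(1-r^2)$ to estimate the head by $\sum_{k=n}^{A_{r,n}}(\cdots)/(k+1) \leq 2A+2$ uniformly in $n$, the tail being $\square_{r,n}^{(A,\infty)}$; taking $A=1$ and applying the bound $\E^\Q[e^{q\square_{r,n}^{(A,\infty)}}|\Fc_n] \leq e^{O_q(1/A)}$ of Proposition \ref{proposition:boundBox} with $q = 2p/\beta$ yields $\E^\Q[e^{p\omega_{r,n}}|\Fc_n] \leq K_p$. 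For $\rho$, the key computation is $\E^\Q[e^{p\xi_k}|\Fc_k] = \E^\P[Y^p|\Fc_k]\,e^{2p(1-|Q_k|^2)/(\beta(k+1))}$ with $Y = (1-|\alpha_k|^2)/(1-|\alpha_k|^2|Q_k|^2) \in (0,1]$; the Beta integral gives
\begin{equation*}
\E^\P[Y^p|\Fc_k] \;=\; \frac{\beta_k}{\beta_k+p}\,{}_2F_1\!\left(p,\,1;\,p+\beta_k+1;\,|Q_k|^2\right) \;=\; 1 - \frac{p(1-|Q_k|^2)}{\beta_k} + O(1/k^2),
\end{equation*}
uniformly in $|Q_k|^2$, and the factor $e^{p(1-|Q_k|^2)/\beta_k}$ cancels the linear term exactly, leaving $\E^\Q[e^{p\xi_k}|\Fc_k] \leq 1 + C_p/k^2$. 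Finitely many small indices (where $\beta_k \lesssim p$) are absorbed using the crude bound $Y \leq 1$, so iterated conditioning gives $\E^\Q[e^{p\rho_{r,n}}|\Fc_n] \leq \prod_{k \geq n}(1+C_p/k^2) \leq K_p$ uniformly in $r$ and $n$.

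For the convergence in probability of $\rho_{r,n}$ on $\mathcal{A}_L$ as $r\to 1^-$, I would refine the above to show that both $\E^\Q[\xi_k|\Fc_k]$ and $\Var^\Q[\xi_k|\Fc_k]$ carry an extra factor $(1-|Q_k|^2)$ beyond the $1/k^2$ decay (both vanishing at $|Q_k|^2=1$), which gives
\begin{equation*}
\E^\Q[\rho_{r,n}^2|\Fc_n] \;\ll\; \sum_{k \geq n}\frac{(1-|Q_k(r)|^2)^2}{(k+1)^2}
\end{equation*}
up to lower-order contributions. Since each $Q_k$ is a Blaschke product with modulus $1$ on $\partial\D$, $|Q_k(r)|^2 \to 1$ $\Q$-a.s.\ as $r \to 1^-$, and on the compact event $\mathcal{A}_L$ this convergence is uniform for the finitely many indices $k < n$; dominated convergence with dominating series $\sum 1/(k+1)^2$ yields $\E^\Q[\rho_{r,n}^2|\Fc_n] \to 0$ uniformly on $\mathcal{A}_L$, and conditional Markov's inequality provides the required $\eta$. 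The principal technical obstacle is securing the exact cancellation $\E^\Q[e^{p\xi_k}|\Fc_k] = 1 + O_p(1/k^2)$ uniformly in $|Q_k|^2$: the naive bound $e^{2p/(\beta(k+1))}$ fails to sum, so the precise hypergeometric expansion and its exact compensation with the exponential factor $e^{2p(1-|Q_k|^2)/(\beta(k+1))}$ built into $\xi_k$ are indispensable for any uniform control.
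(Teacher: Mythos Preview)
Your treatment of $\omega_{r,n}$ (convergence and exponential moments via the head/tail split at $A_{r,n}$ and Proposition~\ref{proposition:boundBox}) is essentially the paper's argument.

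For $\rho_{r,n}$, your route differs from the paper's. The paper splits $\rho_{r,n,N}=E_1+E_2$ with
\[
E_1=-\sum_{j=n}^{N-1}\Bigl(-\log(1-|\alpha_j|^2)-\tfrac{2}{\beta(j+1)}\Bigr)(1-|Q_j|^2),\quad
E_2=-\sum_{j=n}^{N-1}\bigl(\log(1-|\alpha_j Q_j|^2)-|Q_j|^2\log(1-|\alpha_j|^2)\bigr).
\]
Here $E_1$ is an exact $(\F,\Q)$-martingale whose MGF increment is closed-form,
\[
\E^\Q\bigl[e^{p\cdot(\text{increment of }E_1)}\bigm|\Fc_j\bigr]=\frac{e^{a}}{1+a}\leq e^{a^2/2},\qquad a=\tfrac{2p(1-|Q_j|^2)}{\beta(j+1)},
\]
while $E_2$ is a sum of \emph{nonpositive} terms (since $q^m\leq q$), so $e^{pE_2}\leq 1$ for $p\geq 0$. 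Your hypergeometric expansion of $\E^\P[Y^p]$ achieves the same cancellation and is correct---the tail $\sum_{m\geq 2}\frac{(p)_m}{(p+\beta_k+1)_m}q^m$ is indeed $O_p(1/\beta_k^2)$ uniformly in $q\in[0,1]$, being bounded by its value at $q=1$---but the paper's splitting sidesteps this computation entirely.

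There is, however, a genuine gap in your convergence-in-probability step. Once the missing inner expectation is restored, your bound reads
\[
\E^\Q[\rho_{r,n}^2\mid\Fc_n]\;\ll\;\sum_{k\geq n}\frac{\E^\Q\bigl[(1-|Q_k(r)|^2)^2\bigm|\Fc_n\bigr]}{(k+1)^2},
\]
which is fine. The problem is the passage to the limit $r\to 1^-$: the measure $\Q=\Q_r$ \emph{depends on $r$}, so pointwise almost-sure convergence $|Q_k(r)|\to 1$ does not give $\E^{\Q_r}[(1-|Q_k(r)|^2)^2\mid\Fc_n]\to 0$ by bounded convergence, and a Cauchy--Schwarz attempt via the density $d\Q_r/d\P$ fails because its moments blow up like $(1-r^2)^{-c}$. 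Moreover, for $k>n$ the quantity $1-|Q_k(r)|^2$ depends on the full $\alpha_n,\dots,\alpha_{k-1}$ (phases included), which are not constrained by $\mathcal{A}_L$, so no uniformity in $\omega$ is available. The paper's fix is to bound
\[
1-|Q_k(r)|^2\;\leq\;1-S_{n,k}\bigl(r,\,|Q_n(r)|,\,(|\alpha_j|)_{n\leq j<k}\bigr)^2,
\]
where $S_{n,k}$ is the infimum of $|Q_k|$ over all \emph{phases} of $\alpha_n,\dots,\alpha_{k-1}$ with prescribed moduli. Since the conditional law of $(|\alpha_j|)_{j\geq n}$ given $\Fc_n$ is the same under $\P$ and under every $\Q_r$, the outer expectation can be replaced by an $\E^\P$-expectation that no longer depends on $r$ (except through $|Q_n(r)|$, which is uniformly controlled on $\mathcal{A}_L$), and dominated convergence then applies legitimately. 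Without this phase-infimum device, or something equivalent, your argument does not close.
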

\begin{proof}
The convergence of the series defining $\omega_{r,n}$ is a direct consequence of the convergence of the series 
$$ \square_{r,n} := \sum_{ k \geq n} \frac{|Q_{k+1}|^2}{k+2} \ ,$$
proven in Proposition \ref{proposition:boundBox}. 
  Let us now bound the positive exponential moments of $\omega_{r,n}$. Using the index $A_{r,n}  =  \max(n, \lfloor (1-r^2)^{-1} \rfloor)$ for $A=1$, 
  we get
  \begin{align*}
       \omega_{r,n}
   = \ & \frac{2}{\beta} \sum_{n \leq k \leq A_{r,n}} \frac{|Q_k|^2-r^{2k+2}}{k+1} + \frac{2}{\beta} \sum_{k=A_{r,n}+1}^\infty \frac{|Q_k|^2-r^{2k+2}}{k+1}\\
\leq \ & \frac{2}{\beta} \sum_{n \leq k \leq A_{r,n}} \frac{1-r^{2k+2}}{k+1} + \frac{2}{\beta}  \sum_{k=A_{r,n}+1}^{\infty} \frac{|Q_k|^2}{k+1} \\
\leq \ & \frac{2}{\beta} \sum_{k=n}^{A_{r,n}} \frac{1-r^{2k+2}}{k+1} + \frac{2}{\beta} \square_{r,n}^{(A=1, \infty)} \\
   = \ & \Oc(1) + \frac{2}{\beta} \square_{r,n}^{(A=1, \infty)} \ .
\end{align*}
In the last step, we recognized the convergent (hence bounded) Riemann sum:
$$ \sum_{k=n}^{A_{r,n}} \frac{1-r^{2k+2}}{k+1} \stackrel{r \rightarrow 1}{\longrightarrow} \int_0^1 dt \ \frac{1-e^{-t}}{t} \ .$$
As a consequence, it suffices to prove the bound for $\square_{r,n}^{(A=1, \infty)}$ instead of $\omega_{r,n}$, which is already contained in Proposition \ref{proposition:boundBox}. 

Let us now consider $\rho_{r,n}$. We have
\begin{align*}
 \rho_{r,n,N}
 = & -\sum_{j=n}^{N-1} \left( -\log\left(1-|\alpha_j|^2\right) - \frac{2}{\beta(j+1)} \right)\left( 1 - |Q_j|^2 \right) \\
   & \ \ - \sum_{j=n}^{N-1} \left( \log\left(1-|\alpha_j Q_j|^2\right) - |Q_j|^2 \log\left(1-|\alpha_j|^2\right) \right)\\
 =: & \ \ E^{(N)}_1+ E^{(N)}_2\ .
\end{align*}
We control each of the terms separately. We easily check, using  \eqref{eq:log_beta},  that  $(E^{(N)}_1)_{N \geq n}$  is a $\P$-martingale.
 It is also a $\Q$-martingale, since 
$(|\alpha_j|)_{j \geq 0}$ has the same law under the two probability measures. Indeed, for any measurable function $F$ from $\R$ to $\R_+$,  and any $j \geq 0$, 
we have 
\begin{align*} \E^{\Q} [ F(|\alpha_j|) | \Fc_j ] 
& = \E^{\P} \left[ F(|\alpha_j|)  \frac{1 - |\alpha_j Q_j|^2}{|1 - \alpha_j Q_j|^2} | \Fc_j \right] \\ 
& =  \E^{\P} \left[ F(|\alpha_j|)  \E^{\P} \left[ \frac{1 - |\alpha_j Q_j|^2}{|1 - \alpha_j Q_j|^2}  | \Fc_j, |\alpha_j| \right]| \Fc_j \right]  \\
& =  \E^{\P} \left[ F(|\alpha_j|)  | \Fc_j \right],
\end{align*} 
the last equality coming from the fact that for $u = |\alpha_j Q_j| < 1$, 
$$\frac{1}{2\pi} \int_0^{2 \pi} \frac{1  - u^2}{ |1 - u e^{i \theta}|^2} d \theta = 1.$$ 
The conditional $L^2$ norm of the martingale is bounded as follows, using \eqref{eq:log_beta} and \eqref{boundlogcarre}:
$$ \E^{\Q} \left( (E^{(N)}_1)^2  | \Fc_n \right)
 = \sum_{j=n}^{N-1} \E^{\Q} \left( (1-|Q_j|^2)^2 | \Fc_n \right) \Var\left( \log(1-|\alpha_j|^2) \right)\\
 $$ $$\ll \sum_{j=n}^{N-1} \E^{\Q} \left( (1-|Q_j|^2)^2 | \Fc_n \right) \frac{1}{(j+1)^2} \ ,
$$
which a.s. converges when $N$ goes to infinity. We deduce that the martingale $(E_1^{(N)})_{N \geq n}$ is a.s convergent, which means that 
$$-\sum_{j=n}^{\infty} \left( -\log\left(1-|\alpha_j|^2\right) - \frac{2}{\beta(j+1)} \right)\left( 1 - |Q_j|^2 \right)$$
 tends a.s. to a limit $E_1$. 
Furthermore, by iterating Sz\"ego recursion, we deduce that for $j \geq n$, 
$$Q_j = \Xi_{n,j} ( r,  Q_n, (\alpha_k)_{n \leq k \leq j-1},  (\overline{\alpha_k})_{n \leq k \leq j-1})$$
where $\Xi_{n,j}$ is a universal rational function, which has modulus $1$ if the two first arguments have modulus $1$. 

Moreover,  $\Xi_{n,j}$ is  well-defined when the two first arguments have modulus at most $1$ and the others have modulus strictly less than $1$, 
and then it is continuous when these constraints are satisfied, and uniformly continuous if we restrict the $\alpha_k$'s to a compact set of the open unit disc. 
We deduce that for fixed $(b_k)_{n \leq k \leq q-1}$ of modulus strictly less than $1$, 
$$S_{n,j} (r, q, (b_k)_{n \leq k \leq j-1}) := \inf_{|Q| \in [ q,1], (|a_k| = b_k)_{n \leq k \leq j-1} } | \Xi_{n,j} ( r, Q,  (a_k)_{n \leq k \leq j-1},  (\overline{a_k})_{n \leq k \leq j-1})|$$
goes to $1$ when $r$ and $q$ go to $1$ from below. 
Now, we have
$$\E^{\Q}\left( (E^{(N)}_1)^2  | \Fc_n \right) 
\ll  \sum_{j=n}^{N-1} \frac{1}{(j+1)^2} \E^{\Q} \left( (1-S_{n,j} (r, q, (|\alpha_k|)_{n \leq k \leq j-1})^2)^2  | \Fc_n \right)  $$
on the event when $|Q_n| \in [q,1]$. 
Conditionally on $\Fc_n$, $(|\alpha_k|)_{n \leq k \leq j-1}$ has the same distribution under $\P$ and under $\Q$, as proven above.
Since  $(|\alpha_k|)_{n \leq k \leq j-1}$ is independent of  $\Fc_n$ under $\P$, it is also independent under $\Q$, with the same distribution, and then 
$$\E^{\Q}\left( (E^{(N)}_1)^2  | \Fc_n \right) 
\ll  \sum_{j=n}^{N-1} \frac{1}{(j+1)^2} \E^{\P} \left( (1-S_{n,j} (r, q, (|\alpha_k|)_{n \leq k \leq j-1})^2)^2  \right)  $$
on the event when $|Q_n| \in [q,1]$. 
Now, $|Q_n|$ is a continuous function of $r$, $\alpha_0, \dots, \alpha_{n-1}$, and then it is uniformly continuous 
if we assume that $(\alpha_0, \dots, \alpha_{n-1})$ is in a given compact set $L \in \D^n$. 
Hence, under this assumption, $|Q_n| \in [g_{n,L}(r),1]$ when $g_{n,L}$ is a function tending to $1$ when $r \rightarrow 1-$. 
We deduce 
$$\E^{\Q}\left( (E^{(N)}_1)^2  | \Fc_n \right) 
\ll  \sum_{j=n}^{\infty} \frac{1}{(j+1)^2} \E^{\P} \left( (1-S_{n,j} (r, g_{n,L}(r), (|\alpha_k|)_{n \leq k \leq j-1})^2)^2  \right), $$
when $(\alpha_0, \dots, \alpha_{n-1}) \in L$. 
By dominated convergence, the right-hand side, which depends on $\beta, n, L$ and $r$, converges to $0$ when $r \rightarrow 1^-$ with fixed other parameters.
Now, if we let $N \rightarrow \infty$, we deduce, by Fatou's lemma, 
$$\E^{\Q}\left( E_1^2  | \Fc_n \right) 
\ll  \sum_{j=n}^{\infty} \frac{1}{(j+1)^2} \E^{\P} \left( (1-S_{n,j} (r, g_{n,L}(r), (|\alpha_k|)_{n \leq k \leq j-1})^2)^2  \right), $$
and then
\begin{equation}
\Q [ |E_{1}|  > \varepsilon  | \Fc_n ] \leq \eta_1 \label{convproba1},
\end{equation}
where 
$$\eta_1 \ll \varepsilon^{-2} \sum_{j=n}^{\infty} \frac{1}{(j+1)^2} \E^{\P} \left( (1-S_{n,j} (r, g_{n,L}(r), (|\alpha_k|)_{n \leq k \leq j-1})^2)^2  \right)$$
depends on  $\beta, \varepsilon, n, L$ and $r$ and tends to $0$ when $r$ goes to $1$. 

In order to estimate $E^{(N)}_2$, we observe that for $a, q \in [0,1)$,  
$$ - \log ( 1 - a q) + q \log (1-a) 
= \sum_{k= 1}^{\infty} \frac{ a^k (q^k - q)}{k},$$
and then this quantity is nonpositive. Moreover,
since 
$$q - q^k = \sum_{r = 1}^{k-1} (q^r - q^{r+1}) \leq (k-1)(1-q),$$ 
 the absolute value of the quantity above is 
at most 
$$\sum_{k=1}^{\infty} \frac{ a^k (k-1)(1-q)}{k}
\leq (1-q) \sum_{k=2}^{\infty} a^k 
= \frac{(1-q) a^2}{1-a}. $$

We deduce that $E^{(N)}_2$ is a sum of nonpositive terms: let $E_2$ be its limit (real or equal to $-\infty$) when $N \rightarrow \infty$. 
We have
$$|E_2| \leq \sum_{j = n}^{\infty} (1 - |Q_j|^2) \frac{|\alpha_j|^4}{ 1- |\alpha_j|^2}.$$
Since for $j$ such that $\beta_j -1 > 0$, 
\begin{align*} \mathbb{E}^{\Q}  \left[ \frac{|\alpha_j|^4} {1 - |\alpha_j|^2} \right] 
& = \mathbb{E} \left[ \frac{|\alpha_j|^4} {1 - |\alpha_j|^2} \right] 
\\ & = \beta_j \int_{0}^1 x^2 (1 -x)^{\beta_j - 2} dx
= \beta_j \, \frac{ \Gamma(3) \Gamma(\beta_j-1)}{ \Gamma(\beta_j + 2)} 
\\ & = \frac{2 }{(\beta_j - 1)(\beta_j + 1)} = \mathcal{O} (1/j^2), 
\end{align*}
the last series is a.s. convergent and $|E_2|$ is a.s. finite. 
Moreover, if $(\alpha_0, \dots, \alpha_{n-1}) \in L$, we get
$$|E_2| \leq \sum_{j = n}^{\infty} (1 - S_{n,j}(r, g_{n,L}(r), (|\alpha_k|)_{n \leq k \leq j-1})^2) \frac{|\alpha_j|^4}{ 1- |\alpha_j|^2}.$$
Hence,  for $\varepsilon > 0$, 
\begin{align*}
\Q [ |E_2| >  \varepsilon | \Fc_n] 
& \leq  \Q \left[ \max_{n \leq j \leq p-1}  \frac{|\alpha_j|^4}{ 1- |\alpha_j|^2} > R  | \Fc_n \right] 
\\ & + \varepsilon^{-1} \E^{\Q} \left[ R \sum_{j = n}^{p-1} (1 - S_{n,j}(r, g_{n,L}(r), (|\alpha_k|)_{n \leq k \leq j-1})^2) \right. 
\\ &  \left. + \sum_{j = p}^{\infty} (1 - S_{n,j}(r, g_{n,L}(r), (|\alpha_k|)_{n \leq k \leq j-1})^2) \frac{|\alpha_j|^4}{ 1- |\alpha_j|^2}   | \Fc_n  \right]
\end{align*}
for any $p \geq n$ and $R > 0$. 
The first term is a quantity depending on $\beta, n, R, p$, and tending to zero when $R$ goes to infinity. 
The second term depends on $\beta, \varepsilon, n, R, p, L, r$, is finite as soon as $p$ is large enough depending on $\beta$, 
and under such assumption, it tends to zero when $r \rightarrow 1-$ by dominated convergence. 
We deduce that for fixed $\beta, \varepsilon, n, L$, which also allows to fix $p$, we get 
$$\Q [ |E_2| > \varepsilon | \Fc_n] \leq \delta_1(R) + \delta_2(R,r)$$
where $\delta_1(R)$ goes to $0$ when $R \rightarrow \infty$ and $\delta_2(R,r)$ goes to $0$ when $R$ is fixed 
and $r \rightarrow 1-$. Since the left-hand side is in fact independent of $R$, we have 
\begin{equation}
\Q [ |E_{2}|  > \varepsilon  | \Fc_n ] \leq \eta_2 \label{convproba2},
\end{equation}
where 
$$\eta_2 := \inf_{R > 0} (\delta_1(R) + \delta_2(R,r))$$
may depend on $\beta, \varepsilon, n, L, r$. 
For each $R > 0$, we get 
$$\underset{ r \rightarrow 1-}{\lim \sup}  \; \eta_2 \leq \delta_1(R) + \lim_{r \rightarrow 1-}  \delta_2(R,r)
= \delta_1(R),$$
and then $\eta_2$ goes to zero when $r \rightarrow 1-$. 

From the convergence of $E_1^{(N)}$ to $E_1$ and the convergence of $E_2^{(N)}$ to $E_2$, we deduce the a.s. convergence of $\rho_{r,n,N}$ towards 
 $$\rho_{r,n} = \sum_{k=n}^{\infty} \left( - \log \left( \frac{1 - |\alpha_k Q_k(r)|^2}{1 - |\alpha_k|^2} \right)
+ \frac{2}{\beta} \frac{1 - |Q_k(r)|^2}{k+1} \right),$$
the infinite series being a.s. convergent. Moreover, from the estimates \eqref{convproba1} and \eqref{convproba2}, we deduce that for $\varepsilon > 0$, and $L$ a compact subset of the $n$-th power of the open unit disc, there exists $\eta$, depending on $ \beta, \varepsilon, n, L$ and $r$, 
tending to zero when $r \rightarrow 1-$ and the other parameters are fixed, such that 
$$\Q [ |\rho_{r,n}| > \varepsilon | \Fc_n]  \leq \eta$$
a.s. on the event $(\alpha_0, \dots, \alpha_{n-1}) \in L$.  
In the particular case $n = 0$, the $\sigma$-algebra $\Fc_n$ is trivial, and no restriction to an event of the form   $(\alpha_0, \dots, \alpha_{n-1}) \in L$
is involved. We get the existence of  $\eta$, depending only on $\beta, \varepsilon$ and $r$, tending to zero when $r \rightarrow 1$, 
such that 
$$\Q [ |\rho_{r,0}| > \varepsilon]  \leq \eta.$$
Hence, for any $\varepsilon > 0$, 
$$\Q [ |\rho_{r,0}| > \varepsilon]  \underset{r \rightarrow 1}{\longrightarrow} 0,$$
which means that $\rho_{r,0}$ tends to zero in probability when $r \rightarrow 1$.

It remains to bound the positive exponential moments of $\rho_{r,n}$. 
We notice that since $E_2$ has nonpositive terms, it is enough to bound the positive exponential moments of $E_1$. 

We have, for all $p \geq 0$,
\begin{align*}
  & \E^\Q\left[ \exp\left( p \left( \log\left(1-|\alpha_j|^2\right) + \frac{2}{\beta(j+1)} \right)
                             \left( 1 - |Q_j|^2 \right)
                    \right) | \Fc_j \right]\\
= & \E^\Q\left[ \left(1-|\alpha_j|^2\right)^{p\left( 1 - |Q_j|^2 \right)}
                e^{\frac{2p}{\beta(j+1)} \left( 1 - |Q_j|^2 \right)}
    |  \Fc_j \right]\\
= & \frac{(\beta/2)(j+1)}{(\beta/2)(j+1)+ p\left( 1 - |Q_j|^2 \right)} e^{\frac{2p}{\beta(j+1)} \left( 1 - |Q_j|^2 \right)} \\
= & \frac{1}{1 + \frac{2p}{\beta(j+1)}\left( 1 - |Q_j|^2 \right)} e^{\frac{2p}{\beta(j+1)} \left( 1 - |Q_j|^2 \right)} \ .
\end{align*}
Now, using the inequality $-\log(1+x) \leq -x + \half x^2$, available for all $x \geq 0$, we deduce:
\begin{align*}
 & \E^\Q\left[ \exp\left( p \left( \log\left(1-|\alpha_j|^2\right) + \frac{2}{\beta(j+1)} \right)
               \left( 1 - |Q_j|^2 \right)
                \right) | \Fc_j \right]\\
\leq & \ e^{\frac{2p^2}{\beta^2 (j+1)^2} \left( 1 - |Q_j|^2 \right)^2}\\
\leq & \ e^{\frac{2p^2}{\beta^2 (j+1)^2}} \ .
\end{align*}

Using the  tower property of conditional expectation, we deduce 
$$\E^{\Q} [ e^{p E_1} | \Fc_n ] 
\leq e^{\Oc(p^2)},$$
which finishes the proof of the proposition. 
\end{proof}

\section{A diffusive limit for \texorpdfstring{$|Q_j(r e^{i \theta})|^2$}{modulii of Q}}
\label{section:convergenceSDE}

In the proof of Lemma \ref{lemma:OmegaRhoControl}, we will require various estimates regarding 
$\left(|Q_j(r e^{i \theta})|^2 \right)_{j \geq n}$
as well as the diffusive limit for fixed $\theta$, while $j$ is large and $r$ close to $1$. These $\F$-adapted processes satisfy some discrete approximations of SDEs. We will need to know if their distribution converges to some solutions of the corresponding continuous SDEs, in the same way as the simple random walk converges to the Brownian motion after suitable scaling.

The precise statement is as follows, and its proof is the topic of the current section. 

\begin{proposition}
\label{prop:convergencetosde}
We assume $\beta > 2$. For $\varepsilon \in (0,1)$, $A > 0$, $r \in (0,1)$, recall that the indices $A_{r,n}$ and $\varepsilon_{r,n}$ are:
$$ A_{r,n} = \max( n, \lfloor A/(1-r^2) \rfloor ) \ ,
   \quad
   \varepsilon_{r,n} = \max( n, \lfloor \varepsilon/(1-r^2) \rfloor ) \ .$$
Moreover, we fix an event $\mathcal{G} \in \mathcal{F}_n$, which may depend on $r$, and under which $(\alpha_0, \dots, \alpha_{n-1})$ is in some deterministic compact subset of $\D^n$, independent of $r$. 
Then, for every $\xi>0$, we have 
\begin{align}
\label{eq:cv2sde_estimate1}
\limsup_{A \rightarrow \infty} \sup_{r \in (0,1)} 
\Q \left[ \sum_{j = A_{r,n}}^{\infty}  \frac{|Q_j(r)|^2}{j+1} \geq \xi \; |  \; \mathcal{G} \right] = & 0
\end{align}
and
\begin{align}
\label{eq:cv2sde_estimate2}
\limsup_{\varepsilon \rightarrow 0} \sup_{r \in (0,1)} 
\Q \left[ \sum_{j = n}^{\varepsilon_{r,n}-1}  \frac{1 - |Q_j(r)|^2}{j+1} \geq \xi \; | \; \mathcal{G} \right] = & 0 \ .
\end{align}

Moreover, consider the process $X_t^{(r)}$ equal to $|Q_{n + \frac{t}{\log ( r^{-2})}}(r)|^2$ at time $t$ when $t$ is a multiple of $\log (r^{-2})$, and linearly interpolated for other values of $t$. Under $\Q$ and conditionally on $\Gc$, the law of $X^{(r)}$ tends, for the topology of uniform convergence on compact sets, to the distribution of a continuous solution $X$ of the following stochastic differential equation 
\begin{equation}
\label{mainsde}
d(1-X_t) =   X_t dt
              - \left( 1 - X_t \right)^2 \frac{2 dt}{\beta t}
              + 4\left( 1 - X_t \right) X_t \frac{dt}{\beta t}
              + \sqrt{ \left( 1 - X_t \right)^2 \frac{4 X_t}{\beta t} } dB_t \ , 
\end{equation}
where $B$ is a Brownian motion. 

Furthermore, we have that almost surely, 
\begin{equation}
\sup_{t \in (0,1]} \frac{1 - X_t}{t^{1-\varepsilon'}} < \infty \label{supbidule} 
\end{equation} 
for all $\varepsilon' \in (0,1)$, and
$$\int_{0}^{\infty} \frac{X_t - e^{-t}}{t} dt < \infty \ . $$

Finally, the law of $X$ is uniquely determined by the properties given above.
\end{proposition}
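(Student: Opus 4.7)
Inequality \eqref{eq:cv2sde_estimate1} is essentially immediate from Proposition \ref{proposition:boundBox}: after reindexing $j = k+1$ the sum coincides with $\square_{r,n}^{(A,\infty)}$, and the exponential moment bound $\E^\Q[\exp(p\,\square_{r,n}^{(A,\infty)})|\Fc_n] \leq \exp(\Oc_p(A^{-1}))$ combined with Markov yields
\[
\Q\bigl[\square_{r,n}^{(A,\infty)}\geq \xi \,\big|\, \Gc\bigr] \leq e^{-p\xi+\Oc_p(A^{-1})},
\]
which vanishes as $A\to\infty$ followed by taking $p$ large. For \eqref{eq:cv2sde_estimate2}, I iterate the drift expansion \eqref{eq:conditionalQj} to get the deterministic approximation $\E^\Q[1-|Q_{n+k}|^2|\Fc_n] = 1 - r^{2k}|Q_n|^2 + (\text{lower order in } 1/(n+k))$; using $1 - r^{2k} \leq 2k(1-r^2)$ in the range $k \leq \varepsilon/(1-r^2)$, and the uniform convergence $|Q_n(r)|\to 1$ on $\Gc$, the expected head sum is bounded by $\Oc(\varepsilon)$ uniformly in $r$. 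Markov's inequality then finishes.

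\textbf{Paragraph 2 (convergence on $[\delta,T]$).} Doob-decompose the discrete process as $|Q_{j+1}|^2 - |Q_j|^2 = D_j + \Delta M_j$, where Proposition \ref{properties:underQ} supplies
\[
D_j = (r^2-1)|Q_j|^2 + \frac{r^2}{\beta(j+1)}(1-|Q_j|^2)\bigl[2(1-|Q_j|^2) - 4|Q_j|^2\bigr] + \Oc\!\left(\tfrac{1-|Q_j|^2}{(j+1)^2}\right),
\]
with a matching expression for $\E^\Q[(\Delta M_j)^2|\Fc_j]$ and the fourth-moment control \eqref{eq:boundmomentorder4}. Under the time-change $t=(j-n)\log(r^{-2})$, and using $\log(r^{-2}) \sim 1-r^2$ together with $j+1 \sim t/\log(r^{-2})$, the rescaled drift per unit time converges to $-X_t + [2(1-X_t)^2 - 4(1-X_t)X_t]/(\beta t)$ and the rescaled bracket to $4(1-X_t)^2 X_t/(\beta t)$, which are precisely the coefficients of \eqref{mainsde}. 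On any compact $[\delta,T] \subset (0,\infty)$, Aldous--Rebolledo tightness follows from the uniform boundedness of drift and bracket (plus the fourth-moment control giving modulus-of-continuity bounds), and standard martingale-problem identification characterizes the weak limit as a solution of \eqref{mainsde}.

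\textbf{Paragraph 3 (behavior at zero, regularity, integrability).} Since $Q_n$ is a Blaschke product continuous on $\overline{\D}$ with unit modulus on $\partial \D$, and $(\alpha_0,\dots,\alpha_{n-1})$ ranges over a fixed compact subset of $\D^n$ on $\Gc$, one has $X_0^{(r)} = |Q_n(r)|^2 \to 1$ uniformly on $\Gc$ as $r \to 1^-$. Combined with \eqref{eq:cv2sde_estimate2}, this controls $\sup_{t \in [0,\delta]} (1-X_t^{(r)})$ in probability uniformly in $r$, giving tightness on $[0,T]$ and thus extending the convergence in law from $[\delta,T]$ to $[0,T]$. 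For the regularity bound \eqref{supbidule}, I apply It\^o's formula to $(1-X_t)^2 / t^{2(1-\varepsilon')}$, bound the resulting martingale by Burkholder--Davis--Gundy, and conclude by dyadic Borel--Cantelli on $[2^{-k-1},2^{-k}]$. Integrability of $(X_t-e^{-t})/t$ at zero follows from $X_t - e^{-t} = \Oc(t)$, obtained by It\^o's formula applied to $e^t X_t$ together with \eqref{supbidule}; at infinity, the drift $-X_t\,dt$ forces exponential decay of $X_t$, ensuring absolute convergence of the integral.

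\textbf{Paragraph 4 (uniqueness and the main obstacle).} The singular coefficient $1/t$ at the origin is the principal difficulty and is precisely why the pathwise bound \eqref{supbidule} must be built into the statement: both the identification of the limit and the uniqueness argument require such an \emph{a priori} estimate on any admissible solution. To close uniqueness, I set $Y_t := (1-X_t)/t$, which by \eqref{supbidule} stays bounded on bounded intervals and extends continuously to $t=0$. It\^o's formula rewrites \eqref{mainsde} as an SDE for $Y$ whose coefficients are locally Lipschitz in the bounded regime prescribed by \eqref{supbidule}, so Yamada--Watanabe yields pathwise uniqueness after a standard localization in $Y$. Translating back to $X$ and enforcing $X_0 = 1$ pins down the law of $X$. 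The genuinely hard part throughout is the tight coupling between tightness near $t=0$, the regularity estimate \eqref{supbidule}, and uniqueness, which must all be handled together in a neighbourhood of the singularity.
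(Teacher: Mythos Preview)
Your sketch captures the overall architecture, but there are two genuine gaps.

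\textbf{Gap 1: the head estimate \eqref{eq:cv2sde_estimate2}.} Your first-moment argument breaks down for $2<\beta\le 4$. Iterating \eqref{eq:conditionalQj} does \emph{not} give $\E^\Q[1-|Q_{n+k}|^2\,|\,\Fc_n]=1-r^{2k}|Q_n|^2+(\text{lower order})$: the drift correction $\frac{4|Q_j|^2}{\beta(j+1)}$ is of size $1/(j+1)$, and when iterated it contributes a multiplicative factor of order $\prod_{\ell=n}^{n+k-1}(1+\tfrac{4}{\beta(\ell+1)})\asymp\bigl(\tfrac{n+k}{n}\bigr)^{4/\beta}$. For $k\sim\varepsilon/(1-r^2)$ and $n$ fixed this diverges as $r\to 1^-$ whenever $4/\beta>1$, so your bound $\E^\Q[\text{head sum}\,|\,\Fc_n]=\Oc(\varepsilon)$ is false in that range. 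The paper proceeds very differently: it produces a \emph{pathwise} bound $1-|Q_{n+k}|^2\le C_\omega\,e^{S(r)}[(n+k+1)(1-r^2)]^{1-\varepsilon'}$ by unwinding the recurrence \eqref{eq:recurrenceQj} explicitly, writing the resulting products in terms of two auxiliary martingales, and exploiting the subcritical gap via a parameter $\eta>0$ with $\tfrac{2}{\beta}+\eta<1$. Tightness of $S(r)$ (uniformly in $r$) then requires a Doob-inequality plus dyadic-decomposition argument. This pathwise bound is also what is used to extend tightness from $[\delta,T]$ to $[0,T]$ and to establish \eqref{supbidule} for the limit; your Paragraph~3 appeal to the sum estimate \eqref{eq:cv2sde_estimate2} does not by itself control $\sup_{t\in[0,\delta]}(1-X^{(r)}_t)$.

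\textbf{Gap 2: uniqueness.} Your proposed change of variable $Y_t=(1-X_t)/t$ does not remove the singularity. Writing out the SDE for $Y$ one finds a drift containing $\frac{1}{t}\bigl[1-(1-\tfrac{4}{\beta})Y_t\bigr]$ and a diffusion coefficient $\frac{2Y_t}{\sqrt{\beta t}}\sqrt{1-tY_t}$, both still singular at $t=0$; so Yamada--Watanabe with localization does not pin down the law from $t=0$. The paper's route is different and genuinely two-step: first it identifies the \emph{entrance law} $\lim_{t\to 0}(1-X_t)/t\eqlaw \tfrac{\beta}{2\gamma_{\beta/2-1}}$ by recasting \eqref{mainsde} as a Volterra equation and invoking Dufresne's identity; second, given any two solutions satisfying \eqref{supbidule}, it couples them from a small time $\delta$ with the same driving Brownian motion and shows that $|\log\tfrac{1-X^1_t}{t}-\log\tfrac{1-X^2_t}{t}|$ is a nonnegative supermartingale, forcing the marginals to agree. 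Your argument would need a comparable mechanism to propagate information across the singularity.
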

\begin{proof}[Strategy of proof]
In Subsection \ref{section:cv2sde_1}, we provide the proofs of the estimates \eqref{eq:cv2sde_estimate1} and \eqref{eq:cv2sde_estimate2}.

In Subsection \ref{section:cv2sde_2}, we prove that the sequence of laws $\Lc\left( X_t^{(r)} ; t  \geq 0\right)$ is tight for $r<1$. Furthermore, every limit point as $r \rightarrow 1$ is solution of the announced SDE \eqref{mainsde} - for $t>0$. At that stage, uniqueness will still be required to finish the proof of the convergence in law.

In Subsection \ref{section:cv2sde_3}, we study the entrance law for any solution to the SDE \eqref{mainsde} satisfying \eqref{supbidule} for $t>0$ and we relate it to Dufresne's identity.

Subsection \ref{section:cv2sde_4} finally proves uniqueness, and thus concludes the proof of Proposition \ref{prop:convergencetosde}.
\end{proof}

\subsection{Proofs of Eq. \texorpdfstring{\eqref{eq:cv2sde_estimate1}}{} and \texorpdfstring{\eqref{eq:cv2sde_estimate2}}{} }
\label{section:cv2sde_1}
For the first statement, with the notation of Proposition \ref{proposition:boundBox}, it is enough to prove that
\begin{equation} 
\E^{\Q} \left( \square_{r,n}^{(A, \infty)} | \Fc_n \right) = \Oc( A^{-1} ) \label{squareAinfty}
\end{equation}
almost surely, the implicit constant being independent from $r$. From the proof of that proposition, we already have:
\begin{align*}
    \square_{r,n}^{(A, \infty)}
= & \Oc(1/A) - \sum_{k=A_{r,n}}^\infty \Delta M_k \ ,
\end{align*}
where $\Delta M_k$ are martingale differences, bounded by $1$. Estimating the increments of the bracket from \eqref{eq:conditionalVarQj} yields:
$$ \E^\Q\left( (\Delta M_k)^2 | \Fc_k \right) \ll \frac{(1-r^2)^{-2}}{(k+1)^3} \ .$$
Hence, the bracket of the corresponding martingale is bounded by 
$$ \langle M \rangle_\infty \ll \sum_{k \geq A_{r,n}} \frac{(1-r^2)^{-2}}{(k+1)^3} \ll 1/A^2 $$
and the martingale converges in $L^2(\Omega, \mathcal{B}, \Q)$, with 
$$\E [ (M_{\infty} - M_{A_{r,n}} )^2 | \mathcal{F}_n] \ll 1/A^2,$$
which gives the desired estimate.

\medskip

For the second statement \eqref{eq:cv2sde_estimate2}, we will need the following notion. We say that a family of random variables $(X(r))_{r \in (0,1)}$ is tight conditionally to $\mathcal{G}$ when almost surely: 
\begin{align}
\label{eq:conditionalTightness}
\limsup_{a \rightarrow \infty} \sup_{r \in (0,1)} \Q\left( |X(r)|>a \ | \ \mathcal{G} \right) = 0 \ .
\end{align}
Thanks to the recurrence  \eqref{eq:recurrenceQj} and the fact that $Q_0(r) =r$,  we have
\begin{align*}
  & 1 - |Q_{n+k}|^2 \\
= & \sum_{j=0}^{k-1} (1-r^2) r^{2j} \prod_{\ell=k+n-j}^{k+n-1} \frac{1-|\alpha_{\ell}|^2}{|1-\alpha_{\ell} Q_{\ell}|^2}
    + ( 1 - |Q_n|^2) r^{2k} \prod_{\ell=n}^{k+n-1} \frac{1-|\alpha_{\ell}|^2}{|1-\alpha_{\ell} Q_{\ell}|^2} 
\\
\leq & (1 - |Q_n|^2) \prod_{\ell=n}^{k+n-1} \frac{1-|\alpha_{\ell}|^2}{|1-\alpha_{\ell} Q_{\ell}|^2}
     + (1-r^2) \sum_{j=n+1}^{n+k} \prod_{\ell=j}^{n+k-1} \frac{1-|\alpha_{\ell}|^2}{|1-\alpha_{\ell} Q_{\ell}|^2} \ .
\end{align*}
The last inequality is obtained by exchanging the two terms of the sum, by using the fact that $r \leq 1$ and by  changing the index $j$ to $n+k-j$. 
By introducing the two following $(\F, \Q)$-martingales:
$$ \left\{
   \begin{array}{cc}
   \Nc_j := & \sum_{\ell=0}^{j-1} \left( -\log (1 - |\alpha_\ell|^2) - \frac{2}{\beta(\ell+1)} \right)\\
   \Mc_j := & \Mc_j(r) = \sum_{\ell=0}^{j-1} \left( -\log |1-\alpha_{\ell} Q_{\ell}|^2
          + \E^{\Q} \left[ \log |1-\alpha_{\ell} Q_{\ell}|^2 | \Fc_{\ell} \right] \right) \
   \end{array} \right. \ ,
$$
we can rewrite the previous expression as:
\begin{align*}
     & 1 - |Q_{n+k}|^2 \\
\leq & (1 - |Q_n|^2) \exp\left( \sum_{\ell=n}^{k+n-1} \log \left(\frac{1 - |\alpha_\ell|^2}{|1 - \alpha_\ell Q_\ell|^2} \right) \right) \\
     & \quad \quad + (1-r^2) \sum_{j=n+1}^{n+k} \exp\left( \sum_{\ell=j}^{n+k-1} \log \left(\frac{1 - |\alpha_\ell|^2}{|1 - \alpha_\ell Q_\ell|^2} \right) \right) \\
\leq & (1 - |Q_n|^2) \exp\left( -(\Nc_{k+n} - \Nc_{n}) + \Mc_{k+n} - \Mc_n 
       + \sum_{\ell=n}^{k+n-1} \E^{\Q} \left[ \log \left(\frac{1 - |\alpha_\ell|^2}{|1 - \alpha_\ell Q_\ell|^2} \right) | \Fc_\ell \right] \right)\\
     & + (1-r^2) \sum_{j=n+1}^{n+k}
       \exp\left( -(\Nc_{k+n} - \Nc_{j}) + \Mc_{k+n} - \Mc_j 
       + \sum_{\ell=j}^{k+n-1} \E^{\Q} \left[ \log \left(\frac{1 - |\alpha_\ell|^2}{|1 - \alpha_\ell Q_\ell|^2} \right) | \Fc_\ell \right] \right) \ .
\end{align*}
In this computation, we have used the fact that the conditional distribution of $|\alpha_j|^2$ given $\Fc_j$ is the same under $\P$ and under $\Q$, which implies  
$$ \E^{\Q} [ -\log (1 - |\alpha_j|^2) | \Fc_j]
 = \E^{\P} [ -\log (1 - |\alpha_j|^2) | \Fc_j] 
 = \frac{2}{\beta(j+1)} \ .$$
Combining these estimates with the last estimates of Proposition \ref{properties:underQ}, we have a.s., for $n$ large enough depending on $\beta$:
\begin{align*}
  \E^{\Q} \left[ \log \left(\frac{1 - |\alpha_j|^2}{|1 - \alpha_j Q_j|^2} \right) | \Fc_j \right]
= & \frac{4}{\beta(j+1)}|Q_j|^2  -\frac{2}{\beta(j+1)} +  \Oc \left(\frac{1}{(j+1)^2} \right) \\
\leq & \frac{2}{\beta(j+1)} + \Oc \left(\frac{1}{(j+1)^2} \right) ,
\end{align*}
which becomes upon summing:
$$ \sum_{\ell=j}^{n+k-1} 
   \E^{\Q} \left[ \log \left(\frac{1 - |\alpha_\ell|^2}{|1 - \alpha_\ell Q_\ell|^2} \right) | \Fc_\ell \right]
   \leq \Oc(1) + \frac{2}{\beta} \log\left( \frac{n+k}{j+1} \right) \ .$$
We deduce a.s.:
\begin{align*}
         1 - |Q_{n+k}|^2
\ll \  & (1 - |Q_n|^2) \exp\left( -(\Nc_{k+n} - \Nc_{n}) + \Mc_{k+n} - \Mc_n \right)
                     \left( \frac{n+k}{n+1} \right)^{\frac{2}{\beta}}\\
     & + (1-r^2) \sum_{j=n+1}^{n+k}
       \exp\left( -(\Nc_{k+n} - \Nc_{j}) + \Mc_{k+n} - \Mc_j \right)
       \left( \frac{n+k}{j+1} \right)^{\frac{2}{\beta}} \ .
\end{align*}

Now, we have to control the martingales $\Nc$ and $\Mc$. We start by the easier $\Nc$, which is a convergent martingale, 
since it is (up to a shift of the index) equal to the martingale defined by \eqref{martingaleN}, which has been proven to be bounded in $L^2$. 
We have that:
$$ C_\omega^\Nc := \sup_{j \geq n} \left| \Nc_j - \Nc_n \right| < \infty \ ,$$
since $\left( \Nc_j - \Nc_n \right)_{j \geq n}$ is almost surely a Cauchy sequence. Furthermore, since $\Nc$ has independent increments, $C_\omega^\Nc$ is independent from $\Fc_n$, and its distribution under $\Q$ does not depend on $r$. As such, because single random variables are tight:
$$ \limsup_{a \rightarrow \infty}
   \Q\left( |C_\omega^\Nc| > a | \mathcal{G} \right) = 0,
$$
and \eqref{eq:conditionalTightness} is satisfied for $C_\omega^\Nc$. 

In order to control the contribution of $\left( \Mc_j(r) \right)_{j \geq n}$, we crucially use the epsilon of room between $\frac{2}{\beta}$ and $1$ in the subcritical regime. To that end, pick $\eta>0$, depending only on $\beta$,  such that $\frac{2}{\beta} + \eta < 1$, and consider the random variable
\begin{align}
\label{eq:defCMc}
C_\omega^{\Mc,k}(r) := & \sup_{n \leq j \leq n+k} \left[ \Mc_{n+k}(r) - \Mc_j(r) - \eta \log \frac{n+k+1}{j+1} \right] \ .
\end{align}
This quantity has a distribution that depends on $r$ and $k$. Upon using the bound 
\begin{align*}
     & \exp\left[ -(\Nc_{k+n} - \Nc_{j}) + \Mc_{k+n}(r) - \Mc_j(r) \right] \\
   = & \exp\left[ -(\Nc_{k+n} - \Nc_{n}) + \Nc_{j} - \Nc_n) \right] \\
     & \times \exp\left[ \Mc_{k+n}(r) - \Mc_{j}(r) - \eta \log \frac{n+k+1}{j+1} \right]
       \left( \frac{n+k+1}{j+1} \right)^{\eta} \\
\leq & e^{2 C_\omega^\Nc + C_\omega^{\Mc,k}(r)}
       \left( \frac{n+k+1}{j+1} \right)^{\eta} \ ,
\end{align*}
we have:
\begin{align*}
     & \ 1 - |Q_{n+k}|^2\\
\ll  & \ e^{2 C_\omega^\Nc + C_\omega^{\Mc,k}(r)} \left[ 
                (1 - |Q_n|^2)
                \left( \frac{n+k+1}{n+1} \right)^{\frac{2}{\beta}+\eta}
       + (1-r^2) \sum_{j=n+1}^{n+k}
       \left( \frac{n+k+1}{j+1} \right)^{\frac{2}{\beta}+\eta} \right] \ .
\end{align*}
From the classical comparison between series and integrals, we have:
$$ \forall k \in \N, \ 
   \sum_{j=0}^k
   \frac{1}{(j+1)^{\frac{2}{\beta} + \eta}}
   \leq \frac{ (k+1)^{1-\eta-\frac{2}{\beta}} }{1-\eta-\frac{2}{\beta}} \ ,
$$
and from the mean value theorem, we have:
$$ (1 - |Q_n|^2) \leq (1-r) |2 Q_n' Q_n|_{L^\infty(\D)} \leq 2 (1-r^2) |Q_n'|_{L^\infty(\partial \D)}\ .$$
In this last inequality, we used the fact that $|Q_n| \leq 1$ and that, by the maximum principle for subharmonic functions, $|Q_n'|$ reaches its maximum on the boundary of the disc. Therefore, the previous inequality becomes:
\begin{align*}
         1 - |Q_{n+k}|^2
\ll  \ & e^{2 C_\omega^\Nc + C_\omega^{\Mc,k}(r)} \ (1-r^2)
                \left[ 
                2|Q_n'|_{L^\infty(\partial \D)}
                \left( \frac{n+k+1}{n+1} \right)^{\frac{2}{\beta}+\eta}
       + \frac{n+k+1}{1-(\frac{2}{\beta}+\eta)} \right] \\
\ll \ & e^{2 C_\omega^\Nc + C_\omega^{\Mc,k}(r)} (n+k+1) \ (1-r^2)
                \left[ 
                \frac{ 2|Q_n'|_{L^\infty(\partial \D)} }{ n+1 }
       + \frac{1}{1-(\frac{2}{\beta}+\eta)} \right] \ .
\end{align*}
Moreover, by Szeg\"o recursion, $Q'_n(z)$ is a rational function of  $z$, the Verblunsky coefficients of index between $0$ and $n-1$, and their conjugates: 
it is then continuous in $z, \alpha_0, \dots, \alpha_{n-1}$, and bounded if we restrict to $|z| = 1$ and
$(\alpha_0, \dots, \alpha_{n-1}) \in L$ for some compact set $L \in \D^n$. 
Hence, $|Q_n'|_{L^\infty(\partial \D)}$ is uniformly bounded, independently of $r$, under the event $\mathcal{G}$. 
By absorbing all the constants into a single one, we deduce that  there exists a $C_\omega>0$, independent of $r$, which satisfies \eqref{eq:conditionalTightness} and such that:
\begin{align*}
         1 - |Q_{n+k}|^2
\leq  \ & C_\omega \ e^{C_\omega^{\Mc,k}(r)} (n+k+1) (1-r^2) \ .
\end{align*}

In order to complete the proof of \eqref{eq:cv2sde_estimate2},  it is now enough to show that for $\varepsilon'>0$, the random variables
\begin{equation}
\label{eq:todo}
        \left(  S(r) := \sup_{k \leq 1/ \log(1/r^2)}  \left( C_\omega^{\Mc,k}(r) + \varepsilon' \log [(n+k+1) (1-r^2)] \right) \right)_{r \in (0,1)}
\end{equation}
form a tight family, under $\Q$ and conditionally on $\Gc$. Indeed, by combining \eqref{eq:todo} with the previous equation, since
$$ \varepsilon_{r,n}
 = \max\left(n,  \left\lfloor \frac{\varepsilon}{1-r^2} \right\rfloor \right)
 \leq  n + \frac{1}{\log(1/r^2)} + \Oc(1) \ , $$
we obtain, by taking $\varepsilon' = 1/2$,
\begin{align*}
     & \sum_{j=n}^{\varepsilon_{r,n}-1} \frac{1 - |Q_{j}(r)|^2}{j+1} \leq 
  \Oc \left( \min (1, \log (1/r^2) )\right) +  \sum_{j=n}^{\min(\varepsilon_{r,n}-1, n + 1/ \log(1/r^2))} \frac{1 - |Q_{j}(r)|^2}{j+1}  \\
\leq &  \Oc \left( 1 - r^2 \right) + C_\omega \ e^{S(r)} \sum_{j=n}^{\varepsilon_{r,n}-1}
                \frac{1}{j+1} \left( (j+1)(1-r^2) \right)^{\half} \\
\ll   &  \Oc \left( 1 -r^2 \right) + C_\omega \ e^{S(r)} \sqrt{\varepsilon}.
\end{align*}
Now, the sum we want to estimate is non-empty only if $\varepsilon_{r,n} \geq n+1$ which implies $\varepsilon/(1-r^2) \geq 1$, i.e. 
$1-r^2 \leq \varepsilon$. We deduce 
$$\sum_{j=n}^{\varepsilon_{r,n}-1} \frac{1 - |Q_{j}(r)|^2}{j+1} \ll \varepsilon + C_\omega \ e^{S(r)} \sqrt{\varepsilon}.$$
and then 
$$\sup_{r \in (0,1)} \Q \left[ \sum_{j=n}^{\varepsilon_{r,n}-1} \frac{1 - |Q_{j}(r)|^2}{j+1}  \geq \xi | \mathcal{G} \right]
\leq \sup_{r \in (0,1)} \Q \left[ (1 +  C_\omega \ e^{S(r)} ) \gg \xi \varepsilon^{-1/2} | \mathcal{G} \right],$$
which goes to zero with $\varepsilon$ by the tightness of 
$(S(r))_{r \in (0,1)}$. 

 In order to be truly done with the proof of \eqref{eq:cv2sde_estimate2}, it remains to prove the tightness of \eqref{eq:todo}. 

\medskip

The proof of tightness of \eqref{eq:todo} is rather technical but essentially boils down to Doob's martingale inequality in order to control suprema and a dyadic decomposition argument. First, let us start with controlling the variations of the martingale $\mathcal{M}$, via estimates on conditional exponential moments. For $\lambda \in [-1,1]$, and $\ell \geq n$:
\begin{align*}
    \E^\Q\left( e^{\lambda (\Mc_{\ell+1} - \Mc_{\ell})} \ | \Fc_\ell \right)
= & \E\left( \frac{1-|\alpha_\ell Q_\ell|^2}{\left| 1 - \alpha_\ell Q_\ell \right|^{2(1+\lambda)}}
      \ | \Fc_\ell \right)
    e^{\lambda \E^{\Q} \left[ \log |1-\alpha_{\ell} Q_{\ell}|^2 | \Fc_{\ell} \right] } \ .
\end{align*}
By applying \eqref{eq:circleMomentBound} to the uniformly bounded exponent $\lambda+1$ and to 
$u = |\alpha_{\ell} Q_{\ell}| \leq |\alpha_{\ell}|$, and by using the  estimate of 
Proposition \ref{properties:underQ} on the conditional expectation of $-2 \Re  \log (1 - \alpha_j Q_j)$ under $\mathbb{Q}$, we get, for $\ell$ large enough depending on $\beta$:
\begin{align*}
  &  \E^\Q\left( e^{\lambda (\Mc_{\ell+1} - \Mc_{\ell})} \ | \Fc_\ell \right)\\
= & \E\left( (1-|\alpha_\ell Q_\ell|^2)
             \left( 1 + (1+\lambda)^2|\alpha_\ell Q_\ell|^2 + \Oc \left( \frac{|\alpha_\ell|^4}{(1 - |\alpha_{\ell}|^2)^{\mathcal{O}(1)}} \right) \right)
      \ | \Fc_\ell \right)\\
      & \times
    e^{-\frac{4}{\beta (\ell+1)} \lambda |Q_\ell|^2 + \Oc( (\ell+1)^{-2} ) } \\
= & \left( 1 + ((1+\lambda)^2-1) \frac{2|Q_\ell|^2}{\beta(\ell+1)} + \Oc\left( (\ell+1)^{-2} \right) \right)
    e^{-\frac{4}{\beta (\ell+1)} \lambda |Q_\ell|^2 + \Oc( (\ell+1)^{-2} ) } \\
= & \exp\left( ((1+\lambda)^2-1) \frac{2|Q_\ell|^2}{\beta(\ell+1)}
             - \frac{4}{\beta (\ell+1)} \lambda |Q_\ell|^2 + \Oc( (\ell+1)^{-2} ) \right) \\
= & \exp\left( \lambda^2 \frac{2|Q_\ell|^2}{\beta(\ell+1)}
               + \Oc( (\ell+1)^{-2} ) \right) \leq  \exp\left( \lambda^2 \frac{2}{\beta(\ell+1)}
               + \Oc( (\ell+1)^{-2} ) \right)  \,
\end{align*}
the last inequality coming from the fact that $|Q_{\ell}|$ is always smaller than or equal to $1$. 
Therefore, there exists a constant $c>0$ such that 
$$ \Ec^\lambda_{j,j'}
:= \exp\left( \lambda (\Mc_{j} - \Mc_{j'})
    - \lambda^2 \frac{2}{\beta} \sum_{\ell=j'}^{j-1} \frac{1}{\ell+1}
    - c \sum_{\ell=j'}^{j-1} \frac{1}{(\ell+1)^2} \right) \ ,$$
is a positive $(\F, \Q)$-supermartingale in $j \geq j'$, starting at $1$, for $j'$ large enough depending on $\beta$. 
We deduce that the probability that this supermartingale reaches a level $M > 0$ is at most $1/M$. 
Applying this for $\lambda \in (0,1)$ and for $-\lambda$, we deduce that 
for $a$ large enough depending on $\beta$ and $b \geq a \geq n$ (recall that $\mathcal{G}$ is $\mathcal{F}_n$-measurable): 
$$ \Q\left( \sup_{a \leq j \leq b} |\Mc_j - \Mc_a| \geq x  | \mathcal{G} \right)
   \ll e^{-\lambda x + \lambda^2 \frac{2}{\beta}\left( 1 + \log \frac{b+1}{a+1} \right)} \ .
$$
Hence for $\lambda \in (0,1)$:
\begin{equation} \Q\left( \sup_{a \leq j \leq b} |\Mc_j - \Mc_a| \geq \left( 1 + \log \frac{b+1}{a+1} \right)^\half x | \mathcal{G} \right)
   \ll e^{-\lambda x + \frac{2}{\beta} \lambda^2}. \label{boundincrementsM}
\end{equation}

Recall that, by    \eqref{eq:defCMc} and \eqref{eq:todo}, 
\begin{align*}
& S(r)   = \sup_{k \leq 1/ \log(1/r^2)}  \left( C_\omega^{\Mc,k}(r) + \varepsilon' \log [(n+k+1) (1-r^2)]  \right)
\\ & = \sup_{k \leq 1/ \log(1/r^2), n \leq j \leq n+k} \left( \Mc_{n+k}- \Mc_j- \eta \log \frac{n+k+1}{j+1}  + \varepsilon' \log [(n+k+1) (1-r^2)  \right).
\end{align*} 

Intuitively, we  will  show that the increments $\Mc_{n+k}- \Mc_j$ have, with high probability, an order of magnitude  dominated by a power of  $1 +  \log [(n+k+1)/(j+1)]$ with exponent 
smaller than $1$, and then its possible growth when $j$ becomes far from $n+k$ is compensated  by the negative term  $-\eta \log [(n+k+1)/(j+1)]$.  The last term 
$\varepsilon' \log [(n+k+1) (1-r^2)]$, is negative, and it will be useful in order to control a union bound on the different possible orders of magnitude of $k$. For such union bound, 
we need to control moments of $\Mc_{n+k}- \Mc_j$ with order larger than one: the choice of $1.25$ below is quite arbitrary (this value is small enough in order to allow us to control 
the moments). 
The following sum introduces two scales, corresponding to the fact that $S(r)$ involves a supremum on two indices: the order of magnitude of $k$ (approximately $2^p$) and the order of magnitude of $ \log [(n+k+1)/(j+1)]$ (approximately $2^m$). The non-zero terms of the sum corresponds to increments of  $\Mc_{n+k}- \Mc_j$ which are 
larger than $\log^{0.8}[(n+k+1)/(j+1)]$, and for which some care is needed in order to make sure they are well-controlled by the negative term  $-\eta \log [(n+k+1)/(j+1)]$. 

More precisely, for each integer $p \geq 0$, let us define: 

$$  S_p(r)
 := \sum_{m = 0}^{\infty} \max\left(0, 
	\sup_{k \in [2^p - 1, 2^{p+1} - 1]}
	\sup_{ \substack{ n \leq j \leq n+k \\
	                  \log \frac{n+k+1}{j+1} \leq 2^m} }
	       |\mathcal{M}_{n+k} - \mathcal{M}_j|^{1.25} - 2^{m}\right).$$
	       
	         For $k \in  [2^p - 1, 2^{p+1} - 1]$, and $n \leq j \leq n+k$, we can consider, in $S_p(r)$, the term of index 
	$m \geq 0$ such that $(1/2) + \log ((n+k+1)/(j+1)) \in [2^{m-1}, 2^m)$. 
	We deduce
	$$  |\mathcal{M}_{n+k} - \mathcal{M}_j|^{1.25} - 2^m \leq S_p(r)$$
	and then 
		$$  |\mathcal{M}_{n+k} - \mathcal{M}_j| \ll 2^{0.8 m} + S_p(r)^{0.8}
	\ll 1 +  \log^{0.8} ((n+k+1)/(j+1))  + S_p(r)^{0.8}.$$
	By \eqref{eq:defCMc}, we deduce, for $k \in [2^p-1, 2^{p+1} - 1]$, 
$$ C_\omega^{\Mc, k}(r)
   \ll S_p(r)^{0.8} + \sup_{n \leq j \leq n+k} \left( \log^{0.8} \frac{n+k+1}{j+1} - \eta \log \frac{n+k+1}{j+1} \right)
   \ll S_p(r)^{0.8} + 1 \ . $$

	Now, let us define   $p_r$ by $ 1/\log(1/r^2) \in [2^{p_{r} }-1, 2^{p_{r} +1} - 1]$. We see that $S(r)$ from \eqref{eq:todo} is controlled by
 $$\widetilde{S(r)} := \max_{p \leq p_{r} } \left(  S_p(r)^{0.8} - \varepsilon'' (p_{r} - p) \right)$$
 for $\varepsilon'' = \varepsilon' \log 2$. 
 We have, for all $x >1$, using a union bound and Markov's inequality:
 $$ \Q [ \widetilde{S(r)} \geq x | \mathcal{G}]
    \leq \sum_{p \leq p_{r}}  \Q [ S^{0.8}_p(r) \geq x + \varepsilon''(p_{r} - p) | \mathcal{G}]
     $$ $$ \leq \sum_{p \leq p_{r}} \frac{1}{(x + \varepsilon''(p_{r} - p))^{1.25}} \E^{\Q} [S_p(r) | \mathcal{G}]
  \ll_{\varepsilon''} x^{-0.25} \E^{\Q} [S_p(r) | \mathcal{G}].$$
  Notice that the introduction of an exponent larger than $1$ in the definition of $S_p(r)$ is used here in order to get a sum in $p$ which is bounded by a
  quantity
  tending to zero when $x \rightarrow \infty$. This convergence to zero ensures the tightness of $(S(r))_{r \in (0,1)}$, provided that we check that 
 $\E[ S_p(r)| \mathcal{G}]$ is bounded independently of $p$ and 
$\mathcal{G}$.

	In the definition of $S_p(r)$ given above, the double supremum is bounded by the supremum of  $|\mathcal{M}_j - \mathcal{M}_{j'}|^{1.25}$ where $j$ and $j'$ are in an interval $[a,b]$ such that $b = n +  2^{p+1} - 1$, and $\log ((n + 2^p)/(a+1)) \leq 2^m$.  Now:
    $$ \sup_{a \leq j,j' \leq b} |\mathcal{M}_j - \mathcal{M}_{j'}|^{1.25}
       \ll
       \sup_{a \leq j \leq b} |\mathcal{M}_j - \mathcal{M}_{a}|^{1.25} \ .
    $$

    Because of the estimate  \eqref{boundincrementsM}, this quantity has a $k$-th moment (conditionally on $\mathcal{G}$) dominated by $2^{1.25 \half k m}$ for fixed $k \geq 0$, and $a$ large enough depending on $\beta$. This last constraint can in fact be dropped since the individual increments of $\mathcal{M}$ after $n$ have all bounded conditional moments given $\mathcal{G}$ (they are dominated by the moments of the quantity $\log (1 - |\alpha_j|^2)$). 
   
Since the fourth moment of the double supremum $\Sigma_m$ is dominated by
    $$ 2^{1.25 \times 0.5 \times 4 m} = 2^{2.5 m} \ ,$$
   we have
	\begin{align*}
	\E^{\Q} [ \max(0, \Sigma_m - 2^{m}) | \mathcal{G} ] & \leq \E^{\Q} [  \Sigma_m \mathds{1}_{\Sigma_m \geq 2^{ m}} | \mathcal{G}] 
	\leq 2^{-3m} \E^{\Q} [\Sigma_m^{4}| \mathcal{G}]
	\leq 2^{-3m} 2^{2.5m}
	= 2^{-0.5m}
	\end{align*}	
	which implies the boundedness of  $\E^{\Q} [ S_p(r) | \mathcal{G}] $  by summing in $m$.

\subsection{Weak convergence to a solution of the SDE}
\label{section:cv2sde_2}

We start this subsection by a general theorem of convergence of discrete stochastic processes towards the solution of a SDE. Then, we apply this theorem to the setting of Proposition  \ref{prop:convergencetosde}.

\subsubsection{A general theorem for convergence of stochastic processes}
\begin{proposition}
\label{proposition:diffusiveLimit}
Let $(\varepsilon_n)_{n \geq 1}$ be a positive sequence converging to zero. Let $(X^{(n)})_{n \geq 1}$ be a family stochastic processes defined on  intervals $(I_n)_{n \geq 1}$ containing a fixed compact interval $I \subset \mathbb{R}_+$ and whose endpoints are multiples of $\varepsilon_n$, $X^{(n)}$ being continuous and piecewise linear on the intervals of the form $[k \varepsilon_n, (k+1) 
\varepsilon_n]$,  uniformly bounded, and  satisfying the following equation: 
$$ X^{(n)}_{(k+1)\varepsilon_n} -  X^{(n)}_{k\varepsilon_n}
 = b_n \left( k\varepsilon_n, X^{(n)}_{k\varepsilon_n} \right) \varepsilon_n
 + \sigma_n \left( k\varepsilon_n, X^{(n)}_{k\varepsilon_n} \right) \sqrt{\varepsilon_n} Y^{(n)}_k
$$
where $b_n: I_n \times \R \rightarrow \R$ and $\sigma_n: I_n \times \R \rightarrow \R_+$ are given functions.

We assume that $b_n$ and $\sigma_n$ are uniformly converging on $\mathbb{R} \times I$ to continuous and bounded functions $b$ and $\sigma$ when $n$ goes to infinity, and
$$ \E [ Y^{(n)}_k | (  X^{(n)}_{j \varepsilon_n} )_{ j \leq k}  ] = 0, \;  \E [ (Y^{(n)}_k)^2 | (  X^{(n)}_{j \varepsilon_n} )_{ j \leq k}  ] = 1 \ ,$$
$$\E [  (X^{(n)}_{(k+1)\varepsilon_n} -  X^{(n)}_{k\varepsilon_n} )^4  | (  X^{(n)}_{j \varepsilon_n} )_{ j \leq k}  ] = \Oc(\varepsilon_n^2) \ .$$
Moreover,  we suppose that $b$ and $\sigma$ satisfy the estimates: 
$$|b (t, x) - b(t,y)| \ll |x-y|, \; |\sigma(t,x) - \sigma(t,y)| \ll \sqrt{|x-y|}.$$
Then, the family of the laws of $X^{(n)}$ restricted to $I$ for $n \geq 1$ is tight and any subsequencial limit has the law of a solution of the SDE:
$$dX_t = b(t, X_t) dt + \sigma(t, X_t) dB_t \ ,$$
$B$ being a Brownian motion. 
\end{proposition}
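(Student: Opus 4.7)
The plan is the classical Stroock--Varadhan scheme: first establish tightness of the laws of $(X^{(n)}|_I)_{n \geq 1}$ in $C(I,\R)$, then identify every subsequential limit via the martingale problem for the generator $Lf(t,x) = b(t,x)f'(x) + \half \sigma^2(t,x) f''(x)$.

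For the tightness step I will use Kolmogorov's criterion. Write $\Delta_k := X^{(n)}_{(k+1)\varepsilon_n} - X^{(n)}_{k\varepsilon_n}$. The drift contribution to $X^{(n)}_t - X^{(n)}_s$ is bounded by $\|b_n\|_\infty (t-s) = O(t-s)$, while the centered increments form a discrete-time martingale whose predictable quadratic variation over $[s,t]$ equals $\sum_k \sigma_n^2 \, \varepsilon_n = O(t-s)$. Combined with the hypothesis $\E[\Delta_k^4 \mid \text{past}] = O(\varepsilon_n^2)$, a standard discrete Burkholder--Davis--Gundy estimate yields $\E[|X^{(n)}_t - X^{(n)}_s|^4] \ll (t-s)^2$ uniformly in $n$, first at grid times and then, by piecewise linearity, for all $s,t \in I$. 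Kolmogorov's criterion then provides the tightness.

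For the identification step I will follow the martingale-problem route. For $f \in C_c^3(\R)$, a second-order Taylor expansion
$$ f(X^{(n)}_{(k+1)\varepsilon_n}) - f(X^{(n)}_{k\varepsilon_n}) = f'(X^{(n)}_{k\varepsilon_n})\Delta_k + \half f''(X^{(n)}_{k\varepsilon_n})\Delta_k^2 + O(|\Delta_k|^3) $$
together with $\E[Y^{(n)}_k \mid \text{past}] = 0$ and $\E[(Y^{(n)}_k)^2 \mid \text{past}] = 1$ shows that the conditional expectation of the right-hand side equals $L_n f(k\varepsilon_n, X^{(n)}_{k\varepsilon_n})\,\varepsilon_n + O(\varepsilon_n^{3/2})$, where $L_n$ is the generator built from $(b_n, \sigma_n)$ and where the cubic remainder is controlled via $\E[|\Delta_k|^3] \leq \E[\Delta_k^4]^{3/4} = O(\varepsilon_n^{3/2})$. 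Summing over the $O(\varepsilon_n^{-1})$ grid points of $I$ leaves an $O(\varepsilon_n^{1/2})$ error in $L^1$, so $f(X^{(n)}_t) - f(X^{(n)}_0) - \sum L_n f \, \varepsilon_n$ is a martingale up to $o_{L^1}(1)$. Along any weakly convergent subsequence $X^{(n_j)} \to X$, the uniform convergences $b_n \to b$ and $\sigma_n \to \sigma$ together with the continuity of $Lf$ turn the Riemann sum into $\int_0^t Lf(s, X_s)\,ds$, and the martingale property passes to the limit when tested against bounded continuous functionals of $(X_u)_{u \leq s}$. Hence $X$ solves the martingale problem for $L$, equivalently $X$ is a weak solution of $dX_t = b(t,X_t)\,dt + \sigma(t,X_t)\,dB_t$.

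The only real obstacle is the bookkeeping of the error terms: verifying that the cubic Taylor remainder and the cross-term errors arising from expanding $\Delta_k^2 = (b_n \varepsilon_n + \sigma_n \sqrt{\varepsilon_n}\, Y^{(n)}_k)^2$ are all controlled summably by the fourth-moment hypothesis, and that piecewise-linear interpolation between grid points preserves the fourth-moment increment bound (it does, since the interpolated increments are convex combinations of grid increments and zero) as well as the martingale identification (which can be tested only at grid times before the final limit). The Lipschitz and $\half$-H\"older regularity assumptions on $b, \sigma$ are not needed for this existence part, but they yield, via Yamada--Watanabe, pathwise uniqueness of the limiting SDE, so in fact the full sequence $X^{(n)}$ converges in law.
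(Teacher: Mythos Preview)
Your proposal is correct and follows essentially the same Stroock--Varadhan route as the paper: Kolmogorov's criterion for tightness via a fourth-moment increment bound, then a second-order Taylor expansion and the martingale-problem identification of subsequential limits, with Yamada--Watanabe invoked at the end. The only cosmetic difference is that the paper obtains the bound $\E[(X^{(n)}_t - X^{(n)}_s)^4] \ll (t-s)^2$ by a direct recursion on $E_m = \E[(X^{(n)}_{m\varepsilon_n} - X^{(n)}_{k\varepsilon_n})^4]$ rather than by splitting into drift plus martingale and citing a discrete BDG/Rosenthal inequality, but both arguments are standard and equivalent in spirit.
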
 

\begin{proof}
For the tightness, by the classical Kolmogorov criterion, it is enough to show that 
$$\E [ (X^{(n)}_t - X^{(n)}_s)^4 ] = \Oc ((t-s)^2)$$
for $|t-s|$ smaller than some absolute constant. 
Since we assume a linear interpolation, we can suppose $s < t$, $s = k \varepsilon_n $, $t = m \varepsilon_n $ for $k$ and $m$ integers. 
For $m \geq k$, we define 
$$\Delta_{k,m} = X^{(n)}_{m \varepsilon_n} - X^{(n)}_{k \varepsilon_n}$$ 
and we expand: 
$$\E [ \Delta_{k,m+1}^4 ] = \E [ \Delta_{k,m}^4 ] +  4  \E [ \Delta_{k,m}^3  \E [ \Delta_{m,m+1} | \Hc_m] ] 
+ 6   \E [ \Delta_{k,m}^2  \E [ \Delta_{m,m+1}^2 | \Hc_m] ] 
$$ $$+ 4  \E [ \Delta_{k,m}  \E [ \Delta_{m,m+1}^3| \Hc_m] ] 
+ \E[ \Delta_{m,m+1}^4],$$
where $\Hc_m$ is the $\sigma$-algebra generated by $X^{(n)}_{k \varepsilon_n}$ for $k \leq m$. 
We have  $$ \E [ \Delta_{m,m+1} | \Hc_m] =  b_n ( m\varepsilon_n,  X^{(n)}_{m \varepsilon_n}) \varepsilon_n = \Oc(\varepsilon_n)$$
since $b_n$ converges uniformly to $b$ and $b$ is bounded. 
Similarly, 
$$ \E [ \Delta^2_{m,m+1} | \Hc_m]
=  b^2_n (  m \varepsilon_n, X^{(n)}_{m \varepsilon_n}) \varepsilon^2_n  +   \sigma^2_n (m \varepsilon_n,  X^{(n)}_{m \varepsilon_n}) \varepsilon_n 
=  \Oc(\varepsilon_n),$$
$$ \E [ \Delta^4_{m,m+1} | \Hc_m] = \Oc(\varepsilon^2_n)$$
by assumption, 
and by using Cauchy-Schwarz inequality, 
$$ \E [| \Delta^3_{m,m+1}| \, \big| \Hc_m] = \Oc(\varepsilon^{3/2}_n).$$
Using H\"older inequality, we deduce that if $E_m = \E [ \Delta_{k,m}^4 ]$, 
we have 
$$E_{m+1} - E_m \ll (E_m^{3/4} + E_m^{1/2}) \varepsilon_n + E_m^{1/4}  \varepsilon_n^{3/2} +   \varepsilon_n^2.$$
As soon as $E_m \leq 1$ we deduce 
$$E_{m+1} - E_m \ll  E_m^{1/2} \varepsilon_n +  E_m^{1/4}  \varepsilon_n^{3/2} +  \varepsilon_n^2.$$
The term $ E_m^{1/4}  \varepsilon_n^{3/2} $ can be absorbed by the sum of the two others, because of Cauchy-Schwarz inequality. 
Hence, if $E_{m+1} \geq E_m \geq \varepsilon_n^2$, 
$$E^{1/2}_{m+1} - E^{1/2}_m  \leq \frac{ E_{m+1} - E_m}{2 E^{1/2}_m} \ll \varepsilon_n + \varepsilon_n^2  E^{-1/2}_m \ll \varepsilon_n,$$
which remains true if $E_{m+1} \geq E_m$ and $E_m \leq  \varepsilon_n^2$ since in this case 
$$E_{m+1} \leq E_m + \Oc (  E_m^{1/2} \varepsilon_n +   \varepsilon_n^2) \ll \varepsilon_n^2.$$
By induction, we deduce $E^{1/2}_m \ll (m-k) \varepsilon_n$ as soon  as $ (m-k) \varepsilon_n$ is smaller than some absolute constant. This is enough for tightness. 

Now, let $X$ be any limit in law of a subsequence of $(X^{(n)})_{n \geq 1}$. In order to prove that the law of $X$ is necessarily the unique weak solution of the above SDE, we prove that $X$ solves a well-posed martingale problem. Let $f$ be a smooth function with compact support. 
We have by Taylor's formula: 
$$ f( X^{(n)}_{(k+1)\varepsilon_n}) - f( X^{(n)}_{k\varepsilon_n}) = 
f'(X^{(n)}_{k\varepsilon_n}) \Delta_{k,k+1} + \frac{1}{2} f''(X^{(n)}_{k\varepsilon_n})\Delta^2_{k,k+1}  + \Oc_f (|\Delta_{k,k+1} |^3),$$
where the subscript $f$ means that the implicit constant may depend on the function $f$. 
Since 
$$
  \left\{
  \begin{array}{ccc}
  \E [  \Delta_{k,k+1}   | \Hc_k] & = & b_n ( k\varepsilon_n, X^{(n)}_{k\varepsilon_n}) \varepsilon_n \ ,\\
  \E [  \Delta^2_{k,k+1} | \Hc_k] & = & \sigma^2_n ( k \varepsilon_n, X^{(n)}_{k\varepsilon_n}) \varepsilon_n + \Oc( \varepsilon_n^2) \ , \\
  \E [| \Delta^3_{k,k+1}| \, \big| \Hc_k] & = & \Oc(\varepsilon^{3/2}_n) \ ,
  \end{array}
  \right.
$$
we get 
$$ \E \left(  f( X^{(n)}_{(k+1)\varepsilon_n}) - f( X^{(n)}_{k\varepsilon_n}) 
- \varepsilon_n [ f'(X^{(n)}_{k\varepsilon_n})  b_n ( k\varepsilon_n, X^{(n)}_{k\varepsilon_n} )   \right. $$
$$\left. +  \frac{1}{2} f''(X^{(n)}_{k\varepsilon_n}) \sigma^2_n ( k\varepsilon_n, X^{(n)}_{k\varepsilon_n})] \, \big| \Hc_k \right]
 =
 \Oc_f(\varepsilon^{3/2}_n) \ .
$$

Summing for consecutive values of $k$, and conditioning, we deduce that for $s <t $ multiples of $\varepsilon_n$ in $I$, 
\begin{align*}
&
\E \left[   f( X^{(n)}_{t}) - f( X^{(n)}_{s})
      - \int_s^t \left( f' ( X^{(n)}_{ \lfloor u  \rfloor_{\varepsilon_n}})
      b_n ( \lfloor u  \rfloor_{\varepsilon_n}, X^{(n)}_{ \lfloor u  \rfloor_{\varepsilon_n}})   \right. \right. \\
&
\left. \left.+ \frac{1}{2}  f'' ( X^{(n)}_{ \lfloor u  \rfloor_{\varepsilon_n}})  \sigma^2_n ( \lfloor u  \rfloor_{\varepsilon_n}, X^{(n)}_{ \lfloor u  \rfloor_{\varepsilon_n}})  \right) du \, \big|   (X^{(n)}_{v})_{v \leq s}  \right]
\end{align*}
is dominated by  $\varepsilon^{1/2}_n$ (with a constant depending on $f$), $\lfloor u  \rfloor_{\varepsilon_n}$ denoting the largest multiple of $\varepsilon_n$ which is smaller than or equal to $u$. 
Replacing $b_n$ by $b$ and $\sigma_n$ by $\sigma$ changes the integral by at most 
$$ |I| (\|f'\|_{\infty} \|b_n - b\|_{\infty}  + \|f''\|_{\infty} \| \sigma^2_n - \sigma^2\|_{\infty}) \underset{n \rightarrow \infty}{\longrightarrow} 0,$$
$|I|$ denoting the length of $I$. 
After that, replacing $ \lfloor u  \rfloor_{\varepsilon_n}$ by $u$ changes the integrand by at most $ w_A(f'b + \half f'' \sigma^2, \varepsilon_n + \eta_n)$, defined as 
$$\sup_{ \substack{x, y \in [-A,A]\\
                   u, v \in I\\
                   |x-y| + |u-v| \leq   \varepsilon_n + \eta_n}
       }
  \left|  \left(f' (x) b(u,x) + \half f''(x) \sigma^2 (u,x) \right)
       -  \left(f' (y) b(v,y) + \half f''(y) \sigma^2 (v,y) \right)   \right|,$$
as soon as $X^{(n)}_{ \lfloor u  \rfloor_{\varepsilon_n}}$ and $X^{(n)}_{u}$ are in $[-A,A]$ and their difference is at most $\eta_n$. 
Since  $X^{(n)}$ is uniformly bounded, we can choose $A$ in such a way that  the integral is changed by at most 
$$ |I|  w_A(f'b + f'' \sigma^2/2, \varepsilon_n + \eta_n)  $$ $$+ 
2 \|f'b + f'' \sigma^2/2\|_{\infty} \int_s^t  du \left(  \mathds{1}_{ |X^{(n)}_{ \lfloor u  \rfloor_{\varepsilon_n}} - X^{(n)}_{ u}| \geq \eta_n} \right).$$
The variation of the last conditional expectation   is then at most 
$$ |I|  w_A(f'b + f'' \sigma^2/2, \varepsilon_n + \eta_n)  $$ $$ + 
2  |I| \|f'b + f'' \sigma^2/2\|_{\infty} \eta_n^{-2} \sup_{u \in [s,t]} \E [(X^{(n)}_{ \lfloor u  \rfloor_{\varepsilon_n}} - X^{(n)}_{ u})^2  |   (X^{(n)}_{v})_{v \leq s} ]. $$
 Now, 
we have, with the previous notation, 
$$ \E [ \Delta_{k,k+1}^2 | \Hc_k]  = \Oc( \varepsilon_n)$$
and then the last conditional expectation is dominated by $\varepsilon_n$. 

The variation of the conditional expectation of the integral involving $f$ is then dominated by: 
$$ w_A(f'b + f'' \sigma^2/2, \varepsilon_n + \eta_n)  + 
 \|f'b + f'' \sigma^2/2\|_{\infty}  \eta_n^{-2} \varepsilon_n.$$

By uniform continuity, the first term goes to zero when $n$ goes to infinity if $\eta_n$ goes to zero.  We deduce, by taking $\eta_n = \varepsilon_n^{1/4}$, that 
$$ \E \left[   f( X^{(n)}_{t}) - f( X^{(n)}_{s}) - \int_s^t \left( f'( X^{(n)}_{u} ) b(u, X^{(n)}_u)  + \frac{1}{2}  f''(X^{(n)}_u)  \sigma^2 ( u, X^{(n)}_u )  \right) du \, \big|   (X^{(n)}_{v})_{v \leq s}  \right]$$
is bounded by a deterministic quantity $\delta_{n}$ which goes to zero when we let $n \rightarrow \infty$.
We then get, for all measurable functionals $G$,
$$ \left|\E \left[   \left( f( X^{(n)}_{t}) - f( X^{(n)}_{s}) - \int_s^t \big( f' ( X^{(n)}_{u} ) b ( u, X^{(n)}_u) \right. \right. \right. $$
$$\left. \left. \left.  + \frac{1}{2}  f'' ( X^{(n)}_{u})  \sigma^2 ( u, X^{(n)}_u )  \big) du \, \right) G \left(   (X^{(n)}_{v})_{v \leq s} \right)   \right]   \right|  \leq \|G\|_{\infty}  \delta_{n}.$$
if $s < t$ are multiples of $\varepsilon_n$. If in the big parenthesis, we replace $t$ by $t' \in [t, t+ \varepsilon_n]$ and $s$ by $s' \in [s, s + \varepsilon_n]$, 
we change the corresponding quantity by at most 
$$\|f'\|_{\infty} (| X^{(n)}_{t'} -X^{(n)}_{t}| +| X^{(n)}_{s'} -X^{(n)}_{s}| )   + 2  \|f'b + f'' \sigma^2/2\|_{\infty}  \varepsilon_n $$
 which shows that the estimate just above still occurs if we replace $s$ by $s'$ and $t$ by $t'$, with a possibly different $\delta_{n}$ satisfying the same properties. 
We can then write, by changing notation,
$$ \left|\E \left[   \left( f( X^{(n)}_{t}) - f( X^{(n)}_{s}) - \int_s^t \big( f' ( X^{(n)}_{u} ) b ( u, X^{(n)}_u) \right. \right. \right. $$
$$\left. \left. \left.  + \frac{1}{2}  f'' ( X^{(n)}_{u})  \sigma^2 ( u, X^{(n)}_u)  \big) du \, \right) G \left(   (X^{(n)}_{v})_{v \leq \lfloor s \rfloor_{\varepsilon_n} } \right) \right]   \right|  \leq \|G\|_{\infty}  \delta_{n}$$
 for all $s <t$ in any interval $[k \varepsilon_n, m \varepsilon_n]$ contained in $I$. Hence, 
for $s' < s < t$ fixed in the interior of $I$, we have for $n$ large enough and all measurable functionals $G$. 
$$ \left|\E \left[   \left( f( X^{(n)}_{t}) - f( X^{(n)}_{s}) - \int_s^t \big( f' ( X^{(n)}_{u} ) b ( u, X^{(n)}_{ u} ) \right. \right. \right. $$
$$\left. \left. \left.  + \frac{1}{2}  f'' ( X^{(n)}_{u})  \sigma^2 ( u, X^{(n)}_{u} )  \big) du \, \right) G \left(   (X^{(n)}_{v})_{v \leq s' } \right) \right]   \right|  \leq \|G\|_{\infty}  \delta_{n}$$
If $G$ is a continuous, bounded functional from $\mathcal{C}(I, \mathbb{R}) $ to $\mathbb{R}$, the quantity inside the expectation is continuous and bounded in the trajectory of $X^{(n)}$, since this process is uniformly bounded by some constant $A$ and $f''$, $\sigma$ and $b$ are uniformly continuous in $[-A,A]$, $[-A,A] \times I$ and $[-A,A] \times I$ respectively. 
We deduce that if the law of $X$ is a limit point of the family of laws of  $X^{(n)}$ restricted to $I$, then 
$$\E \left[   \left( f( X_{t}) - f( X_{s}) - \int_s^t \big( f' ( X_{u} ) b( u, X_u ) + \frac{1}{2}  f'' ( X_{u})  \sigma^2 ( u, X_u)  \big) du \, \right) G \left(   (X_{v})_{v \leq s' } \right) \right] = 0$$
for all $s' < s < t$ in the interior of $I$. Taking limits and using dominated convergence, we can let $s = s'$ and allow $s'$ and $t$ to be at the boundary of $I$. As such for all smooth functions $f$ with bounded support, and bounded continuous functionals $G$, and $s<t$ in $I$, we have:
$$\E \left[   \left( f( X_{t}) - f( X_{s}) - \int_s^t \big( f' ( X_{u} ) b( u, X_u ) + \frac{1}{2}  f'' ( X_{u})  \sigma^2 ( u, X_u)  \big) du \, \right) G \left(   (X_{v})_{v \leq s } \right) \right] = 0 \ .$$

Now, let $t_0$ be  the left-hand point of the interval $I$, and let us fix $x_0$. In order to invoke the machinery of the martingale problem, we will need a fixed initial condition - as the literature is stated in that form. Since $X$ belongs to the space of continuous functions, which is a separable complete metric space, the regular conditional probability $\P^{t_0, x_0} = \P\left( \cdot \ | \ X_{t_0} = x_0 \right)$ does exist (see \cite{D19}). The above equation says that for all smooth $f$ with bounded support:
$$ M^f_t := f(X_{t}) - f(X_{t_0}) - \int_{t_0}^t \big( f' ( X_{u} ) b( u, X_u ) + \frac{1}{2}  f'' ( X_{u})  \sigma^2 ( u, X_u)  \big) du$$
is an $\left( \P, \Fc^X \right)$-martingale, where $\Fc^X$ denotes the natural filtration of $X$. We then get, for $t_0 \leq s \leq t$, and for a bounded continuous functional 
$G$,  
\begin{align*}
\mathbb{E}^{\P^{t_0,X_{t_0}}}  [  M^f_t G \left(   (X_{v})_{  v \leq s } \right) ] & = \mathbb{E}^{\P}  [   M^f_t G \left(   (X_{v})_{ v \leq s } \right) | X_{t_0} ] 
 = \mathbb{E}^{\P} [  \mathbb{E}^{\P}    [   M^f_t G \left(   (X_{v})_{t_0 \leq v \leq s } \right) |  \Fc^X_s  ] | X_{t_0}] \\ & =  \mathbb{E}^{\P} [   M^f_ s G \left(   (X_{v})_{t_0 \leq v \leq s } \right) | X_{t_0}]  = \mathbb{E}^{\P^{t_0,X_{t_0}}}  [  M^f_s G \left(   (X_{v})_{  v \leq s } \right) ] 
\end{align*}
almost surely. Hence, for  $\P_{X_{t_0}}(dx)$-almost every $x_0$, we have 
$$\mathbb{E}^{\P^{t_0,x_0}}  [  M^f_t \tilde{G} \left(   (X_{sv})_{  v \leq 1 } \right) ] 
=\mathbb{E}^{\P^{t_0,x_0}}  [  M^f_s  \tilde{G} \left(   (X_{sv})_{  v \leq 1 } \right) ] $$
for $t_0 \leq s \leq t$, $\tilde{G}$ bounded and continuous functional on $\mathcal{C}([0,1], \mathbb{R})$, $f$ smooth with compact support, all these elements being restricted to arbitrary countable sets. 

Replacing $s$ and $t$ by $s'$ and $t'$ for $s' > s$, $t' > t$ in the chosen countable set, supposed to be dense, and letting $s' \rightarrow s$ and $t' \rightarrow t$, we can drop the restriction on $s$ and $t$ by dominated convergence.
Moreover, since for two smooth functions $f$ and $g$ with compact support, $$ |  M^f_t -  M^g_t  | \ll_{b,\sigma,I}(1+t) ( \|f-g\|_{\infty} +  \|f'-g'\|_{\infty}  +  \|f''-g''\|_{\infty}),$$
we can drop the restriction on $f$ after considering a dense subset of smooth functions with respect to the $C^2$ norm. 

We then have for  $\P_{X_{t_0}}(dx)$-almost every $x_0$, 
$$\mathbb{E}^{\P^{t_0,x_0}}  [ ( M^f_t -  M^f_s) H( X_{sv_0}, \dots, X_{s v_r}) ] = 0$$
for all smooth $f$ with compact support, all $t \geq s \geq t_0$, all $v_0, \dots v_r \in \mathbb{Q} \cap [0,1]$ and all polynomials $H$ with rational coefficients. By dominated convergence, one can drop the assumption $v_0, \dots, v_r \in \mathbb{Q}$ and only assume that  $H$ is a continuous function. Using again dominated convergence, the equality remains true 
when $H$ is the indicator of a product of intervals. By monotone class theorem, we easily deduce that for $\P_{X_{t_0}}(dx)$-almost every $x_0$ and all smooth $f$ with compact support, 
$M^f$ is a $(\P^{t_0, x_0}, \Fc^X)$-martingale, i.e. the law $\Lc\left( X_{t}, \ t \geq t_0 \ | \ X_{t_0} = x_0 \right)$ is a solution of the martingale problem associated to the SDE and with initial condition $x_0$. The regularity assumptions on the coefficients of $b$ and $\sigma$ allow us to apply the result by Yamada and Watanabe given in \cite[Theorem 8.2.1]{SV07}.  It says that the SDE satisfies Itô uniqueness and that the martingale problem is well-posed for deterministic initial conditions. This means that  for $\P_{X_{t_0}}(dx)$-every $x_0$, the law $\Lc\left( X_{t}, \ t \geq t_0 \ | \ X_{t_0} = x_0 \right)$, is uniquely determined and coincides with the law of the solution of the SDE with initial condition $x_0$. 

In the end, as required, the law of $X$ is uniquely determined by its initial probability distribution at the left-hand point of $I$. And $X$ has the same distribution as the solution of the SDE with the same initial distribution. 

\end{proof} 

\subsubsection{An application of the previous result}
\label{section:convergenceSDEchaos}
We will now apply the result of the previous subsection to the particular case we are interested in. Recall that the goal is to prove that $\left( X^{(r)} ; r < 1\right)$ is tight and that any limit point solves the SDE \eqref{mainsde}.

We start by observing that, thanks to Proposition \ref{properties:underQ}, there exists an $(\F, \Q)$-martingale $N$ with normalized bracket such that, 
for $Q_j := Q_j(r)$, 
\begin{align}
  \label{eq:doobMeyerQj}
  & 1 - |Q_{j+1}|^2\\
\nonumber
= & 1-r^2 + r^2 \left( 1 - |Q_{j}|^2 \right)
           \left( 1 - \frac{2}{\beta(j+1)}\left( 1 - |Q_{j}|^2 \right)
                    + \frac{4}{\beta(j+1)} |Q_{j}|^2 + \Oc( \frac{1}{(j+1)^2}) \right)\\
\nonumber
  & + \sqrt{r^4 \left( 1 - |Q_{j}|^2 \right)^2
        \left( \frac{4 |Q_j|^2}{\beta(j+1)}
               + \Oc\left( \frac{1}{(j+1)^2}\right)
        \right)} \Delta N_k \ .
\end{align}

We choose $\varepsilon \in (0,1)$ and $A > 1$, a sequence $(r_m)_{m \geq 1}$ in $(0,1)$ which tends to $1$ and such that $r_m$ is sufficiently close to $1$ for all $m$, and we apply Proposition \ref{proposition:diffusiveLimit} to: 

$$\varepsilon_m  = \log(1/r_m^2),  \; X^{(m)}_{k \varepsilon_m}  =  |Q_{n+k}|^2, \;  I_m = \R_+, \; I = [\varepsilon, A],$$
$$b_m (t,x) = (\log (1/r_m^2))^{-1}  \left[ (1-x) - (1 - r_m^2) - r_m^2 (1-x) \left( 1 - \frac{2}{ \beta (j + 1)}(1-x) \right. \right. $$
$$\left. \left. + \frac{4}{\beta (j+1)} x + \Oc ( (j+1)^{-2}) \right)
\right],$$
and 
$$\sigma^2_m (t,x) = (\log (1/r_m^2) )^{-1} r_m^4 (1-x)^2  \left( \frac{4x}{\beta(j+1)} + \Oc ((j+1)^{-2}) \right),$$
for $j = n + t/ \log (1/r_m^2)$, which tend uniformly to 
$$b (t,x) = -x + (1-x) \left( \frac{2}{\beta t} (1-x) - \frac{4x}{\beta t} \right)$$
and 
$$\sigma^2(t,x) = \frac{4 x (1-x)^2}{\beta t}$$
when $t \in I$. 
We then also have the uniform convergence of $\sigma_m$ towards $\sigma$ for $t \in I$ since the square root function is uniformly continuous.
Moreover, the condition on the conditional fourth moment is ensured by 
  \eqref{eq:boundmomentorder4}, and the Lipschitz-like conditions for $b$ and $\sigma$ are also satisfied. 
From Proposition \ref{proposition:diffusiveLimit},  we deduce that the family, indexed by $m$,  of distributions of the linear interpolation 
$X^{(r_m)}_t$ 
of $t \mapsto |Q_{n + t/ \log (1/r_m^2)}|^2$, restricted to the interval $[\varepsilon, A]$, is tight, and 
any limit point satisfies the SDE of Proposition \ref{prop:convergencetosde} on the interval $[\varepsilon, A]$. 

From the tightness of $(S(r))_{r \in (0,1)}$ (see \eqref{eq:todo}), we get 
that for $k \leq 1/\log (1/r^2)$, and $\varepsilon' \in (0,1)$, 
$$ 1 - |Q_{n+k}|^2 \leq C_{\omega} e^{S(r)} \left[(n+k+1)(1-r^2) \right]^{1- \varepsilon'} $$ $$
\leq  C_{\omega} e^{S(r)}  [ [(1-r^2)(n+1)]^{1- \varepsilon'} + (k \log (1/r^2))^{1- \varepsilon'}] $$
which implies that the family of conditional distributions of
$$\sup_{t \in [0,1]} \frac{1 - X_t^{(r)}}{ t^{1 - \varepsilon'} + (1-r^2)^{1 - \varepsilon'}},$$
given $\mathcal{G}$, remains tight  with respect to  $r \in (0,1)$.

This allows us to extend the tightness of the conditional law  of 
 $X^{(r_m)}$ given $\mathcal{G}$ from the interval $[\varepsilon,A]$ to the interval $[0,A]$. 
Indeed, tightness is ensured by the fact that for all $\xi > 0$, 
$$\underset{m \rightarrow \infty}{\lim \sup} \, \Q [ w_{X^{(r_m)}, [0,A]} (\delta) > \xi  | \mathcal{G}] 
\underset{\delta \rightarrow 0}{\longrightarrow} 0,$$
where $w_{X^{(r_m)}, [0,A]} $ denotes the modulus of continuity of 
$X^{(r_m)}$ restricted to the interval $[0,A]$. 
We already know tightness on the interval $[\varepsilon, A]$
and then 
$$\underset{m \rightarrow \infty}{\lim \sup} \, \Q [ w_{X^{(r_m)}, [\varepsilon,A]} (\delta) > \xi /2 | \mathcal{G}] 
\underset{\delta \rightarrow 0}{\longrightarrow} 0.$$
Moreover, for $\delta \in (0, \varepsilon)$,
\begin{align*}
& \; \; \underset{m \rightarrow \infty}{\lim \sup} \,  \Q [ w_{X^{(r_m)}, [0,\varepsilon]} (\delta) > \xi /2 | \mathcal{G}] 
 \leq \underset{m \rightarrow \infty}{\lim \sup}  \,  \Q [ w_{X^{(r_m)}, [0,\varepsilon]} (\varepsilon) > \xi /2 | \mathcal{G}] 
 \\ & \leq \underset{r \rightarrow 1}{\lim \sup} \, \Q \left[ 
\sup_{t \in (0,1]} \frac{1 - X_t^{(r)}}{ t^{1 - \varepsilon'} + (1-r^2)^{1 - \varepsilon'}} 
> \frac{\xi/2}{\varepsilon^{1 - \varepsilon'} + (1-r^2)^{1 - \varepsilon'}} |  \mathcal{G} \right].
\end{align*}
We deduce, for all $\varepsilon \in (0,1)$, 
\begin{align*}
& \underset{\delta \rightarrow 0}{\lim \sup} \; 
\underset{m \rightarrow \infty}{\lim \sup} \, \Q [ w_{X^{(r_m)}, [0,A]} (\delta) > \xi  | \mathcal{G}] 
\\ & \leq  \underset{r \rightarrow 1}{\lim \sup} \,  \Q \left[ 
\sup_{t \in (0,1]} \frac{1 - X_t^{(r)}}{ t^{1 - \varepsilon'} + (1-r^2)^{1 - \varepsilon'}} 
> \frac{\xi/2}{\varepsilon^{1 - \varepsilon'} + (1-r^2)^{1 - \varepsilon'}} |  \mathcal{G} \right]
\\ & \leq \underset{r \rightarrow 1}{\lim \sup} \,  \Q \left[ 
\sup_{t \in (0,1]} \frac{1 - X_t^{(r)}}{ t^{1 - \varepsilon'} + (1-r^2)^{1 - \varepsilon'}} 
> \frac{\xi/2}{2 \varepsilon^{1 - \varepsilon'}} |  \mathcal{G} \right]
\end{align*}
Letting $\varepsilon \rightarrow 0$ gives the tightness we are looking for. 

This tightness shows that any sequence $(X^{(r_m)})_{m \geq 1}$ for $r_m \rightarrow 1$ has a 
subsequence which tends in law (conditionally on $\mathcal{G}$) to a limiting process.
Moreover, this limit should satisfy the SDE of the proposition on the full open interval $(0, \infty)$, 
since it satisfies the equation on any interval of the form $[\varepsilon,A]$. 

Let $X$ be a process which is the limit in law of a subsequence of $(X^{(r_m)})_{m \geq 1}$, and let us now show that 
$$\sup_{t \in (0,1]} \frac{ 1 - X_t}{t^{1-\varepsilon'}} < \infty$$
almost surely. 
Since $X$ has the limiting distribution of a subsequence of  $(X^{(r_m)})_{m \geq 1}$,  we have, by continuity of the underlying functional,
for all $\varepsilon \in (0,1)$, $\xi > 2$, 
$$\P \left[ \sup_{t \in (\varepsilon,1]} \frac{ 1 - X_t}{t^{1-\varepsilon'}} > \xi \right]
 \leq \underset{r \rightarrow 1}{\lim \sup} \ \Q \left[ 
\sup_{t \in (\varepsilon,1]} \frac{1 - X_t^{(r)}}{ t^{1 - \varepsilon'}} 
> \xi - 1 |  \mathcal{G} \right]$$
and then 
\begin{align*}
\P \left[ \sup_{t \in (\varepsilon,1]} \frac{ 1 - X_t}{t^{1-\varepsilon'}} > \xi \right]
 & \leq \underset{r \rightarrow 1}{\lim \sup} \ \Q \left[ 
\sup_{t \in (\varepsilon,1]} \frac{1 - X_t^{(r)}}{ t^{1 - \varepsilon'} + (1-r^2)^{1- \varepsilon'}} 
> \xi - 2 |  \mathcal{G} \right]
\\ & \leq \underset{r \rightarrow 1}{\lim \sup} \ \Q \left[ 
\sup_{t \in (0,1]} \frac{1 - X_t^{(r)}}{ t^{1 - \varepsilon'} + (1-r^2)^{1- \varepsilon'}} 
> \xi - 2 |  \mathcal{G} \right],
\end{align*}
which implies 
\begin{align*}
\P \left[ \sup_{t \in (0,1]} \frac{ 1 - X_t}{t^{1-\varepsilon'}} = \infty \right]
& \leq \underset{\varepsilon \rightarrow 0}{\lim \sup}  \, 
\P \left[ \sup_{t \in (\varepsilon,1]} \frac{ 1 - X_t}{t^{1-\varepsilon'}} > \xi \right]
\\ & \leq \underset{r \rightarrow 1}{\lim \sup} \ \Q \left[ 
\sup_{t \in (0,1]} \frac{1 - X_t^{(r)}}{ t^{1 - \varepsilon'} + (1-r^2)^{1- \varepsilon'}} 
> \xi - 2 |  \mathcal{G} \right],
\end{align*}
for all $\xi > 2$, and then by letting $\xi \rightarrow \infty$, 
$$\P \left[ \sup_{t \in (0,1]} \frac{ 1 - X_t}{t^{1-\varepsilon'}} = \infty \right] = 0.$$
This immediately provides the a.s. integrability of $(X_t - e^{-t})/t$ on the interval $(0,1]$. 
For the integrability at infinity, we observe that from 
\eqref{squareAinfty}, 
$$\E^{\Q} \left[ \sum_{k \geq \max(n, \lfloor (1-r^2)^{-1} \rfloor} 
\frac{|Q_{k+1}(r)|^2}{k+2} | \mathcal{G} \right] = \Oc(1),$$
which easily implies for $r$ close enough to $1$ (depending on $n$), 
$$\E^{\Q} \left[ \int_1^{\infty} \frac{X_t^{(r)}}{t} dt  | \mathcal{G} \right] = \Oc(1),$$
and then 
$$\E^{\Q} \left[ \int_1^{B} \frac{X_t^{(r)}}{t} dt  | \mathcal{G} \right] = \Oc(1)$$
uniformly in $B > 1$. 
Since $X$ is a limit point of a subsequence of $(X^{(r_m}))_{m \geq 1}$, and the integral on a compact set with respect to $dt/t$ is a continuous functional, 
we have 
$$\E \left[ \int_1^{B} \frac{X_t}{t} dt \right] = \Oc(1),$$
again independently of $B$. By monotone convergence, 
$$\E \left[ \int_1^{\infty} \frac{X_t}{t} dt \right] = \Oc(1),$$
which implies the integrability of  $\frac{X_t - e^{-t}}{t}$ on $[1, \infty)$. 

To end the proof of the Proposition \ref{prop:convergencetosde}, it now remains to prove that the law of $X$ is uniquely determined by the fact that it satisfies the SDE and that
$$\sup_{t \in (0,1]} \frac{ 1 - X_t}{t^{1-\varepsilon'}} < \infty.$$
The proof is given in the next two subsections. 

\subsection{Entrance law from Dufresne's identity}
\label{section:cv2sde_3}

An amusing fact is that the entrance law of the process $X$ uses Dufresne's identify which relates the perpetuity of a Brownian motion with drift to the inverse of a Gamma random variable.

More precisely, if $W$ is a Brownian motion, then Dufresne's identity states that (see \cite[p.78]{BS12}) for all $b > 0$:
\begin{align}
\label{eq:dufresne}
2\int_0^{\infty} \exp\left(- 2 ( W_{w} + b w ) \right) d w \eqlaw  \frac{1}{\gamma_b}
\end{align}
where $\gamma_{b}$ is a Gamma random variable of parameter $b$. This plays an important role in the following:
\begin{lemma}
Let $(X_t)_{t \geq 0}$ be a continuous process, satisfying the SDE \eqref{mainsde}, and such that 
$$\sup_{t \in (0,1)} \frac{1- X_t}{t^{1- \varepsilon'}} dt < \infty$$
almost surely, for all $\varepsilon' \in (0,1)$. Then, the following convergence in law holds:
$$ \frac{1-X_t}{t} 
   \stackrel{t \rightarrow 0}{\longrightarrow}
   \frac{\beta}{2 \gamma_\nu}
$$ 
where $\nu = \frac{\beta}{2} -1$.
\end{lemma}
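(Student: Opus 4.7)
\medskip\noindent\textbf{Proof plan.} Set $U_t := 1 - X_t$, which is a.s.\ of order $t^{1-\varepsilon'}$ for every $\varepsilon' \in (0,1)$ by hypothesis. Expanding \eqref{mainsde} and collecting, the SDE rewrites as
$$dU_t = (1-U_t)\, dt + \frac{4U_t(1-U_t)}{\beta t}\, dt - \frac{2U_t^2}{\beta t}\, dt + \frac{2 U_t \sqrt{1-U_t}}{\sqrt{\beta t}}\, dB_t.$$
The idea is to solve this SDE by variation-of-constants against the \emph{linearization} near $U = 0$, that is, to divide out an integrating factor built from the leading geometric part. Introduce
$$M_t := t^{2/\beta} \exp\left( \int_1^t \frac{2}{\sqrt{\beta s}}\, dB_s \right),$$
which is the strong positive solution of $dM_t = M_t\bigl( \tfrac{4}{\beta t}\, dt + \tfrac{2}{\sqrt{\beta t}}\, dB_t \bigr)$ on $(0,\infty)$. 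A direct Ito computation for $V_t := U_t / M_t$ shows that the singular $1/t$-drifts cancel, leaving
$$dV_t = \frac{dt}{M_t} + \frac{1}{M_t}\,\mathcal{E}(t,U_t)\, dt + \frac{1}{M_t}\,\mathcal{F}(t,U_t)\, dB_t,$$
with $|\mathcal{E}(t,U)| \ll U + U^2/t$ and $|\mathcal{F}(t,U)| \ll U^2/\sqrt{t}$ for small $U$.

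\medskip\noindent Integrating from $t_0$ to $t \in (t_0, 1]$ and letting $t_0 \to 0^+$, the upper bound $U_{t_0} \ll_\omega t_0^{1-\varepsilon'}$ together with the explicit asymptotics $M_{t_0} \sim t_0^{2/\beta + o(1)}$ (from the geometric form and Brownian LIL) yields $V_{t_0} \to 0$ a.s., provided $\varepsilon' < 1 - 2/\beta$, which is feasible since $\beta > 2$. Moreover $\int_0^1 M_s^{-1}\, ds < \infty$ a.s.\ because $2/\beta < 1$. Hence
$$V_t = \int_0^t \frac{ds}{M_s} \ + \ (\text{residual}),$$
and multiplying by $M_t/t$ and reversing time via $s = t e^{-\eta}$, $\eta \in [0,\infty)$, gives
$$\frac{U_t}{t} = \int_0^\infty \exp\!\left( -\Bigl(1 - \tfrac{2}{\beta}\Bigr)\eta + \tfrac{2}{\sqrt{\beta}}\, \widehat B^{(t)}_\eta \right) d\eta \ + \ (\text{residual}),$$
where $\widehat B^{(t)}_\eta := \tfrac{\sqrt{\beta}}{2}\int_{t e^{-\eta}}^{t} \tfrac{2}{\sqrt{\beta s}}\, dB_s$ has quadratic variation $\eta$, hence is a standard Brownian motion in $\eta$ for each fixed $t$.

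\medskip\noindent By Brownian scaling $\eta = \beta s$, the main integral has, for every $t > 0$, the same law as $\beta \int_0^\infty e^{-2(\nu s + W_s)}\, ds$ with $\nu = \beta/2 - 1$ and $W$ a standard Brownian motion; Dufresne's identity \eqref{eq:dufresne} then identifies its distribution with $\beta/(2\gamma_\nu)$. The conclusion follows once the residual is shown to tend to $0$ in probability as $t \to 0^+$. I expect this error-control step to be the main obstacle: it amounts to bounding $(M_t/t)\int_0^t M_s^{-1}\,\mathcal{E}(s,U_s)\, ds$ and its Ito counterpart, uniformly in small $t$, using the quantitative gap $U_s \lesssim s^{1-\varepsilon'} \ll s^{2/\beta} \sim M_s$ supplied by the a.s.\ bound \eqref{supbidule} together with the Burkholder–Davis–Gundy inequality for the stochastic piece. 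Granting this, $(1-X_t)/t$ converges in law to $\beta/(2\gamma_\nu)$, completing the proof.
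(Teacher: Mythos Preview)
Your approach is correct and follows essentially the same strategy as the paper: a variation-of-constants argument to peel off the linear part of the dynamics, then a time-change identifying the leading integral with a Dufresne perpetuity. The organizational difference is that the paper works at the logarithmic level, writing the SDE for $-\log(1-X_t)$ and using the \emph{exact} integrating factor built from $X$ itself, namely
\[
Y_t = (1-X_t)\,e^{t}\,t^{2/\beta}\exp\Bigl(\int_t^1 \tfrac{4X_u}{\beta u}\,du + \int_t^1 \sqrt{\tfrac{4X_u}{\beta u}}\,dB_u\Bigr),
\]
which yields the clean relation $d\log Y_t = (1-X_t)^{-1}\,dt$ and hence a Volterra equation for $1-X_t$. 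The error then appears as a single \emph{multiplicative} factor $e^{o_{\P}(1)}$, controlled via $\int_0^t (1-X_u)u^{-1}\,du \to 0$ and a DDS bound on $\int_s^t (1-\sqrt{X_u})\sqrt{4/(\beta u)}\,dB_u$. By contrast, you linearize the integrating factor up front (your $M_t$ has $X$ replaced by $1$), which spares you the preliminary step of proving $X_t<1$ for all $t>0$ (needed in the paper to make sense of $\log(1-X_t)$), but leaves \emph{additive} residuals $\mathcal{E},\mathcal{F}$ to bound after multiplying by $M_t/t$. Your sketch of this step is sound: the drift residual is dominated by $t^{1-\varepsilon'}\cdot (M_t/t)\!\int_0^t M_s^{-1}\,ds$, which is tight times $t^{1-\varepsilon'}\to 0$; the stochastic residual has quadratic variation $\ll \int_0^t s^{3-4\varepsilon'} M_s^{-2}\,ds$, controllable by the same DDS/LIL reasoning you invoke for $M_{t_0}$. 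The paper's multiplicative packaging makes this control marginally cleaner, but both routes are valid and lead to the same Dufresne integral.
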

\begin{proof}
Recall that $X_0 = 1$ and $X$ satisfies the SDE:
$$ d(1-X_t) =   X_t dt
              - \left( 1 - X_t \right)^2 \frac{2 dt}{\beta t}
              + 4\left( 1 - X_t \right) X_t \frac{dt}{\beta t}
              + \sqrt{ \left( 1 - X_t \right)^2 \frac{4 X_t}{\beta t} } dB_t \ .$$
             
Via It\^o's formula, we can write, for all $t_0 > 0$,  on the event when $X_{t_0} < 1$ and in  the interval between $t_0$ and the first  hitting time $T_{t_0}^1$ of $1$ after $t_0$: 
\begin{align}
    -d\log(1-X_t) 
= & \frac{1}{1-X_t} dX_t
    + \half \frac{1}{(1-X_t)^2} d\langle X, X \rangle_t \nonumber \\
= & - \frac{X_t}{1-X_t} dt
    + \left( 1 - X_t \right) \frac{2 dt}{\beta t}
    - 4 X_t \frac{dt}{\beta t}
    - \sqrt{ \frac{4 X_t}{\beta t} } dB_t
    + \frac{2X_t}{\beta t} dt \nonumber \\
= & - \frac{X_t}{1-X_t} dt
    + \frac{2 dt}{\beta t}
    - 4 X_t \frac{dt}{\beta t}
    - \sqrt{ \frac{4 X_t}{\beta t} } dB_t \ . \label{edslog}
\end{align}
Let us show that this equation is in fact satisfied for all $t>0$. The Dambis-Dubins-Schwarz Theorem
states that a continuous martingale can be written as a time-changed Brownian motion (see \cite{RevuzYor}, Chapter V). More precisely, if $M$ is a continuous local martingale vanishing at zero, then there exists a Brownian motion 
$(W_s)_{s \geq 0}$ such that 
$$W_s = M_{ \inf \{u, \langle M, M \rangle_u > s \}}$$
when $s \leq  \langle M, M \rangle_{\infty}$, $ \langle M, M \rangle$ denoting the quadratic variation of $M$. 
If $\langle M, M \rangle$ is continuous and strictly increasing, we deduce that 
$$M_u = W_{\langle M, M \rangle_u}$$
for all $u \geq 0$. In the present situation, 
we get that 
$$- \int_{t_0}^u \sqrt{ \frac{4 X_t}{\beta t} } dB_t
= W_{ \int_{t_0}^u \frac{4 X_t}{\beta t} dt }$$
for all $u \geq t_0$. Notice that the quadratic variation is strictly increasing since $X$ cannot be equal to zero on a nontrivial interval: this would contradict
the SDE.  Since $X$ is always bounded by $1$, we deduce that for all $t_1 > t_0$, 
$$\sup_{t \in [t_0, \min(t_1, T_{t_0}^1) )} [-\log (1-X_{t}) + \log (1- X_{t_0})] \leq \frac{2}{\beta} \log (t_1/t_0) + \sup_{ 0 \leq s \leq \frac{4}{\beta} \log (t_1/t_0)} W_s
< \infty.$$
Taking limit at the end of the interval, we deduce 
$$-\log (1-X_{\min(t_1, T_{t_0}^1)}) + \log (1- X_{t_0}) < \infty$$
i.e. $X_{\min(t_1, T_{t_0}^1)} < 1$ and then $T_{t_0}^1 > t_1$. Since, $t_1 > t_0$ is arbitrary, $T_{t_0}^1 = \infty$, which means that $X$ almost surely never returns to $1$ after $t_0$, conditionally on the event $X_{t_0} < 1$. 
In particular, for all $t_0 < t_1$, 
$$\P [ X_{t_0} < 1,  X_{t_1} = 1 ] = 0$$
and then taking a countable union, 
$$\P [X_{t_1} = 1, \exists t \in \Q \cap (0, t_1), X_t < 1] = 0,$$
and by continuity
$$\P [X_{t_1} = 1, \exists t \in (0, t_1), X_t < 1] = 0,$$
i.e. 
$$\P [X_{t_1} = 1] = \P [ \forall t \in (0, t_1], X_t = 1 ] = 0$$
since the fact that $X$ remains equal to $1$ in a non-trivial interval contradicts the SDE satisfied by $X$. 
Hence, for all $t_0 > 0$, $X_{t_0} < 1$ almost surely, which by the previous reasoning, implies a.s. that $X$ never hits $1$ after $t_0$. 
Taking the countable intersection of the events for $t_0 \in \Q \cap (0, 1]$, we deduce that a.s. $X$  is strictly smaller than $1$ everywhere except at time $0$. Hence, \eqref{edslog} is almost surely satisfied for all $t \in (0, \infty)$. 

\medskip

As we shall see, Eq. \eqref{edslog} can be recast into the following Volterra equation:
\begin{align}
\label{eq:volterra}
(1-X_t) & = \int_0^t ds \ e^{-(t-s)} \left( \frac{s}{t} \right)^{\frac{2}{\beta}}
            \exp\left(\int_s^t \frac{4 X_u}{\beta u} du
            + \int_s^t \sqrt{\frac{4 X_u}{\beta u} } dB_u \right) \ .
\end{align}

To do so, fix an arbitrary time, say $1$, and write:
\begin{align}
\label{eq:volterraCandidate}
 Y_t & = (1-X_t) e^t t^{\frac{2}{\beta}}
                 \exp\left( \int_t^1 \frac{4 X_u}{\beta u} du
                           +\int_t^1 \sqrt{\frac{4 X_u}{\beta u} } dB_u \right) \ .
\end{align}

First, let us prove that almost surely, $\lim_{t \rightarrow 0} Y_t = 0$. Fix an integer $r \geq 1$. Using the Dambis-Dubins-Schwarz Theorem, there exists a Brownian motion $W^{(r)}$ such that for $t \geq e^{-r}$:
$$ \int_{e^{-r}}^t \sqrt{\frac{X_u}{u} } dB_u
   =
   W^{(r)}_{\int_{e^{-r}}^t \frac{X_u}{u} du} \ .
$$
Because $X_u \leq 1$, we obtain:
$$\sup_{e^{-r} \leq t \leq 1} \left| \int_{e^{-r}}^t \sqrt{\frac{X_u}{u} } dB_u \right|
  \leq \sup_{0 \leq s \leq \int_{e^{-r}}^1 \frac{1}{u} du}| W^{(r)}_s|
  =    \sup_{0 \leq s \leq r}| W^{(r)}_s | \eqlaw \sqrt{r} V
$$
 where $V$ is a random variable with subexponential tails. By  the Borel-Cantelli Lemma, we deduce that there is random  variable $C_\omega < \infty$, such that
$$\forall r \geq 1, 
  \sup_{e^{-r} \leq t \leq 1}  \left| \int_{e^{-r}}^t \sqrt{\frac{X_u}{u} } dB_u \right|
  \leq C_\omega r^{3/4} \ ,$$
and then
$$\forall r \geq 1, \ 
  \forall e^{-r} \leq t \leq 1, \
\left|  \int_{t}^1  \sqrt{\frac{X_u}{u} } dB_u \right|
  \leq 2 C_\omega r^{3/4} \ ,$$
which gives 
$$ \forall t \in (0,1],
 \ \left| \int_{t}^1 \sqrt{\frac{X_u}{u} } dB_u \right| \leq 2 C_\omega \left( 1+|\log t| \right)^{3/4}
 \ .
$$

The previous bound applied to Eq. \eqref{eq:volterraCandidate} gives
$$Y_t  \leq  (1-X_t) e^t t^{-\frac{2}{\beta}}
       e^{2 C_\omega \left( 1+|\log t| \right)^{3/4}} \ .$$
By assumption on the process $X$ and the fact that $\frac{2}{\beta} < 1$ we deduce that $Y_t$ almost surely goes to zero when  $t \rightarrow 0$. We are now ready to recast the SDE on $X$ into a Volterra equation. From \eqref{edslog}, we deduce that  $d \log Y_t = dt/(1-X_t)$, and then, since $Y$ tends to zero at zero: 
$$ Y_t = \int_0^t ds \frac{Y_s}{1-X_s}.
$$
which implies the Volterra equation \eqref{eq:volterra}.
 
 \medskip 
 
 We are now ready to finish the argument. We have 
 $$0 \leq \int_s^t \frac{4 (1-X_u)}{\beta u} du \leq \int_0^t \frac{4 (1-X_u)}{\beta u} du$$
 which tends to zero in probability when $t$ goes to zero, since $(1-X_u)/u$ is integrable because of the assumption on $X$. 

On the other hand, by using again the  Dambis-Dubins-Schwarz theorem, we have for $\varepsilon \in (0,t)$, 
    $$\sup_{\varepsilon \leq s \leq t}  \left| \int_\varepsilon^s  (1- \sqrt{X_u}) \sqrt{\frac{4}{\beta u} } dB_u \right| 
    \leq \sup_{0 \leq \ell \leq L} |\gamma^{(\varepsilon)}_\ell|$$
    where $\gamma^{(\varepsilon)}$ is a Brownian motion and 
$$L = \int_\varepsilon^t  (1- \sqrt{X_u})^2  \frac{4}{\beta u} du
    \leq \int_0^t  (1- X_u)  \frac{4}{\beta u} du \ .
$$
  We deduce that for $a, \alpha > 0$, 
\begin{align*}
     & \ \P\left[ \sup_{\varepsilon \leq s \leq t}  \left| \int_\varepsilon^s  (1- \sqrt{X_u}) \sqrt{\frac{4}{\beta u} } dB_u \right|  > a \right]\\
\leq & \ \P\left[ \int_0^t  (1-X_u)  \frac{4}{\beta u} du \geq \alpha \right]
       + \P\left[  \sup_{0 \leq \ell \leq \alpha} |\gamma^{(\varepsilon)}_\ell| \geq a \right] \\ 
=    & \ \P\left[ \int_0^t  (1-X_u)  \frac{4}{\beta u} du \geq \alpha \right]
       +   \P\left[ \sqrt{\alpha} V \geq a \right] .
\end{align*}
Using the triangle inequality and letting $\varepsilon \rightarrow 0$, we get 
$$ \P \left[ \sup_{0 < s \leq t}  \left| \int_s^t  (1- \sqrt{X_u}) \sqrt{\frac{4}{\beta u} } dB_u \right|  >2 a \right]
    \leq
    \P \left[ \int_0^t  (1- X_u)  \frac{4}{\beta u} du \leq \alpha \right]
    +  \P\left[ \sqrt{\alpha} V \geq a \right] .$$
    
    By the assumption on the behavior of $X$ at zero, we have almost surely
$$\int_0^t (1- X_u)  \frac{4}{\beta u} du
  \underset{t \rightarrow 0}{\longrightarrow} 0 \ .
$$
    We deduce 
    $$\underset{t \rightarrow 0}{\lim \sup}  \, \mathbb{P} \left[ \sup_{0 < s \leq t}  \left| \int_s^t  (1- \sqrt{X_u}) \sqrt{\frac{4}{\beta u} } dB_u \right|  >2 a \right] 
    \leq   \P\left[ \sqrt{\alpha} V \geq a \right] \ .$$
    By letting $\alpha \rightarrow 0$, we deduce that 
    $$\sup_{0 < s \leq t}  \left| \int_s^t  (1- \sqrt{X_u}) \sqrt{\frac{4}{\beta u} } dB_u \right|  \underset{t \rightarrow 0}{\longrightarrow} 0$$
in probability. Hence, 
$$A_t := \sup_{0 < s \leq t}  \left|\int_s^t \frac{4 (1-X_u)}{\beta u} du  +  \int_s^t  (1- \sqrt{X_u}) \sqrt{\frac{4}{\beta u} } dB_u \right|
      = o_{\mathbb{P}} (1)
$$
where $o_{\mathbb{P}} (1)$ denotes any quantity tending to zero in probability with  $t$. As such, by making the substitution in Eq. \eqref{eq:volterra}, we find:
\begin{align*}
1-X_t = & e^{o_{\mathbb{P}}(1)}
          \int_0^t ds \ \left( \frac{s}{t} \right)^{\frac{2}{\beta}}
                   \exp\left(\int_s^t \frac{4 }{\beta u} du
                   + \int_s^t \sqrt{\frac{4}{\beta u} } dB_u \right) \\
      = & (1+o_{\mathbb{P}}(1))
          \int_0^t ds \
                   \exp\left( \frac{2}{\beta} \log \frac{t}{s}
                   + \int_s^t \sqrt{\frac{4}{\beta u} } dB_u \right) \ .
\end{align*}
We deduce, by using the Dambis-Dubins-Schwarz theorem for Wiener integrals, there exists a Brownian motion $W^{(t)}$ such that:
$$ \left( \int_s^t \sqrt{\frac{1}{\beta u} } dB_u \ ; \ 0 < s \leq t \right)
 = \left( -W^{(t)}_{\frac{1}{\beta}\log \frac{t}{s}} \ ; \ 0 < s \leq t   \right) \ .$$
The change of variables $s = t e^{-\beta w}$ yields
\begin{align*}
1-X_t = & (1+o_{\mathbb{P}}(1)) 
          \int_0^\infty t \beta e^{-\beta w} dw \
                   \exp\left( 2w
                   - 2 W^{(t)}_w \right) \\
      = & (1+o_{\mathbb{P}}(1)) \ t \ \frac{\beta}{2} \
          \left[2 \int_0^\infty dw \
          \exp\left( - 2 (W^{(t)}_w + (\frac{\beta}{2}-1) w) \right) \right].
\end{align*}
We are done thanks to Dufresne's identity \eqref{eq:dufresne}.
\end{proof}

\subsection{Uniqueness in law of solutions of the SDE}
\label{section:cv2sde_4}

\begin{lemma}
Let $(X_t)_{t \geq 0}$ be a continuous process, satisfying the SDE \eqref{mainsde}, and such that 
$$\sup_{t \in (0,1)} \frac{1- X_t}{t^{1- \varepsilon'}} dt < \infty$$
almost surely, for all $\varepsilon' \in (0,1)$.  Then the law of $X$ is uniquely determined.
\end{lemma}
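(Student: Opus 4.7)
The strategy is to prove pathwise uniqueness for the SDE \eqref{mainsde} under the given regularity condition, which combined with the weak existence established earlier in the proof of Proposition \ref{prop:convergencetosde} will deliver uniqueness in law via the Yamada-Watanabe principle. The key tool is the Volterra representation \eqref{eq:volterra} already obtained in the proof of the entrance law lemma, which pathwise identifies any solution (subject to the growth condition at $0$) as a functional of the driving Brownian motion.

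Concretely, I would place two continuous solutions $X$ and $\tilde X$ on the same filtered probability space, both driven by the same Brownian motion $B$ and both satisfying the growth bound $\sup_{t \in (0,1]}(1-X_t)/t^{1-\varepsilon'} < \infty$ for every $\varepsilon' \in (0,1)$. By the derivation recalled in the entrance law lemma, each satisfies
\begin{equation*}
1 - X_t = \int_0^t e^{-(t-s)}\left(\frac{s}{t}\right)^{2/\beta} \mathcal{E}(X; s, t)\, ds,
\end{equation*}
where $\mathcal{E}(Y; s, t) := \exp\bigl(\int_s^t \tfrac{4 Y_u}{\beta u}\, du + \int_s^t \sqrt{\tfrac{4 Y_u}{\beta u}}\, dB_u\bigr)$, and likewise for $\tilde X$. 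Subtracting the two Volterra equations and applying $|e^a - e^b| \leq e^{\max(a,b)}|a-b|$ together with $|\sqrt{a}-\sqrt{b}| \leq \sqrt{|a-b|}$, I would introduce a stopping time $\tau_N$ that bounds both $\mathcal{E}(X;\cdot,\cdot)$ and $\mathcal{E}(\tilde X;\cdot,\cdot)$ uniformly on $\{0 \leq s \leq t \leq T\}$, and derive a Volterra--Gronwall inequality of the form
\begin{equation*}
g_N(t) := \E\bigl[(X_{t \wedge \tau_N} - \tilde X_{t \wedge \tau_N})^2\bigr] \leq C_N \int_0^t \left(\frac{s}{t}\right)^{2/\beta}\frac{g_N(s)}{s}\, ds,
\end{equation*}
in which the factor $1/s$ comes from It\^o's isometry applied to the stochastic integral inside $\mathcal{E}$, and the integrability of the overall kernel near $s = 0$ relies on $\beta > 2$. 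Iterating this inequality forces $g_N \equiv 0$, so $X \equiv \tilde X$ on $[0, \tau_N \wedge T]$ almost surely, and letting $\tau_N \to \infty$ followed by $T \to \infty$ yields pathwise uniqueness on $[0, \infty)$.

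Once pathwise uniqueness has been obtained, the Yamada-Watanabe principle together with the weak existence already established delivers uniqueness in law, which closes the proof of the lemma and of Proposition \ref{prop:convergencetosde}. The main obstacle will be the design of the localizing stopping time $\tau_N$: on $\{\tau_N > T\}$, the exponential $\mathcal{E}$ has to stay uniformly bounded in $(s,t)$ without destroying the integrable singularity at $s = 0$ that drives the Gronwall iteration. The logarithmic bounds on the stochastic integrals $\int_s^t \sqrt{X_u/u}\, dB_u$ obtained via Dambis--Dubins--Schwarz and Borel--Cantelli in the proof of the entrance law are precisely the input needed to make this localization feasible, and I would expect the bulk of the technical work to consist in transplanting those estimates to the present pathwise comparison setting.
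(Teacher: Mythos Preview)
Your Volterra--Gronwall route has a genuine gap at the step where you claim
\[
g_N(t)\;\le\;C_N\int_0^t\left(\frac{s}{t}\right)^{2/\beta}\frac{g_N(s)}{s}\,ds.
\]
The stochastic part of $a-b$ in your bound $|e^a-e^b|\le e^{\max(a,b)}|a-b|$ is $\int_s^t\sqrt{4/(\beta u)}\,(\sqrt{X_u}-\sqrt{\tilde X_u})\,dB_u$, whose second moment by It\^o's isometry is $\int_s^t\frac{4}{\beta u}\,\E[(\sqrt{X_u}-\sqrt{\tilde X_u})^2]\,du\le\int_s^t\frac{4}{\beta u}\,\E|X_u-\tilde X_u|\,du$. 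This puts $\sqrt{g_N}$ (or $\E|X-\tilde X|$) on the right-hand side, not $g_N$; the H\"older-$\tfrac12$ nature of $x\mapsto\sqrt{x}$ is precisely the Yamada--Watanabe obstruction, and it survives the localization by $\tau_N$. Even if your linear inequality held, it would not iterate to zero: the $n$-fold convolution of the kernel $(s/t)^{2/\beta}s^{-1}$ over the simplex equals $(\beta/2)^n$ (change variables $s_j=t\,e^{-u_j}$), so the iteration gives $(C_N\beta/2)^n\sup g_N$, which diverges once $C_N>2/\beta$---and $C_N$ scales with the localization level $N$.

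The paper does not attempt pathwise uniqueness from $t=0$. Since the SDE has strong (Yamada--Watanabe) solutions for $t>0$, matching one-dimensional marginals suffices. The entrance law lemma gives $(1-X_t)/t\Rightarrow G$ for any admissible solution, so for small $\delta$ one can couple two solutions on the same Brownian motion from time $\delta$ with $|L^1_\delta-L^2_\delta|$ small in probability, where $L^j_t=\log((1-X^j_t)/t)$. From \eqref{edslog} the drift of $L_t$ is strictly decreasing in $L_t$, and strong uniqueness for $t>0$ forces the flows not to cross; hence $|L^1_t-L^2_t|$ is a nonnegative supermartingale and stays small at $t_0$. This monotone-comparison argument replaces your Gronwall step and is what actually closes the proof.
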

\begin{proof}
Let $X^1$ and $X^2$ be two solutions. In order to prove that $X^1 \eqlaw X^2$ as processes, we need to prove that finite dimensional distributions on $\{ t > 0\}$ coincide.

Nevertheless, because $X^1$ and $X^2$ solve the same SDE which admits strong solutions for $t>0$, both processes enjoy the Markov property, with the same transition kernels. Therefore, we only need to prove that marginals match:
$$ \forall t>0, \ X^1_t \eqlaw X^2_t \ .$$
In other words, one can fix $t_0 \in (0,1)$ and prove that $X^1_{t_0}$ and $X^2_{t_0}$ have the same law. 
Although $X^1_0 = X^2_0 = 1$ is the natural continuation at $t=0$, we do not have the Markov property in order to directly make the transition from time $0$ to time $t_0$. From the previous section, since both $X^1$ and $X^2$ have the required tightness property at $0$ and solve the SDE, they have the same "entrance law":
$$ \frac{1-X^1_t}{t} \stackrel{t \rightarrow 0}{\longrightarrow} G ,$$
$$ \frac{1-X^2_t}{t} \stackrel{t \rightarrow 0}{\longrightarrow} G,$$
for  $$G \eqlaw \frac{\beta}{2 \gamma_{(\beta/2)-1}}.$$ 
From this convergence in law, one deduces  that  for all $\eta> 0$, there exists $\delta \in (0,t_0)$ small enough, depending on $\eta$, such that we can couple the distributions of $(X^1_t)_{t \geq \delta}$ and  $(X^2_t)_{t \geq \delta}$, in such a way that with probability larger than $1- \eta$, $|L^1_{\delta} - L^2_{\delta}| \leq \eta$, 
where 
$$L^j_t := \log \left( \frac{ 1- X^j_t}{t} \right)$$
and $(X^1_t)_{t \geq \delta}$ and  $(X^2_t)_{t \geq \delta}$ are driven by the same Brownian motion. From the SDE given by \eqref{edslog}, the drift term in $L^j_t$ is nonincreasing with respect to the value of $L^j_t$. 
 Moreover, since the SDE has strong solutions, the corresponding stochastic flows do not intersect, which implies that 
 the relative order of $L^1$ and $L^2$ never changes. We deduce that the process $(|L^1_t - L^2_t|)_{t \geq \delta}$ is a nonnegative supermartingale, and then for all $t \geq \delta$, 
$$\P\left[ |L^1_{t} - L^2_{t}| \geq \eta^{\half} \ | \ |L^1_{\delta} - L^2_{\delta}| \leq \eta \right]
 \leq
 \E\left[ \frac{|L^1_{t} - L^2_{t}|}{\eta^{\half}} \ | \ |L^1_{\delta} - L^2_{\delta}| \leq \eta \right]
 \leq \eta^{\half},$$
 which implies 
 $$\P [ |L^1_{t_0} - L^2_{t_0}| \geq \eta^{1/2} ] \leq \eta^{1/2} + \P [ |L^1_{\delta} - L^2_{\delta}| > \eta ]
 \leq \eta + \eta^{1/2}.$$
Hence, the characteristic functions of $L^1_{t_0} $ and $L^2_{t_0} $, taken at $\lambda \in \mathbb{R}$, differ 
by at most 
$$\E [ \min (2, \lambda |L^1_{t_0} - L^2_{t_0}|) ] \leq \lambda \eta^{1/2} + 2 \P [ |L^1_{t_0} - L^2_{t_0}| \geq \eta^{1/2} ] 
\leq \eta + (1+\lambda )\eta^{1/2}.$$
This bound does not depend on the coupling between $L^1_{t_0}$ and $L^2_{t_0}$ and is available for all $\eta > 0$, which implies that 
$L^1_{t_0}$ and $L^2_{t_0}$ have the same distribution.  Hence, $X^1_{t_0} \eqlaw X^2_{t_0}$.
\end{proof}

\section{Concluding the proof of the Main Theorem \ref{thm:main}}
\label{section:proofmain2}

Finally, we are ready to tackle the proofs of Lemma \ref{lemma:RenormalizedGMC_L1Limit} and Lemma \ref{lemma:OmegaRhoControl}, thus completing the proof of the Main Theorem.

\begin{proof}[Proof of Lemma \ref{lemma:RenormalizedGMC_L1Limit}]
If we multiply the quantity inside the expectation by the indicator of $\mathds{1}_{M_{\infty}^{-1} \leq A}$ for some $A > 0$, the convergence occurs since $GMC_r^{\gamma} (f)$ converges to  $GMC^{\gamma}(f)$ in $L^1$. 
Hence, the upper limit of the left-hand side is at most 
$$\|f\|_{\infty} \underset{r \rightarrow \infty}{\lim \sup} \ \E  \left[ \mathds{1}_{M_{\infty}^{-1} > A}  \frac{1}{M_{\infty}} GMC_r^{\gamma} (\partial \D) \right]
+ \|f\|_{\infty} \E  \left[ \mathds{1}_{M_{\infty}^{-1} > A}  \frac{1}{M_{\infty}} GMC^{\gamma} (\partial \D) \right] \ .$$
By Fatou's lemma:
$$ \E  \left[ \mathds{1}_{M_{\infty}^{-1} > A}  \frac{1}{M_{\infty}} GMC^{\gamma} (\partial \D) \right]
   \leq
   \underset{r \rightarrow \infty}{\lim \inf} \ 
   \E  \left[ \mathds{1}_{M_{\infty}^{-1} > A}  \frac{1}{M_{\infty}} GMC_r^{\gamma} (\partial \D) \right]
   \ ,
$$
which is trivially smaller than the $\limsup$, and therefore the upper limit is at most:
$$ 2  A^{-1}  \|f\|_{\infty} 
   \underset{r \rightarrow \infty}{\lim \sup} \,  \E  \left[ \frac{1}{M^2_{\infty}} GMC_r^{\gamma} (\partial \D) \right]
   \ .
$$
Since $A$ is arbitrarily large, it is enough to show that $\E  \left[ \frac{1}{M^2_{\infty}} GMC_r^{\gamma} (\partial \D) \right] $ is bounded, uniformly in $r <1$ sufficiently close to $1$. By rotational invariance, we get 
\begin{align*}
\E  \left[ \frac{1}{M^2_{\infty}} GMC_r^{\gamma} (\partial \D) \right]  
& = \E \left [ M_{\infty}^{-1}   \prod_{j=0}^{\infty} \frac{1 - |\alpha_j|^2}
                                          {|1 - \alpha_j Q_j(r)|^{2}}
                                          e^{ \frac{2}{\beta(j+1)}(1-r^{2j+2}) } \right]
\\ & =  \E^{\Q}  \left [ M_{\infty}^{-1}   e^{\rho_{r,0} + \omega_{r,0}} \right]
\\ & \leq  \left( \E^{\Q}  \left [ M_{\infty}^{-3} \right] \right)^{1/3}  \left(\E^{\Q}  \left [ e^{3 \rho_{r,0} }\right] \right)^{1/3}
   \left(\E^{\Q}  \left [ e^{3\omega_{r,0}} \right] \right)^{1/3}
\end{align*} 
Recall that 
$$\rho_{r,n} = \sum_{j=n}^{\infty}  \left( - \log \left( \frac{ 1- |\alpha_j Q_j(r)|^2}{1 - |\alpha_j|^2} \right)+ \frac{2}{\beta} \frac{1 - |Q_j(r)|^2}{j+1} \right)$$
and 
$$\omega_{r,n} = \frac{2}{\beta} \sum_{j=n}^{\infty} \frac{|Q_j(r)|^2 - r^{2j+2}}{j+1}.$$

Since the law of $(|\alpha_j|)_{j \geq 0}$ is the same under $\P$ and $\Q$, we have
\begin{align*}
\E^{\Q}  \left [ M_{\infty}^{-3} \right] 
& = \E^{\Q}  \left [ \prod_{j=0}^{\infty}  (1 - |\alpha_j|^2)^3  e^{\frac{6}{\beta (j+1)} }\right] 
= \E  \left [ \prod_{j=0}^{\infty}  (1 - |\alpha_j|^2)^3  e^{\frac{6}{\beta (j+1)}} \right] 
\\ & \leq \underset{m \rightarrow \infty}{\lim \inf} \prod_{j=0}^m \mathbb{E} [ (1 - |\alpha_j|^2)^3  e^{\frac{6}{\beta (j+1)}}]
\end{align*}
where the last inequality is due to Fatou's lemma and the independence of $(\alpha_j)_{j \geq 0}$.  
Now, 
$$
\mathbb{E} [ (1 - |\alpha_j|^2)^3  e^{\frac{6}{\beta (j+1)}}]
 = e^{3/\beta_j}  \, \beta_j \, \int_0^1 (1-x)^{\beta_j + 2} dx 
 =  e^{3/\beta_j} \, \frac{\beta_j}{\beta_j + 3} = 1 + \mathcal{O}(1/j^2), $$
which  shows that 
$$\E^{\Q}  \left [ M_{\infty}^{-3} \right] < \infty.$$
 We also know, 
from Proposition \ref{proposition:boundOmegaRho}, that $\rho_{r,0}$ and $\omega_{r,0}$ have exponential moments of all orders, bounded independently of $r$. 
\end{proof}

\begin{proof}[Proof of Lemma \ref{lemma:OmegaRhoControl}]
We have for $\eta >0 $, 
$$\E^{\Q} \left[ |e^{\rho_{r,n}} - 1| e^{\omega_{r,n}} \,| \Fc_n \right] 
\leq \E^{\Q} \left[ \mathds{1}_{|e^{\rho_{r,n}} - 1| \geq \eta} (1 + e^{\rho_{r,n}}) e^{ \omega_{r,n}}  \,| \Fc_n \right]  
+ \eta \E^{\Q} \left[e^{\omega_{r,n}} \,| \Fc_n \right] $$

In Proposition \ref{proposition:boundOmegaRho}, we have proven that the conditional  exponential moments of order $p$ of $\omega_{r,n}$ and $\rho_{r,n}$ 
are bounded by a quantity depending only on $p$ and $\beta$, independently of $r$, and that on the event 
$$\mathcal{A}_L := \{ (\alpha_0, \dots, \alpha_{n-1} ) \in L\},$$
$L$ being a given compact set of $\mathbb{D}^n$, 
$\Q [ |e^{\rho_{r,n}} - 1| \geq \eta | \Fc_n]$ is bounded by a quantity depending only on 
$\beta, \eta, n, L, r$ and tending to $0$ when $r$ goes to $1-$. 

We deduce, by applying H\"older inequality and letting $\eta \rightarrow 0$, that on $\mathcal{A}_L$, the conditional expectation $\E^{\Q} \left[ |e^{\rho_{r,n}} - 1| e^{\omega_{r,n}} \,| \Fc_n \right]$ is bounded by a deterministic quantity depending on $\beta, n, L, r$ and tending to zero when $r$ goes to $1-$. Since it is also uniformly bounded by a quantity 
depending only on $\beta$, we have 
$$ \E^{\Q}  \left[ \E^{\Q} \left[ |e^{\rho_{r,n}} - 1| e^{\omega_{r,n}} \,| \Fc_n \right]  \right]  \underset{r \rightarrow 1-}{\longrightarrow} 0 $$ 
by dominated convergence. 
It is then enough to show that 
$$ \E^{\Q} \left| \E^{\Q} \left[ e^{\omega_{r,n}}  \,| \Fc_n \right]- K_{\beta}  \right|
   \underset{r \rightarrow 1-}{\longrightarrow} 0 $$
for some $K_{\beta} \in \R$.

\medskip

Now, for all $u > 0$, 
$$ \E^\Q \left| 
         \E^{\Q} \left[ e^{ \omega_{r,n}}  - e^{ \min( \omega_{r,n} ,u)}  \,| \Fc_n \right] 
         \right|
   \leq e^{-u} \E^{\Q} \left[ e^{ 2\omega_{r,n}} \right] = \Oc (e^{-u})$$
since the exponential moments of $\omega_{r,n}$ are bounded uniformly in $r$ and $n$. It is then enough to show that for all $u > 0$, there exists some $K_{\beta, u} \in \mathbb{R}$ such that
$$
   \E^{\Q} \left| \E^{\Q} \left[ e^{\min(\omega_{r,n},u)}  \,| \Fc_n \right]- K_{\beta, u}  \right|
   \underset{r \rightarrow 1-}{\longrightarrow} 0 \ .$$
In this case, $K_{\beta,u}$ is bounded uniformly in $u$ because of the bound on the exponential moments 
of $\omega_{r,n}$ and one can take for $K_{\beta}$ a limit point of $K_{\beta,u}$ for $u \rightarrow \infty$. It is a fortiori  sufficient to prove that for some random variable $\omega$ with distribution depending only on $\beta$,  and for all continuous, bounded functions $G$ from $\mathbb{R}$ to $\mathbb{R}$, 
$$ 
   \E^{\Q} \left| \E^{\Q} \left[ G(\omega_{r,n})  \,| \Fc_n \right]- \E [G(\omega)] \right|
   \underset{r \rightarrow 1-}{\longrightarrow} 0 \ .$$

By dominated convergence, it is then enough to show that some determination of the conditional law of $\omega_{r,n}$ given $\mathcal{F}_n$, under $\Q$, almost surely converges to the distribution of a  random variable $\omega$ depending only on $\beta$. 

In order to avoid the issue of fixing these determinations of conditional laws (recall that conditional expectations are only defined almost surely), 
let us use the following trick. First, since the full event can be approximated by events of the form $\mathcal{A}_L$, it is enough to show, for all 
compact sets $L \in \D^n$, 
$$ 
   \E^{\Q} \left[ \mathds{1}_{\mathcal{A}_L} \left| \E^{\Q} \left[ G(\omega_{r,n})  \,| \Fc_n \right]- \E [G(\omega)] \right| \right]
   \underset{r \rightarrow 1-}{\longrightarrow} 0 \ .$$
Let $\mathcal{E}$ be the $\mathcal{F}_n$-measurable event, defined by
$$\mathcal{E} = \{ \E^{\Q} \left[ G(\omega_{r,n})  \,| \mathcal{F}_n \right]- \E [G(\omega)]  > 0 \}. $$
This event may depend on $r$. 
The left-hand side of the last convergence can be written as 
$$\E^{\Q} [  \E^{\Q} \left[ G(\omega_{r,n})  \,| \mathcal{F}_n \right]- \E [G(\omega)]  \; | \mathcal{A}_L, \mathcal{E} ] \; \mathbb{\Q} [ \mathcal{E} \cap \mathcal{A}_L] 
$$ $$+ \E^{\Q} [  \E^{\Q} \left[  - G(\omega_{r,n})  \,| \mathcal{F}_n \right] + \E [G(\omega)]  \; | \mathcal{A}_L, \mathcal{E}^c ] \; \mathbb{\Q} [ \mathcal{E}^c \cap \mathcal{A}_L] $$
and then, since $\mathcal{E}$ and $\mathcal{A}_L$ are in $\mathcal{F}_n$, 
it is equal to 
$$( \E^{\Q} \left[ G(\omega_{r,n})   \; | \mathcal{A}_L, \mathcal{E} \right]- \E [G(\omega)]  ) \; \mathbb{\Q} [ \mathcal{E} \cap \mathcal{A}_L] 
$$ $$+ ( \E^{\Q} \left[  - G(\omega_{r,n})   \; | \mathcal{A}_L, \mathcal{E}^c \right] + \E [G(\omega)]   ] ) \; \mathbb{\Q} [ \mathcal{E}^c \cap \mathcal{A}_L] $$
We deduce that it is sufficient to prove the following: for any event $\mathcal{G}$ of non-zero probability, possibly dependent on $r$, included in $\mathcal{A}_L$ and $\mathcal{F}_n$-measurable, 
the conditional law of $\omega_{r,n}$ given $\mathcal{G}$ tends to the distribution of a random variable $\omega$ depending only on $\beta$, when $r \rightarrow 1-$. 

\medskip

Recall that for $\varepsilon \in (0,1)$ and $A>1$, we have:
$$ \omega_{r,n} = \omega_{r,n}^{(0, \varepsilon)}
                + \omega_{r,n}^{(\varepsilon, A)}
                + \omega_{r,n}^{(A, \infty)} \ ,
                $$
where for $a<b$:
$$ \omega_{r,n}^{(a, b)}
 = \frac{2}{\beta} \sum_{j=a_{n,r}}^{b_{n,r}-1} \frac{|Q_j(r)|^2-r^{2j+2}}{j+1}\ .
$$
and $a_{n,r} = \max\left( n, \lfloor a/(1-r^2) \rfloor \right)$. From Proposition \ref{prop:convergencetosde}, we deduce that for all $\eta>0$:
$$ \limsup_{\varepsilon \rightarrow 0} \sup_{r \in (0,1)}
   \Q\left( \left| \omega_{r,n}^{(0, \varepsilon)}\right| \geq \eta | \mathcal{G} \right)
   = 0 \ ,
$$
$$ \limsup_{A \rightarrow \infty} \sup_{r \in (0,1)}
   \Q\left( \left| \omega_{r,n}^{(A, \infty)}\right| \geq \eta | \mathcal{G} \right)
   = 0. 
$$ 

From this bound, it is easy to deduce the convergence we are looking for, from the convergence, 
 for all positive integers $A$ and all $\varepsilon \in (0,1)$, of  the conditional distribution of $\omega_{r,n}^{(\varepsilon,A)}$, under $\Q$ and given $\mathcal{G}$, to a variable $\omega^{(\varepsilon,A)}$ depending only on $\beta$, $\varepsilon$ and $A$.
 Notice that we have convergence in law of $\omega^{(\varepsilon,A)}$ towards $\omega$, when $A \rightarrow \infty$ and $\varepsilon \rightarrow 0$.
 
Now, let us consider the process $X_t^{(r)}$ which is equal to $|Q_{n + (t/ \log (1/r^2))} (r)|^2$ when $n + (t/ \log (1/r^2))$ is an integer and which is linearly interpolated otherwise. In other words, for all $t \geq 0$, 
 $$ X_t^{(r) } = \sum_{j \geq n} w_j^r(t) |Q_j(r)|^2 \ ,$$
where $w_j^r(t) = w_0\left(- (j-n) + \frac{t}{\log (1/r^2)} \right)$ and 
$$ w_{0} (t) = 
   \left\{
   \begin{array}{cc}
   0     & \textrm{if } \left| t \right| > 1    \ , \\
   1-|t| & \textrm{if } \left| t \right| \leq 1 \ .
   \end{array}
   \right.
$$
That is to say that the graph of $w_j^r$ is given by a triangle of width $2 \log (1/r^2)$ and height $1$, centered at $t = (j-n) \log (1/r^2)$. 
 
We then have 
\begin{align*}
    \int_{\varepsilon}^A \frac{X_t^{(r)} }{t} dt
= & \sum_{j \geq n} |Q_j(r)|^2 \int_{\varepsilon}^A \frac{w_j^r(t)}{t} dt\\
= & \sum_{j \geq n} |Q_j(r)|^2
    \int_{-(j-n) + \varepsilon/\log (1/r^2)}^{-(j-n) + A/\log (1/r^2)} \frac{w_0(t)}{j-n+t} dt \ .
\end{align*}
Since $w_0$ is supported on $[-1, 1]$, each integral in the above formula vanishes if $-(j-n) + \varepsilon/\log (1/r^2) \geq 1$ or $-(j-n) + A/\log (1/r^2) \leq -1$. Therefore, we can restrict the summation index $j$ to 
\begin{align}
\label{eq:indexRestriction}
& \varepsilon/\log (1/r^2) - 1 < j-n < A/\log (1/r^2) + 1 \ .
\end{align}
For such indices, $j$ grows to infinity as $r \rightarrow 1^-$ and the contribution of each bounded number of terms to the series goes to zero as $r \rightarrow 1^-$. This remark allows us to take care of two boundary effects:

\begin{itemize}
 \item The integrals $\int_{-(j-n) + \varepsilon/\log (1/r^2)}^{-(j-n) + A/\log (1/r^2)} \frac{w_0(t)}{j-n+t} dt$ can be taken over $[-1, 1]$ since the integration interval does not cover $[-1, 1]$ only for a bounded number of indices $j$ such that \eqref{eq:indexRestriction} occurs. 
 \item Because $\varepsilon/\log (1/r^2) = \varepsilon_{n,r} + \Oc_{n, \varepsilon} (1)$ for $r \in (1/2,1)$ and the same for $A_{n,r}$, instead of restricting indices $j$ to \eqref{eq:indexRestriction}, we can restrict to $\varepsilon_{r,n} \leq j \leq A_{r,n}-1$.
\end{itemize}

Hence, if $o_r(1)$ denotes any quantity tending to zero when $r \rightarrow 1-$, $n$, $\varepsilon$ and $A$ being fixed: 
\begin{align*}
    \int_{\varepsilon}^A \frac{X_t^{(r)} }{t} dt
= & o_r(1)
    +
    \sum_{j=\varepsilon_{n,r}}^{A_{n,r}-1} |Q_j(r)|^2
    \int_{[-1, 1]} \frac{w_0(t)}{j-n+t} dt \\
= & o_r(1)
    +
    \sum_{j=\varepsilon_{n,r}}^{A_{n,r}-1} |Q_j(r)|^2
    \left( \int_{[-1, 1]} \frac{w_0(t)}{j+1} dt + \Oc_n  \left( \frac{1}{(j+1)^2} \right) \right)\\
= & o_r(1)
    +
    \sum_{j=\varepsilon_{n,r}}^{A_{n,r}-1} \frac{|Q_j(r)|^2}{j+1} \ .
\end{align*}

A comparison between Riemann sums and integrals easily gives 
$$ \int_{\varepsilon}^A \frac{e^{-t}}{t} dt
   = o_r(1)
  +  \sum_{j=\varepsilon_{r,n}}^{A_{r,n}-1} \frac{r^{2(j+1)}}{ j+1}, $$
and then, the combination of the two previous equations yields 
$$ \omega_{r,n}^{(\varepsilon,A)}
   = \frac{2}{\beta} \left(o_r(1) + \int_{\varepsilon}^A \frac{X_{t}^{(r)} - e^{-t}}{t} dt \right) \ .$$

\medskip

Now, from Proposition \ref{prop:convergencetosde}, $(X_{t}^{(r)})_{t \geq 0}$, conditionally on $\mathcal{G}$, under $\Q$, tends in law to a limiting stochastic process $(X_t)_{t \geq 0}$ whose distribution depends only on $\beta$, for the topology of uniform convergence on compact sets. Since the map 
$$Y \mapsto \frac{2}{ \beta} \int_{\varepsilon}^A \frac{Y_t - e^{-t}}{t} dt$$
is continuous for this topology, the continuous mapping theorem entails that 
$\omega_{r,n}^{(\varepsilon,A)}$ converges in distribution to 
 $$ \omega_{\varepsilon,A} := \frac{2}{\beta} \int_{\varepsilon}^A \frac{X_t - e^{-t}}{t} dt \ .$$
Since the integral
$$ \frac{2}{\beta} \int_0^{\infty} \frac{X_t - e^{-t}}{t} dt$$
is absolutely convergent by Proposition \ref{prop:convergencetosde}, it defines a random variable $\omega$ which is the limit of $\omega_{\varepsilon, A}$ when $\varepsilon$ goes to zero and $A$ goes to infinity. This completes the proof of the Main Theorem.
\end{proof}

\appendix 
\section{CBE as regularization of a Gaussian space}
\label{section:appendix_CBE}

Here we provide a short proof independent from \cite{JM15} that traces from the Circular Beta Ensemble become Gaussian as $n \rightarrow \infty$. This proof is absent from the literature and is in fact hidden in the book of Macdonald \cite{macdo}. 

Unlike \cite{DS94} or \cite{JM15}, this proof is not quantitative. It shows that the $C\beta E_n$ is the regularization of a Gaussian space by $n$ points at the level of symmetric functions.

\begin{lemma}[Gaussianity of traces]
Given a unitary matrix $U_n$ whose spectrum is sampled following the $C \beta E_n$, we have, for any positive integer $M$, the convergence in distribution:
$$ \left( \operatorname{tr}\left( U_n^k \right)  \right)_{k \in \{1,2, \dots, M\}} 
   \stackrel{n \rightarrow \infty}{\longrightarrow}
   \left( \sqrt{\frac{2 k}{\beta}} \Nc^\C_k\  \right)_{k \in \{1,2, \dots, M\}}   \ .
$$ 
\end{lemma}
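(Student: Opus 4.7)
The plan is to prove convergence of all joint mixed moments
\[
\mathbb{E}\!\left[ \prod_i \operatorname{tr}(U_n^{k_i}) \prod_j \overline{\operatorname{tr}(U_n^{k'_j})} \right]
\]
as $n \to \infty$, and then invoke the fact that a Gaussian vector is uniquely determined by its moments, so that convergence of mixed moments implies convergence in distribution.

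First, I would identify $\operatorname{tr}(U_n^k)$ with the evaluation at the eigenvalues $(e^{i\theta_1}, \dots, e^{i\theta_n})$ of the power-sum symmetric function $p_k = \sum_{j} x_j^k$. Using $\overline{p_k(e^{i\theta})} = p_k(e^{-i\theta})$, every joint mixed moment becomes the expectation under $C\beta E_n$ of a product of the form $p_\lambda \cdot \overline{p_\mu}$ for two partitions $\lambda, \mu$, which in particular is a symmetric Laurent polynomial in the $e^{i\theta_j}$'s.

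Second, the key input comes from Macdonald's theory \cite[Chapter VI]{macdo}: when $\alpha = 2/\beta$, integration of symmetric polynomials against $C\beta E_n$ coincides with the Jack inner product truncated to $n$ variables,
\[
\int P \,\overline{Q}\,dC\beta E_n = \langle P, Q \rangle_{n, \alpha}\ ,
\]
and on the ring of symmetric functions, for any pair of partitions $\lambda, \mu$ of fixed size, $\langle p_\lambda, p_\mu \rangle_{n,\alpha}$ stabilizes for $n$ large enough and converges to the standard Jack inner product value
\[
\langle p_\lambda, p_\mu \rangle_\alpha = \delta_{\lambda \mu}\, z_\lambda\, \alpha^{\ell(\lambda)}\ ,
\]
where $z_\lambda = \prod_{k \geq 1} k^{m_k}\, m_k!$ and $\ell(\lambda) = \sum_k m_k$, for $\lambda = (1^{m_1} 2^{m_2}\cdots)$.

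Third, I would compare with the candidate Gaussian limit. By Wick's formula applied to the independent centered complex Gaussians $X_k := \sqrt{2k/\beta}\,\mathcal{N}_k^\C$ (which satisfy $\mathbb{E}[X_k \overline{X_\ell}] = \delta_{k \ell}\, (2k/\beta)$ and $\mathbb{E}[X_k X_\ell] = 0$), only the diagonal pairings of $X$'s with $\overline{X}$'s contribute, and one checks directly that
\[
\mathbb{E}\!\left[ \prod_i X_{\lambda_i} \prod_j \overline{X_{\mu_j}} \right]
= \delta_{\lambda \mu} \prod_k m_k!\, \left(\tfrac{2k}{\beta}\right)^{m_k}
= \delta_{\lambda\mu}\, z_\lambda\, \alpha^{\ell(\lambda)}\ ,
\]
matching exactly the Jack inner product of Step 2. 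Since multivariate Gaussians are determined by their moments, this moment convergence yields the stated convergence in distribution.

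\textbf{Main obstacle.} The substantial input is Step 2: the identification of the $C\beta E_n$ integration with the Jack inner product and the stabilization of this inner product on symmetric functions of fixed degree as $n \to \infty$. This rests on the explicit construction of the Jack polynomials as joint eigenfunctions of the Sekiguchi--Debiard operators and on the computation $\langle p_\lambda, p_\mu \rangle_\alpha = \delta_{\lambda \mu}\, z_\lambda \,\alpha^{\ell(\lambda)}$, both of which are non-trivial results from Macdonald's treatise. Once granted, the rest of the argument is a routine comparison of moments followed by an application of the method of moments for Gaussian vectors.
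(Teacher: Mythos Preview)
Your proposal is correct and follows essentially the same approach as the paper's proof: both identify traces with power-sum symmetric functions, use Macdonald's results \cite[Chapter VI, \S\S 9--10]{macdo} that the $C\beta E_n$ pairing converges to the Jack inner product $\langle p_\lambda, p_\mu\rangle_\alpha = \delta_{\lambda\mu} z_\lambda \alpha^{\ell(\lambda)}$, recognize this as the mixed-moment structure of independent complex Gaussians, and conclude by the method of moments. Your write-up is in fact a bit more explicit than the paper's about the Wick computation and the stabilization in $n$.
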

\begin{proof}
Consider the following specialization of power sum polynomials, which form a basis of symmetric functions:
       $$ p_k := p_k(U_n) =\operatorname{tr} \left( U_n^k \right) $$
       and for a partition $\lambda$
       $$ p_\lambda := \prod_i p_{\lambda_i} \ .$$

Also, consider the following scalar product for functions in $n$ variables. Given $f, g: (\partial \D)^n \rightarrow \C$ symmetric, define:
       $$ \langle f, g \rangle_n := \E_{C\beta E_n}\left( f(y_1, \dots, y_n) \overline{g(y_1, \dots, y_n)} \right)\,$$
       where $\{y_1, \dots, y_n\}$ follows the $C\beta E_n$. 
       
The convergence in law of traces of the $C \beta E_n$ to Gaussians is equivalently reformulated as the convergence of the $\langle p_\lambda, p_\mu \rangle_n$ to the appropriate limit. This is readily obtained from the combination of the two following facts:
\begin{itemize}
 \item \cite[Chapter VI, \textsection 9, ``Another scalar product'', Theorem (9.9)]{macdo}: The scalar product $\langle \cdot, \cdot \rangle_n$ approximates the Macdonald scalar product in infinitely many variables
       $ \langle \cdot, \cdot \rangle_n \rightarrow \langle \cdot, \cdot \rangle \ ,$
       where
       $$ \langle p_\lambda, p_\mu \rangle = \delta_{\lambda, \mu} Cste(\lambda) \ .$$
 \item \cite[Chapter VI, \textsection 10, ``Jack symmetric functions'']{macdo}: The Macdonald scalar product has a Gaussian space lurking behind as
       $$ Cste(\lambda) \delta_{\lambda, \mu}
          = z_\lambda \left( \frac{2}{\beta} \right)^{\ell(\lambda)} \delta_{\lambda, \mu}
          = \E\left( \prod_{k} \left( \sqrt{\frac{2 k}{\beta}} \Nc_k^\C \right)^{m_k(\lambda)}
                               \left( \sqrt{\frac{2 k}{\beta}} \overline{\Nc_k^\C} \right)^{m_k(\mu)} \right)
       \ ,$$
       where $\ell(\lambda)$ is the length of the partition $\lambda$, $m_k(\lambda)$ the  multiplicity of $k$ in the partition $\lambda$ and 
       $$z_{\lambda} = \prod_k \left( m_k(\lambda)! \ k^{m_k(\lambda)} \right). $$
\end{itemize}

\end{proof}
\noindent {\bf Acknowledgments.} The authors would like to thank R. Rhodes and V. Vargas for the helpful discussions we had on the Gaussian Multiplicative Chaos and the Fyodorov-Bouchaud formula. 

R.C. is supported by the ANR LabEx CIMI (grant number ANR-11-LABX-0040) within the French State Programme ``Investissements d'Avenir'', by the ANR project ``POAS: Probability on Algebraic Structures'', grant number ANR-24-CE40-5511, and by the Institut Universitaire de France (IUF).
\bibliographystyle{alpha}
\bibliography{ChaosCBE}

\end{document}